\def\MILLER{\begin{tikzpicture}[scale=1.2]
  \def\TreeEdgeSignDist{0.14}
  \def\vertlabeldist{0.3}
  \def\clusterdslabelscale{0.6}
  \def\graphdslabelscale{0.6}
  \def\SnakeWiggle{6pt}
  \def\SnakeAmplitude{0.6pt}

  \InfVertices
  \Vertex[x=2.2,y=3.7,L=\relax]{0}
  \Vertex[x=1.5,y=1.0,L=$-$]{6A}
  \BlueVertices
  \VertexLN[x=1.5,y=4.0,L=$2$]{1}{$\bm{w_1}$}
  \VertexLW[x=0.0,y=3.0,L=$\>\>$]{2}{$\bm{u_1}$}
  \VertexLNW[x=1.5,y=3.0,L=$\>\>$]{3}{$\bm{x}$}
  \VertexLE[x=3.0,y=3.0,L=$\>\>$]{4}{$\bm{u_2}$}
  \VertexLE[x=1.5,y=2.0,L=$\>\>$]{5}{$\bm{w_2}$}
  \VertexLW[x=0.0,y=0.5,L=$\>\>$]{7}{$\bm{u_3}$}
  \VertexLE[x=3.0,y=0.5,L=$\>\>$]{8}{$\bm{u_4}$}
  \YellowVertices
  \VertexLNW[x=1.5,y=1.0,L=$ $]{6}{$\bm{w_3}\>\>\>$}
  \VertexLNE[x=1.5,y=1.0,L=\relax]{6S}{$-$}

  \BlueEdges
  \Edge(1)(3)
  \YellowEdges
  \Edge(3)(0)
  \Edge(2)(3)
  \Edge(3)(4)
  \Edge(3)(5)
  \Edge(5)(6)
  \Edge(8)(6)
  \Edge(6)(7)

  \TreeEdgeSignW(1)(3){0.5}{2}
  \def\TreeEdgeSignDist{0.2}
  \TreeEdgeSignW(3)(0){0.8}{+}
  \def\TreeEdgeSignDist{0.14}
  \TreeEdgeSignN(2)(3){0.5}{+}
  \TreeEdgeSignS(2)(3){0.5}{5}
  \TreeEdgeSignN(3)(4){0.5}{-}
  \TreeEdgeSignS(3)(4){0.5}{5}
  \TreeEdgeSignW(3)(5){0.6}{2}
  \TreeEdgeSignE(3)(5){0.6}{+}
  \TreeEdgeSignW(5)(6){0.5}{2}
  \TreeEdgeSignS(6)(7){0.5}{6}
  \TreeEdgeSignS(6)(8){0.5}{6}

  \ESwapPerc3234{in=-120,out=-60}{0.35}
\end{tikzpicture}}
\def\WINDMILL{\begin{tikzpicture}[scale=1.3]
  \def\TreeEdgeSignDist{0.14}
  \def\GraphEdgeSignDist{0.1}
  \def\clusterdslabelscale{0.6}
  \def\graphdslabelscale{0.6}
  \def\SnakeWiggle{6pt}
  \def\SnakeAmplitude{0.6pt}

  \InfVertices
  \Vertex[x=2.2,y=3.6,L=\relax]{0+}
  \Vertex[x=2.0,y=3.8,L=\relax]{0-}
  \BlueVertices
  \def\vertlabeldist{0.22}
  \VertexLN[x=1.5,y=4.0,L=$2$]{1}{$\bm{v_1}$}
  \VertexLE[x=1.5,y=2.0,L=$\>\>$]{5}{$\bm{v_2}$}
  \def\vertlabeldist{0.3}
  \VertexLW[x=1.5,y=3.0,L=$\>\>$]{3}{$\bm{v_x}$}
  \VertexLNW[x=1.25,y=1,L=$\>\>$]{7+}{$\bm{v_3^+}$}
  \VertexLNE[x=1.75,y=0.5,L=$\>\>$]{7-}{$\bm{v_3^-}$}

  \BlueEdges
  \Edge(0+)(3)
  \Edge(0-)(3)
  \Edge(1)(3)
  \Edge(5)(7+)
  \Edge(5)(7-)
  \LoopW(3)
  \LoopW(3)
  \LoopE(3)
  \EdgeW(7)
  \EdgeE(7)
  \BendedEdges
  \Edge(3)(5)
  \Edge(5)(3)

  \VSwap{7+}{7-}{}2
  \VArr{$(3)-(0.4,-0.18)$}{$(3)+(0.5,0.2)$}{in=120}{1.5}
  \VArr{$(3)+(0.5,0.2)$}{$(3)-(0.4,0.2)$}{out=-90,in=-90}{1.5}
  
  \GraphEdgeSignE(0-)(3){0.1}{\>\>e_\infty^+}
  \GraphEdgeSignE(0+)(3){0.1}{\>\>e_\infty^-}
  \GraphEdgeSignW(1)(3){0.4}{e_1}
  \GraphEdgeSignE(1)(3){0.39}{1\>}
  \GraphEdgeSignW(5)(7+){0.48}{e_3^+}
  \GraphEdgeSignE(5)(7+){0.48}{1\>}
  \GraphEdgeSignW(5)(7-){0.5}{\>\,1}
  \GraphEdgeSignE(5)(7-){0.5}{\>\>e_3^-}
  \TreeSignAt(7+)(1.7,-0.17){6}
  \TreeSignAt(7+)(1.9,-0.17){\ell_4}
  \TreeSignAt(7-)(-1.7,0.17){6}
  \TreeSignAt(7-)(-1.9,0.17){\ell_3}
  \TreeSignAt(3)(-1.4,0){\ell_1}
  \TreeSignAt(3)(-1.2,0.01){5}
  \TreeSignAt(3)(1.4,0){\ell_2}
  \TreeSignAt(3)(1.2,0.01){5}
  \TreeSignAt(5)(-0.27,0.4){e_2^+}
  \TreeSignAt(5)(-0.07,0.4){1}
  \TreeSignAt(5)(0.08,0.5){1}
  \TreeSignAt(5)(0.3,0.5){e_2^-}
\end{tikzpicture}
}
\def\GRAINthree{\begingroup
\def\clusterdslabelscale{0.9}
\def\clustersep{3pt}
\def\clnodedist{0pt}
\clusterpicture[1.1]
  \Root(2.50,2)(r6);
  \Root(3.00,2)(r13);
  \Root(3.25,2)(r14);
  \ClusterDSName(c1){(r13)(r14)}{5\!/\!2}{+}{\t_1};
  \Root(3.91,2)(r15);
  \Root(4.16,2)(r16);
  \ClusterDSName(c2){(r15)(r16)}{5\!/\!2}{-}{\t_2};
  \Root(4.82,2)(r1);
  \Root(5.07,2)(r2);
  \Root(5.32,2)(r3);
  \Root(5.57,2)(r4);
  \Root(5.82,2)(r5);
  \ClusterDSName(c3){(r1)(r2)(r3)(r4)(r5)}{2}{}{\s_1};
  \Root(6.48,2)(r11);
  \Root(6.73,2)(r12);
  \Root(7.23,2)(r7);
  \Root(7.48,2)(r8);
  \ClusterDSName(c4){(r7)(r8)}{3}{-}{\t_3};
  \Root(8.14,2)(r9);
  \Root(8.39,2)(r10);
  \ClusterDSName(c5){(r9)(r10)}{3}{-}{\t_4};
  \ClusterDSName(c6){(c4)(c4n)(c4name)(c5)(c5n)(c5name)}{1}{-}{\s_3};
  \ClusterDSName(c7){(r11)(r12)(c6)(c6n)(c6name)}{1}{+}{\s_2};
  \ClusterDSName(c8){(r6)(c1)(c1n)(c1name)(c2)(c2n)(c2name)(c3)(c3n)(c3name)(c7)(c7n)(c7name)}{}{+}{X};
  \frob(c1)(c2);
\endclusterpicture\endgroup}
\begin{document}

\title{Semistable types of hyperelliptic curves}
\author{Tim and Vladimir Dokchitser, C\'eline Maistret, Adam Morgan}

\thanks{This research is supported by EPSRC grants EP/M016838/1 and EP/M016846/1}
\thanks{The second author is supported by a Royal Society University Research Fellowship.}

\subjclass[2010]{Primary 11G20, secondary 14H45, 05C22} 

\keywords{Hyperelliptic curves, hyperelliptic graphs, BY trees, cluster pictures,
  Tamagawa group, semistable reduction}


\begin{abstract}
In this paper, we explore three combinatorial descriptions of 
semistable types
of hyperelliptic curves over local fields: dual graphs, their quotient trees by
the hyperelliptic involution, and configurations of the roots of the defining equation
(`cluster pictures'). We construct explicit combinatorial one-to-one
correspondences between the three, which furthermore respect automorphisms and allow to keep track of the monodromy pairing and the Tamagawa group of the 
Jacobian. We introduce a classification scheme and a naming convention 
for semistable types of hyperelliptic curves and types with a Frobenius action. This is the higher genus analogue of the distinction between good, split and non-split multiplicative reduction for elliptic curves.
Our motivation is to understand $L$-factors, Galois representations, conductors, Tamagawa numbers and other local invariants of hyperelliptic curves and their Jacobians.
\end{abstract}
\maketitle
\clearpage
\tableofcontents
\clearpage

\section{Introduction}\label{s:intro}

\def\GraphScale{0.4}
\def\SnakeWiggle{3pt}
\def\clustersep{1.3pt}
\rootsize{0.5}

Suppose $K$ is a field with a discrete valuation, say of odd residue characteristic,
and $C/K$ is a hyperelliptic curve of genus $g$,
$$
  C: y^2 = f(x). 
$$
Our motivation is the study of the arithmetic of $C$ and its Jacobian, including its minimal model, Tamagawa number, $L$-factor, conductor and other invariants related to the Birch--Swinnerton-Dyer conjecture. It would be desirable to have a classification of reduction types in the fashion of Kodaira types for elliptic curves, which moreover would take into consideration non-algebraically closed residue fields. 
In order to do so, for semistable curves, this paper develops a correspondence between three
natural combinatorial objects attached to $C$ that control its arithmetic. The correspondence is explicit and gives a simple way to pass between these objects in practice.

\subsection{Correspondence}

First, $C$ has semistable reduction over some finite extension $F/K$:
it has a model over the ring of integers of $F$ with
stable special fibre $\bar C$. Thus, $\bar C$ has only ordinary double points as singularities, and 
$\Aut(\bar C)$ is finite (assuming $g\ge 2$ for the moment). 
Associated to $\bar C$ is its dual graph $G$, 
with a vertex for each geometric irreducible component, decorated with its genus
and an edge for each intersection. It is often referred 
to as a `semistable type' of~$C$; 
e.g., in genus~2 there are seven types (omitting genus 0 markings):

\begin{center}
\begin{tabular}{ccccccc}
\TwBG
\end{tabular}
\end{center}

Second, $G$ has an involution $\iota$ that comes from the hyperelliptic 
involution $y\mapsto -y$ on $C$, and the topological quotient $G/\langle \iota\rangle$ is a tree, 
say $T$.
It has genus markings on the vertices as well, and a natural 2-colouring:
colour points over which $G\to T$ is 2:1 yellow, and the branch locus blue. In 
genus~2, the corresponding trees are

\begin{center}
\def\GraphScale{0.33}
\begin{tabular}{ccccccc}
\TwBT
\end{tabular}
\end{center}

Third, the set $X\!\subset\!\bar K$ of the $2g\!+\!1$ or $2g\!+\!2$ roots of the 
defining polyno\-mial~$f(x)$
gives another natural combinatorial invariant --- how the roots `cluster' together.
Call a non-empty subset $\s\subset X$ a \emph{cluster} if it is of the form
$X\cap\text{(some disc in $\bar K$)}$, and view 
$X$ abstractly as a finite set with a collection $\Sigma$ of clusters $\s\subset X$, a 
\emph{cluster picture}. Different presentations $y^2=f(x)$ of the same curve may give
different cluster pictures; however there is an equivalence relation induced by M\"obius transformations of the roots. When $|X|=6$ ($g=2$), there are seven 
equivalence classes, represented by 

\begin{center}
\def\clpicscale{0.7}\rootsize{0.4}
\begin{tabular}{ccccccc}
\TwBS
\end{tabular}
\end{center}

\noindent
The leftmost one illustrates the 6 roots being all equidistant, in the next one the last two roots are closer to each other than to the other four, and so on.

\smallskip

The three sets of 7 pictures raise the obvious question, 
to which the answer turns out to be `Yes'. 
There is an established combinatorial notion of a `hyperelliptic graph'. In this paper, we
introduce `BY (blue/yellow) trees' and `cluster pictures', again in a 
combinatorial fashion, and formally define genus and equivalence. 
We then prove

\begin{theorem}[Main correspondence]\label{icorr1}
There is an explicit genus-preserving one-to-one correspondence between 
\begin{itemize}
\item Hyperelliptic graphs up to isomorphism,
\item BY trees up to isomorphism,
\item Cluster pictures up to equivalence.
\end{itemize}
\end{theorem}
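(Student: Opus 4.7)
My plan is to use BY trees as a pivot and construct explicit bijections both with hyperelliptic graphs and with cluster pictures, verifying in each case that the maps are mutually inverse and preserve genus. Throughout, the correspondences are built from purely local data at vertices and edges, so the global checks reduce to finitely many case analyses of the sort that is already visible in the genus $2$ tables reproduced in the excerpt.

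First I would handle the correspondence between hyperelliptic graphs and BY trees. The forward map takes a hyperelliptic graph $G$ with its canonical involution $\iota$ to the quotient $T = G/\langle\iota\rangle$, with a vertex or edge of $T$ coloured blue precisely when it sits in the branch locus of the quotient and yellow when $\iota$ acts without fixed points there. Genus markings on $T$ are determined from those of $G$ by a local Riemann--Hurwitz calculation that accounts for loops contracted by $\iota$. The inverse map forms the canonical double cover of $T$ branched along the blue subgraph; the BY axioms are exactly what is needed to ensure that this double cover exists, is a graph in the same combinatorial sense, and carries a unique involution making it a hyperelliptic graph. That the two maps are mutually inverse follows from comparing their effect vertex by vertex and edge by edge.

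For the correspondence between cluster pictures and BY trees, I would associate to a cluster picture $\Sigma$ on a root set $X$ the tree whose vertices are the clusters (with suitable conventions for the outer cluster and for singleton clusters, which become leaves) and whose edges come from maximal proper inclusion. The colour and genus of a vertex are read off from the parity of the corresponding cluster size together with the parities of its children: clusters of even size whose odd children cover an even number of roots give one colour, the remaining ones give the other, and the genus records the number of odd children in a precise way. The inverse map unfolds a BY tree into a nested collection of subsets of a formal root set. The key point to verify is that M\"obius equivalence of cluster pictures corresponds exactly to combinatorial isomorphism of the resulting BY trees, which amounts to showing that the outer cluster plays no preferred role in the construction.

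The main obstacle is precisely this last equivalence: M\"obius transformations permute the clusters in a non-obvious way, so the parity bookkeeping has to be intrinsic to the isomorphism class of the cluster picture, not to any particular embedding in $\mathbb P^1$. Once this is handled, the transitivity of the triangle of bijections is automatic, and consistency with the hyperelliptic-graph side can be verified by constructing the composite map directly and comparing it with the well-known recipe that reads the dual graph of a semistable model off from the clusters of the roots of $f$.
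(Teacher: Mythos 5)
Your two constructions are the ones the paper uses: the quotient $G\mapsto G/\langle\iota\rangle$ with the branch locus coloured blue (and the branched double cover as inverse), and the tree of clusters with colours and genera read off from parities. The genuine gap is in the step you yourself flag as ``the main obstacle'' and then do not resolve: making the cluster-picture side well defined on \emph{equivalence classes}. A cluster picture does not naturally produce a closed BY tree at all --- the top cluster $X$ singles out a direction to infinity, so the natural output is an \emph{open} tree with a dangling edge at $v_X$. Worse, the moves generating the equivalence relation (adding/removing a cocluster, passing between $2g+1$ and $2g+2$ roots) change the set of clusters and hence the vertex set of your tree, so ``the tree whose vertices are the clusters'' is genuinely different for different representatives of the same class; the issue is not that the outer cluster might play a preferred role by symmetry, but that the assignment is simply not constant on equivalence classes. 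Also note that the relation in the theorem is the combinatorial one generated by four explicit moves (M\"obius transformations are only the motivation), so whatever invariant you extract must be checked against each move separately.

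What is missing is a normalization. The paper supplies it by (a) first proving the correspondence for \emph{open} objects up to isomorphism, (b) defining the \emph{core} of an open BY tree / open hyperelliptic graph --- the maximal closed subobject, obtained by pruning a short tail near infinity --- and showing via a finite case analysis (sixteen possible neighbourhoods of infinity) that every elementary move on cluster pictures only alters that tail and never the core, and (c) exhibiting a unique \emph{balanced} representative in each equivalence class, corresponding to attaching the open edge at the canonical centre of the closed tree. Any correct version of your argument needs an analogue of (b) or (c); without it the composite map to closed BY trees is not defined. Separately, your genus claim on the graph side requires the identification $H_1(G)\cong H_1(T,T_b)$ (relative homology with respect to the blue part), which is a short but nontrivial computation rather than a purely local Riemann--Hurwitz count, and your colouring rule should be stated as: \"ubereven clusters give yellow vertices, even children give yellow edges --- ``odd children covering an even number of roots'' is not the right dichotomy.
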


In order to work with a fixed model of a hyperelliptic curve, it is also natural to ask
for a graph-theoretic counterpart of a cluster picture that determines it up to isomorphism, rather than up to equivalence. The right notions turn out to be 
\emph{open BY tree} / \emph{open hyperelliptic graph}, with one extra edge 
or $\iota$-orbit of edges with no endpoint (`going off to $\infty$'). 
An open hyperelliptic graph $G$ has a unique largest 
hyperelliptic subgraph, its \emph{core} $\tilde G$, and
$G, G'$ are called \emph{equivalent} if $\tilde G\iso \tilde G'$; similarly for 
BY trees $T$,~$\tilde T$.

The refined version of Theorem \ref{icorr1} is

\begin{theorem}[Open correspondence]\label{icorr2}
There is an explicit genus-preserving and equivalence-preserving 
one-to-one correspondence between 
\begin{itemize}
\item Open hyperelliptic graphs up to isomorphism,
\item Open BY trees up to isomorphism,
\item Cluster pictures up to isomorphism.
\end{itemize}
The construction is summarized in Table \ref{tabdicto}.
\end{theorem}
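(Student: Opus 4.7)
The plan is to construct explicit maps in both directions between each pair of categories (open hyperelliptic graphs, open BY trees, cluster pictures) and verify they compose to the identity on isomorphism classes. The correspondence between open hyperelliptic graphs $G$ and open BY trees $T$ is the most geometric of the three and should be done first, using the quotient by the hyperelliptic involution. Given an open hyperelliptic graph $G$ with involution $\iota$, set $T=G/\langle\iota\rangle$; colour a vertex of $T$ blue if it is the image of a fixed vertex of $\iota$ or if it is the image of an edge midpoint fixed by $\iota$, and yellow otherwise; colour an edge of $T$ yellow if it is covered 2:1 (i.e.\ its preimage in $G$ is a pair of $\iota$-swapped edges) and blue if it is covered 1:1. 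Genera on vertices of $T$ are inherited from $G$ in the natural way (via the Riemann--Hurwitz contribution on blue vertices, unchanged on yellow). The distinguished open edge is preserved. Conversely, from an open BY tree $T$ one builds $G$ by taking two copies of every yellow vertex and edge, glued at the blue part, and the axioms of a BY tree are set up precisely so that this produces a well-defined open hyperelliptic graph with its canonical $\iota$.

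Next I would treat the bijection between open BY trees and cluster pictures, which is the combinatorial heart of the correspondence. Given a cluster picture $\Sigma$ on a root set $X$, the set of proper clusters, partially ordered by inclusion and augmented by a top element for $X$ together with a distinguished outgoing edge to record the parity of $|X|$, forms a rooted tree. Each cluster $\s$ becomes a vertex whose BY colour is dictated by the parity of $|\s|$ and the parities of its children, and whose genus marking records the `extra' odd-size children that must be reinterpreted as a genus contribution (so that the Riemann--Hurwitz balance is correct for the associated double cover). In the reverse direction, each BY tree prescribes, at every vertex, how many roots lie in the corresponding cluster and how they partition among children, and an inductive unfolding yields a unique cluster picture up to isomorphism.

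I would then compose the two constructions to obtain the direct bijection between open hyperelliptic graphs and cluster pictures, and check that the three bijections form a commutative triangle; the required identities on each step are local at individual vertices/clusters/components, so this comes down to finite case checks consolidated in Table~\ref{tabdicto}. Preservation of genus is clear vertex-by-vertex (the total genus is the sum of vertex genera plus the first Betti number of $G$, all visible on the tree and cluster picture via standard formulae). Preservation of equivalence of the underlying core objects (after quotienting the open data by the appropriate trivial attachments) follows because the "infinite" edge and the `ghost' top cluster are precisely the data that get lost when passing to the closed version, recovering the statement of Theorem~\ref{icorr1} as a corollary.

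The main obstacle, and where the bulk of the work will lie, is the careful bookkeeping of conventions --- how singleton clusters, trivial vertices of genus zero and degree $\le 2$, yellow vs.\ blue colouring at leaves, and the distinguished open edge are handled in the three formalisms. The axioms defining hyperelliptic graphs, BY trees and cluster pictures must be balanced so that the above recipes are bijective without introducing spurious non-minimal objects, and verifying this amounts to a systematic case analysis, guided by Table~\ref{tabdicto}, of what each axiom forces on the other two sides.
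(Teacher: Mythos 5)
Your proposal is correct and follows essentially the same route as the paper: the graph--tree correspondence via the quotient $G/\langle\iota\rangle$ (with branch locus blue) and its inverse by doubling along the blue part, the tree--cluster correspondence via the inclusion tree of proper clusters with colour determined by parity/\"ubereven-ness and genus recording the odd children, and genus preservation checked through the Betti-number/vertex-genus decomposition. The bookkeeping issues you flag (genus $0$ degree-$2$ vertices, leaves from twins and anti-invariant edges, the open edge) are exactly the case analysis the paper carries out in Constructions \ref{GtoT}--\ref{TtoS} and Propositions \ref{by tree hyp graph proposition} and \ref{cluster picture by tree proposition}.
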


For example, here are open hyperelliptic graphs with core \OZGb,
open BY trees with core \OZTb, and the corresponding cluster pictures
that form one full equivalence class:

\vspace{-0.3em}

\begingroup
\def\GraphScale{0.35}
\begin{center}
\scalebox{0.9}{\hbox{
\begin{tabular}{ccccccc}
\OZG\cr 
\OZT\\[2pt]
\OZS\cr 
\end{tabular}}}
\end{center}
\endgroup

\subsection{Invariants}

The dual graph $G$ of a semistable curve carries 
several important invariants, notably

\begin{itemize}
\item 
A \emph{metric} (edges have length),
\item
\emph{Automorphisms} (e.g. coming from the action of Galois),
\item
Homology \emph{lattice} $\Lambda=H_1(G,\Z)$,
\item
Symmetric positive-definite \emph{pairing}%
\footnote{the monodromy pairing of Grothendieck \cite{Gro72}}
on $\Lambda$ induced by the metric,
\item
\emph{Tamagawa group}\footnote{also known in graph theory as 
the Jacobian (see \S\ref{ss:jacgraphs}), Picard group or the sandpile group of $G$;
for curves over local fields this is the group of connected components of the 
N\'eron model of the Jacobian, and the size of its Frobenius invariants 
is called the Tamagawa number}
$\Phi(G)=\Lambda^\vee/\Lambda$. 
\end{itemize}

All of these have counterparts for BY trees and cluster pictures:

\def\bu{\smallskip\noindent $\bullet$\hspace{0.5em}}

\bu
The analogue of an (open or not) \emph{metric} hyperelliptic graph $G$ / 
BY tree~$T$ (edges have length) is a \emph{metric cluster} picture $\Sigma$,
with distances between clusters.

\bu
For a BY tree $T$, the counterpart of $\Lambda_G=H_1(G,\Z)$ is 
the relative homology group $\Lambda_T=H_1(T,T_b,\Z)$ with respect
to the blue part $T_b\subset T$. For a cluster picture $\Sigma$, $\Lambda_{\Sigma}$ is, essentially, 
a permutation module on certain clusters (`even but not \"ubereven').

\bu
In all three settings, the metric determines a pairing on $\Lambda$. 
When all edge lengths of $G$ are integers, we call $G$ \emph{integral} and
this notion also transports to $T$ and $\Sigma$ as well. In this case,
the pairing is $\Z$-valued, and we can define the 
\emph{Tamagawa group} $\Phi=\Lambda^\vee/\Lambda$.

\bu
The group $\Aut G$ corresponds to
the group $\Aut T$ of automorphisms of $T$ together with a choice of sign for each yellow component, 
and to the group $\Aut\Sigma$ of permutations of clusters 
in $\Sigma$ with a choice of (compatible) signs for clusters of even size.

%

 
\begin{theorem}
The correspondence in Theorem \ref{icorr2} extends to the metric case. 
Suppose $G$, $T$ and $\Sigma$ correspond to one another. 
Choose a section $s: G/\langle \iota\rangle\to G$. Then there are 
canonical isomorphisms
\begin{equation*}
\label{}
  \Aut(\tilde G) \iso \Aut(\tilde T) \iso \Aut(\Sigma)
\end{equation*}
and canonical $\Aut(\cdot)$-equivariant isomorphisms
\begin{equation*}
  \Lambda_{\tilde G} \iso \Lambda_{\tilde T} \iso \Lambda_{\Sigma}
\end{equation*}
as lattices with a pairing. If $G$ is integral, they induce isomorphisms
\begin{equation*}
  \Phi(\tilde G) \iso \Phi(\tilde T) \iso \Phi(\Sigma).
\end{equation*}
\end{theorem}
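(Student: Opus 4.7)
The plan is to prove the three isomorphism statements in sequence, exploiting the explicit recipes of Theorem \ref{icorr2} (summarized in the dictionary) and the metric extension of the correspondence.

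First I would establish $\Aut(\tilde G) \iso \Aut(\tilde T) \iso \Aut(\Sigma)$. The correspondence of Theorem \ref{icorr2} is functorial: it is built from intrinsic combinatorial data (incidences of edges and vertices, the involution $\iota$, vertex colours and genus markings, cluster containment), so every isomorphism of one object transports canonically to an isomorphism of the others. Passing to the core is itself a canonical operation (the unique maximal closed hyperelliptic subobject), so automorphisms of the open objects descend to their cores; and since $\Sigma$ has no open analogue, the automorphism groups of the closed cores match those of $\Sigma$ on the nose. Equivariance for all subsequent constructions comes for free, since $\Aut(\cdot)$ acts by permuting the combinatorial data that define both the lattices and the pairings.

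Next, for the lattice isomorphism, I would use the section $s:\tilde G/\langle\iota\rangle\to\tilde G$ to construct an explicit map $\Lambda_{\tilde T}\to\Lambda_{\tilde G}$: a relative $1$-cycle in $(\tilde T,\tilde T_b)$ has its endpoints in the blue part (the branch locus of the hyperelliptic cover), so its lift through $s$ together with its $\iota$-image assembles into a closed cycle in $\tilde G$. This map is well-defined and, combined with the $\iota$-orbits of loops coming from yellow edges, realises the whole of $H_1(\tilde G,\Z)$; rank agreement follows from the Riemann--Hurwitz-style identity $h_1(\tilde G)=h_1(\tilde T,\tilde T_b)$, and independence of $s$ up to canonical isomorphism is because two sections differ only by yellow edges, whose contribution vanishes modulo boundaries of the relative complex. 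On the cluster side, the basis of $\Lambda_\Sigma$ indexed by even non-\"ubereven clusters is matched to the yellow edges of $\tilde T$ (equivalently the loops of $\tilde G$) by the dictionary, giving the required canonical isomorphisms. The three pairings are compatible because each is defined by the same local data: on $\Lambda_{\tilde G}$ the monodromy pairing counts signed edge lengths along intersections; on $\Lambda_{\tilde T}$ the pairing sums edge lengths along relative cycles; on $\Lambda_\Sigma$ it is given by the cluster-depth formula. These three agree on generators, hence on the whole lattice by bilinearity, and all are $\Aut$-equivariant by construction.

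The Tamagawa group statement is then formal: $\Phi=\Lambda^\vee/\Lambda$ is a functor on the category of lattices with $\Z$-valued positive-definite pairings, so the equivariant isometries $\Lambda_{\tilde G}\iso\Lambda_{\tilde T}\iso\Lambda_\Sigma$ induce $\Phi(\tilde G)\iso\Phi(\tilde T)\iso\Phi(\Sigma)$ whenever $G$ is integral. The main obstacle I expect is pairing compatibility through the cluster formulation: while the monodromy pairing on $\Lambda_{\tilde G}$ is standard and its transfer to $\Lambda_{\tilde T}$ is essentially a bookkeeping exercise in the cover $\tilde G\to\tilde T$, the cluster-depth pairing on $\Lambda_\Sigma$ involves combinatorial conventions (signs, parity of clusters, exclusion of \"ubereven clusters) that need to be matched edge-by-edge with the graph-theoretic side. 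A secondary technical point is proving that the lattice map is canonical independently of $s$ at the level of isomorphism classes, even though individual lifts depend on the choice of section.
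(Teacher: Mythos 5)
Your strategy coincides with the paper's: the lattice isomorphism is built exactly as in Proposition \ref{hyp graph homology} by sending a relative cycle $x$ to $s(x)-\iota(s(x))$, the cluster lattice is matched to the tree side via the dictionary as in Proposition \ref{lattices BY cluster}, the pairings are compared on generators, and $\Phi=\Lambda^\vee/\Lambda$ is taken formally (Theorems \ref{lattice thm 1} and \ref{component theorem}). However, one of your supporting claims is false as stated: the assertion that the lattice map is independent of $s$ because the contribution of yellow edges ``vanishes modulo boundaries'' is wrong. Replacing $s$ by $s'$ on a yellow component $Z$ turns $s(x)-\iota(s(x))$ into its negative for chains supported over $Z$, so the two isomorphisms differ by the sign automorphism $\psi_{s,s'}$ (Remark \ref{dependence of lattice iso on section}); the isomorphism is canonical only once $s$ is fixed -- which is precisely why the theorem carries the choice of $s$ in its hypotheses -- and equivariance must then be checked against the $s$-dependent identification $\Aut G\iso\Aut T$.

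The second genuine gap is the claim that the automorphism groups of the cores ``match those of $\Sigma$ on the nose''. One does have $\Aut\Sigma\iso\Aut T\iso\Aut G$ exactly by the open correspondence, but the restriction $\Aut T\to\Aut\tilde T$ is in general neither injective nor surjective: its kernel is $C_2$ in cases EE--OxO (the sign on the yellow component meeting the open edge) and its image is only the stabiliser of the point of $\tilde T$ closest to $\infty$ (Corollary \ref{res of auts}). The paper gets a clean statement only for the balanced representative of the equivalence class, where that point is the centre and hence fixed by everything, and even there the map $\Aut\Sigma\to\Aut\tilde G$ has kernel $C_2$ when $X$ is not \"ubereven (Theorem \ref{closedcorraut}); your proof needs this case analysis rather than a functoriality appeal. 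A smaller slip: $\Lambda_\Sigma$ is not indexed by the yellow edges of $\tilde T$ (a yellow vertex of degree $d$ carries $d$ yellow edges but contributes only rank $d-1$); the correct index set is the set of even non-\"ubereven clusters $\s\ne X$, matched with the blue vertices whose parent edge is yellow, equivalently with the components of $\tilde T_b$ not containing the root, and when $X$ is \"ubereven the lattice is only the sum-zero sublattice of $\Z[\cE_\Sigma]$ -- a case your sketch omits.
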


The Tamagawa group $\Phi(\tilde G)$ is also equivariantly isomorphic to the graph-theoretic
Jacobian of $G$, see Proposition \ref{component group vs jacobian}.  
As one application, we get a simple description of the 2-torsion in
$\Phi(\tilde G)$.

\begin{corollary}
\label{itwotors}
Let $G$ be an integral hyperelliptic graph of genus $\ge 2$, 
$G^{\langle \iota\rangle}$ the set of fixed 
points of the involution $\iota$, 
and $\cW$ the set of those connected components of $G^{\langle \iota\rangle}$ 
that contain a point of integer distance from a vertex.
Then we have isomorphisms of $\textup{Aut}~G$-modules
$$
  \Phi(G)[2]\iso 
  \begin{cases}
  0~~&~~\text{if }\cW=\emptyset~~\text{and}~~\rk H_1(G,\Z)~~\text{even,}\\
  \F_2~~&~~\text{if }\cW=\emptyset~~\text{and}~~\rk H_1(G,\Z)~~\text{odd,} \\
  \ker(\F_2[\mathcal{W}]\stackrel{\textup{sum}}{\longrightarrow}\F_2)~~&~~\text{otherwise.}
  \end{cases}
$$
(Here `\textup{sum}' denotes the sum of the coefficients map.)
\end{corollary}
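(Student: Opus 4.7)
The plan is to transfer the problem to the BY tree side via the correspondence and compute the 2-torsion there. By the previous theorem, $\Phi$, $\Lambda$, and the monodromy pairing are $\textup{Aut}$-equivariantly isomorphic under the correspondence $G \leftrightarrow T$, so I would work throughout with the BY tree attached to $G$. First, I would reduce to the core: $\Phi(G) = \Phi(\tilde G)$ by construction, $\rk H_1(G, \Z) = \rk H_1(\tilde G, \Z)$ since $G \setminus \tilde G$ is a disjoint union of trees, and the set $\cW$ is unchanged because a $\iota$-fixed point lying off the core either sits in a tree attached to a fixed vertex of $\tilde G$ (so its component of $G^{\langle\iota\rangle}$ already contained a core vertex) or lies on a pair of trees swapped by $\iota$ (contributing nothing to $G^{\langle\iota\rangle}$). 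Thus I may assume $G = \tilde G$ and let $T$ be the associated BY tree.

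Second, I would make $\Lambda_T$ and its pairing explicit. Since $T$ is a tree, the long exact sequence in relative homology for the pair $(T, T_b)$ collapses to
$$
0 \longrightarrow \Lambda_T \longrightarrow \Z[\pi_0(T_b)] \stackrel{\textup{aug}}{\longrightarrow} \Z \longrightarrow 0,
$$
identifying $\Lambda_T$ with the group of degree-zero $\Z$-divisors on the set $\pi_0(T_b)$ of blue components of $T$. Unwinding the correspondence, the blue components of $T$ match the elements of $\cW$: a blue subtree of $T$ arises from a connected locus of $\iota$-fixed vertices and edges of $G$, while an isolated blue vertex that is a leaf of a yellow edge arises from the midpoint of a $\iota$-swapped edge of \emph{even} length (odd length would give a half-integer distance and hence no corresponding blue vertex in $T$). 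The monodromy pairing on $\Lambda_T$ is, under this identification, the effective Laplacian of the weighted graph on $\cW$ whose edge weights are the integer lengths of the yellow paths in $T$ joining the components.

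Third, I would deduce the theorem from a Smith-normal-form / Laplacian computation. In the case $\cW \neq \emptyset$, the identification above presents $\Phi(T)$ as the cokernel of an integer Laplacian-type pairing on $\Z[\cW]$ restricted to degree zero, and a direct mod-2 calculation yields
$$
\Phi(T)[2] \cong \ker\bigl(\F_2[\cW] \stackrel{\textup{sum}}{\longrightarrow} \F_2\bigr),
$$
this isomorphism being $\Aut(T)$-equivariant (hence $\Aut(G)$-equivariant) by construction. For $\cW = \emptyset$, the tree $T$ has no blue part, $\Lambda_T = 0$, and all $2g+2$ fixed points of $\iota$ are midpoints of odd-length swapped edges; here I would instead work directly with $H_1(G,\Z)$ and its pairing and read off from the rank parity that $\Phi(G)[2] = 0$ or $\F_2$.

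The hard part will be the Laplacian calculation in step three: one must verify that the cokernel pairing, after reduction mod 2, has exactly the claimed structure --- in particular, that there is no extra parity obstruction beyond the degree-zero condition. This hinges on the integrality of edge lengths in $G$ (and hence of yellow path lengths in $T$) and on the fact that adjacent blue components are linked through integer yellow distances, which guarantees the right mod-2 behaviour of the pairing. Everything else, including the reduction to the core and the translation via the correspondence, is formal.
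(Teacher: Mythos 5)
There is a genuine gap, and it sits at the heart of your step two. You claim that the blue components of $T$ are in bijection with $\cW$, on the grounds that the midpoint of an $\iota$-anti-invariant edge of \emph{odd} length gives ``no corresponding blue vertex in $T$.'' That is false: the quotient $G\to T$ is branched over \emph{every} $\iota$-fixed point, and Construction \ref{GtoT} attaches a blue genus-$0$ leaf to $T$ for every $\iota$-anti-invariant edge of $G$, whatever its length. So $\pi_0(T_b)$ is in bijection with $\pi_0(G^{\langle\iota\rangle})$, of which $\cW$ is only the subset of components meeting $G(\Z)$. Consequently $\Lambda_T\iso\tilde{H}_0(T_b)$ is strictly larger than your $\ker(\Z[\cW]\to\Z)$ whenever an odd-length anti-invariant edge is present, and your Laplacian lives on the wrong set. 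The discarded basis directions are exactly the ones whose self-pairing is \emph{odd} (the parent edge of such a leaf has odd length, while by Lemma \ref{integral BY trees lemma} all other relevant yellow edges have \emph{even} length --- note it is this evenness, not mere integrality of yellow path lengths, that kills the off-diagonal entries mod $2$). Showing that these odd directions are units in the mod-$2$ Gram matrix and hence contribute nothing to $\Phi[2]$ is an essential part of the proof; your setup cannot even state it, and the fact that your final answer in the case $\cW\neq\emptyset$ agrees with the truth is an accident of having thrown away precisely the harmless part of the lattice without justification.

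The error becomes fatal in the case $\cW=\emptyset$: you assert that $T$ ``has no blue part'' and $\Lambda_T=0$. Every BY tree has nonempty blue part (all leaves are blue), and here $T$ has $2g+2$ blue genus-$0$ leaves joined to a single yellow component, with $\rk\Lambda_T=2g+1>0$ --- which is why the statement distinguishes the parity of $\rk H_1(G,\Z)$ at all. ``Read off from the rank parity'' is not an argument; the actual computation (the $S=\emptyset$ case of Theorem \ref{2-tors in BY tree}) is a diagram chase over the degree-zero sublattice of $\Z[\cL_T]$ showing that the kernel of the mod-$2$ pairing map is generated by the all-ones vector, which lies in the sublattice if and only if $|\cL_T|=\rk\Lambda_T+1$ is even. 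To repair your proof you must work with the full lattice on $\pi_0(T_b)$ (equivalently, the paper's $\Pi_T=\Z[\cL_T]$ or its degree-zero part, depending on the colour of the root), record that the Gram matrix mod $2$ is diagonal with entries $l_p(v)\bmod 2$, and only then discard the odd-diagonal directions to land on $\cW$.
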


There is an algorithm due to Betts \cite{Betts} that computes, for
a BY tree $T$, the Tamagawa group $\Phi(T)$ and the group of 
invariants $\Phi(T)^{F=1}$ for $F\in\Aut T$. By the theorem above, it gives a
way to compute $\Phi(G)$ and $\Phi(\Sigma)$ for hyperelliptic graphs and 
cluster pictures as well.


%
%
%

\subsection{Hyperelliptic curves in odd residue characteristic} \label{hyp curves overview}

The present paper is purely combinatorial, and was motivated by its geometric 
counterpart~\cite{M2D2} that studies the arithmetic and geometry of hyperelliptic curves
over local fields. We briefly sketch the results for the interested reader.


Let $K$ be a local field of odd residue characteristic $p$, 
with valuation $\val_K$ on the separable closure $\Kbar$.
As before, let $C/K$ be a hyperelliptic curve
$$
  C: y^2 = f(x)
$$
of genus $\ge 2$.
Suppose it is semistable over some finite Galois extension $F/K$. 
The dual graph $G_C$ of the special fibre of the minimal regular model of $C/F$ is naturally a 
metric hyperelliptic graph, and it has a Galois action%
\footnote{This is slightly non-trivial, and relies on uniqueness of the minimal regular model of $C$ over $F$}
$$
  \GKK\to\Aut G_C.
$$
The cluster picture $\Sigma_C=(X,\Sigma)$ is given by the collections of roots that are cut out by $p$-adic discs, 
$$
  X      = \bigl\{\text{roots of $f$ in $\Kbar$}\bigr\}, \qquad 
  \Sigma = \bigl\{\s\!=\!X\cap D \bigm| D\!\subset\!\Kbar \text{ disc}, \s\ne\emptyset \bigr\}.
$$
It carries a metric, with the distance between clusters determined by the $p$-adic distances between the roots:
$$
  \delta(\s, \s') = 
 2\cdot\text{diameter}(\s\cup\s') - \text{diameter}(\s) - \text{diameter}(\s'),
$$
where for $X'\subseteq X$, diameter$(X')=\min_{r,r'\in X'}(\val_K(r-r'))$.
An automorphism $\sigma\in\GKK$ acts on $X$ and permutes the clusters.
Moreover, one can assign a natural sign $\epsilon_{\sigma}(\s)=\pm 1$ for every even cluster $\s$.
This gives a homomorphism 
$$
  \GKK\to\Aut \Sigma_C.
$$
Having produced both the hyperelliptic graph $G_C$ and the cluster picture $\Sigma_C$, we 
can now state

\begin{theorem}[\cite{M2D2}]
The core of the hyperelliptic graph associated to the cluster picture $\Sigma_C$  by Theorem \ref{icorr2} is 
isomorphic to the dual graph $G_C$. The isomorphism preserves the metric and the $\GKK$-action.
\end{theorem}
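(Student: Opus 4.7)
The plan is to prove the theorem by giving an explicit description of the minimal regular model of $C/F$ in terms of the cluster picture $\Sigma_C$, and then matching this description against the combinatorial construction from Theorem \ref{icorr2}. The bulk of the work is geometric rather than combinatorial: once the model is understood, the required isomorphism, its metric compatibility, and its $G_{\bar K/K}$-equivariance all follow by inspection. This is precisely the programme carried out in the companion paper \cite{M2D2}, and the proof here serves as a bridge between the geometry there and the combinatorics of this paper.

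First, working over $F$ so that $C$ acquires semistable reduction, one assigns to each proper cluster $\s\in\Sigma_C$ (in a suitable sense — essentially the even clusters and the proper children of odd clusters, together with some boundary data) an irreducible component of the stable model, and to each pair of clusters nested immediately a chain of $\mathbb{P}^1$s in the minimal regular model of length equal to the relative depth $\delta(\s,\s')$. Even clusters contribute components on which the hyperelliptic cover is trivial (two components mapping $1$-to-$1$, i.e.\ the yellow part of the dual graph), whereas odd clusters and the \"ubereven clusters behave differently and give rise to the blue part. The construction is precisely the reverse of the recipe summarised in Table \ref{tabdicto}, so the dual graph of this explicit model is, by construction, exactly the hyperelliptic graph assigned to $\Sigma_C$ by Theorem \ref{icorr2}, with the core corresponding to the part of the model not contracted when passing to the minimal proper regular model (the chains of $\mathbb{P}^1$s with no further geometry attached are exactly the `open' edges that disappear when passing to the core).

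Next, the metric on $G_C$ coming from the lengths of chains of rational curves in the special fibre agrees, by the very construction of these chains, with the distances $\delta(\s,\s')$ on the cluster side. The matching is entirely local around each node of the special fibre, so can be checked one pair of neighbouring clusters at a time. Finally, for the Galois action: an element $\sigma\in G_{\bar K/K}$ permutes the roots of $f$, hence permutes clusters, inducing a map $G_{\bar K/K}\to\Aut\Sigma_C$; on the geometric side, $\sigma$ acts on the special fibre of the minimal regular model over $F$ (using that this model is unique, so Galois descends canonically to an action on the dual graph), giving $G_{\bar K/K}\to\Aut G_C$. Both actions are read off from the same permutation of roots plus the same sign data for even clusters (which on the geometric side records whether $\sigma$ swaps the two components lying over a yellow vertex), so the isomorphism constructed above is automatically equivariant.

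The main obstacle, and the reason the proof is essentially the content of \cite{M2D2}, is the first step: exhibiting the minimal regular model over $F$ with the required explicit combinatorial shape. In practice this is done by starting from the naive model $y^2=f(x)$ on $\mathbb{P}^1_{\mathcal{O}_F}$, blowing up along the clusters in a carefully chosen order (governed by cluster depths), resolving the resulting singularities in tame characteristic using the standard theory of Hirzebruch--Jung strings, and then contracting $(-1)$-curves to reach the minimal model. Keeping track of which components survive, their genera, and the intersection data throughout this process — and checking that the final configuration matches the combinatorial recipe of Theorem \ref{icorr2} — is the technically substantial part; everything else, including metric and Galois compatibility, is then essentially bookkeeping.
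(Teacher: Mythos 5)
The first thing to say is that this paper does not prove the statement you were asked about: it is imported verbatim from the companion paper \cite{M2D2}, and the present paper declares itself purely combinatorial, so there is no proof here to compare yours against. With that caveat, your outline has the right overall shape and is consistent with how \cite{M2D2} proceeds: construct an explicit regular model over $F$ governed by the clusters and their relative depths, with chains of $\mathbb{P}^1$s of length $\delta(\s,\s')$ linking the components attached to nested clusters; pass to the minimal model (which removes exactly the ``tail at $\infty$'' that the core construction of Proposition \ref{Tclosure} removes combinatorially); and then observe that metric and $\GKK$-compatibility are local bookkeeping, with the sign $\epsilon_\sigma(\s)$ on an even cluster recording whether $\sigma$ swaps the two chains (or components) lying over the corresponding yellow edge (or vertex). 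Since you defer the entire technical core to \cite{M2D2}, what you have written is an accurate pr\'ecis rather than a proof, which is a reasonable thing to produce for a cited theorem.

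There is, however, a concrete error in the dictionary you state, and it sits precisely at the step where you claim the dual graph matches the recipe of Theorem \ref{icorr2} ``by construction''. You write that even clusters contribute two components on which the cover is trivial (the yellow part), while odd and \"ubereven clusters give rise to the blue part. This has the \"ubereven case inverted and the general even case wrong. Per Table \ref{closed correspondence} (and Table \ref{tabdicto}): a principal cluster yields a \emph{pair} of components swapped by $\iota$ --- a yellow vertex --- precisely when it is \"ubereven, i.e.\ has no odd children, so that the double cover over that component is unramified and split; every non-\"ubereven principal cluster, whether even or odd, yields a \emph{single} $\iota$-invariant component (a blue vertex). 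The parity of a cluster governs instead the colour of the \emph{edge} to its parent: an odd child gives one $\iota$-invariant chain (blue edge), an even child a swapped pair of chains (yellow edge). The distinction already bites in genus $2$: for type \hetype{I_{n,m}} the top cluster $X$ is even but not \"ubereven (it has two odd singleton children) and the dual graph is a single vertex carrying two loops, whereas for type \hetype{U_{n,m,r}} the top cluster is \"ubereven and the dual graph is a pair of vertices joined by three chains. Your stated rule would produce two components in both cases, so the model built from it would not match $\GG(\TT(\Sigma_C))$ and the ``by construction'' identification would fail. The fix is simply to replace ``even'' by ``\"ubereven'' in the criterion for when a cluster's component splits, and to let parity control edge colours rather than vertex multiplicity.
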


Thus, from a cluster picture of $C/K$, which is an elementary invariant constructed from the roots of $f(x)$,
we recover the semistable model of $C/F$ together with the Galois action. 
This allows us to determine some of the main arithmetic invariants of $C$,
such as 

\begin{itemize}
\item
Necessary and sufficient conditions for $C/K$ to be semistable;
\item
The Galois representation $\H(C/\Kbar,\Q_l)$;
\item 
The conductor of $C$;
\item
Equations for the minimal regular model of $C/F$;
\item
The Tamagawa number of the Jacobian of $C$ over $F$.\footnote{
Note also that Corollary \ref{itwotors} fully describes the 2-torsion of the 
Tamagawa group.}
\end{itemize}

\subsection{Classification of semistable types and naming convention}


Recall that one of the main motivations for studying hyperelliptic graphs, BY trees and cluster pictures was to produce a classification of semistable types of hyperelliptic curves. 
The results in this paper allow one to produce such a classification in any genus; the tables in \S\ref{s:tables} classify objects up to genus 3 and objects together with an automorphism in genus 1 and 2. Keeping track of an automorphism is important, as the action of Frobenius on the dual graph of the special fiber of the minimal regular model of hyperellptic curves is key to the study of their arithmetic.
One standard application of such classifications is that they enable one to use a systematic case by case analysis.
For example, \cite{CMThesis} employs the one in the present paper to prove a general result  on the parity of ranks of Jacobians of genus 2 curves. 

We end by proposing a naming convention for BY trees or, equivalently,  hyperelliptic graphs and cluster pictures. In the context of semistable curves of genus $1$ or $2$, our notation is compatible with that of Kodaira \cite{Kodaira} and 
Namikawa--Ueno \cite{NU}. As an illustration, the seven types in genus 2 shown previously get the names

\begin{center}
\begin{tabular}{c@{\qquad}c@{\qquad}c@{\qquad}c@{\qquad}c@{\qquad}c@{\qquad}c}
\TwBn
\end{tabular}
\end{center}
(with the same ordering). Moreover, we also allow an arbitrary automorphism to be encoded in the type. For example, for type $1_n$ above, there are two choices of automorphisms giving
$$
 \hetype{1_n^+}\qquad \hetype{1_n^-}.
$$
These correspond to a genus 2 curve whose reduction has one split or non-split node, 
respectively.

\medskip\noindent{\bf Related work.} 
Hyperelliptic graphs and their link to hyperelliptic curves are well-known 
\cite{BN,Caporaso,Chan,KY}. Our definition (\ref{defhypgraph}) 
is as in \cite{Caporaso,KY}; it is stronger than that of \cite{Chan} which does not 
require condition (3).\footnote{Condition (3) of \ref{defhypgraph} 
is needed for geometric reasons, for 
otherwise there are no hyperelliptic curves with such special fibres; 
see \cite{KY} Thm 1.2 and compare 36 types in genus 3 of \cite{Chan}
with 32 in Table \ref{tabg3}. From the point of view of our 1-1 correspondence, it is 
forced automatically by cluster pictures.} 
Cluster pictures (and, to some extent, BY trees) appear implicitly in \cite{Bosch} 
in setting of rigid geometry. As far as we are aware, neither 
the correspondence nor an analysis of automorphism groups (which is important when
the residue field is not algebraically closed) have been studied.

\subsection{Layout}
\S\ref{s:background} recalls terminology about graphs and properties of their homology, 
relative homology and the natural pairing coming from edge lengths.
\S\ref{s:objects} introduces hyperelliptic graphs, BY trees and cluster pictures.
\S\ref{s:121o} and \S\ref{s:121c} state and prove the one-to-one correspondence 
between hyperelliptic graphs, BY trees and cluster pictures.
\S\ref{s:homology} shows that it preserves the lattice $\Lambda$ as a module under 
automorphism groups.
\S\ref{s:components} discusses Tamagawa groups of hyperelliptic graphs.
The naming convention is addressed in \S \ref{s:naming}. The classification of semistable types is discussed in \S \ref{s:tables}. 

\begin{acknowledgements}
We would like to thank the EPSRC and the Royal Society for their support, and the Warwick Mathematics 
Institute where parts of this research were carried out.
\end{acknowledgements}

\comment

\bigskip
\bigskip

\subsection{Hyperelliptic curves in odd residue characteristic} 

Our motivation was to describe arithmetic invariants of hyperelliptic curves over local fields. 
As observed in \cite{M2D2}, cluster pictures appear naturally when trying to describe the special fibre. The equivalence in the present paper has been developed because many invariants of hyperelliptic curves are described using the dual graph of the special fibre. 

We now give a sketch of the kind of results one obtains. A precise application focused summary will be given in \cite{Hyxit}.

For the rest of the discussion in the subsection, $C/K$ will be a hyperelliptic curve over a local field of odd residue characteristic.

\subsubsection{Hyperelliptic graphs}
Hyperelliptic graphs are defined as purely combinatorial objects (see Definition \ref{defhypgraph}). It is known that the dual graph $G$ of the special fibre of a semistable hyperelliptic curve is always a hyperelliptic graph. Moreover, all hyperelliptic graphs arise this way. (See \cite{}, \cite{}).

\subsubsection{Cluster pictures}

Given 
$$
  C:y^2=cf(x) = c\prod_{r \in \cR}(x-r),
$$ 
the cluster picture $\Sigma$ is all the non-empty subsets of $\cR$ of the form $\cR \cap D$ for some disc $D$ in $\bar{K}$. To extract certain arithmetic invariants of $C$, we record the distance between clusters (given by $\delta(\s, \s') = v_K(r - r')$ for $r \in \s$ and $r' \in \s'$ with not both in $\s \cap \s'$), the Galois action on clusters (induced by that on $\cR$) and for each $\sigma \in \Gal(\bar{K}/K)$ and even cluster $\c$, a sign $\epsilon_{\sigma}(\c)$ (see \cite{M2D2}) that makes $\sigma$ an automorphism of the cluster picture (Definition \ref{autc}).

\subsubsection{Semistability criterion}
Using $\Sigma$ it is easy to decide whether $C/K$ is semistable via the knowledge of distances between all clusters, the action of inertia on $\Sigma$ and the valuation of the leading term $c$. (See \cite{M2D2}).
\subsubsection{Minimal regular model}
If $C$ is semistable over $K$ then the hyperelliptic graph associated to $\Sigma$ is indeed the dual graph of the special fibre $\bar{C}$ of the minimal regular model of $C$. Moreover the minimal regular model and its special fibre can be given explicitly.  
When $C/K$ is not semistable then one can recover the minimal regular model of $C$ over a suitable field extension $F/K$ where $C$ becomes semistable, together with the induced Galois action. (See \cite{M2D2}). 
\subsubsection{Homology and Galois representation}
Thanks to the correspondence given in Theorem 1.2, one can read off the homology of $G$ as a Galois module from $\Sigma$, e.g. the dimension of $\Lambda_{\Sigma} \simeq \Lambda_{G} = H_1(G,\Z)$ is essentially given by the number of even clusters in $\Sigma$. One can also describe the Galois representation $H^1_{et}(C/\bar{K},\Q_{\ell}$) and compute the conductor in terms of $\Sigma$ together with the automorphisms induced by the absolute Galois group. (See \cite{M2D2}, \cite{Weil2}).
\subsubsection{Tamagawa numbers}
For semistable curves $C$, Betts \cite{Betts} has developed an algorithm to find the Tamagawa group of the Jacobian of $C$ and a formula for the Tamagawa number in terms of the BY tree of $C$. Since the cluster picture $\Sigma$ is easy to compute and thanks to the explicit equivalence between BY trees and cluster pictures, this gives a practical way to compute Tamagawa numbers. (See \cite{Hyxit}).


%

\endcomment

\subsection{Notation}

Throughout the paper we use the following notation:\\[-6pt]

\begin{tabular}{llllll}
$G$      & hyperelliptic graph / open hyperelliptic graph (\S\ref{ssHG}) \cr
$T$      & BY tree / open BY tree (\S\ref{ssBY}) \cr
$\Sigma$ & cluster picture (\S\ref{ssCP}) \cr
$\s$     & cluster (element of $\Sigma$)\cr
$g$      & genus function on vertices of $G$ or $T$ / clusters in $\Sigma$, and the \cr
         & total genus of $G$, $T$ or $\Sigma$ 
           (Definitions \ref{hggenus}, \ref{bygenus}, \ref{genus of cluster})\cr
$\Lambda$ & homology lattice of $G$, $T$ or $\Sigma$ 
           (Definitions \ref{lambdaHG}, \ref{lambdaBY}, \ref{lambdaCP})\cr
$\Lambda^\vee$ & $=\Hom(\Lambda,\Z)$, the dual lattice\cr
$\delta$ & distance function on $G$, $T$ or $\Sigma$ 
           (Definitions \ref{defmetrichypgraph}, \ref{defmetricbytree}, \ref{de:distance})\cr
$\tilde G$, $\tilde T$ & core of an open hyperelliptic graph/BY tree 
           (Definitions \ref{coreHG}, \ref{coreBY})\cr
$T_b, T_y$ & blue/yellow part of $T$\cr
$\iota$      & hyperelliptic involution on a hyperelliptic graph $G$\cr
$s$      & continuous section $G/\langle \iota\rangle\to G$  to the quotient map (\S\ref{sections section})\cr
$\Z[X]$   & free abelian group on $X$
\end{tabular}


%

\def\GraphScale{0.4}
\def\SnakeWiggle{3pt}
\def\clustersep{1.3pt}
\rootsize{0.5}

\section{Background on metric graphs}\label{s:background}

\subsection{Graphs}
\label{ssgraphs}

In what follows, the word \emph{graph} refers to a topological space $G$ homeomorphic
to a finite (combinatorial) graph. It comes with a set $V(G)$ of \emph{vertices} 
(containing all points $x\in G$ of degree $\ne 2$) and \emph{edges} $E(G)$. 
\emph{Graph isomorphisms} are homotopy classes of homeomorphisms that 
preserve vertices and edges.
Loops and multiple edges are allowed, though note that in the topological setting the 
action of $\Aut G$ might permute multiple edges and reverse direction of loops.

By a \emph{metric graph} we mean a topological graph $G$ along with a function $l:E(G)\rightarrow \mathbb{R}_{>0}$ which assigns a length to each edge.
This may be extended into a metric on $G$. We write $\delta(v,v')$ for the shortest distance between $v,v'\in V(G)$. We require isomorphisms and automorphisms of metric graphs to preserve lengths. 


\subsection{Homology of graphs} \label{homology of graphs}

For a topological space $X$, $H_i(X)$ denotes the $i$-th singular homology group of the space $X$ with coefficients in $\mathbb{Z}$. For a subspace $A\subseteq X$, $H_i(X,A)$ denotes the $i$-th relative (singular) homology group, again with coefficients in $\mathbb{Z}$. Most calculations will be carried out via simplicial homology. This will give the same answer where used: see \cite[Theorem 2.27]{MR1867354}. We refer to Section 2 of op. cit. for more details and proofs of everything outlined below.

Let $G$ be a graph. We make $G$ into a $\Delta$-complex by taking the $0$-simplices (resp. $1$-simplices) to be the set of vertices (resp. edges, along with their endpoint(s)) of $G$ and write $C_0(G)$ (resp. $C_1(G)$) for the free $\mathbb{Z}$-module on the $0$-simplices (resp. $1$-simplices) of $G$. For each choice of orientation on the edges of $G$, which for a non-loop edge $e$ amounts to a choice of `nose' $e_+$ and `tail' $e_-$, we have an associated boundary map $d:C_1(G)\rightarrow C_0(G)$ sending a non-loop edge $e$ to $e_+-e_-$, and sending loops to $0$. We then have
\[H_1(G)=\text{Ker}(d)\subseteq C_1(G).\]
Given two choices of orientation, there is a canonical isomorphism between the associated homology groups and so $H_1(G)$ is independent of the choice of orientation. For the rest of this section, fix an orientation on $G$. 


\subsubsection{Action of automorphisms}

Let $\text{Aut }G$ be the group of automorphisms of $G$. Then $\text{Aut }G$ acts on $C_0(G)$ via its action on the set of vertices, and on $C_1(G)$ via its action on the set of edges save that now we add signs to this action to take account of the orientation. Explicitly, if $\sigma\in \text{Aut }G$ maps the (unsigned) non-loop edge $e$ to $e'$ then we define the action on $e\in C_1(G)$ as
\[\sigma(e)=\begin{cases} e'~~&~~\sigma(e_+)=e'_+\\ -e'~~&~~\sigma(e_+)=e'_-.\end{cases}\]
The action on loops is defined similarly, introducing a minus sign if $\sigma$ maps a loop with its positive orientation to a loop with its negative orientation. 
This action commutes with the boundary map and induces an action of $\text{Aut }G$ on $H_1(G)$. Whilst the action on $C_1(G)$ depends on the choice of orientation, the induced action on $H_1(G)$ does not (upon canonically identifying the homology groups arising from different choices of orientation). 

\subsubsection{Length pairing on homology of metric graphs}\label{sss:lengthpairing}

Suppose now that $G$ is a metric graph with associated length function $l$. 
Define the \emph{length pairing} on $C_1(G)$ by setting 
\[\left \langle e,e'\right \rangle = \begin{cases} l(e) ~~&~~ e=e,'\\0~~&~~e\ne e',\end{cases}\]
and extending bilinearly. This is independent of the orientation on the edges of $G$. The induced pairing on $H_1(G)$ is positive definite (since the pairing on $C_1(G)$ is) and invariant under the action of $\text{Aut }G$ (since the same is true of the pairing on $C_1(G)$, for any choice of orientation). 

In particular, $H_1(G)$ is a finitely generated, torsion free $\mathbb{Z}$-module which carries a canonical action of $\text{Aut }G$ and, in the metric case, a positive definite real valued, invariant pairing.

\subsubsection{Relative homology}\label{se:RH}

Let $H$ be a subgraph of $G$ (that is, a closed subspace of $G$ which is a union of edges and vertices of $G$). In the metric case, we put the induced metric on $H$ so that it is also a metric graph. Then $H$ has a natural structure of a $\Delta$-complex inherited from that on $G$. Having fixed an orientation on the edges of $G$, we have an induced orientation on the edges of $H$. The boundary operator $d:C_1(G)\rightarrow C_0(G)$ then maps $C_1(H)$ into $C_0(H)$ and we have
\[H_1(G,H)=\ker\left(\frac{C_1(G)}{C_1(H)}\longrightarrow \frac{C_0(G)}{C_0(H)}\right).\]

Writing $\text{Aut}_H~G$ for the subgroup of automorphisms of $G$ preserving $H$, the action of $\text{Aut }G$ on $C_1(G)$ defined above induces an action of $\text{Aut}_H~G$ on $H_1(G,H)$. Again, this is independent of the choice of orientation on $G$. 

In the metric case, define the $\emph{relative length pairing}$ on $C_1(G)$ by setting
\[\left \langle e,e'\right \rangle = \begin{cases} l(e) ~~&~~ e=e',e\notin H,\\0~~&~~\text{otherwise,}\end{cases}\]
and extending bilinearly. This induces a positive definite pairing on $H_1(G,H)$ which is invariant under the action of $\text{Aut}_H~G$. In particular, as with $H_1(G)$, $H_1(G,H)$ is a finitely generated, torsion free (as $C_1(H)$ is a direct summand of $C_1(G)$) $\mathbb{Z}$-module equipped with an action of $\text{Aut}_H~G$ and, in the metric case, a positive definite real valued, invariant pairing.

For an example illustrating all the definitions above, see Example \ref{Miller}.

\subsubsection{Subdivision of edges} \label{subdivision}

It will often be convenient to define a new $\Delta$-complex structure on $G$ by `subdividing' certain edges. That is, we take points $x_1,...,x_m$ lying on edges of $G$ and define a $\Delta$-complex structure on $G$ by taking the set of $0$-simplices to be the set $V(G)\cup \{x_1,...,x_m\}$, and redefining the set of $1$-simplices accordingly. Let us temporarily denote $G$, along with the new $\Delta$-complex structure as ${G^*}$, and fix an orientation on the $1$-simplices of ${G^*}$ (not necessarily induced from the orientation on the $1$-simplices of $G$). Then there is a natural map $C_1(G)\rightarrow C_1({G^*})$ sending an edge $e\in C_1(G)$ to the (signed) sum over its subdivisions. This map induces an isomorphism $H_1(G)\stackrel{\sim}{\rightarrow} H_1({G^*})$. If the set $\{x_1,...,x_m\}$ is preserved by an automorphism $\sigma$ of $G$, then $\sigma$ induces an action on $H_1({G^*})$ by the same formula as in the case of $G$ and the isomorphism $H_1(G)\cong H_1({G^*})$ defined above preserves this action. Moreover, in the metric case, each $1$-simplex of ${G^*}$ has an associated length and we thus obtain a length pairing on $H_1({G^*})$ by the same formula as for $G$. Since the length of an edge of $G$ is the sum of the lengths of its subdivisions, the isomorphism above identifies the pairings also. Consequently, we will frequently subdivide edges without further comment, with the caveat that when computing actions of automorphisms, we use subdivisions which are preserved by the automorphisms of interest. 

The above discussion applies equally well to the relative homology group of $G$ with respect to a subgraph $H$ (along with its automorphism action and, in the metric case, pairing), provided that when defining the simplicial complex structure on $G$ by subdividing edges, we give $H$ the simplicial complex structure inherited from the new one on $G$ as before.  

\def\GraphScale{0.4}
\def\SnakeWiggle{3pt}
\def\clustersep{1.3pt}
\rootsize{0.5}

\section{Hyperelliptic graphs, BY trees and cluster pictures}\label{s:objects}

\def\vz{\vspace{0pt}}
\def\clustersep{1.5pt}


\stepcounter{equation}
\begin{table}[t]
\begin{center}
\ \ \includegraphics{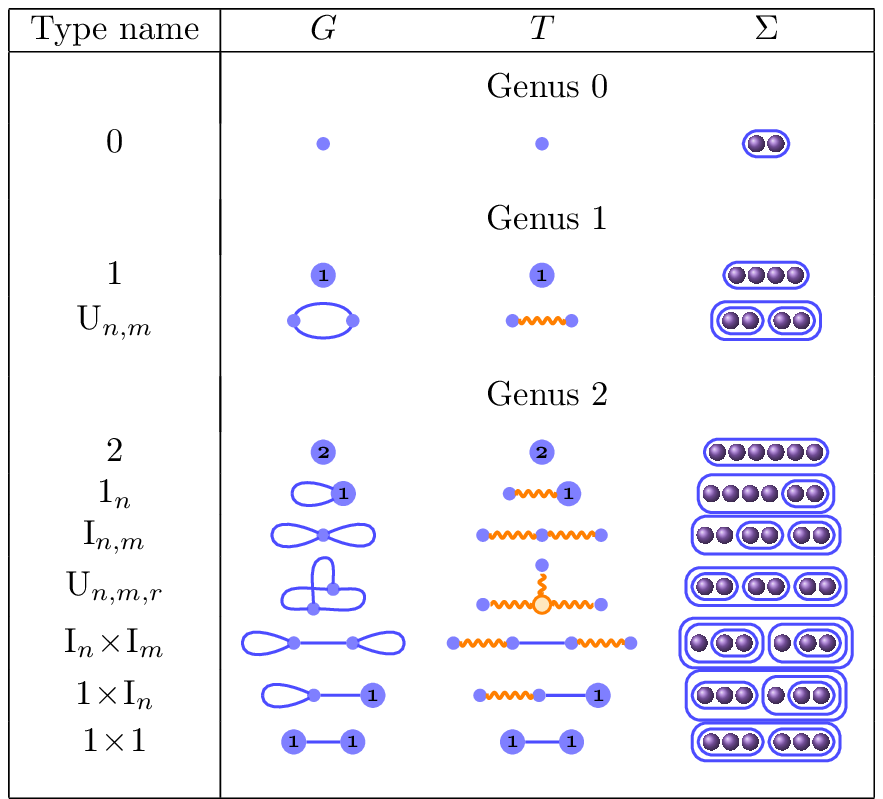}
\end{center}
\medskip
\caption{All type names, hyperelliptic graphs, BY trees and 
balanced cluster pictures in genus $0$, $1$ and $2$}
\label{g012table}
\end{table}

\subsection{Hyperelliptic graphs}
\label{ssHG}

\begin{definition}[Hyperelliptic graph]
\label{defhypgraph}
Let $G$ be a connected graph equipped with
\begin{itemize}
\item
$g: V(G)\to \Z_{\ge 0}$, a function that assigns a \emph{genus} to every vertex,
\item
$\iota: G\to G$ an involution (graph isomorphism of order $\leq$ 2).
\end{itemize}
We say $G$ (or, more precisely, $(G,g,\iota)$) is a \emph{hyperelliptic graph} if
\begin{enumerate}
\item 
vertices of non-zero genus are $\iota$-invariant;
\item
genus 0 vertices have degree $\ge 3$;
\item
$2g(v)+2\ge$ \# $\iota$-invariant edges at $v$, for every vertex $v$;
\item
the topological quotient $G/\langle \iota\rangle$ is contractible (that is, a tree).
\end{enumerate}
In addition, it is convenient to declare two exceptions
\begin{equation}\label{circlegraph}
  \pb{\excGzero} \qquad \qquad \pb{\excGone}
\end{equation}
(with $\iota$=reflection in the $x$-axis and vertices of genus 0) 
to be hyperelliptic graphs as well, although they violate (2).
\end{definition}

\begin{definition}[Open version]\label{opendefhypgraph}
An \emph{open} hyperelliptic graph $G$ is a connected graph $G$ with at least one 
open\footnote{that is, open on one end; such graphs are called `graphs with legs' in \cite{Caporaso}} edge, 
a genus function
$g:V(G)\rightarrow \mathbb{Z}_{\geq 0}$ and an involution $\iota$, satisfying conditions $(1)$-$(3)$ of Definition \ref{defhypgraph} for all vertices, and such that the quotient $G/\langle \iota\rangle$ is a tree with a unique open edge (in particular, $G$ has either a unique open edge, or a pair of open edges swapped by $\iota$).  
In addition, we declare the graph 
\begin{equation}\label{opencirclegraph}
  \excGzeroOpen
\end{equation}
(with vertex of genus 0 and $\iota$=reflection in the $x$-axis) 
to be an open hyperelliptic graph, though it violates (2).
\end{definition}

\begin{remark}
To stress the distinction from the open case, we will sometimes also refer 
to hyperelliptic graphs (as in Definition \ref{defhypgraph}) 
as \emph{closed hyperelliptic graphs}. 
\end{remark}

\begin{remark}
Any open hyperelliptic graph 
is homeomorphic to a connected component of $\bar{G} \setminus I$ for 
some hyperelliptic graph $\bar{G}=(\bar{G},g,\iota)$ for $I\subset V(\bar G)$ 
an $\iota$-orbit of vertices, together with the induced genus function 
and action of~$\iota$. In other words, $G$ is a hyperelliptic graph with 
an extra `open edge/pair of open edges'.
The missing vertex/pair of vertices are referred to as $\infty$ or $\infty^+, \infty^-$. 


We draw hyperelliptic graphs as follows, with numbers indicating the genus $g(v)$ when it is positive
(and omitted when $g(v)=0$).
$$
\begin{tabular}{c@{\qquad}c}
  \OnexInG & \OnexInOG \cr
  (closed) hyperelliptic graph & open hyperelliptic graph\cr
\end{tabular}
$$
\end{remark}

\begin{definition}[Genus]
\label{hggenus}
For both closed and open hyperelliptic graphs, the \emph{genus of $G$} is given  by
$$
  g(G) = \rk H_1(G) + \sum_{v\in V(G)} g(v).
$$
\end{definition}

\begin{definition}[Isomorphism] \label{iso of hyp graph}
Two hyperelliptic graphs (closed or open) are \emph{isomorphic} 
if there is a homeomorphism between them that preserves the defining data
(vertices, edges, genus markings, commutes with $\iota$).
We write $\Aut G$ for the group of \emph{automorphisms} of $G$.
(Recall that they are considered up to homotopy; see \S\ref{ssgraphs}.)
\end{definition}

\begin{remark} \label{hyp inv central}
In every hyperelliptic graph $G$ that is not the exceptional circle graph 
from \eqref{circlegraph},
$\iota$ is the unique involution in $\Aut G$ that fixes all the vertices
of positive genus, and 
such that the quotient $G/\langle \iota \rangle$ is a tree
(see e.g. \cite{Kausz} Prop 1.4). In particular, it is central, and 
graph isomorphisms commute with $\iota$ automatically.
\end{remark}

\begin{example}
\label{exhg012}
Table \ref{g012table} (2nd column) lists all
hyperelliptic graphs of genus 0,1 and 2, and 
Table \ref{tabg1a} open ones of genus 0 and 1, up to isomorphism.
\end{example}

\begin{definition}[Core]
\label{coreHG}
A \emph{(closed) hyperelliptic subgraph} $H$ of a (open or closed) 
hyperelliptic graph $G$ is a (closed) hyperelliptic graph $H$ such that
\begin{itemize}
\item
As a topological space, $H$ is a union of vertices and edges of $G$,
closed in $G$, and closed under $\iota$.
\item
The vertices of $H$ are exactly those vertices of $G$ that are in $H$,
except for those that have genus 0 and degree 2 in $H$. The latter
become points on the edges of $H$. (When $H$ is the second exceptional graph 
\eqref{circlegraph}, we declare its two $\iota$-invariant points to be genus 0 vertices.)

\item
The genus of a vertex of $H$ is the same as its genus in $G$.
\end{itemize}
%
The \emph{core} $\tilde G$ of an open hyperelliptic graph $G$ is its maximal 
closed hyperelliptic subgraph. By Proposition \ref{Tclosure} below, 
it is unique, has the same genus as $G$, and can be easily obtained from $G$
by removing a few vertices and edges near $\infty$. 
\end{definition}

\begin{definition}[Equivalence] \label{equiv def hyp}
We say that two open hyperelliptic graphs are \emph{equivalent} if they have isomorphic cores.
\end{definition}

\begin{example}
\label{ex1:0g}
Take a single vertex 
of genus 1 with a loop, 
and let $\iota$ reverse the direction of the loop:
$$
  \tilde G \quad = \quad  \OZGb 
$$
It is a (closed) hyperelliptic graph $\tilde G$ of genus 1+1=2. There are, up to isomorphism, 
7 open hyperelliptic graphs $G$ with $\tilde G$ as their core:

\begingroup
\def\GraphScale{0.35}
\begin{center}
\begin{tabular}{ccccccc}
\OZG\cr 
\end{tabular}
\end{center}
\endgroup

\noindent
Graphs \#1,\#2,\#5 have $\Aut G=\langle 1,\iota\rangle\iso C_2$, and the other four 
$\Aut G\iso C_2^2$.
\end{example}

\begin{definition}[Metric version]\label{defmetrichypgraph}
A \emph{metric hyperelliptic graph} $G$ (closed or open) is a hyperelliptic graph equipped with an $\iota$-invariant length function $\delta: E(G)\to\R_{>0}$ 
on the edges (excluding the open edge).
In this case, we write $\delta(v,v')$ for the shortest distance between $v,v'\in V(G)$.
We require isomorphisms and automorphisms of metric graphs to preserve $\delta$,
and for a hyperelliptic subgraph $H\subset G$ we require%
\footnote{When $H$ is the exceptional circle graph from \eqref{circlegraph}, the vertices
  might change, and so we require instead that the total length of the circle 
  is the same in $H$ and in $G$}
$\delta_H(v,v')=\delta_G(v,v')$ for the vertices of $H$. Similarly, we say that two open metric hyperelliptic graphs are equivalent if there is an isomorphism between their cores which preserves distance. 
\end{definition}

\begin{definition}[The homology lattice $\Lambda$]
\label{lambdaHG}
Let $G$ be a closed or open hyperelliptic graph. We set $\Lambda_G=H_1(G)$.  Recall from \S\ref{homology of graphs}
that a (closed) hyperelliptic graph comes with a natural action of $\Aut G$ and, if $G$ is a metric hyperelliptic graph, a non-degenerate, real valued, $\Aut G$-invariant pairing. The same is also true of the open case. Indeed, in Lemma \ref{le:H1iso} we will show that if $G$ is an open hyperelliptic graph with core $\tilde{G}$, then $H_1(G)$ is canonically isomorphic to $H_1(\tilde{G})$. Since automorphisms of $G$ induce automorphisms of the core via restriction, we may use the closed case to equip $\Lambda_G$ with an action of $\Aut G$ and, in the metric case, with a non-degenerate, real valued, $\Aut G$-invariant pairing.
\end{definition}

\begin{example}\label{Windmill}
Consider the following open hyperelliptic graph $G$:

\bigskip

\noindent
\begin{minipage}{0.37\textwidth}
\WINDMILL
\end{minipage}%
\begin{minipage}{0.62\textwidth}
\begin{tabular}{p{\textwidth}}
The graph has vertices $v_1, v_x, v_2, v_3^+, v_3^-$ of genera $2,0,0,0,0$;
 loops $\l_1, \l_2$ of lengths $5,5$; edges $\l_3, \l_4, e_1, e_2^+, e_2^-, e_3^+, e_3^-$ of lengths $6,6,1,1,1,1,1$ and two open edges $e_{\infty}^+, e_{\infty}^-$ going off to infinity from $v_x$. The involution $\iota$ fixes $v_1, v_2, v_x, e_1$, swaps $v_3^+$ with $ v_3^-$, $e_2^+$ with $e_2^-$, $e_3^+$ with $e_3^-$ and $e_\infty^+$ with $e_\infty^-$, and reverses the directions of $\l_1,\l_2,\l_3,\l_4$.
$G$ admits an automorphism $\sigma\in\Aut_G$ of order 4, that fixes all vertices except $v_3^+$ and $v_3^-$, fixes the edges $e_1, e_2^+, e_2^-$ pointwise, swaps $e_3^+$ and $e_3^-$, reverses the directions of $\l_3$ and $\l_4$, and has an order four action on the loops, sending $\l_1\to -\l_2 \to -\l_1 \to \l_2$ (where $-\l$ denotes $\l$ with the opposite orientation).
\end{tabular}
\end{minipage}%

The lattice $\Lambda_G$ is
$$
  \Lambda_G = H_1(G) = \langle \l_1, \l_2, \l_3+e_3^+ - e_3^-, \l_4 + e_3^+ - e_3^-, e_2^+ - e_2^-\rangle \simeq \Z^5,
$$
where we have picked an orientation for each edge and loop (edges $e_2^{\pm}, e_3^{\pm}$ going bottom-to-top, $\l_3, \l_4$ right-to-left, and the loops $\l_1, \l_2$ oriented clockwise) and where $+$ means concatenation and $-x$ is $x$ with the opposite orientation.
The loops in the above basis have lengths $5,5,8,8,2$, and have trivial intersections, except for the third and fourth basis elements whose intersection has length 2. Thus the length-pairing (see Section \ref{sss:lengthpairing}) and the action of $\sigma$ on $\Lambda_G$ are
$$
\langle\cdot,\cdot\rangle=
\begin{pmatrix}
5&0&0&0&0\\
0&5&0&0&0\\
0&0&8&2&0\\
0&0&2&8&0\\
0&0&0&0&2\\
\end{pmatrix},
\qquad\qquad\qquad
\sigma=
\begin{pmatrix}
0&-1&0&0&0\\
1&0&0&0&0\\
0&0&-1&0&0\\
0&0&0&-1&0\\
0&0&0&0&1\\
\end{pmatrix}.
$$
\end{example}

\subsection{BY trees}
\label{ssBY}

\begin{definition}[BY tree]
\label{defbytree}
A BY tree is a finite tree $T$ with a genus function $g: V(T)\to\Z_{\ge 0}$ on vertices 
and a 2-colouring blue/yellow on vertices and edges such that
\begin{enumerate}
\item 
yellow vertices have genus 0, degree $\ge 3$, and only yellow edges; 
\item
blue vertices of genus 0 have at least one yellow edge;
\item
at every vertex, $2g(v)+2\ge$ \# blue edges at $v$.
\end{enumerate}
Note that all leaves are blue. 

In diagrams, yellow edges are drawn squiggly and yellow vertices 
hollow for the benefit of viewing them in black and white.
\end{definition}

\begin{example}
\label{exby012}
Table \ref{g012table} (third column) lists all 
BY trees of genus 0,1 and 2 up to isomorphism. 
\end{example}

\begin{notation}
As a topological space, $T=T_b\coprod T_y$ with
$T_b$ the \emph{blue part}, and $T_y$ the \emph{yellow part}.
(Thus $T_b\subset T$ is a closed subset.)
\end{notation}

\begin{definition}[Open version]\label{opendefbytree}
An \emph{open BY tree} $T$ is a finite tree $T$ with a unique open edge, a genus function $g: V(T)\to\Z_{\ge 0}$ on vertices 
and a 2-colouring blue/yellow on vertices and edges, satisfying conditions (1), (2) and (3) of Definition \ref{defbytree}. 
\end{definition}


In other words, as for hyperelliptic graphs,  an open BY tree $T$ is a BY tree with one `missing' vertex, that we will refer to as
$\infty$. Again, we sometimes refer to BY trees of Definition \ref{defbytree} as \emph{closed}, to distinguish them from the open ones.

\begin{definition}[Isomorphic BY trees]
Two (closed or open) BY trees are \emph{isomorphic} 
if there is a homeomorphism between them that preserves the defining data
(vertices, edges, genus markings, colouring).
\end{definition}

\begin{definition}
\label{bygenus}
For a closed or open BY tree $T$, the \emph{genus} of $T$ is
$$
  g(T) = \rk H_1(T,T_b) + 
    \sum_{v\in V(T)} g(v),
$$
where the first term is the (singular) relative homology group (see \S \ref{se:RH}).
\end{definition}

\begin{remark} \label{reduced homology remark}
The relative  homology sequence
$$
  0=H_1(T) \lar H_1(T,T_b) \lar H_0(T_b) \lar H_0(T)=\Z
  $$
  gives an isomorphism $H_1(T,T_b)\iso \tilde{H}_0(T_b)$ where the latter group is the reduced homology of $T_b$ in degree zero. In particular,  $\rk H_1(T,T_b)$ is equal to one less than the number of connected components of $T_b$.
\end{remark}

\begin{definition}[Core]
\label{coreBY}
A \emph{(closed) BY subtree} $T'$ of a (closed or open) 
BY tree $T$ is a (closed) BY tree $T'$ such that
\begin{itemize}
\item
As a topological space, $T'$ is a union of vertices and edges of $T$,
and is closed in $T$.
\item
The vertices of $T'$ are exactly those vertices of $T$ that are in $T'$,
except for those of genus 0 that in $T'$ have degree 2 and incident edges of the
same colour as the vertex. These exceptional vertices
become points on the edges of $T'$.
\item
The genus of a vertex of $T'$ is the same as its genus on $T$.
\end{itemize}
%
The \emph{core} $\tilde T$ of an open BY tree $T$ is its maximal 
closed BY subtree. Again, we will see later 
(Proposition \ref{Tclosure}) that it is unique, has the same genus,
and is obtained from $T$ by removing a few vertices and edges near~$\infty$.
\end{definition}

\begin{definition}[Equivalence] \label{equiv def BY}
We say that two open BY trees  are \emph{equivalent} if they have isomorphic cores.
\end{definition}


\begin{definition}
\label{autb}
An \emph{isomorphism of (closed or open) BY trees} $T\to T'$ 
is a pair $(\alpha, \epsilon)$ where 
\begin{itemize}
\item 
$\alpha$ is a graph isomorphism $T\to T'$ 
(in the open case, $T\cup\{\infty\}\to T'\cup\{\infty\}$ with $\alpha(\infty)=\infty$)
that preserves the genera of the vertices and the colours, and
\item
$\epsilon(Z)=\pm 1$ is a collection of signs for every connected component $Z$ of 
the yellow part $T_y\subset T$. 
\end{itemize}
Equivalently, 
$\epsilon$ is a collection of signs $\epsilon(v)\in \{\pm 1\}$ and $\epsilon(e)\in \{\pm 1\}$ for every 
yellow vertex and yellow edge, such that $\epsilon(v)=\epsilon(e)$ whenever $e$ 
ends at $v$.
The isomorphisms are composed by a cocycle rule
$$
  (\alpha, \epsilon_\alpha)\circ (\beta, \epsilon_\beta) = 
    \bigl(\alpha\circ\beta, \bullet\mapsto\epsilon_\beta(\bullet)\epsilon_\alpha(\beta(\bullet))\bigr).
$$
An \emph{automorphism} of $T$ is an isomorphism from $T$ to itself. We write $\Aut T$
for the group of automorphisms.
(As all $\epsilon$ may be chosen to be $+1$, 
this extended notion of an isomorphism does not affect the earlier definition of
BY trees being isomorphic.)
\end{definition}

\begin{example}
\label{ex1:0t}
Take a tree $\tilde T$ on 2 blue vertices, one of genus 0, and one of genus 1, 
with one yellow edge between them:
$$
  \tilde T \quad =  \quad \OZTb
$$
It is a (closed) BY tree of genus 1+1=2. There are, up to isomorphism, 
7 open BY trees $T$ with $\tilde T$ as their core:

\begingroup
\def\GraphScale{0.35}
\begin{center}
\begin{tabular}{ccccccc}
\OZT\cr 
\end{tabular}
\end{center}
\endgroup

\noindent
Trees \#1,\#2,\#5 have $\Aut T=\Aut\tilde T\iso C_2$, and the other four 
$\Aut T\iso C_2^2$.
\end{example}

\begin{remark}
\label{autbr}
If $T_y^{(1)},...,T_y^{(r)}$ are the connected components of $T_y\subset T$, then, by definition,
$$
  \Aut T = \Aut_0(T) \ltimes (\Z/2\Z)^r,
$$
where $\Aut_0(T)$ consists of those elements for which $\epsilon(T_y^{(j)})=+1$ for all~$j$. 
Equivalently, $\Aut_0(T)$ is the group of (homotopy classes of) 
homeomorphisms $T\to T$ that preserve $V(T), E(T), T_b, T_y$ and $g$.
\end{remark}

\begin{definition}[Metric version]\label{defmetricbytree}
A \emph{metric (open or not) BY tree} is one with a length function
$\delta: E(T)\to\R_{>0}$ on the edges (excluding the open one).  
We denote by $\delta(v,v')$ the distance between $v,v'\in V(T)$, and 
we require isomorphisms/automor\-phisms of metric trees to preserve $\delta$. Similarly, we say that two open metric BY trees are equivalent if there is an isomorphism between their cores which preserves distance. 
\end{definition}

\begin{definition}[The lattice $\Lambda$]
\label{lambdaBY}
Let $T$ be a (open or not) BY tree. We set $\Lambda_T=H_1(T,T_b)$.  In the closed case, as detailed in Section \ref{homology of graphs}, it comes with a natural action of $\text{Aut}_0~ T$ and, if $T$ is a metric BY tree, a non-degenerate, real valued, $\text{Aut}_0 T$-invariant pairing. We extend the action of $\text{Aut}_0~ T$ to an action of the full automorphism group $\Aut T$ as follows. Let $\sigma=(\sigma_0,\epsilon_\sigma)$ be an element of $\Aut T$ and let $e$ be a yellow edge, viewed as an element of $C_1(T)$. Then we set $\sigma(e)=\epsilon(Z)\sigma_0(e)$ where $Z$ is the connected component of $T_y$ containing $e$ and the action of $\sigma_0$ on $C_1(T)$ is as in Section \ref{homology of graphs}. This induces the sought action of $\Aut T$ on $H_1(T,T_b)$. 

We will show in Lemma \ref{le:H1iso} that if $T$ is an open BY tree with core $\tilde{T}$, then $H_1(T,T_b)$ is canonically isomorphic to $H_1(\tilde{T},\tilde{T}_b)$. Since automorphisms of $T$ induce automorphisms of the core via restriction, we may use the closed case to equip $\Lambda_T$ with an action of $\Aut T$ and, in the metric version, a non-degenerate, real valued, $\Aut T$-invariant pairing also.
\end{definition}

\begin{example}\label{Miller}
Consider the following open BY tree $T$:

\bigskip

\noindent
\begin{minipage}{0.40\textwidth}
\MILLER
\end{minipage}%
\begin{minipage}{0.59\textwidth}
\begin{tabular}{p{\textwidth}}
The graph $T$ has vertices $u_1,\ldots u_4$, $w_1,w_2,w_3,x$ of genera $0,0,0,0,2,0,0,0$. There is an edge from $x$ going off to infinity. $T$ admits an automorphism $\sigma=(\alpha,\epsilon)\in\Aut_T$ of order 4, where $\alpha$ swaps $u_1$ and $u_2$, and fixes all the other vertices and the sign function $\epsilon$ is given by 
$$\epsilon(u_1 x)=1,\quad \epsilon(u_2 x)=-1,\quad \epsilon(w_2 x)=1,$$
$$\epsilon(w_3)=\epsilon(w_3 w_2)=\epsilon(u_3 w_3)=\epsilon(u_4 w_3)=-1,$$
where $yz$ denotes the edge between the vertices $y$ and $z$.
\end{tabular}
\end{minipage}%
\medskip
%
%

The lattice $\Lambda_T = H_1(T,T_b)$ is the relative homology of $T$ with respect to its blue part. In other words it consists of 1-chains in $T$ whose boundary is blue (see Section \ref{se:RH}).
Writing $[a,b]$ for the shortest path from $a$ to $b$, and 
$$c_1=[u_1,x], \quad c_2=[u_2,x],\quad  c_3=[u_3,w_2],\quad c_4=[u_4,w_2],\quad c_5=[w_2,x],$$
the lattice is given by
$$
  \Lambda_T = H_1(T,T_b) = \langle c_1, c_2, c_3, c_4, c_5 \rangle \simeq \Z^5.
$$
The cycles have lengths $5,5,8,8,2$, and have trivial intersections, except for the third and fourth basis elements whose intersection has length 2. Clearly $\alpha$ swaps $c_1$ and $c_2$ and fixes the other basis vectors, while the sign $\epsilon$ in $1$ on $c_1$ and $c_5$, and $-1$ on $c_2, c_3, c_4$. Thus the length-pairing (see Section \ref{sss:lengthpairing}) and the action of $\sigma$ on $\Lambda_T$ are
$$
\langle\cdot,\cdot\rangle=
\begin{pmatrix}
5&0&0&0&0\\
0&5&0&0&0\\
0&0&8&2&0\\
0&0&2&8&0\\
0&0&0&0&2\\
\end{pmatrix},
\qquad\qquad\qquad
\sigma=
\begin{pmatrix}
0&-1&0&0&0\\
1&0&0&0&0\\
0&0&-1&0&0\\
0&0&0&-1&0\\
0&0&0&0&1\\
\end{pmatrix}.
$$
\end{example}

\subsection{Cluster pictures}
\label{ssCP}

\begin{definition}[Cluster picture]
\label{defclpic}
Let $X$ be a finite set and $\Sigma\subset \cP(X)$ a collection of non-empty subsets of $X$;
elements of $\Sigma$ are called \emph{clusters}. 
Then $\Sigma$ (or $(X,\Sigma)$) is a \emph{cluster picture} if
\begin{enumerate}
\item
Every singleton (`root') is a cluster, and $X$ is a cluster. 
\item
Two clusters are either disjoint or contained in one another.
\end{enumerate}
We say that $\Sigma$ has \emph{genus} $g$ if $|X|\in\{2g+1,2g+2\}$.
\end{definition}

Two cluster pictures $(X,\Sigma_X)$ and $(Y,\Sigma_Y)$ are \emph{isomorphic}
if there is a bijection $X\to Y$ that takes $\Sigma_X$ to $\Sigma_Y$.
(So we may take $X=\{1,...,n\}$, 
and just consider its cluster pictures, up to $S_n$-permutations.)

\begin{remark} 
\label{clreal}
Let $X=\{r_1,...,r_n\}\subset K$ be a finite subset of a field with a (non-trivial) valuation.
Then the non-empty subsets of $X$ that are cut out by discs
in $\bar K$ form a cluster picture. Conversely, every cluster picture arises in this way: 
for any $K$ that has at least $n$ elements in its residue field one 
can find $X\subset K$ that realises it.
\end{remark}

\begin{example}
\label{ex1:0s1}
Let $X=\{1,2,3,4,5,6\}$ and let 
$$
  \Sigma =\Bigl\{ \{1\},...,\{6\},\{5,6\},X\Bigr\}.
$$
Thus, apart from the required singletons and $X$, there is one extra cluster $\s=\{5,6\}$. 
We draw $\Sigma$ with ovals around every $\s\in\Sigma$ with $|\s|>1$:
$$
  \Sigma \quad =  \quad \OZSb
$$
It is a cluster picture of genus 2.
In the language of the last remark, it is realised, for example, 
by $\{1,2,3,4,-p,p\}\subset\Q_p$ ($p\ge 5$). 
\end{example}

\begin{definition}[Children] \label{children}
If $\s'\subsetneq \s$ is a maximal subcluster, we write $\s'<\s$ and 
refer to $\s'$ as a \emph{child} of $\s$, and $\s$ as the \emph{parent} of $\s'$. 
\end{definition}

\begin{definition}[Types of clusters]
A cluster $\s$ is \emph{proper} if $|\s|>1$ or
$|\s|=|X|=1$, a \emph{twin} if $|\s|=2$,
and it is \emph{odd}/\emph{even} if its size is odd/even.
A proper cluster is \emph{\"ubereven} if it has no odd children.
\end{definition}

\begin{definition}[Genus] \label{genus of cluster}
A non-\"ubereven cluster has \emph{genus} $g=g(\s)$ if it has $2g+1$ or $2g+2$ 
odd children; \"ubereven clusters are declared to have genus~0.
\end{definition}

\begin{definition}[Balanced]
A cluster picture $\Sigma$ is \emph{balanced} if $|X|$ is even, 
$X$ is the only cluster of size $>\frac{|X|}2$, 
and there are either 0 or 2 clusters of size $\frac{|X|}2$.
\end{definition}

\begin{example}
\label{excp012}
Table \ref{g012table} (last column) lists all 
balanced cluster pictures of genus 0,1 and 2 up to isomorphism.
\end{example}

\begin{definition}
\label{autc}
An \emph{isomorphism} $\Sigma\to\Sigma'$ is an equivalence class of  
pairs $(\alpha, \epsilon)$ with
$\alpha: \Sigma\to\Sigma'$ a bijection that preserves cluster sizes and inclusions, 
and $\epsilon(\s)=\pm 1$ a collection of signs for even clusters $\s\in\Sigma$, 
such that
$$
  \epsilon(\s')=\epsilon(\s) \qquad \text{for $\s$ \"ubereven}, \s'<\s.
$$
Here we say pairs $(\alpha,\epsilon)$ and $(\alpha',\epsilon')$ are equivalent if $\alpha$ and $\alpha'$ induce the same map between the sets of proper clusters of $\Sigma$ and $\Sigma'$, and $\epsilon=\epsilon'$. 
We compose isomorphisms by a cocycle rule
$$
  (\alpha, \epsilon_\alpha)\circ (\beta, \epsilon_\beta) = 
    \bigl(\alpha\circ\beta, \s\mapsto\epsilon_\beta(\s)\epsilon_\alpha(\beta(\s))\bigr).
$$
An \emph{automorphism} of $\Sigma$ is an isomorphism from $\Sigma$ to itself, and we write $\Aut \Sigma$
for the group of automorphisms. (As all $\epsilon$ may be chosen to be $+1$, 
this extended notion of an isomorphism does not affect the definition of
being isomorphic.) Equivalently, one may think of an element of $\Aut\Sigma$ as a pair $(\sigma,\epsilon_\sigma)$ where $\sigma$ is a permutation of the proper clusters of $\Sigma$ and $\epsilon_\sigma$ is a collection of signs as above, composition of two such being given by the same formula. We will frequently take this viewpoint without further comment in what follows. 
\end{definition}

\begin{remark}
\label{autsr}
As for BY trees (Remark \ref{autbr}), we have
$$
  \Aut \Sigma = \Aut_0(\Sigma) \ltimes (\Z/2\Z)^r,
$$
where $\Aut_0(\Sigma)$ consists of elements for which 
$\epsilon(\s)=+1$ for all even clusters $\s\in\Sigma$.
The number $r$ is the number of equivalence classes of even clusters for the equivalence 
relation generated by `$\s'\sim\s$ if $\s$ is \"ubereven and~$\s'<\s$'.
\end{remark}


\begin{definition}[Equivalence]
\label{Sequivdef}
We say that cluster pictures $(X_1,\Sigma_1)$ and $(X_2,\Sigma_2)$ are \emph{equivalent} if $(X_2,\Sigma_2)$ is isomorphic to a cluster picture obtained from $(X_1,\Sigma_1)$  in a finite number
of the following steps $(X,\Sigma)\to (X',\Sigma')$. 
\begin{itemize}
\item[(i)]
(`add cocluster')
$X'\!=\!X$, $\Sigma'\!=\!\Sigma\cup\{X\setminus\s\}$ for some $\s<X$, $|X|\!=\!2g+2$.
\item[(ii)]
(`remove cocluster') 
$X'=X$, $\Sigma'=\Sigma\setminus\{\s\}$ for some $\s<X$ with $|\s|\ge 2$,
when $X$ has exactly two children, $|X|=2g+2$.
\item[(iii)]
(`$2g\!+\!1\to 2g\!+\!2$')
$X'=X\coprod\{r\}$, $\Sigma'=\Sigma\cup\{X'\}$, when $|X|=2g+1$.
\item[(iv)]
(`$2g\!+\!2\to 2g\!+\!1$')
$X=X'\coprod\{r\}$, $\Sigma=\Sigma'\cup\{X\}$, when $|X|=2g+2$.
\end{itemize}
\end{definition}

In general, note that 
\begin{itemize}
\item 
the genus of a cluster picture is preserved under equivalence;
\item
(ii)=(i)$^{-1}$ and (iv)=(iii)$^{-1}$, hence this is an equivalence relation;
\item
(i) does nothing when $X$ has only 2 children;
\end{itemize}

\begin{example}
\label{ex1:0s2}
There are, up to isomorphism, 
7 cluster pictures $\tilde\Sigma$  in the equivalence class of 
$\Sigma$ from Example \ref{ex1:0s1}:

\begingroup
\def\clpicscale{0.8}
\rootsize{0.4}
\begin{center}
\begin{tabular}{ccccccc}
\OZS\cr 
\end{tabular}
\end{center}
\endgroup

\noindent
Cluster pictures \#1,\#2,\#5 have $\Aut\Sigma \iso C_2$, 
and the other four $\Aut \Sigma\iso C_2^2$. All 7 have trivial $\Aut_0(\Sigma)$,
so the automorphisms come from choices of signs on even clusters.
\end{example}


%

As for hyperelliptic graphs and BY trees, we have a metric version:

\begin{definition}[Metric version]\label{de:distance}
A cluster picture $(X,\Sigma)$ is \emph{metric} when 
every pair of proper clusters $\s<\fr$ is assigned a 
\emph{distance} $\delta(\s,\fr)=\delta(\fr,\s)\in\R_{>0}$.
%
%
The distance function clearly extends to every pair of proper clusters: 
if $\s$ and $\fr$ are distinct proper clusters with least common ancestor $\u$ 
(no child of $\u$ contains 
both $\s$ and $\fr$, but $\u$ does), so that 
$$
  \s<\s_1<...<\s_{k-1}<\u>\fr_{m-1}>...>\fr_1>\fr,
$$
we let $\delta(\s,\fr)$ be the sum of the $k+m$ distances between adjacent 
clusters in the chain; we let $\delta(\s,\s)$=0.

An \emph{isomorphism as metric cluster pictures} is one 
that preserves $\delta$.


We say that two metric cluster pictures are \emph{equivalent} if one is (up to isomorphism) obtained from the other by a finite number of `metric versions' of the moves (i)-(iv) of Definition \ref{Sequivdef}, in which we allow any metric $\delta'$ on $(X',\Sigma')$ such that
\begin{itemize}
\item $\delta'(\fr,\fr')=\delta(\fr,\fr')$ for every $\fr,\fr'\in\Sigma\cap\Sigma'$ with (for moves (i)-(ii))~$\fr,\fr'\!\neq\!~\!X$,
\item $\delta'(\fr,X\setminus \s)=\delta(\fr,X)$ for all $\fr\in\Sigma'\setminus\{X,X\setminus\s\}$ in move (i),
\item $\delta'(\fr,X)=\delta(\fr,\s)$ for all $\fr\in\Sigma'\setminus\{X\}$ in move (ii).
\end{itemize}
\end{definition}

In diagrams, distances are shown using subscripts on clusters: a cluster gets a subscript indicating the distance to its parent (cf. Example \ref{Grain}).

\begin{remark}
The clusters arising in Remark \ref{clreal} are naturally metric cluster pictures.
In this setting, for a cluster $\s$ we define its `depth' as $\delta(\s) = \min_{r,r'\in\s}(\val_K(r-r'))$, and for a pair $\s<\s'$ set the distance to be given by the `relative depth', $\delta(\s,\s')=\delta(\s)-\delta(\s')$. This is the same as the formula given in \S\ref{hyp curves overview} in terms of the diameter.
\end{remark}



We now define the lattice $\Lambda_{\Sigma}$ for a cluster 
picture $(X,\Sigma)$. We need one preliminary piece of notation.

\begin{notation} \label{the set cal E}
Let $(X,\Sigma)$ be a cluster picture. Let
$$
  \cE_\Sigma=\{\textup{even, non-\"ubereven clusters }\s\neq X\}.
$$
Further, for $\s\in \cE_\Sigma$, 
write $\hat\s$ for the smallest non-\"ubereven cluster strictly containing $\s$. If no such cluster exists, we set $\hat\s=X$.
\end{notation}

\begin{definition}[The lattice $\Lambda$]
\label{lambdaCP}
Let $(X,\Sigma)$ be a cluster picture. If $X$ is not an \"ubereven cluster,
let
$$ 
  \Lambda_\Sigma=\Z[\cE_\Sigma]  =
    \bigl\{\sum_{\s\in\cE_\Sigma} \lambda_\s \s \bigm|\lambda_\s\in \Z\bigr\}.
$$ 
If $X$ is \"ubereven, let 
$$
  \Lambda_\Sigma=\Bigl\{\sum_{\s\in\cE_\Sigma} \lambda_\s \s\in \Z[\cE_\Sigma]~\bigm|~\sum_{\hat\s=X} \lambda_{\s}=0\Bigr\}.
$$
Further, define an action of $\sigma=(\sigma_0,\epsilon_\sigma)\in \Aut\Sigma$ on $\Lambda_\Sigma$ by 
\[\sigma\cdot \s=\epsilon_\sigma(\s)\sigma_0(\s).\]
In the metric case, define a pairing on $\Lambda_\Sigma$ by setting, for $\s_1,\s_2\in \cE_\Sigma$,
$$
\left \langle \s_1,\s_2\right \rangle=
\begin{cases} 
2\delta(\s_1\wedge \s_2,\hat\s_1)~~&~~\hat\s_1=\hat\s_2,\\0~~&~~\hat\s_1\neq\hat\s_2,
\end{cases}
$$
where $\s_1 \wedge \s_2$ denotes the least common ancestor of $\s_1$ and $\s_2$.
\end{definition}

\begin{example}\label{Grain}
Consider the following cluster picture $\Sigma$:
\rootsize{0.5}
\def\vz{\vspace{1pt}}
\def\clustersep{5pt}
$$
\GRAINthree
$$
\def\clustersep{1.3pt}
\rootsize{0.5}
\def\vz{\vspace{0pt}}
\def\clustersep{1.5pt}

\noindent The cluster picture $\Sigma$ has clusters $\t_1,\ldots, \t_4$ (twins), $\s_1, \s_2, \s_3$, $X$ of genera $0,0,0,0,2,0,0,0$ with distances between children and parents $\delta(\t_1,X)=\delta(\t_2,X)=5/2$, $\delta(\t_3,\s_3)=\delta(\t_4,\s_3)=3$, $\delta(\s_3,\s_2)=1, \delta(\s_2,X)=1$, $\delta(\s_1,X)=2$.

This cluster picture admits an automorphism $\sigma=(\alpha,\epsilon)\in\Aut_\Sigma$ of order 4,
where $\sigma$ swaps $\t_1$ and $\t_2$ (which is indicated by the black line between the two leftmost twins) and fixes all other clusters. The sign function $\epsilon$ is 
$$
\epsilon(\t_1)=1,\quad
\epsilon(\t_2)=-1,\quad
\epsilon(\s_2)=1, \quad
\epsilon(X) = 1,
$$
$$
\epsilon(\t_3)=
\epsilon(\t_4)=
\epsilon(\s_3)=-1,
$$
indicated by the $+$ and $-$ on top of the respective even clusters.

Here the set of even non-\ub \ clusters (that are not $X$) is $\cE_\Sigma=\{\t_1,\t_2,\t_3,\t_4,\s_2\}$, and since $X$ is not \ub,
$$
  \Lambda_\Sigma = \langle \t_1, \t_2, \t_3, \t_4, \s_2 \rangle \simeq \Z^5.
$$
By definition
$$
 \hat \t_1 = \hat\t_2 = \hat\s_2 = X \qquad \text{and} \qquad \hat\t_3=\hat\t_4=\s_2,
$$
and 
$$
\t_1\wedge\t_2= \t_1\wedge \s_2 = \t_2\wedge \s_2= X \qquad \text{and}\qquad \t_3\wedge\t_4=\s_3.
$$
Thus by definition of the pairing on $\Lambda_\Sigma$ and since the action of $\sigma$ on $\Lambda_\Sigma$ is a signed permutation given by $\alpha$ and $\epsilon$, 
$$
\langle\cdot,\cdot\rangle=
\begin{pmatrix}
5&0&0&0&0\\
0&5&0&0&0\\
0&0&8&2&0\\
0&0&2&8&0\\
0&0&0&0&2\\
\end{pmatrix},
\qquad\qquad\qquad
\sigma=
\begin{pmatrix}
0&-1&0&0&0\\
1&0&0&0&0\\
0&0&-1&0&0\\
0&0&0&-1&0\\
0&0&0&0&1\\
\end{pmatrix}.
$$
\end{example}

\def\GraphScale{0.4}
\def\SnakeWiggle{3pt}
\def\clustersep{1.3pt}
\rootsize{0.5}

\section{One-to-one correspondence (open case)}\label{s:121o}

\stepcounter{equation}
\begin{table}[t]
\begin{center}
\ZeroOneTable
\end{center}
\medskip
\caption{Open hyperelliptic graphs,
open BY trees and cluster pictures in genus 0 and 1 up to isomorphism}
\label{tabg1a}
\end{table}

In this section we explain how to pass between cluster pictures, hyperelliptic graphs and BY trees.
We construct maps $\GG$, $\TT$ and $\SS$ (which become inverse to each other upon passing to isomorphism classes)
between objects according to the following diagram:

\begin{center}
\scalebox{0.85}{$
\begin{gathered}
\xymatrix{\left\{ \text{open hyperelliptic graphs}\right\} \ar@/^{1.5pc}/[rr]^{\TT} &  & \left\{ \text{open BY trees}\right\} \ar@/_{-1.5pc}/[ll]^{\GG}\ar@/^{1.5pc}/[rr]^{\SS} &  & \left\{ \text{cluster pictures}\right\}. \ar@/_{-1.5pc}/[ll]^{\TT}}
\end{gathered}
$}
\end{center}
In addition, for $\underline{F}$ one of $\GG$, $\TT$ or $\SS$, and $X$ an object on which it is defined, we construct a map $\underline{F}:\Aut X\rightarrow \Aut \underline{F}(X)$, which turns out to be an isomorphism in each case. Here there is a subtlety: for an open hyperelliptic graph $G$, the map $\Aut G\rightarrow \Aut \TT(G)$ depends on 
a choice of section $G/\langle\iota\rangle\to G$. 
The exact dependence of the map on the choice is examined in Proposition \ref{dependence of s}. By contrast, all other maps on automorphism groups are canonical. 
The constructions are summarised in Table \ref{tabdicto} and illustrated in Examples \ref{WM} and \ref{GM}.

The main result is the following. 

\begin{theorem}
\label{combmain1}
The maps $\GG$, $\TT$ and $\SS$ defined in Constructions \ref{GtoT}, \ref{TtoG}, \ref{StoT} and \ref{TtoS} give a genus preserving one-to-one correspondence between isomorphism classes of 
(either metric or not)
\begin{itemize} 
\item[(i)]
Cluster pictures,
\item[(ii)]
Open hyperelliptic graphs,
\item[(iii)]
Open BY trees.
\end{itemize}
Moreover, the associated maps on automorphism groups (see Constructions \ref{GtoT}, \ref{TtoG}, \ref{StoT} and \ref{TtoS} and the preceding paragraph) are isomorphisms.


\end{theorem}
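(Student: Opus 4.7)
My plan is to prove the theorem by verifying that the four constructions $\GG$, $\TT$, $\SS$ (on objects) produce well-defined output of the correct type, that the four relevant compositions are the identity up to isomorphism, and finally that the induced maps on automorphism groups are mutually inverse bijections. The natural intermediate object is the open BY tree, which sits between the graph and the cluster sides. So the proof naturally splits into two halves: the graph/tree correspondence and the tree/cluster correspondence. I would first verify that the constructions respect all the axioms (conditions (1)--(3) of Definitions \ref{defhypgraph}, \ref{opendefbytree}, and the axioms of Definitions \ref{defbytree}, \ref{defclpic}); in particular that $\TT(G)$ is indeed a tree (using that $G/\langle\iota\rangle$ is contractible), that $\GG(T)$ is indeed hyperelliptic, and that the balanced/\"ubereven conditions match up correctly under $\SS$. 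For genus preservation I would expand both sides of each of the identities $g(G)=\rk H_1(G)+\sum g(v)$, $g(T)=\rk H_1(T,T_b)+\sum g(v)$, $g(\Sigma)$ determined by $|X|$, using Remark \ref{reduced homology remark} to express $\rk H_1(T,T_b)$ as (number of blue components of $T$) $-1$, and match terms combinatorially.

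For the bijection on isomorphism classes, I would check each of the four round trips separately. The identities $\GG\circ\TT\simeq\id$ and $\TT\circ\GG\simeq\id$ should be almost tautological from the constructions: passing to the quotient by $\iota$ and then re-doubling the yellow edges recovers the original graph on the nose, since the only data lost under the quotient is the $\iota$-action on yellow edges/vertices, which $\GG$ reinstates by definition. The identities $\SS\circ\TT\simeq\id$ and $\TT\circ\SS\simeq\id$ require more care: one matches the even non-\"ubereven clusters with yellow components of $T$, the odd clusters with blue leaves, and the genus of a cluster with the genus marking of the corresponding vertex, using the classification of the local structure of a cluster picture into proper/twin/\"ubereven/odd as encoded by the colours. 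The exceptional graphs \eqref{opencirclegraph} and their tree/cluster counterparts should be checked by hand.

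For the automorphism groups, I would argue that on each side, $\Aut$ decomposes as $\Aut_0\ltimes (\Z/2)^r$ (see Remark \ref{autbr} for BY trees, Remark \ref{autsr} for cluster pictures, and analogously for hyperelliptic graphs, where the $\Z/2$ factors come from the freedom to flip each yellow edge / edge swapped by $\iota$ / \"ubereven stratum). Since $\GG$, $\TT$, $\SS$ clearly match up the relevant combinatorial data on the `$\Aut_0$' parts, while the choice of section $s:G/\langle\iota\rangle\to G$ exactly specifies how to lift a yellow-component sign of $T$ to an element of $\Aut G$ (this is where Proposition \ref{dependence of s} enters, making the dependence explicit), the induced maps on $\Aut_0$ and on the $(\Z/2)^r$ factors are separately bijective. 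Because the isomorphism $\Aut\simeq\Aut_0\ltimes(\Z/2)^r$ is via the cocycle rule built into Definitions \ref{autb} and \ref{autc}, these combine into a homomorphism, and since they are already set-theoretic bijections the induced maps are group isomorphisms.

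The main technical obstacle, as anticipated above, will be the automorphism statement on the graph/tree side, because it genuinely depends on a section $s$ and because automorphisms of $G$ act in a topologically non-trivial way (they can permute multiple edges and reverse loops, cf.~the remarks after Definition \ref{iso of hyp graph} and Remark \ref{hyp inv central}). The centrality of $\iota$ from Remark \ref{hyp inv central} is what makes the descent of $\Aut G$ to $\Aut(G/\langle\iota\rangle)$ well-defined, while the non-triviality of the extension $1\to\langle\iota\rangle\to\Aut G\to \Aut T\to 1$ must be resolved exactly by the sign data $\epsilon$ on yellow components. I would handle this by fixing $s$ once and for all, reducing every automorphism of $G$ to $s$-equivariant form modulo $\iota$ on each yellow $\iota$-orbit of edges, and reading off the signs $\epsilon$ from the resulting defect. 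The computation in Examples \ref{Windmill} and \ref{Miller} (matching identical matrices $\langle\cdot,\cdot\rangle$ and $\sigma$) is a sanity check that the prescription is the correct one, and the verification of the other two compositions should follow by direct inspection on proper and improper clusters.
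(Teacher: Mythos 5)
Your proposal is correct and follows essentially the same route as the paper: the proof splits into the graph/tree half (Proposition \ref{by tree hyp graph proposition}) and the tree/cluster half (Proposition \ref{cluster picture by tree proposition}), with genus preservation obtained from the homology comparison and the round trips checked directly from the constructions, and with the section-dependence on the graph side handled exactly via Proposition \ref{dependence of s}. The only real divergence is minor: for surjectivity of the automorphism maps the paper proves injectivity in both directions and bootstraps cardinalities through the round-trip isomorphisms, whereas you argue structurally via the decompositions $\Aut=\Aut_0\ltimes(\Z/2)^r$ of Remarks \ref{autbr} and \ref{autsr} (and their graph analogue); both work, and yours avoids the paper's slightly circular-looking counting step.
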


\begin{proof}
Combine Proposition \ref{by tree hyp graph proposition} (`hyperelliptic graphs $\leftrightarrow$ BY trees') with Proposition \ref{cluster picture by tree proposition} (`BY trees $\leftrightarrow$ cluster pictures').
\end{proof}


%

\subsection{Hyperelliptic graphs $\leftrightarrow$ BY trees}
\label{sshto}


We begin with the maps between open hyperelliptic graphs and open BY trees, as well as the associated maps on automorphism groups. In fact, the constructions apply equally well in the closed version. Since both versions will be relevant later, we cover both here. Before detailing the constructions, we briefly discuss the notion of a `section to the quotient map' for a hyperelliptic graph. 

\begin{remark}[Sections to the quotient map] \label{sections section}
Let $G$ be a (open or not) hyperelliptic graph and $\pi:G\rightarrow G/\left\langle \iota \right\rangle$ be the quotient map. In order to construct the map between automorphisms of $G$ and automorphisms of the associated BY tree $T=G/\left\langle \iota \right\rangle$, it will be necessary to choose a continuous map $s: T \rightarrow G$ such that $\pi\circ s=\id$. That is, a continuous section to $\pi$ (we henceforth refer to $s$ as a \emph{section}, the continuity being understood). More concretely, such a choice amounts to the following. Write $G_y=\pi^{-1}(T_y)$ and choose a decomposition  $G_y=G_y^+\coprod G_y^-$,
such that $\pi: G_y^\pm\to T_y$ is a homeomorphism (that is, a choice of `top' and `bottom' 
above every connected component of $T_y$). 
Such a choice determines a section $s$ by sending $x\in T_b$ to its unique preimage in $G$, and $x\in T_y$ to its unique preimage in $G_y^+$. Conversely, a section $s$ determines such a decomposition by taking $G_y^+$ to be $s(T_y)$.
\end{remark}
\smallskip 
\smallskip
\begin{construction}[$\TT(G)$] ~
\label{GtoT}

\noindent {\em{Objects}}:
Let $(G, g, \iota)$ be a (closed) hyperelliptic graph, without loss of generality considered to be metric.
Define 
$$
  T=\TT(G)=G/\langle \iota\rangle,
$$ 
the topological quotient. It is a tree by Definition \ref{defhypgraph} (4),
and we colour the branch locus $T_b$ of the quotient map $\pi: G\to T$ blue and $T_y=T\setminus T_b$ yellow.
In other words, under $\pi$, blue points have one preimage and yellow points have
two preimages.
We make $T$ into a graph as follows:


Write $V(G)=\{v_1,...,v_k,v_{k+1},\iota(v_{k+1}),\ldots,v_n,\iota(v_n)\}$,
with the first $k$ vertices $\iota$-invariant, followed by the pairs swapped by $\iota$. 
Then $v_1,...,v_k$ give vertices $\bar v_1,...,\bar v_k\in V(T)$, each $\bar{v}_j$ declared blue of genus $g(v_j)$, 
and each pair $\{v_j,\iota(v_j)\}$ for $j>k$ gives one vertex $\bar v_j\in V(T)$, 
declared yellow of genus~0.

Next, take an edge of $G$, say of length $d$.
It is either mapped to another edge by $\iota$, is $\iota$-invariant, or is $\iota$-anti-invariant. Then
\begin{itemize}

\item
each $\iota$-invariant edge $vw$ gives a blue edge $\bar v\bar w$
of length $2d$;
\item
each $\iota$-anti-invariant edge $v \iota(v)$ (allowing for $\iota(v)=v$ in the case of loops) gives a yellow edge of length $d$
from $\bar v$ to an extra  blue genus 0 leaf of $T$; 
\item 
each swapped pair of edges
$v w$ and $\iota(v) \iota(w)$ gives a yellow edge $\bar v \bar w$ of length $2d$.
\end{itemize}
Finally, if $G$ 
is an open (possibly metric) hyperelliptic graph, 
we define $\TT(G)$ in the same way (treating $w$ as $\infty$ for the open edge(s)).

\smallskip
\noindent {\em{Automorphisms}}:
Let $G$, $T$ and $\pi:G\rightarrow T$ be as above. To define the map $\Aut G \rightarrow \Aut T$ we begin by choosing a  section $s:T\rightarrow G$ to $\pi$. 
Now given $\sigma \in \Aut G$ we construct an automorphism  $\TT(\sigma)=(\TT(\sigma)_0,\epsilon_{\TT(\sigma)})$ of $T$ as follows (if we wish to record the choice of section, we write $\TT(\sigma;s)$).  Since $\sigma$ necessarily commutes with the hyperelliptic involution $\iota$ (see Remark \ref{hyp inv central}), it induces a graph-theoretic automorphism $\bar{\sigma}$ of $T$ which preserves genus and colour by construction. We set $\TT(\sigma)_0=\bar{\sigma}$. Next, let $Z$ be a connected component of $T_y$. Then we define \[\epsilon_{T(\sigma;s)}(Z)=\begin{cases}1~~&~~s(\bar{\sigma}(Z))=\sigma(s(Z)),\\-1~~&~~\text{else}.\end{cases}\] 
\end{construction}

\begin{proposition} \label{is a BY tree}
If $G$ is a hyperelliptic graph, then $\TT(G)$ is a BY tree. The same 
holds in the open case.
\end{proposition}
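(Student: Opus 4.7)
The plan is to verify, directly from the hyperelliptic graph axioms, the three defining conditions of Definition \ref{defbytree} for $T=\TT(G)$. The tree structure itself is immediate: $T$ is a tree by condition~(4) of Definition \ref{defhypgraph}, and in the open case the definition additionally forces a unique open edge in $G/\langle\iota\rangle$, so no further check is needed there. The colour and genus assignments are well-defined by Construction \ref{GtoT}.

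First, I would verify Definition \ref{defbytree}(1) for yellow vertices of $T$. Each such $\bar v$ comes from an $\iota$-orbit $\{v,\iota(v)\}$ with $v\neq\iota(v)$; by Definition \ref{defhypgraph}(1) such $v$ has genus $0$, so $\bar v$ is assigned genus $0$. The key observation is that no edge at $v$ can be fixed pointwise by $\iota$, since $v\neq\iota(v)$; hence every edge at $v$ is either in an $\iota$-swapped pair or of the form $v\iota(v)$ (including loops at $v$), and in both cases the construction produces a yellow edge at $\bar v$. This also shows that edges at $\bar v$ in $T$ are in bijection with the $\iota$-orbits of edges at $\{v,\iota(v)\}$, which agree with the edges at $v$ in $G$; hence $\deg_T(\bar v)=\deg_G(v)\geq 3$ by Definition \ref{defhypgraph}(2).

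For Definition \ref{defbytree}(2), the new blue leaves introduced from $\iota$-anti-invariant edges are by construction adjacent to a yellow edge. For an original $\iota$-invariant blue vertex $v$ of genus $0$, conditions (2) and (3) of Definition \ref{defhypgraph} give $\deg_G(v)\geq 3$ and at most $2g(v)+2=2$ $\iota$-invariant edges at $v$, so $v$ must carry a non-$\iota$-invariant edge, which produces a yellow edge at $\bar v$ in $T$. Condition~(3) of Definition \ref{defbytree} is then a direct translation: blue edges at $\bar v$ correspond bijectively to $\iota$-invariant edges at $v$, so the bound $2g(\bar v)+2\geq\#\{\text{blue edges at }\bar v\}$ is exactly Definition \ref{defhypgraph}(3) at $v$ (the cases of yellow vertices and new leaves being trivial, as they carry no blue edges).

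The remaining step is to handle the two exceptional hyperelliptic graphs in \eqref{circlegraph} separately: direct inspection shows that $\TT$ sends them to two-vertex BY trees joined by a single yellow edge (in one case joining an original blue genus-$0$ vertex to a new blue leaf, in the other joining two original blue genus-$0$ vertices), both of which satisfy Definition \ref{defbytree}. The open exception \eqref{opencirclegraph} is checked analogously. The main obstacle will not be conceptual but combinatorial: the case analysis at each vertex (invariant edge vs. swapped pair vs. anti-invariant edge, with loops and multi-edges properly accounted for) must be carried out carefully so that the edge-counting at yellow vertices and the colour assignments remain unambiguous.
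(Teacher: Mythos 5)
Your proof is correct and follows essentially the same route as the paper: verify the three conditions of Definition \ref{defbytree} directly from conditions (1)--(3) of Definition \ref{defhypgraph} (with the degree bound at yellow vertices coming from the two-to-one covering away from the fixed locus), and dispose of the degenerate low-genus cases by inspection. One small slip in that last step: the first exceptional graph in \eqref{circlegraph} is a single genus-$0$ vertex with no edges, so $\TT$ sends it to a single isolated blue genus-$0$ vertex (the genus-$0$ entry of Table \ref{g012table}), not to a two-vertex tree with a yellow edge; only the circle graph yields two blue vertices joined by a yellow edge.
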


\begin{proof}
It is easy to check the claim in genus 0 and 1
(see Tables \ref{g012table}, \ref{tabg1a}),
so assume $g>1$ from now on.
We check the three conditions of Definition \ref{defbytree}:

(1) \emph{Yellow vertices have genus 0, degree $\ge 3$, and only yellow edges}:
 all yellow vertices come from pairs $v_j, \iota(v_j)$ consisting of two distinct vertices of $G$ swapped by $\iota$, and are declared to have genus $0$ in the construction.  Moreover, yellow vertices have only yellow outgoing edges, because blue edges only come from edges between 
$\iota$-invariant vertices. Finally, $G\to T$ is a two-to-one topological cover at $\bar v_j$, and 
so $\deg\bar v_j=\deg v_j \ge 3$ by Definition \ref{defhypgraph} (2).

(2) \emph{Blue vertices of genus 0 have at least one yellow edge}: 
blue vertices either come from loops, in which case they have a yellow edge by construction, or 
from $\iota$-invariant vertices $v_j$. Suppose $g(v_j)=0$ but $\bar v_j$ has only blue 
outgoing edges. Then $\deg v_j\ge 3$ by \ref{defhypgraph} (2), so $v_j$ 
has at least three $\iota$-invariant outgoing edges, contradicting Definition \ref{defhypgraph} (3).

(3) \emph{At every vertex, $2g(v)+2\ge$ \# blue edges at $v$}: if $v$ is yellow, or a blue leaf 
coming from a loop, then it has only yellow edges and there is nothing to prove. 
If $v$ is blue, the blue edges from $\bar v$ are in one-to-one correspondence 
with the $\iota$-invariant edges from $v$, and the claim follows from Definition \ref{defhypgraph} (3).
\end{proof}

We now examine the dependence of the map $T(-,s):\Aut G\rightarrow \Aut T$ constructed above on the choice of section $s$. For this it will be useful to have the following definition. 

\begin{definition} \label{defi of psi}
Let $G$ be a hyperelliptic graph (open or not), let $T=\TT(G)$ be the associated BY tree, and let $\pi:G\rightarrow T$ be the quotient map. Further, let $s$ and $s'$ be two sections to $\pi$. We define the automorphism $\psi_{s,s'}=(\id,\epsilon_{s,s'})$ of $T$ by setting, for a component $Z$ of $T_y$,  
\[\epsilon_{s,s'}(Z)=\begin{cases} 1~~&~~s(Z)=s'(Z)\\-1~~&~~s(Z)\neq s'(Z).\end{cases}\]
\end{definition}

\begin{proposition} \label{dependence of s}
Let $G$ be a hyperelliptic graph (possibly open), let $T=\TT(G)$ be the associated BY tree, and let $\pi:G\rightarrow T$ be the quotient map. Then for each section $s:T\rightarrow G$ to $\pi$, the map $\TT(-;s)$ defines a homomorphism $\Aut G\rightarrow \Aut T$. Moreover, if $s$ and $s'$ are two choices of section then for all $\sigma \in \Aut G$ we have
$$
  \TT(\sigma;s')=\psi_{s,s'}\circ\TT(\sigma;s)\circ\psi_{s,s'}^{-1}.
$$
 In particular, the map $\TT(-;s)$ depends on $s$ only up to conjugation.
\end{proposition}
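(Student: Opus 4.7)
The plan is to verify both statements by direct computation with the cocycle rule in $\Aut T$ from Definition~\ref{autb}. The underlying tree isomorphism of $\TT(\sigma;s)$ is $\bar\sigma$ and depends only on $\sigma$, so throughout, the action takes place at the level of the sign function on connected components $Z$ of $T_y$. The basic observation I would record first is that $s(\bar\sigma(Z))$ and $\sigma(s(Z))$ both lie in the fibre $\pi^{-1}(\bar\sigma(Z))$, on which $\iota$ acts freely with two orbits. Hence there is a unique $a_\sigma(Z)\in\Z/2\Z$ with
\[
\sigma(s(Z)) \;=\; \iota^{a_\sigma(Z)}\bigl(s(\bar\sigma(Z))\bigr),\qquad \epsilon_{\TT(\sigma;s)}(Z)=(-1)^{a_\sigma(Z)}.
\]

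For the homomorphism property, I would compute $(\sigma\tau)(s(Z))$ in two ways. Directly from the defining relation above (applied to $\sigma\tau$) it equals $\iota^{a_{\sigma\tau}(Z)}s(\overline{\sigma\tau}(Z))$. On the other hand, using that $\sigma$ commutes with $\iota$ (Remark~\ref{hyp inv central}),
\[
\sigma(\tau(s(Z))) \;=\; \sigma\bigl(\iota^{a_\tau(Z)}s(\bar\tau(Z))\bigr) \;=\; \iota^{a_\tau(Z)+a_\sigma(\bar\tau(Z))}\bigl(s(\bar\sigma\bar\tau(Z))\bigr).
\]
Comparing parities yields $\epsilon_{\TT(\sigma\tau;s)}(Z) = \epsilon_{\TT(\tau;s)}(Z)\cdot \epsilon_{\TT(\sigma;s)}(\bar\tau(Z))$, which is precisely the composition rule of Definition~\ref{autb}.

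For the conjugation formula, first note that $\psi_{s,s'}^{-1}=\psi_{s,s'}$ because its underlying tree map is trivial and its signs are of order two. Expanding $\psi_{s,s'}\circ\TT(\sigma;s)\circ\psi_{s,s'}$ by two applications of the cocycle rule gives an automorphism of $T$ with underlying map $\bar\sigma$ and sign on~$Z$ equal to $\epsilon_{s,s'}(Z)\cdot\epsilon_{\TT(\sigma;s)}(Z)\cdot\epsilon_{s,s'}(\bar\sigma(Z))$. On the other side, writing $s'(W)=\iota^{b(W)}s(W)$ with $(-1)^{b(W)}=\epsilon_{s,s'}(W)$ and substituting into $\sigma(s'(Z))$ and $s'(\bar\sigma(Z))$ (again using $\sigma\iota=\iota\sigma$), the defining relation for $\epsilon_{\TT(\sigma;s')}(Z)$ produces exactly the same product. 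The final statement that $\TT(-;s)$ depends on $s$ only up to conjugation is then an immediate consequence.

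The main obstacle is simply careful bookkeeping of $\Z/2$-parities: all the content lies in parsing the cocycle rule alongside the commutativity $\sigma\iota=\iota\sigma$ and the fact that two sections differ fibrewise by powers of $\iota$. No deeper geometric input is required.
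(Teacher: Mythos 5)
Your proof is correct and is exactly the "straightforward calculation" that the paper's own proof leaves to the reader: the paper simply asserts that the homomorphism property is clear and that the conjugation formula follows by direct computation. Your bookkeeping via the exponents $a_\sigma(Z)$ and $b(W)$, together with $\sigma\iota=\iota\sigma$ and the cocycle rule, is a clean way to write out that omitted verification, and all the parity comparisons check out.
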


\begin{proof}
It is immediate from the construction that for any automorphism $\sigma$ of $G$, 
$\TT(\sigma;s)$ is indeed an automorphism of $T$ and it is clear 
that $\TT(-;s)$ is a homomorphism. A straightforward calculation verifies the statement concerning the dependence on the choice of section.
\end{proof}

\begin{construction}[$\GG(T)$]
\label{TtoG}~

\noindent {\em{Objects}}:
Let $T$ be a BY tree, viewed as a topological space. Let $G$ be the topological space given by glueing two disjoint copies $T^+$ and $T^-$ of $T$ along their common blue parts. Then $G$ comes with a natural map $\pi:G\rightarrow T$ making it into a two-to-one cover of $T$ ramified along $T_b$, as well as an involution $\iota$ (swapping the elements of the fibres of $\pi$ over points in $T_y$) such that $G/\langle \iota\rangle=T$. 
When $T$ has genus $\ge 2$, we make $G$ into a graph as follows:
\begin{itemize}
\item 
A blue vertex $\bar v\in V(T)$ which is not a genus 0 leaf gives an $\iota$-invariant vertex $v\in V(G)$
of genus $g(\bar v)$.
\item 
a yellow vertex $\bar v\in V(T)$ gives vertices $v^+$ and $v^-$ in $V(G)$ (where $v^+\in T^+$ and $v^-\in T^-$) swapped by~$\iota$,
of genus 0,
\item
a (necessarily blue) genus 0 leaf $\bar v$ of $T$ with an edge from $\bar{v}$ to a blue
vertex $\bar{w}$, of length $d$, gives an $\iota$-anti-invariant loop on $w$ of length~$d$,
\item
a (blue) genus 0 leaf $\bar v$ of $T$ joined by an edge of length $d$ to a yellow
vertex $\bar{w}$ gives an $\iota$-anti-invariant edge between $w^+$ and $w^-$ of length~$d$,
\item
a blue edge between (necessarily blue vertices) $\bar v$ and $\bar w$ of length $d$ gives an $\iota$-invariant edge between $v$ and $w$ 
 of length $d/2$,
\item
a yellow edge between $\bar{v}$ and $\bar{w}$ of length $d$ gives a pair of edges, one between $v^+$ and $w^+$ and one between $v^-$ and $w^-$, swapped by $\iota$, each of length $d/2$ (here, if $\bar{v}$ (resp. $\bar{w}$) is blue we set $v^+=v^-=v$ (resp. $w^+=w^-=w$)).
\end{itemize}
When $T$ is an open BY tree, we use exactly the same construction 
(say, by adding a vertex at $\infty$, following the steps above, and removing 
the vertices above $\infty$).

Finally, when $T$ has genus 0 or 1, we have to declare vertices of $G$ slightly 
differently, and we refer to Tables \ref{g012table}, \ref{tabg1a} for the 
correspondence.

\smallskip

\noindent {\em{Automorphisms}}: 
Write $G_b$ for  those points in $G$ fixed by $\iota$ and $G_y$ for $G\setminus G_b$. Further, write $G_y^+$ for the points in $G_y$ which come from $T^+$ and $G_y^-$ for the points coming from $T^-$. To ease notation, for $x\in G$ we also set $\bar{x}:=\pi(x)\in T$.

Now let $\sigma=(\sigma_0,\epsilon_\sigma)$ be an automorphism of $T$. We define the automorphism $\GG(\sigma)$ of $G$ as follows:
\begin{itemize}
\item
for $x\in G_b$ we set $\GG(\sigma)(x)$ to be the unique point of $G$ lying over $\sigma(\bar{x})$
\item
for $x\in G_y^+$ we set
\[\GG(\sigma)(x)=\begin{cases} \pi^{-1}(\{\bar{x}\})\cap G_y^+~~&~~\epsilon(\bar{x})=1\\\pi^{-1}(\{\bar{x}\})\cap G_y^-~~&~~\epsilon(\bar{x})=-1.\end{cases}\]
\item
similarly, for $x\in G_y^-$  we set
\[\GG(\sigma)(x)=\begin{cases} \pi^{-1}(\{\bar{x}\})\cap G_y^-~~&~~\epsilon(\bar{x})=1\\\pi^{-1}(\{\bar{x}\})\cap G_y^+~~&~~\epsilon(\bar{x})=-1.\end{cases}\]
\end{itemize}
(In the above, for $\bar{x}\in T$ yellow, we write $\epsilon_\sigma(\bar{x})$ for the value of $\epsilon_\sigma$ on the connected component of $T_y$ containing $\bar{x}$.) 

Note that $\sigma$ commutes with the hyperelliptic involution $\iota$.
\end{construction}

\begin{proposition}
If $T$ is a BY tree, then $\GG(T)$ is a hyperelliptic graph. The same 
holds in the open case. Further, the map $\TT(-)$ gives a homomorphism $\Aut T\rightarrow \Aut G$. 
\end{proposition}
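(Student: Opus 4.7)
The plan is to verify the three parts of the statement in turn, checking each of the defining axioms of a hyperelliptic graph in part one, the analogous check in the open case in part two, and the homomorphism property directly from the explicit formulas in part three.

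For the first part, assume $T$ has genus $\ge 2$ (the cases of genus $0$ and $1$ can be checked by inspection against Tables \ref{g012table} and \ref{tabg1a}). We must check that $\GG(T)=(G,g,\iota)$ satisfies the four conditions of Definition \ref{defhypgraph}. Condition (4) is immediate: by construction $G$ is obtained by glueing two copies of $T$ along $T_b$, so the quotient by the involution $\iota$ that swaps the two sheets is exactly $T$, which is a tree. Condition (1) is also immediate from the case-by-case description: the only vertices of $G$ carrying positive genus are those coming from blue non-leaf vertices of $T$, and these are $\iota$-invariant by construction. Condition (2) requires a short check: a genus $0$ vertex $v\in V(G)$ arises either from a blue non-leaf genus $0$ vertex $\bar v$ of $T$ (which, by Definition \ref{defbytree}(2), has at least one yellow edge, and by summing the contributions of blue and yellow edges at $\bar v$ to edges at $v$ in $G$ one verifies $\deg v\ge 3$), or from half of a pair $\{v^+,v^-\}$ coming from a yellow vertex $\bar v$ of $T$ (which has $\deg\bar v\ge 3$ by Definition \ref{defbytree}(1), and the degree is preserved by the two-to-one cover locally at $\bar v$). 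Condition (3) follows because the $\iota$-invariant edges at $v$ are precisely in bijection with the blue edges at $\bar v$ in $T$ (with blue loops from genus $0$ leaves of $T$ accounting correctly), so it translates directly into Definition \ref{defbytree}(3).

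For the second part, the construction in the open case is identical except that an open edge of $T$ produces either one $\iota$-invariant open edge of $G$ (if the open edge of $T$ is blue) or a swapped pair of open edges (if it is yellow); none of the verifications above are affected, and the quotient $G/\langle\iota\rangle$ remains $T$ with its unique open edge, as required by Definition \ref{opendefhypgraph}.

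For the third part, fix $\sigma=(\sigma_0,\epsilon_\sigma)\in\Aut T$. To see $\GG(\sigma)$ is an automorphism of $G$, observe that it is continuous (being defined piecewise on the closed subsets $G_b$ and $\overline{G_y^\pm}$ with matching values along $G_b$), preserves the $\Delta$-complex structure, the genus marking and commutes with $\iota$ (by inspection of the three cases in the construction). It is invertible because the analogous formula for $\sigma^{-1}$ provides a two-sided inverse. The key step, which is the main point of the argument, is multiplicativity: given $\sigma,\tau\in\Aut T$, one has to check $\GG(\sigma\tau)=\GG(\sigma)\GG(\tau)$ by comparing the two maps on the three types of points $x\in G_b$, $x\in G_y^+$, $x\in G_y^-$. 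On $G_b$ both sides send $x$ to the unique lift of $\sigma_0\tau_0(\bar x)$, so equality is automatic. On the yellow locus the formulas produce a sheet depending on the sign $\epsilon_\sigma(\bar x)\epsilon_\tau(\tau_0^{-1}(\bar x))$ for one composition and on $\epsilon_{\sigma\tau}(\bar x)$ for the other, and these agree precisely by the cocycle rule defining composition in $\Aut T$ (Definition \ref{autb}). This is the only place where the sign data plays a nontrivial role, so it is the step I expect to require the most care in the write-up; everything else is bookkeeping against the definitions.
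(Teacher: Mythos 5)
Your proposal is correct and follows essentially the same route as the paper, whose entire proof reads ``That $\GG(T)$ is a hyperelliptic graph follows by reversing the argument of Proposition \ref{is a BY tree}. The claim about automorphisms is clear.'' — your axiom-by-axiom check is precisely that reversal, and your cocycle computation spells out the ``clear'' step. The only quibbles are cosmetic: the loops in $G$ coming from genus $0$ leaves of $T$ are $\iota$-anti-invariant (there are no ``blue loops''), and in the composition check your sign $\epsilon_\sigma(\bar x)\epsilon_\tau(\tau_0^{-1}(\bar x))$ is written in terms of the intermediate point rather than the source, but it matches $\epsilon_{\sigma\tau}$ under the cocycle rule either way.
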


\begin{proof}
That $\GG(T)$ is a hyperelliptic graph follows by reversing the argument of Proposition \ref{is a BY tree}. The claim about automorphisms is clear.
\end{proof}

\begin{remark} \label{choice of section in G construction}
As constructed, for a BY tree $T$, $G=\GG(T)$ comes equipped with a canonical section $s:T\rightarrow G$ sending $\bar{x}\in T$ to $x^+$ (in the notation of Construction \ref{TtoG}). In general, suppose that $s':T\rightarrow G$ is any section. Then given $\sigma=(\sigma_0,\epsilon_\sigma)\in \Aut T$ we may define an automorphism $\GG(\sigma;s')\in \Aut G$ by first defining it on the image of $s'$ as \[\GG(\sigma;s')(s'(\bar{x}))=\begin{cases}s'(\sigma_0(\bar{x}))~~&~~\bar{x}~~\text{blue or}~\bar{x}~\text{yellow and }\epsilon_\sigma(\bar{x})=1,\\\iota(s'(\sigma_0(\bar{x})))~~&~~\text{else,}\end{cases}\] 
and then extending to the whole of $G$ by insisting that $\GG(\sigma;s')$ commutes with the hyperelliptic involution. Then $\GG(\sigma;s)$ agrees with $\GG(\sigma)$ as defined in Construction \ref{TtoG}.  Moreover, writing  $\phi_{s,s'}$ for the automorphism of $G$ such that $s'=\phi_{s,s'}\circ s$, we have  \[\GG(\sigma;s')=\phi_{s,s'}\circ \GG(\sigma;s)\circ \phi_{s,s'}^{-1}\] for all $\sigma \in \Aut T$.  
\end{remark}

The following proposition establishes the correspondence between hyperelliptic graphs and BY trees. 

\begin{proposition} \label{by tree hyp graph proposition}
\label{GTgenus}
Let $G$ be a hyperelliptic graph, possibly open and/or metric and let $T=\TT(G)$ be the corresponding BY tree. Then 
\begin{enumerate}
\item[(1)~]
We have an equality of genera $g(G)=g(T)$,
\item[(2)~]
For any choice of section $s$, the map $\TT(-;s)$ gives an isomorphism $\Aut G\stackrel{\sim}{\longrightarrow} \Aut T$.
\item[(3)~] 
We have  $\GG(T)\iso G$ (as metric graphs if $G$ is metric).
\end{enumerate}
Conversely, let $T$ be a BY tree, possibly open and/or metric, and let $G=\GG(T)$ be the corresponding hyperelliptic graph. Then
  \begin{enumerate}
  \item[$(1)^\prime$] 
  We have an equality of genera $g(T)=g(G)$,
  \item[$(2)^\prime$]
  The map $\GG(-)$ gives an isomorphism $\Aut T\stackrel{\sim}{\longrightarrow} \Aut G$,
  \item[$(3)^\prime$]
  We have $\TT(G)\iso T$ (as metric BY trees if $T$ is metric).
  \end{enumerate}
 \end{proposition}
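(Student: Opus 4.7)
The proof splits naturally into three parts in each direction, and the converse direction follows from the forward one by symmetry of the constructions, so I would focus on items (1), (2), (3) with $T = \TT(G)$ and indicate why $(1)^\prime$--$(3)^\prime$ then come essentially for free.

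For (1), the strategy is to identify the contributions to $g(G)$ and $g(T)$ coming from vertex genera and from homology separately. Partition the edges of $G$ into $\iota$-invariant non-loop edges, $\iota$-anti-invariant edges $v\iota(v)$ with $v\ne\iota(v)$, $\iota$-anti-invariant loops, and pairs swapped by $\iota$ (note that $\iota$-invariant loops do not occur, for otherwise $T$ would contain a loop, contradicting condition~(4) of Definition~\ref{defhypgraph}), and similarly partition the vertices of $G$ into $\iota$-invariant ones and pairs swapped by $\iota$. Construction~\ref{GtoT} then gives a precise count for the number of blue and yellow vertices and edges of $T$ in terms of these quantities, and the identity $\chi(T)=1$ (since $T$ is a tree) yields a linear relation amongst them. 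Combining this with the formula $\rk H_1(G)=|E(G)|-|V(G)|+1$ for the connected graph $G$, together with the description $\rk H_1(T,T_b)=\#\pi_0(T_b)-1$ from Remark~\ref{reduced homology remark} (where $\#\pi_0(T_b)$ is computed as $|V(T_b)|-|E(T_b)|$, since $T_b$ is a forest), identifies $\rk H_1(G)$ with $\rk H_1(T,T_b)$. Finally, all $\iota$-paired vertices of $G$ have genus zero by Definition~\ref{defhypgraph}~(1), so the sum of vertex genera is unchanged under $G\mapsto T$, completing the identity $g(G)=g(T)$. The open case follows by adding a point (or pair of points) at infinity and reducing to the closed case.

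For (2), injectivity of $\TT(-;s)$ is the main step, and surjectivity will follow from having an explicit inverse via $\GG$. Suppose $\sigma\in\Aut G$ with $\TT(\sigma;s)=\id_T$. Then the induced map $\bar\sigma$ on $T$ is the identity, so $\sigma$ preserves each fibre of $\pi$, meaning $\sigma(x)\in\{x,\iota(x)\}$ for every $x\in G$; on $G_b$ this forces $\sigma$ to fix every point. On each yellow component $Z\subset T_y$, the condition $\epsilon_{\TT(\sigma;s)}(Z)=+1$ says that $\sigma(s(Z))=s(Z)$, hence $\sigma$ fixes the $G_y^+$-sheet, and by commutation with $\iota$ also the $G_y^-$-sheet. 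Thus $\sigma=\id_G$. For surjectivity, given $\tau=(\tau_0,\epsilon_\tau)\in\Aut T$, I would lift $\tau_0$ to a homeomorphism $\sigma:G\to G$ by setting $\sigma(x)$ to be the unique preimage of $\tau_0(\pi(x))$ on $G_b$, and on $G_y$ using the decomposition $G_y=G_y^\pm$ induced by $s$ together with the signs $\epsilon_\tau$, mirroring the formula in Construction~\ref{TtoG}; verifying that $\sigma$ is a graph automorphism and that $\TT(\sigma;s)=\tau$ is a direct check from the definitions.

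For (3), the identification $\GG(\TT(G))\iso G$ is essentially by construction: the quotient map $\pi:G\to T$ exhibits $G$ as a topological double cover of $T$ ramified precisely along $T_b$, and Construction~\ref{TtoG} builds exactly this cover by glueing two copies of $T$ along their blue parts. The only verification required is that the graph-theoretic data (vertices, edges, genera, and the metric in the metric case) transported back to $G$ match those of the original graph, which is immediate from comparing the edge- and vertex-rules of Constructions~\ref{GtoT} and~\ref{TtoG}.

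The converse statements $(1)^\prime$--$(3)^\prime$ are then obtained symmetrically. For $(3)^\prime$, one checks that taking the $\iota$-quotient of $\GG(T)$ recovers $T$ with its colouring and genus markings (the natural section to $\pi$ noted in Remark~\ref{choice of section in G construction} makes this concrete). Statement~$(1)^\prime$ follows from (1) applied to $G=\GG(T)$, using $(3)^\prime$, and similarly $(2)^\prime$ follows from (2) applied to $G=\GG(T)$ with the canonical section of Remark~\ref{choice of section in G construction}. The main obstacle is a clean bookkeeping argument in (1), where one must track the contribution of anti-invariant loops and edges (which produce new blue leaves in $T$) without double-counting; the rest of the proof is a straightforward unwinding of the definitions.
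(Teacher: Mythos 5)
Your proposal is correct, but it diverges from the paper's proof in two places worth noting. For part (1), the paper does not count anything: it defers entirely to Proposition~\ref{hyp graph homology}, which produces an isomorphism $H_1(G)\iso H_1(T,T_b)$ from a short exact sequence of chain complexes $0\to C_\bullet(T)/C_\bullet(T_b)\to C_\bullet(G)\to C_\bullet(T)\to 0$ (the first map being $x\mapsto s(x)-\iota(s(x))$), and then simply observes that $\pi$ is a bijection on positive-genus vertices. Your Euler-characteristic bookkeeping is a valid and more elementary substitute for the rank comparison --- the count checks out: with $k$ invariant vertices, $n-k$ swapped pairs, $a$ invariant edges, $b+c$ anti-invariant edges and loops, and $d$ swapped edge-pairs, the tree relation gives $n=a+d+1$, whence $\rk H_1(G)=b+c+k-a-1=\#\pi_0(T_b)-1=\rk H_1(T,T_b)$ --- but it buys you less, since the paper needs the actual homology isomorphism (with pairings and automorphism actions) later anyway for Theorem~\ref{lattice thm 1}, so proving it once does double duty. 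For surjectivity in (2), the paper avoids constructing an explicit lift: it reduces to the inequality $|\Aut T|\le|\Aut G|$, which it gets from (3) together with injectivity of $\GG(-):\Aut T\to\Aut\GG(T)$ (proved in the converse half), so that injectivity of $\TT(-;s)$ and $\GG(-)$ together force both to be isomorphisms by cardinality. Your explicit lift (which is essentially $\GG(-;s)$ of Remark~\ref{choice of section in G construction} transported along $G\iso\GG(T)$) is more direct and avoids the slightly circular-looking interleaving of the two halves, at the cost of having to verify continuity of the lift across the boundary of $G_b$ and $G_y$ and that it genuinely preserves the graph structure --- both routine, as you say. Your treatment of (3) and of $(1)'$--$(3)'$ matches the paper's. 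One small caveat: your counting argument in (1) should explicitly set aside the exceptional graphs of \eqref{circlegraph} and the low-genus cases, where the vertex conventions differ; the paper handles genus $0$ and $1$ by inspection of the tables throughout.
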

  

\begin{proof}
First let $G$ be a hyperelliptic graph and $T$ the associated BY tree. For concreteness we consider the closed non-metric case, the argument being identical in the other cases. It is clear from the constructions that we have $\GG(T)\iso G$ (non-canonically). 
We will show later in Proposition \ref{hyp graph homology} that there is an isomorphism  $H_1(G)  \iso H_1(T,T_b)$.
Since $\pi: G\to T$ is also a bijection on the vertices of positive genus, we have $g(T)=g(G)$ (cf. Definitions \ref{hggenus}, \ref{bygenus}). We have now established (1) and (3). 

To show (2), since changing the section $s$ serves to conjugate $\TT(-;s)$, it suffices to prove the result for a single section $s$, which we now fix. If $\sigma \in \Aut G$ is such that $\TT(\sigma;s)$ is the trivial automorphism of $T$, then  $\sigma$ acts trivally on the quotient $G/\left \langle \iota \right \rangle$ and also preserves the section $s$. Such an automorphism is easily seen to be the identity so $\TT(-;s)$ is injective. To show surjectivity  it now suffices to show that $|\Aut T|\leq |\Aut G|$. By (3), it suffices to show that $\GG(-):\Aut T\rightarrow \Aut \GG(T)$ is injective  which we do independently below. Modulo this remaining claim, this completes the proof of (1), (2) and (3).

Now let $T$ be a BY tree (say closed and non-metric) and $G=\GG(T)$. Again, it is clear from the construction that we have $\TT(G)\iso T$ (and now the isomorphism is canonical) so $(3)^\prime$ is proven. Part $(1)^\prime$ now follows from $(3)^\prime$ and (1).

We now show $(2)^\prime$. Let $\sigma=(\sigma_0,\epsilon_\sigma)$ be an automorphism of $T$. If $\GG(\sigma)$ is trivial then it is immediate from the definition that  $\GG(\sigma)$ is an automorphism of $G$ that commutes with the hyperelliptic involution and acts trivially on $s(T)$. Since such automorphisms are necessarily trivial, $\GG(\sigma)$ is injective (which also completes the proof of (2)). As above, the isomorphism of $(3)^\prime$ along with the injectivity of $\TT(-):G\rightarrow \TT(G)$ shown previously forces $\GG(\sigma)$ to be an isomorphism. 
\end{proof}

\begin{remark} \label{signed hyperelliptic graphs}
It is tempting to define `signed hyperelliptic graphs' as pairs $(G,s)$ where $G$ is a hyperelliptic graph (possibly open) and $s:G/\left \langle \iota \right \rangle \rightarrow G$ is a section to the quotient map, with isomorphisms between two such pairs $(G,s)$ and $(G',s')$ required to take $s$ onto $s'$. It is easy to see that two such pairs $(G,s)$ and $(G',s')$ are isomorphic if and only if $G$ and $G'$ are, and that $\GG$ and $\TT$ give a one-to-one correspondence between signed hyperelliptic graphs and BY trees in which there are no choices involved in identifying automorphism groups (automorphisms of signed hyperelliptic graphs which forget the section must be allowed though). Additionally, the isomorphism $G\iso\GG(\TT(G))$ becomes canonical. We have decided against setting up the correspondence this way, however, as for our application to hyperelliptic curves, the hyperelliptic graphs we obtain do not come with a natural section $s$ and so it is convenient to allow an arbitrary choice.
\end{remark}

\subsection{Cluster pictures $\leftrightarrow$ BY trees}
\label{sssto}

We now construct the maps between cluster pictures and open BY trees.

\begin{construction}[$\TT(\Sigma)$]
\label{StoT}~

\noindent {\em{Objects}}:
Let $(X,\Sigma)$ be a cluster picture.
Define $T=\TT(\Sigma)$ to be the open BY tree whose vertices are
\begin{itemize}
\item 
one vertex $v_\s$ for every proper cluster $\s$ which is not a twin, coloured yellow
if $\s$ is \"ubereven and blue otherwise.
\item
one blue vertex (a leaf) $v_\t$ for every twin $\t$.
\end{itemize}
For the edges 
\begin{itemize}
\item 
for every pair $\s'<\s$ (see Definition \ref{children}) with $\s'$ proper,  link $v_\s'$ and $v_\s$ by an edge, yellow if $\s'$ is even and blue otherwise.
\item
add one open edge $v_X\infty$, yellow if $X$ is even and blue otherwise.
\end{itemize}
In the metric version, set the length to be $\delta(\s,\s')$ for blue edges and $2\delta(\s,\s')$ for yellow edges.

Finally, define the genus of a vertex $v_\s$ to be the genus of the cluster $\s$ as in Definition \ref{genus of cluster}. 
\smallskip

\noindent {\em{Automorphisms}}: Let $\sigma=(\sigma_0,\epsilon_{\sigma})$ be an element of $\Aut\Sigma$, where $\sigma_0$ is viewed as a permutation of the proper clusters. Then we define an automorphism $\TT(\sigma)=(\TT(\sigma)_0,\epsilon_{\TT(\sigma)})$ of $T=\TT(\Sigma)$ as follows. For a vertex $v_\s$ of $T$, set $\TT(\sigma)_0(v_\s)=v_{\sigma(\s)}$. To define $\epsilon_{\TT(\sigma)}$, for a yellow component $T_y$ of $T$, pick a yellow edge from $v_\s$ to $v_{\s'}$ in $T_y$  where we take $v_{\s'}$ to be nearer to $\infty$ (since yellow vertices have only yellow edges, each component of $T_y$ has at least one yellow edge). Then $\s$ is an even cluster and we set $\epsilon_{\TT(\sigma)}(T_y)=\epsilon_{\sigma}(\s)$. The compatibility of signs on even clusters as in Definition \ref{defclpic} ensures that this is well defined. 
\end{construction}

\begin{proposition}
\label{TTStrue}
Let $(X,\Sigma)$ be a cluster picture. Then 
\begin{itemize}
\item[(1)] $\TT(\Sigma)$ is an open BY tree,
\item[(2)] The map $\TT(-)$ defines a homomorphism $\Aut\Sigma \rightarrow \Aut\TT(\Sigma)$.
\end{itemize}
\end{proposition}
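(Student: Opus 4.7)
For part~(1), the plan is to verify the three conditions of Definition~\ref{defbytree} (which, together with the existence of a unique open edge $v_X\infty$, characterise open BY trees).
The underlying structure is a tree because the inclusion relation on $\Sigma$ is a forest, rooted at $X$, together with the open edge going to $\infty$. Genus and colour are well-defined directly from the construction.

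First I would check condition~(1) (yellow vertices have genus $0$, only yellow edges, and degree $\ge 3$). Yellow vertices correspond to \"ubereven clusters $\s$, which by definition have genus~$0$; every child of $\s$ is even (so the corresponding edge is yellow), and the edge from $v_\s$ to its parent (or to $\infty$ if $\s=X$) is also yellow because $\s$ itself is even. Degree $\ge 3$ follows from the observation that an \"ubereven cluster cannot have fewer than two children (one child would force $\s'=\s$, as $\s$ has no odd children such as singletons); hence $v_\s$ has at least two child-edges plus the parent/open edge.

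Next, condition~(3): $2g(v)+2\ge \#$~blue edges at $v$. For $v_\s$ with $\s$ non-\"ubereven, the blue edges at $v_\s$ correspond to odd children plus possibly the edge to the parent (blue iff $\s$ is odd). A congruence $|\s|\equiv \#\{\text{odd children}\}\pmod 2$ then shows this count equals exactly $2g(\s)+2$. Yellow vertices and twin leaves have no blue edges, so the bound is trivial.

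The genuinely delicate case is condition~(2): blue vertices of genus~$0$ have at least one yellow edge. I would split into subcases by whether $\s$ is a twin (then its unique edge is yellow since $|\s|=2$); a proper non-twin non-\"ubereven cluster of genus~$0$ with an even proper child (trivially fine); or a cluster with only odd children. In this last case a parity argument forces $\s$ to be even, making the parent/open edge yellow. The sporadic cases of very small $|X|$ are handled by direct inspection against Tables~\ref{g012table} and~\ref{tabg1a}. This parity/case-analysis is the main obstacle, but is elementary.

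For part~(2), I would first verify that $\TT(\sigma)=(\TT(\sigma)_0,\epsilon_{\TT(\sigma)})$ is well-defined. The permutation $\TT(\sigma)_0$ preserves cluster size, hence colour and parity, and preserves \"ubereven-ness, so it is a graph-theoretic automorphism of $T$ preserving colours and genus. For the sign function, I must show that the value $\epsilon_\sigma(\s)$ does not depend on the yellow edge chosen in the component $T_y$. Two yellow edges in the same component of $T_y$ are connected by a path of yellow edges through yellow (i.e.\ \"ubereven) intermediate vertices, and the compatibility clause
$\epsilon_\sigma(\s')=\epsilon_\sigma(\s)$ for $\s$ \"ubereven, $\s'<\s$ in Definition~\ref{autc} propagates along such a path by transitivity. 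Finally, the homomorphism property
$\TT(\sigma\tau)=\TT(\sigma)\TT(\tau)$ follows by tracking child clusters through the cocycle
$\epsilon_{\sigma\tau}(\s)=\epsilon_\tau(\s)\epsilon_\sigma(\tau(\s))$
on the cluster side and the matching cocycle rule for BY trees from Definition~\ref{autb}: under $\TT(\tau)$ a yellow edge with child cluster $\s$ maps to one with child cluster $\tau(\s)$, so both sides evaluate to $\epsilon_\tau(\s)\epsilon_\sigma(\tau(\s))$.
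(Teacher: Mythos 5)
Your proof is correct and follows essentially the same route as the paper: verify the three conditions of Definition~\ref{defbytree} via the same parity arguments on odd/even children, then check that $\TT(\sigma)$ is well defined on yellow components and that the two cocycle rules match. One small overstatement in your condition~(3): blue edges at $v_\s$ correspond only to the odd \emph{proper} children (plus possibly the parent edge), so odd singleton children are counted in $2g(\s)+2$ but contribute no edge — your count is an upper bound rather than an exact equality, which is all the required inequality needs.
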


\begin{proof}
(1) 
Let us check the conditions of a BY tree (Definition \ref{opendefbytree}).
When $|X|\le 2$, this is easy to check by hand. Otherwise:

\emph{Yellow vertices have genus 0, degree $\ge 3$, and only yellow edges:} 
\"ubereven clusters have genus 0 and at least 2 children, giving at least 2 edges
and an edge to the parent or to $\infty$. So there are at least 3 outgoing edges, all of which are yellow
since all children of an \"ubereven cluster are even.

\emph{Blue vertices of genus 0 have at least one yellow edge:} every even cluster has a yellow
edge to its parent or $\infty$, and every odd cluster of size $>2$ has either positive genus
or at least one even child that gives a yellow edge.

\emph{At every vertex, $2g(v)+2\ge$ \# blue edges at $v$: } 
every vertex $v$ comes either from a twin or from a cluster $\s$ of size $>2$. 
In the former case, $v$ has no blue edges. In the latter,

$\bullet$ if $\s$ is odd, it has an odd number (=$2g(\s)+1$) of odd children, and

\begin{tabular}{llllll}
$2g(\s)+2$ &=& 1 + \#\{odd children\} $\ge$ 1 + \#\{odd proper children\} \cr 
           &=& \# blue edges at $v$.
\end{tabular}

$\bullet$ if $\s$ is even, it has an even number
(=$2g(\s)+2$) of odd children, and

\begin{tabular}{llllll}
$2g(\s)+2$ &=& \#\{odd children\} $\ge$ \#\{odd proper children\} \cr 
           &=& \# blue edges at $v$.
\end{tabular}

(2) If $\sigma=(\sigma_0,\epsilon_\sigma)$ is an automorphism of $\Sigma$ then $\sigma_0$ preserves inclusion and the size of proper clusters by definition. Thus $\TT(\sigma)$ preserves adjacency, colour and genus and it is now clear that $\TT(\sigma)$ is an automorphism of $\TT(\Sigma)$. That the map $\TT(-)$ is an homomorphsim follows from the way we have defined composition of automorphisms for cluster pictures and BY trees (cf. Definitions \ref{autb}, \ref{autc}). 
\end{proof}


The construction in the opposite direction is as follows.

\begin{construction}[$\SS(T)$] \label{TtoS} ~

\noindent {\em{Objects}}:
Let $T$ be an open (possibly metric) BY tree. Define a partial order on the vertices of $T$ by setting $v\preceq v'$ if $v'$ lies on the unique shortest path from $v$ to $\infty$. 

For each blue vertex $v\in T_b$, let $\deg_{T_b}(v)$ be the number of blue edges at $v$ (i.e. the degree of $v$ in $T_b$) and set $m_v=2g(v)+2-\deg_{T_b}(v)$, which is non-negative by Definition \ref{opendefbytree} (3). Take $m_v$ singletons $x_{v,1},...,x_{v,m_v}$ and define $X_v:=\{x_{v,1},...,x_{v,m_v}\}$. Now take  $X=\bigcup_{v\in T_b}X_v$. 

Further, for every vertex $v$ of $T$ (of any colour), set 
$$
  \s_v=\bigcup_{v'\preceq v,~v'\textup{ blue}}X_{v'}
$$ 
and define the subset $\Sigma\subseteq \cP(X)$ as
$$
  \Sigma= \bigcup_{v\in T}\{\s_v\}\cup \bigcup_{x\in X}\{x\}.
$$
Set $\SS(T)=(X,\Sigma)$, the cluster picture associated to $T$. 

In the metric case, for $e\in E(T)$, write
\[l(e)=\begin{cases} \delta(e)~~&~~e \textup{ blue,}\\ \frac{1}{2}\delta(e)~~&~~\text {$e$ yellow,}\end{cases}\]
and extend to a distance function of $T$ in the obvious way. Now for vertices $v,w\in T$, define $\delta(\s_v,\s_w)=l(v,w)$. 
\smallskip

\noindent {\em{Automorphisms}}: Let $\sigma=(\sigma_0,\epsilon_\sigma)\in \textup{Aut}(T)$. Then we define an element $\SS(\sigma)=(\SS(\sigma)_0,\epsilon_{\SS(\sigma)})$ of  $\Aut\Sigma$ as follows. Noting that the map $v\mapsto \s_v$ is a bijection between the vertices of $T$ and the proper clusters of $\Sigma$, define a permutation $\SS(\sigma)_0$ of the proper clusters of $\Sigma$ by setting $\SS(\sigma)_0(\s_v)=\s_{\sigma_0(v)}$. Since $\s_v\subseteq \s_{v'}$ if and only if $v \preceq v'$, this preserves inclusion. To define $\epsilon_{\SS(\sigma)}$, let $\s\in \Sigma$ be an even cluster. Then $\s=\s_v$ for a vertex $v$ of $T$ and we'll see in Corollary \ref{genus of subtree} (2) below that the edge from $v$ towards $\infty$ is yellow. Writing $Z$ for the connected component of $T_y$ containing this edge, we define $\epsilon_{\SS(\sigma)}(\s)=\epsilon_\sigma(Z)$.
\end{construction}

\begin{proposition}
Let $T$ be an open BY tree. Then $\SS(T)$ is a cluster picture. Moreover, $\SS(-)$ defines a homomorphism $\Aut T\rightarrow \Aut \SS(T)$. 
\end{proposition}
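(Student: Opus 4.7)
The plan is to verify in turn the three conditions of Definition~\ref{defclpic}, the claim that $\SS(\sigma)$ lies in $\Aut \SS(T)$ for each $\sigma \in \Aut T$, and finally that $\SS$ respects composition. Each step will use only the structural properties of open BY trees that we have codified.

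For the first part, observe that the singletons $\{x\}$ for $x \in X$ are included in $\Sigma$ by fiat, so condition~(1) of Definition~\ref{defclpic} reduces to showing $X \in \Sigma$. Note that the unique vertex $v_\infty$ of $T$ adjacent to the open edge is the maximum element under the partial order $\preceq$, so every blue vertex $v'$ satisfies $v' \preceq v_\infty$ and hence $\s_{v_\infty} = X$. Condition~(2) (that two clusters are either disjoint or one contains the other) follows from the tree structure: if $v \preceq w$, then $\s_v \subseteq \s_w$, while if $v$ and $w$ are $\preceq$-incomparable in $T$, then the sets $\{v' : v' \preceq v\}$ and $\{v' : v' \preceq w\}$ lie in disjoint subtrees and thus so do the corresponding $X_{v'}$'s, making $\s_v \cap \s_w = \emptyset$. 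Singletons versus clusters of the form $\s_v$ are handled by the same argument, since each $x \in X_{v'}$ lies in $\s_w$ precisely when $v' \preceq w$.

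Now I would turn to the automorphism map. Given $\sigma = (\sigma_0,\epsilon_\sigma) \in \Aut T$, the fact that $\SS(\sigma)_0$ is a bijection of proper clusters preserving inclusions is immediate from the tree automorphism $\sigma_0$ preserving $\preceq$ (which in turn follows from $\sigma_0$ fixing the open edge). Preservation of sizes of clusters follows because $|\s_v| = \sum_{v' \preceq v,\, v'\text{ blue}} m_{v'}$ with $m_{v'} = 2g(v')+2 - \deg_{T_b}(v')$, and $\sigma_0$ preserves genus, colour, and adjacency, hence $m_{v'} = m_{\sigma_0(v')}$.

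The remaining obstacle, and the only genuinely non-trivial point, is the well-definedness of the sign function $\epsilon_{\SS(\sigma)}$ and the compatibility condition of Definition~\ref{autc}. Here I would invoke Corollary~\ref{genus of subtree}(2) (cited forward in Construction~\ref{TtoS}), which asserts that a cluster $\s_v \in \Sigma$ is even precisely when the edge from $v$ towards $\infty$ is yellow; this ensures $\epsilon_{\SS(\sigma)}(\s)$ is defined for every even $\s$. For the compatibility, let $\s = \s_v$ be übereven and $\s' = \s_{v'} < \s$ a child. Übereven means $v$ is yellow, so every edge at $v$ is yellow; in particular the edge $v'v$ and the edge from $v$ towards $\infty$ both lie in the same connected component $Z$ of $T_y$. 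The edge from $v'$ towards $\infty$ is precisely $v'v$, so $\epsilon_{\SS(\sigma)}(\s') = \epsilon_\sigma(Z) = \epsilon_{\SS(\sigma)}(\s)$ as required. Finally, the homomorphism property is a direct comparison of the cocycle composition rules for $\Aut T$ (Definition~\ref{autb}) and $\Aut \Sigma$ (Definition~\ref{autc}), which match by construction since each $\epsilon_{\SS(\sigma)}(\s)$ is transported from $\epsilon_\sigma(Z)$ on the relevant yellow component.
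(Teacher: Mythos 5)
Your proof is correct and takes essentially the route the paper intends: the paper's own proof is the one-line assertion that both claims are clear from Construction \ref{TtoS}, and your argument is precisely the direct verification of Definitions \ref{defclpic} and \ref{autc} that this implies. The specific points you isolate --- that $\s_{v_\infty}=X$, that $\preceq$-incomparable vertices have disjoint down-sets and hence disjoint clusters, and that Corollary \ref{genus of subtree}(2) is what makes $\epsilon_{\SS(\sigma)}$ well-defined on even clusters and compatible on \"ubereven ones --- are exactly the facts the paper leans on in the neighbouring arguments (e.g. the proof of Proposition \ref{cluster picture by tree proposition}), so nothing further is needed.
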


\begin{proof}
In both cases this is clear from Construction  \ref{TtoS}. 
\end{proof}


Given a vertex $v$ of an open BY tree $T$, it is not obvious from Construction \ref{TtoS} how the size of the associated cluster $\s_v$ relates to invariants of $T$ and $v$. The following two results explain this, as well as showing that Construction \ref{TtoS} preserves genus.

\begin{proposition}  \label{TSgenus}
Let $T$ be an open BY tree, $e$ its unique open edge, and $\SS(T)=(X,\Sigma)$ the associated cluster picture. 
Then 
\[|X|=\begin{cases} 2g(T)+ 2~~&~~\text{if }e\text{ is yellow,}\\ 2g(T)+1~~&~~\text{if }e\text{ is blue.}\end{cases}\]
In particular, $g(T)=g(\Sigma)$, and $|X|$ is even if and only if $e$ is yellow.
\end{proposition}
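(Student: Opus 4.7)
The plan is to compute $|X|$ directly from Construction \ref{TtoS} and match it against $2g(T)+2$ or $2g(T)+1$ by separately evaluating both sides in terms of elementary invariants of $T$. Throughout, let $b$ (resp.\ $B$) denote the number of blue vertices (resp.\ blue edges, including the open edge $e$ if it is blue), let $c$ denote the number of connected components of $T_b$, and set $\epsilon=1$ if $e$ is blue and $\epsilon=0$ if $e$ is yellow.

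First I would unpack the definition of $|X|$. Since $X=\bigcup_{v\in T_b}X_v$ is a disjoint union with $|X_v|=2g(v)+2-\deg_{T_b}(v)$, and since yellow vertices have genus $0$, one obtains
\[
|X|\;=\;\sum_{v\in T_b}\bigl(2g(v)+2-\deg_{T_b}(v)\bigr)\;=\;2\sum_{v\in V(T)}g(v)+2b-\sum_{v\in T_b}\deg_{T_b}(v).
\]
The double-count $\sum_{v\in T_b}\deg_{T_b}(v)$ picks up each finite blue edge twice and each open blue edge (if any) once, so it equals $2B-\epsilon$. Substituting gives
\[
|X|\;=\;2\sum_{v}g(v)+2b-2B+\epsilon.
\]

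Next I would compute $g(T)=\rk H_1(T,T_b)+\sum_{v}g(v)$. The point is that $\rk H_1(T,T_b)=c-1$. Indeed, $T$ is contractible (as an open tree) and each connected component of $T_b$ is itself a tree (finite, or finite with a single open ray attached when $e$ is blue), hence contractible. The long exact sequence of the pair $(T,T_b)$ reduces, as in Remark \ref{reduced homology remark}, to $0\to H_1(T,T_b)\to H_0(T_b)\to H_0(T)\to 0$; since $T_b$ is always non-empty (yellow leaves are forbidden by Definition \ref{opendefbytree}(1)), this identifies $H_1(T,T_b)$ with $\tilde H_0(T_b)\cong\Z^{c-1}$. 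Consequently $2g(T)+2=2c+2\sum_{v}g(v)$.

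The main step is then the Euler-characteristic identity $b-B=c-\epsilon$. Each component of $T_b$ is contractible and hence has Euler characteristic $1$; summing over the $c$ components, the blue vertices contribute $b$ and the blue edges contribute $-B$, except that an open blue edge contributes Euler characteristic $0$ rather than $-1$ (the open ray is contractible on its own), which accounts for the correction $+\epsilon$. Combining this with the two displays above yields
\[
|X|-\bigl(2g(T)+2\bigr)\;=\;2b-2B+\epsilon-2c\;=\;2(c-\epsilon)+\epsilon-2c\;=\;-\epsilon,
\]
which is exactly the claim, and the final sentences ($g(T)=g(\Sigma)$ and the parity of $|X|$) then follow immediately from the definition of $g(\Sigma)$. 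The only non-routine point is the Euler-characteristic bookkeeping for the open edge; the rest is a straightforward combination of Construction \ref{TtoS} with the long exact sequence already used for closed BY trees.
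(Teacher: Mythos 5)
Your proof is correct and follows essentially the same route as the paper's: both compute $|X|=\sum_{v\in T_b}(2g(v)+2-\deg_{T_b}(v))$, reduce the degree sum via the handshake/Euler-characteristic count on the contractible components of $T_b$ (with the $\pm 1$ correction for an open blue edge), and finish with $\rk H_1(T,T_b)=\#\{\text{components of }T_b\}-1$ from Remark \ref{reduced homology remark}. Your version just makes the bookkeeping ($b$, $B$, $c$, $\epsilon$) more explicit.
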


\begin{proof}
By Construction \ref{TtoS} we see that
\[|X|=\sum_{v\in T_b}\left(2g(v)+2-\text{deg}_{T_b}(v)\right).\]
 Now since yellow vertices have genus $0$,  we may split this sum as
\[|X|=\sum_{v\in T} 2g(v)+\sum_{v\in T_b}\left( 2-\text{deg}_{T_b}(v)\right).\]
If $e$ is yellow then $T_b$ is a disjoint union of closed connected trees, hence
\[\sum_{v \in T_b}\left(2-\text{deg}_{T_b}(v)\right)=2|V(T_b)|-2|E(T_b)|=2\#\{\text{connected comps. of }T_b\}.\]
On the other hand, if $e$ is blue, it is counted one fewer times in the sum. We thus obtain
\[|X|=2\left(\sum_{v\in T}g(v)+\#\{\text{connected comps. of }T_b\}\right)-\begin{cases} 0~~&~~\text{if }e \text{ is yellow,}\\1~~&~~\text{if }e \text{ is blue.}\end{cases}\] Since $\text{rk}H_1(T,T_b)$ is equal to one less than the number of connected components of $T_b$ (see Remark \ref{reduced homology remark}) the result follows.
\end{proof}

\begin{cor}  \label{genus of subtree}
Let $T$ be an open BY tree and $\Sigma=\SS(T)$ the associated cluster picture. Fix a vertex $v\in T$ and (in the notation Construction \ref{TtoS}) let $\mathfrak{s}_v\in \Sigma$ be the associated cluster. Further, denote by $e_v$ the edge from $v$ towards $\infty$. Then:
\begin{itemize}
\item[(1)] We have  \[|\s_v|=\begin{cases} 2g(T_v)+2~~&~~\text{if }e_v \text{ is yellow,}\\2g(T_v)+1~~&~~\text{if }e_v \text{ is blue,}\end{cases}\]
where here $T_v$ denotes the open BY tree generated by the vertices $v'\preceq v$ of $T$ along with the open edge $e_v$,
\item[(2)] The cluster $\s_v$ is even if and only if $e_v$ is yellow, and \"ubereven if and only if $v$ itself is yellow,
\item[(3)] We have an equality of genera $g(\s_v)=g(v)$.
\end{itemize}
\end{cor}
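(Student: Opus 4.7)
The plan is to reduce everything to Proposition \ref{TSgenus} applied to the open subtree $T_v$, and then unwind the definitions of the genus and the various cluster types.

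For part (1), I would first observe that $T_v$ is itself an open BY tree (its open edge being $e_v$), and that its associated cluster picture $\SS(T_v)$ has $\s_v$ as its underlying root set. Here the one genuine check is a bookkeeping one: for vertices $v'\prec v$, the set of edges at $v'$ is identical in $T$ and in $T_v$, while at $v$ itself the edge $e_v$ changes from a closed edge (towards $\infty$ in $T$) to the open edge of $T_v$, but its colour is the same and it is still incident to $v$. Consequently $\deg_{T_b}(v')=\deg_{(T_v)_b}(v')$ for every blue $v'\preceq v$, so the integers $m_{v'}$ and hence the sets $X_{v'}$ are unchanged. This gives $\SS(T_v)=(\s_v,\Sigma_v)$ where $\Sigma_v$ consists of the clusters of $\Sigma$ contained in $\s_v$, and (1) is then immediate from Proposition \ref{TSgenus} applied to $T_v$.

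For part (2), the statement about evenness is just (1). For the \"ubereven claim, I would work through the children of $\s_v$ in $\Sigma$: the proper children correspond to the vertices $v'$ joined to $v$ by an edge directed away from $\infty$, and the singleton children correspond to the elements of $X_v$. By (1) (applied to each $v'$), a proper child $\s_{v'}$ is even precisely when the edge from $v'$ to $v$ is yellow. Hence $\s_v$ is \"ubereven exactly when every edge from $v$ going downward is yellow \emph{and} $X_v=\emptyset$. If $v$ is yellow this holds by Definition \ref{defbytree}(1), since yellow vertices have only yellow edges (and are not blue, so contribute no elements of $X$). Conversely, if $v$ is blue then the constraint $m_v=0$ forces $\deg_{T_b}(v)=2g(v)+2\ge 2$; but having all downward edges yellow allows at most one blue edge at $v$ (the edge $e_v$), a contradiction. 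Thus $\s_v$ is \"ubereven iff $v$ is yellow.

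For part (3), when $\s_v$ is \"ubereven both sides are $0$ by (2) and Definition \ref{genus of cluster}. Otherwise, let $n$ be the number of odd children of $\s_v$. Singleton children contribute $m_v$, and by the evenness criterion above, the proper odd children are those joined to $v$ by a blue downward edge, contributing $\deg_{T_b}^{\text{down}}(v):=\deg_{T_b}(v)-[\,e_v\text{ blue}\,]$. Substituting $m_v=2g(v)+2-\deg_{T_b}(v)$ yields $n=2g(v)+2-[\,e_v\text{ blue}\,]$, which equals $2g(v)+2$ if $e_v$ is yellow and $2g(v)+1$ if $e_v$ is blue. In either case $g(\s_v)=g(v)$ by Definition \ref{genus of cluster}. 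The only subtlety I anticipate is the degree-bookkeeping in the first paragraph, which is where the convention that open edges are counted in $\deg_{T_b}$ becomes important; once that is settled, the rest is a direct unwinding of definitions.
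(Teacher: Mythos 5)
Your proof is correct and follows essentially the same route as the paper: part (1) by applying Proposition \ref{TSgenus} to the subtree $T_v$, and parts (2)--(3) by counting the odd children of $\s_v$ as $m_v$ plus the number of blue downward edges at $v$. The only difference is that you make explicit the bookkeeping the paper leaves implicit (that $T_v$ is a valid open BY tree whose associated cluster picture has underlying set $\s_v$, with the open edge counted in $\deg_{T_b}$), which is a worthwhile check but not a change of method.
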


\begin{proof}
The claims are easy to check when $T$ has genus $0$ or $1$, so assume the genus 
is at least 2.

(1). Apply Proposition \ref{TSgenus} to the open BY tree $T_v$. 

(2). That $\s_v$ is even if and only if $e_v$ is yellow is clear from (1).  Next, since yellow vertices have only yellow edges and no associated singletons, it is clear that if $v$ is yellow then $\s_v$ is \"ubereven. For the converse it is convenient to first observe that if $v$ is blue then the number of odd children of $\s_v$ is either $2g(v)+2$ or $2g(v)+1$, the former case occuring if and only if $e_v$ is yellow. Indeed, by Construction \ref{TtoS} the number of children of $\s_v$ of size $1$ is given by $m_v=2g(v)+2-\text{deg}_{T_b}(v)$, whilst part (1) applied to the vertices adjacent to $v$ shows that the number of odd proper children of $\s_v$ is given by the number of blue edges at $v$, excluding the edge $e_v$ towards infinity (should this be blue). 

Since $g(v)$ is non negative, it now follows that if $v$ is blue then $\s_v$ cannot be \"ubereven. 

(3). If $v\in T$ is yellow then $\s_v$ is \"ubereven by (2) and both $g(v)$ and $g(\s_v)$ are $0$. Suppose now that $v\in T$ is blue.  As above, the number of odd children of $\s_v$ is either $2g(v)+1$ or $2g(v)+2$. It is now  immediate from the definition of the genus of a cluster that $g(\s_v)=g(v)$ as claimed. 
\end{proof}

The following Proposition completes the proof of Theorem \ref{combmain1}.

\begin{proposition} \label{cluster picture by tree proposition}
Let $T$ be an open (possibly metric) BY tree and $(X,\Sigma)=\SS(T)$ be the associated cluster picture. Then
\begin{enumerate}
\item[(1)~]
we have an equality of genera $g(T)=g(\Sigma)$.
\item[(2)~]
The map $\sigma \mapsto \SS(\sigma)$ gives an isomorphism $\Aut T\stackrel{\sim}{\longrightarrow} \Aut\Sigma$.
\item[(3)~] 
We have  $\TT(\Sigma)\iso T$ (as metric BY trees if $T$ is metric).
\end{enumerate}
Conversely, let $(X,\Sigma)$ be a (possibly metric) cluster picture and let $T=\TT(\Sigma)$ be the associated open BY tree. Then 
\begin{enumerate}
\item[$(1)^\prime$]
we have an equality of genera $g(\Sigma)=g(T)$.
\item[$(2)^\prime$]
The map $\sigma \mapsto \TT(\sigma)$ gives an isomorphism $\Aut\Sigma \stackrel{\sim}{\longrightarrow} \Aut T$.
\item[$(3)^\prime$] 
We have  $\SS(T)\iso \Sigma$ (as metric cluster pictures if $\Sigma$ is metric).
\end{enumerate}
\end{proposition}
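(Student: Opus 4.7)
The plan is to verify the six claims by a direct inspection of the constructions, bootstrapping the genus and automorphism statements from the bijection of objects. I will treat the closed/non-metric case first and observe that the argument is compatible with the metric refinement.

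First, I would deal with genus. Claim $(1)^\prime$ is immediate: by Construction~\ref{StoT} the number of singletons of $\SS(\TT(\Sigma))$ coincides with $|X|$ (once $(3)^\prime$ is in place, see below), so it suffices to invoke Proposition~\ref{TSgenus} applied to $T=\TT(\Sigma)$, noting that the open edge of $\TT(\Sigma)$ is yellow exactly when $X$ is even. Claim~(1) is literally Proposition~\ref{TSgenus}.

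The heart of the proof is $(3)$ and $(3)^\prime$, which I will establish by tracing the constructions. For $(3)^\prime$, let $\Sigma=(X,\Sigma)$ be a cluster picture and write $T=\TT(\Sigma)$. By Construction~\ref{StoT} the vertices of $T$ are in canonical bijection with the proper non-twin clusters of $\Sigma$ together with the twins of $\Sigma$; call this bijection $\s\leftrightarrow v_\s$. The partial order $\preceq$ on $V(T)$ coming from paths to $\infty$ matches the cluster inclusion order $\subseteq$, because edges of $T$ join a cluster to its parent and the open edge attaches to $v_X$. I would check, using Construction~\ref{TtoS} and Corollary~\ref{genus of subtree}(1),(2), that for every vertex $v_\s$ the cluster associated in $\SS(T)$ is precisely $\s$: the singletons contributed by a blue vertex $v_\s$ correspond to odd children of $\s$ that are leaves, the proper odd children give the odd subclusters via the recursion, and in the \"ubereven case all children are even and produce no singletons at that vertex. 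This yields a bijection of the sets of clusters respecting inclusion and cluster sizes, i.e.\ an isomorphism of cluster pictures. The metric compatibility is built into Constructions~\ref{StoT} and~\ref{TtoS} through the $\times 2$ convention on yellow edges, so the isomorphism is metric when $\Sigma$ is metric.

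For $(3)$, let $T$ be an open BY tree, $\Sigma=\SS(T)$, and $T'=\TT(\Sigma)$. Corollary~\ref{genus of subtree}(2) ensures that the non-twin proper clusters of $\Sigma$ correspond to the non-leaf blue vertices and the yellow vertices of $T$, while twins correspond exactly to the blue leaves of $T$ that give rise to a pair of singletons (and, in the exceptional low-genus cases, directly to Tables~\ref{g012table} and~\ref{tabg1a}). Thus the constructions of $\TT$ and $\SS$ are mutual inverses on vertex sets; edges correspond to covering relations $\s' < \s$ in both pictures; and the colour/genus data agrees by Corollary~\ref{genus of subtree}(2)--(3) and the colour rule of Construction~\ref{StoT}. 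Again the metric case follows from the halving/doubling conventions used for yellow edges.

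Finally, for the automorphism statements $(2)$ and $(2)^\prime$, I would use that $\TT$ and $\SS$ are homomorphisms of groups (Proposition~\ref{TTStrue} and its mirror), and that via the natural isomorphisms of $(3)$ and $(3)^\prime$ the compositions $\SS\circ\TT$ and $\TT\circ\SS$ induce the identity on automorphism groups. More concretely: for $\sigma\in\Aut\Sigma$, $\SS(\TT(\sigma))$ acts on clusters of $\SS(T)$ by the permutation induced by $\sigma$ on proper clusters, and the sign function is read off from the same yellow edges on both sides by construction, so it equals $\sigma$ under the canonical identification. The analogous check in the other direction covers $(2)$.

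The main subtlety I expect is the bookkeeping for signs on yellow components: one must verify that the sign $\epsilon_\sigma$ is transported correctly under both $\SS$ and $\TT$, in particular that Construction~\ref{TtoS} reads the sign from the edge pointing towards $\infty$ while Construction~\ref{StoT} reads it from an arbitrary yellow edge, and that the compatibility condition in Definition~\ref{autc} makes the two choices agree. Once this compatibility is pinned down on a single yellow component, the rest of the proof is a clean diagram chase, with the low-genus exceptional cases from Tables~\ref{g012table} and~\ref{tabg1a} handled by direct inspection.
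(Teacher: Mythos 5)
Your proposal is correct and follows essentially the same route as the paper: parts $(1)$ and $(1)'$ are reduced to Proposition~\ref{TSgenus}, parts $(3)$ and $(3)'$ are established by tracing Constructions~\ref{StoT} and~\ref{TtoS} through the bijection $\s\leftrightarrow v_\s$ together with Corollary~\ref{genus of subtree}, and the sign bookkeeping you flag (parent edge towards $\infty$ versus an arbitrary yellow edge, reconciled by the compatibility condition in Definition~\ref{autc}) is exactly the point the paper also has to address. The only divergence is in $(2)$ and $(2)'$: you conclude bijectivity by checking directly that $\SS\circ\TT$ and $\TT\circ\SS$ are the identity on automorphism groups under the canonical identifications, whereas the paper verifies injectivity of both $\SS(-)$ and $\TT(-)$ separately and then closes the loop with the cardinality comparison $|\Aut T|\le|\Aut\Sigma|\le|\Aut T|$; both are valid, and your version avoids the finiteness/counting step at the cost of one extra explicit computation of the composites.
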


\begin{proof}
We consider the non-metric case throughout, the metric case being an easy extension. First let $T$ be an open BY tree and $(X,\Sigma)=\SS(T)$ the associated cluster picture. Part $(1)$ was shown previously in Proposition \ref{TSgenus}. 

Next we show part (3). In the notation of Constructions \ref{StoT} and \ref{TtoS}, consider the map $f:T\rightarrow \TT(\Sigma)$ sending $v\in T$ to $v_{\mathfrak{s}_v}$. It is clear from the constructions that this is a graph theoretic isomorphism. Moreover, by  Construction \ref{StoT} and Corollary \ref{genus of subtree} (2) we see that $f$ preserves colour (of both edges and vertices). Finally, to see that $f$ preserves genus, fix $v\in T$. Since for $\s\in \Sigma$ we defined the genus of $v_\s$ to be the genus of the cluster $\s$, it suffices to show that $g(v)=g(\s_v)$ for each $v\in T$, which is Corollary \ref{genus of subtree} (3). 

To show $(2)$, let $\sigma=(\sigma_0,\epsilon_{\sigma})\in \Aut T$ and suppose that $\SS(\sigma)$ is trivial in $\Aut\Sigma$. Then as $v\mapsto \s_v$ is a bijection between the vertices of $T$ and the proper clusters of $\Sigma$, $\sigma_0$ fixes every vertex of $T$. Moreover, if $\epsilon_{\SS(\sigma)}$ is trivial on each even cluster then $\epsilon_\sigma$ must be trivial on each yellow edge and is then itself trivial. This shows that $\SS(-)$ is injective and, in particular, that we have $|\Aut T|\leq |\Aut\Sigma|$. To show that $\SS(-)$ is an isomorphism, it suffices to show that we also have the reverse inequality. In light of (3), it suffices to show that the map $\TT(-):\Aut\Sigma \rightarrow \Aut \TT(\Sigma)$ is injective, which we do below. 

We now turn to $(1)^\prime$, $(2)^\prime$ and $(3)^\prime$, for which we fix a cluster picture $(X,\Sigma)$ and let $T=\TT(\Sigma)$ be the associated BY tree. We first show $(3)^\prime$.  In the notation of Constructions \ref{StoT} and \ref{TtoS}, we'll show that the map $h:\s\mapsto \s_{v_\s}$ is an isomorphism of cluster pictures. It is clear that it gives a bijection on proper clusters which preserves inclusion. To complete the argument, we prove by induction that $|\s|=|\s_{v_\s}|$ for all proper clusters $\s$. 

First suppose that $\s$ is a minimal proper cluster, i.e. all its children are singletons. Then by the definition of the genus of $\s$ we have
$$
|\s|=
   \begin{cases}
     2g(\s)+1~~&~~\s\text{ odd,}\\2g(\s)+2~~&~~\s\text{ even.}
   \end{cases}
$$
Now $v_\s$ is blue and has no children in $T$ by minimality of $\s$. Moreover, its parent edge is yellow if $\s$ is even and blue if $\s$ is odd. In particular we have 
$$
  \text{deg}_{T_b}(v_\s)=
    \begin{cases}
      1~~&~~\s\text{ odd,}\\0~~&~~\s\text{ even,}
    \end{cases}
$$
whence $m_{v_\s}=|\s_{v_\s}|=|\s|$ as desired (here, as in Construction \ref{TtoS}, for any vertex $v$ of $T$ we set $m_v=2g(v)+2-\text{deg}_{T_b}(v)$).  

Next, take a proper cluster $\s\in \Sigma$ and suppose that we have $|\s'|=|\s_{v_{\s'}}|$ for all proper clusters $\s'\subseteq \s$. In particular, to show that $|\s|=|\s_{v_\s}|$, it suffices to show that $\s$ and $\s_{v_\s}$ both have the same number of children of size $1$. Now by Corollary \ref{genus of subtree} and the discussion on genera in the proof of (3), it follows that $\s$ and $\s_{v_\s}$ have the same genus and parity. Combining the inductive hypothesis with the observation that the genus and parity of a cluster together determine how many odd children it has (cf. Definition \ref{genus of cluster}) completes the proof.

Part $(1)^\prime$ now follows upon combining $(1)$ and $(3)^\prime$. 

Finally, we show $(2)^\prime$. In light of $(3)^{\prime}$ and the injectivity of the map in (2) shown above, it suffices to show that $\sigma \mapsto \TT(\sigma)$ is injective (and this also completes the proof that the map in (2) is an isomorphism). This follows by noting that $\s\mapsto v_{\s}$ is a bijection between proper clusters of $\Sigma$ and vertices of $T$, and that even sized clusters give vertices whose parent edge is yellow, so triviality of $\epsilon_{ \TT(\sigma)}$ forces triviality of $\epsilon_{\sigma}$.  
 \end{proof}

\phantom{X}
\stepcounter{equation}
\begin{table}[!htb]
\caption{Dictionary for the correspondence (open case, genus $\ge 1$)}
\noindent\hskip-17mm
\begin{tabular}{|l|l|l|}
\hline
Cluster picture $\Sigma$ \phantom{${X^{X^X}}$}& Open BY tree  $T$              &  Open hyperelliptic graph $G$  \cr
\hline
\hline
&&\cr
cluster $\s$ with $|\s|>2$    & vertex $w_\s$ not a genus 0 leaf   & $\iota$-orbit of vertices $\{v_\s\}$ or $\{v_\s^+,v_\s^-\}$  \cr
of genus $g$ & of genus $g$& of genus $g$ \cr
&with parent edge $a_\s$&with $\iota$-orbit of parent edge(s) $p_\s$ or $\{p_\s^+,p_\s^-\}$\cr
...................................&...........................................&....................................................................\cr
$\bullet$ $\s$ odd & $\bullet$ $w_\s$ blue $a_\s$ blue  & $\bullet$ $v_\s$ with parent edge $p_\s$ \cr
...................................&...........................................&....................................................................\cr
$\bullet$ $\s$ even non-\ub & $\bullet$ $w_\s$ blue $a_\s$ yellow  & $\bullet$ $v_\s$ with parent edges $p_\s^+,p_\s^-$ \cr
...................................&...........................................&....................................................................\cr
$\bullet$ $\s$ \ub &$\bullet$  $w_\s$ yellow $a_\s$ yellow  &$\bullet$  $v_\s^+,v_\s^-$ with parent edges $p_\s^+,p_\s^-$ \cr
&&\cr
\hline
&&\cr
$\s<\s'$ & $a_\s =w_\s w_{\s'}$& $p_\s = v_\s v_{\s'}$ or \cr
&&$p_\s^{\pm} = v_\s v_{\s'}$ or $v_\s^{\pm} v_{\s'}$ or  $ v_\s v_{\s'}^{\pm} $ or $ v_\s^{\pm} v_{\s'}^{\pm} $\cr
$\delta(\s,\s')=\leftchoice{2d}{\s \mbox{ odd}}{d}{\s \mbox{ even}}$ & length $2d$& length $d$\cr
&&\cr
\hline
&&\cr
twin $\t$    & genus 0 leaf $u_\t$   &  an edge $\l_\t$\cr
$\t<\s$&with yellow edge $a_{\t}$ to $w_\s$&   $\l_\t =v_\s v_\s$ or $\l_t = v_\s^+ v_\s^-$\cr
$\delta(\t, \s)=d/2$& of length $d$ & of length $d$\cr
&&\cr
\hline
\hline
$(\sigma,\epsilon) \in\Aut\Sigma$ \phantom{${X^{X^X}}$}&$(\sigma,\epsilon) \in\Aut T$&$\sigma \in \Aut G$ \cr
\hline
&&\cr
$\sigma : \s \mapsto \s_2$& $\sigma: w_{\s} \mapsto w_{\s_2}$ & $\sigma: v_{\s} \mapsto v_{\s_2}$ or $\sigma:v_{\s}^{\pm} \mapsto v_{\s_2}^{\pm \epsilon(v_{\s}^*)}$ \cr
&&$\sigma:p_{\s}\mapsto p_{\s_2}$ or $\sigma:p_{\s}^{\pm} \mapsto p_{\s_2}^{\pm \epsilon(v_{\s}^*)}$\cr
&&\cr
$\epsilon(\s)$, \quad for $\s$ even&$\epsilon(a_\s)$,\quad for $a_\s$ yellow &  $\epsilon(v_\s^*)\in \{\pm 1\}$ \cr
&&\cr
\hline
&&\cr
$\sigma : \t \mapsto \t_2$, $\epsilon(\t)$& $\sigma: u_{\t} \mapsto u_{\t_2}$, $\epsilon(a_\t)$ & $\sigma: \l_{\t} \mapsto \epsilon(\l_\t)\l_{\t_2}$,  $\epsilon(\l_\t)\in \{\pm 1\}$\cr 
&&(where $-\l_{\t}$ is $\l_{\t}$ with reversed orientation)\cr
&&\cr
\hline
\end{tabular}
\label{tabdicto}
\end{table}

\subsection{Summary of constructions and examples}
The one-to-one correspondence given by Theorem \ref{combmain1} is easy to use in practice. Table \ref{tabdicto} summarizes the constructions (it follows from Constructions \ref{GtoT}, \ref{TtoG}, \ref{StoT} and Proposition \ref{cluster picture by tree proposition}). We illustrate how to use it in Examples \ref{WM} and \ref{GM}. 

As in Remark \ref{sections section}, the hyperelliptic graph described in the third column of Table \ref{combmain1} comes with the decomposition $G_y = G_y^+  \coprod G_y^-$, where $G_y^+$ consists of all edges and vertices denoted with a $+$. (Thus in order to construct automorphisms of a cluster picture or a BY tree from that of a hyperelliptic graph, it is first necessary to pick such a decomposition.) 

Recall that in a BY tree, from every vertex $v$ there is a shortest path towards $\infty$. The \emph{parent edge} of $v$ is the edge $a$ incident to $v$ on this path.
Similarly, for a vertex $v$ of a hyperelliptic graph, a \emph{parent edge} is an edge incident to $v$ on one of the shortest paths towards $\infty$. 

\begin{example}\label{WM}
Consider the open hyperelliptic graph $G$ and the open BY tree $T$ together with the automorphisms $\sigma_G$ and $(\alpha_T, \epsilon_T)$ of Examples \ref{Windmill} and  \ref{Miller}:

\WINDMILL
\qquad\qquad
\raise 1.7em\hbox{\MILLER}

 
%

Following Table \ref{tabdicto}, from $G$ we form the associated BY tree by creating:
\begin{itemize}
\item blue vertices $w_1, x, w_2$ of genera 2,0,0 corresponding to the $\iota$-invariant vertices $v_1, v_x, v_2$ of $G$,
\item a yellow  vertex $w_3$ corresponding to $\{v_3^+,v_3^-\}$, 
\item blue genus 0 leaves $u_1, u_2, u_3, u_4$ corresponding to the loops $\l_1, \l_2$ and the $\iota$-anti-invariant edges $\l_3, \l_4$,
\item a blue edge from $w_1$ to $x$ of length 2 corresponding to $\iota$-invariant edge $e_1$, 
\item yellow edges from $w_2$ to $x$ and $w_3$ to $w_2$ of lengths 2,2 corresponding to the $\iota$-orbits $\{e_2^+, e_2^-\}$  and $\{e_3^+, e_3^-\}$,
\item yellow edges from $u_1$ to $x$, $u_2$ to $x$, $u_3$ to $w_3$, $u_4$ to $w_3$ of lengths 5,5,6,6 corresponding to $\l_1, \l_2, \l_3, \l_4$, 
\item a yellow open edge from $x$ to $\infty$ corresponding to the two open edges from $v_x$ to $\infty$.
\end{itemize}
This construction precisely yields $T$.

To compute the automorphism $(\alpha,\epsilon)$ corresponding to $\sigma_G$, consider the hyperelliptic graph $G$ with the decomposition of its $\iota$-permuted part $G_y = G_y^+ \coprod G_y^-$, 
where $G_y^+$ consists of $e_2^+, e_3^+, v_3^+, e_{\infty}^+$ and the top halves of $\l_1$, $\l_2$, $\l_3$ and $\l_4$ (call these $\l_1^+, \l_2^+, \l_3^+, \l_4^+$).

Then $(\alpha, \epsilon)$ is given as follows: 
$\alpha$ acts on $T$ by swapping $u_1$ and $u_2$, since $\sigma_G$ swaps $\l_1$ and $\l_2$ and fixes all other $\iota$-orbits, and
\begin{align*}
\sigma_G(\l_1^+) = \l_2^+ &\implies \epsilon (u_1 x) = 1, \\
\sigma_G(\l_2^+) = \l_1^- &\implies \epsilon (u_2  x) = -1,  \\
\sigma_G(e_{\infty}^+) = e_{\infty}^+ &\implies \epsilon (x  \infty) = 1,\\
\sigma_G(e_2^+) = e_2^+ &\implies \epsilon (x  w_2) = 1,\\
\sigma_G(v_3^+) = v_3^- &\implies \epsilon(u_3  w_3) = \epsilon(u_4 w_3) = \epsilon(w_2  w_3) = \epsilon( w_3)=-1.
\end{align*}
In other words, we obtain $(\alpha,\epsilon)=(\alpha_T, \epsilon_T)$.

From $T$, to form the associated hyperelliptic graph, we create: 
\begin{itemize}
\item vertices $v_1, v_x, v_2$ fixed by $\iota$ of genera 2,0,0, corresponding to the blue vertices $w_1, x, w_2$, 
\item vertices $ v_3^+, v_3^-$ swapped by $\iota$, of genera 0,0 corresponding to the yellow vertex $w_3$, 
\item two edges $e_{\infty}^+, e_{\infty}^-$ swapped by $\iota$, corresponding the open yellow edge from $x$ to $\infty$, 
\item an edge from $v_1$ to $v_x$ of length 1, corresponding to the blue edge from $w_1$ to $x$, 
\item two edges $e_2^+$, $e_2^-$ swapped by $\iota$, of lengths 1, corresponding to the yellow edge from the blue vertex $w_2$ to the blue vertex $x$, 
\item two edges $e_3^+$, $e_3^-$ swapped by $\iota$, of lengths 1, corresponding to the yellow edge from the yellow vertex $w_3$ to the blue vertex $w_2$,  
\item two $\iota$-anti-invariant edges $\l_1, \l_2$ from $v_x$ to itself of lengths 5, corresponding to the yellow edges from the blue vertex $x$ to the genus 0 leaves $u_1$ and $u_2$, 
\item two $\iota$-anti-invariant edges $\l_3, \l_4$ from $v_3^+$ to $v_3^-$ of lengths 6, corresponding to the yellow edges from the yellow vertex $w_3$ to the genus 0 leaves $u_3$ and $u_4$. 
\end{itemize}
We obtain precisely $G$. 

The automorphism $\sigma$ corresponding to $(\alpha_T, \epsilon_T) \in \Aut(T)$ is given by
\begin{itemize}
\item $\sigma$ fixes $v_1, v_x, v_2$ and preserves $\{v_3^+, v_3^-\}$ since $\alpha_T$ fixes $w_1, x, w_2$ and $w_3$, 
\item $\sigma$ fixes $e_{\infty}^+, e_{\infty}^-, e_2^+, e_2^-$ since $\alpha_T$ fixes $w_2x$, $x \infty$ and $\epsilon_T$ is 1 on both edges, 
\item $\sigma$ swaps $e_3^+$ and $e_3^-$, $v_3^+$ and $v_3^-$ and changes the orientation of $\l_3$ and $\l_4$ since $\alpha_T$ fixes $w_3 w_2$, $w_3$, $u_3 w_3$, $u_4  w_3$ and $\epsilon_T$ is $-1$ on all of them,
\item $\sigma$ maps $\l_1$ to $\l_2$ since $\alpha_T$ maps $u_1$ to $u_2$ and $\epsilon_T (u_1  x)=1$,
\item $\sigma$ maps $\l_2$ to $-\l_1$ since   $\alpha_T$ maps $u_2$ to $u_1$ and $\epsilon_T (u_2  x)=-1$ (where $-\l$ denotes a loop $\l$ with opposite orientation).
\end{itemize}
We then obtain $\sigma=\sigma_G$.
\end{example}

\begin{example}\label{GM}
Consider the cluster picture $\Sigma$ and the open BY tree $T$ together with the automorphisms $(\alpha_{\Sigma}, \epsilon_{\Sigma})$ and $(\alpha_T, \epsilon_T)$ from Examples \ref{Grain} and \ref{Miller}: \\
 \raise 4em\hbox{\GRAINthree}
\quad
\hbox{\MILLER}

Following Table \ref{tabdicto}, from $\Sigma$ we construct the associated BY tree by creating:
\begin{itemize}
\item blue vertices $w_1, x, w_2$ of genera 2, 0, 0, corresponding to the non-\ub\ clusters $ \s_1, X, \s_2$ of size $>2$, 
\item a yellow vertex $w_3$ of genus 0 corresponding to the \ub\ cluster $\s_3$, 
\item  blue genus 0 leaves $u_1, u_2, u_3, u_4$ corresponding to the twins $\t_1, \t_2, \t_3, \t_4$, 
\item a yellow open edge from $x$ to $\infty$, since the top cluster $X$ is even, 
\item a blue edge from $w_1$ to $x$ of length 2, since $\s_1$ is odd, $\s_1<X$ and $\delta(\s_1,X)=2$, 
\item yellow edges from $u_1$ to $x$, $u_2$ to $x$, $w_2$ to $x$, $w_3$ to $w_2$, $u_3$ to $w_3$ and $u_4$ to $w_3$ of lengths 5,5,2,2,6,6, since $\t_1, \t_2,  \s_2, \s_3, \t_3, \t_4$ are even, $\t_1<X$, $\t_2<X$,  $\s_2<X$, $\s_3<\s_2$, $\t_3<\s_3$, $\t_4<\s_3$, and $\delta(\t_1, X) = \frac{5}{2}$, $\delta(\t_2, X) = \frac{5}{2}$, $\delta(\s_3, \s_2) =1$, $\delta(\s_2, X) =1$, $\delta(\t_3, \s_3) =3$ and $\delta(\t_4, \s_3) =3$.
\end{itemize}
This yields $T$.
The automorphism $(\alpha, \epsilon)$ corresponding to $(\alpha_{\Sigma}, \epsilon_{\Sigma})$ is given by 
\begin{itemize}
\item $\alpha$ swaps $u_1$ and $u_2$ and fixes all other vertices, since $\alpha_{\Sigma}$ swaps $\t_1$ and $\t_2$ and fixes all other clusters, 
\item $\epsilon$ is $1$ on the yellow edges $x \infty$, $u_1  x$, $w_2  x$, since $\epsilon_{\Sigma}(X) = \epsilon_{\Sigma}(\t_1) =\epsilon_{\Sigma}(\s_2)=1$, 
\item $\epsilon$ is $-1$ on the yellow edges $u_2  x$, $w_3  w_2$, $u_3  w_3$, $u_4  w_3$,  since $\epsilon_{\Sigma}(\t_2) = \epsilon_{\Sigma}(\s_3) =\epsilon_{\Sigma}(\t_3)= \epsilon_{\Sigma}(\t_4)=-1$, 
\item $\epsilon(w_3)=-1$ since it inherits the sign from its parent edge.
\end{itemize}
This shows precisely that $(\alpha,\epsilon)=(\alpha_T, \epsilon_T)$. 

From $T$ we construct the associated cluster picture by creating:
\begin{itemize}
\item clusters $\s_1, \s_2, \s_3, \s_x$ of size $>2$ and genera 2,0,0,0 corresponding to vertices that are not genus 0 leaves $w_1, w_2, w_3, w_x$, 
\item twins $\t_1, \t_2, \t_3, \t_4$ corresponding to the genus 0 leaves $u_1, u_2, u_3, u_4$,
\item $\s_x$ is the top cluster $X$ since $a_x = x \infty$,
\item $\s_1$ is an odd child of $s_x$ of relative depth 2 since $a_1 =w_1w_x$ is blue of length 2,
\item $\s_2$ is an even non \ub\ child of $s_x$ of relative depth 1 since $w_2$ is blue and $a_2 = w_2 w_x$ is yellow of length 2, 
\item $\s_3$ is an \ub\ child of $\s_2$ of relative depth 1 since $w_3$ is yellow and $a_3=w_3w_2$ is yellow of length 2,
\item $\t_1$ and $\t_2$ are children of  $\s_x$ of relative depth $\frac 52$ since $a_{\t_1} = u_1x$ and $a_{\t_2} = u_2x$, both of length 5,
\item $\t_3$ and $\t_4$ are children of  $\s_3$ of relative depth $3$ since $a_{\t_3} = u_3w_3$ and $a_{\t_4} = u_4w_3$, both of length 6,
\item $\s_1$ has 5 roots since it is odd and of genus 2,
\item $\s_3$ has no roots outside $\t_3, \t_4$ since it is \"ubereven,
\item $\s_2$ has 2 roots in addition to $\s_3$ since it is even non-\ub\ of genus 0,
\item $\s_x$ has 1 root in addition to $\s_1, \s_2, \t_1, \t_2$ since it is even of genus 0. 
\end{itemize}
This is precisely $\Sigma$.

The automorphism $(\alpha, \epsilon)$ corresponding to $(\alpha_T, \epsilon_T)$ is given by 
\begin{itemize}
\item $\alpha$ swaps $\t_1$ and $\t_2$ and fixes all other clusters since $\alpha_{T}$ swaps $u_1$ and $u_2$ and fixes all other vertices, 
\item $\epsilon(X) = \epsilon(\t_1) =\epsilon(\s_2) =1$, since $\epsilon_T$ is 1 on the parent edges of $x$, $u_1$ and $w_2$, 
\item $\epsilon(\t_2) = \epsilon(\t_3) = \epsilon(\t_4) =\epsilon(\s_3)=-1$, since $\epsilon_T$ is $-1$ on the parent edges of $u_2, u_3, u_4, w_3$.
\end{itemize}
This yields $(\alpha, \epsilon)=(\alpha_{\Sigma}, \epsilon_{\Sigma})$.\end{example}

\def\GraphScale{0.4}
\def\SnakeWiggle{3pt}
\def\clustersep{1.3pt}
\rootsize{0.5}

\section{One-to-one correspondence (closed case)}\label{s:121c}

In this section we study the notion of equivalence for open hyperelliptic graphs, open BY trees and cluster pictures (see Definitions \ref{equiv def hyp}, \ref{equiv def BY} and \ref{Sequivdef}). 
This enables us to prove a `closed version' of the correspondences of Section \ref{s:121o}.
We also 
\begin{itemize}
\item Explain how to explicitly obtain the core of an open hyperelliptic graph or open BY tree, and address the converse, namely which open BY trees have a specified core (Proposition \ref{Tclosure}, Corollary \ref{core to open}, Table \ref{opentable});
\item Identify a canonical representative in an equivalence class of BY trees, that corresponds to a balanced cluster picture 
(Remark \ref{canonical BY tree rep});
\item Describe `principal clusters' in a cluster picture, that correspond to vertices in the core of the associated hyperelliptic graph;
\item Interpret the moves for equivalence of cluster pictures (Definition \ref{Sequivdef}) in terms of the associated BY tree (proof of Lemma \ref{principal cluster lemma}).
\end{itemize}
The precise statement of the closed correspondence is as follows:

\begin{theorem}
\label{combmain2}
There is a genus-preserving one-to-one 
correspondence, both in the metric and non-metric case, between
\begin{itemize} 
\item[(i)]
Balanced cluster pictures up to isomorphism,
\item[(i$'$)]
Cluster pictures up to equivalence,
\item[(ii)]
Hyperelliptic graphs up to isomorphism,
\item[(ii$'$)]
Open hyperelliptic graphs up to equivalence,
\item[(iii)]
BY trees up to isomorphism,
\item[(iii$'$)]
Open BY trees up to equivalence.
\end{itemize}
Explicitly, the correspondence between hyperelliptic graphs and BY trees is given by the maps \underline{G} and \TT  of Section \ref{s:121o} and similarly, the correspondence between open BY trees, open hyperelliptic graphs and cluster pictures is given by the maps $\underline{G},\TT$ and $\underline{\Sigma}$. Maps $(ii')\rightarrow (ii)$ and $(iii')\rightarrow (iii)$ are given by taking the core, whilst $(i)\rightarrow (i)'$ takes a balanced cluster picture to its equivalence class. 

In genus $\ge 2$, the correspondences (i')$\leftrightarrow$(ii)$\leftrightarrow$(iii) set up bijections between various invariants\footnote
{The definition of cotwins and principal clusters is given in Definition \ref{def principal cotwin}.}
as shown in Table \ref{closed correspondence}.
\end{theorem}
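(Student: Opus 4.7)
The plan is to reduce Theorem \ref{combmain2} to the open-case correspondence (Theorem \ref{combmain1}) together with a concrete analysis of the core operation and of the equivalence moves on cluster pictures. The first step would be to prove Proposition \ref{Tclosure}, which asserts that the core of an open hyperelliptic graph (resp.\ open BY tree) exists, is unique, and has the same genus as the original object. For an open BY tree $T$, I would establish this by an explicit iterative pruning procedure: remove the open edge together with its nearest vertex $v_\infty$, then iteratively delete any leaf or contract any degree-2 genus-0 vertex that causes $T$ to fail the closed BY-tree axioms (Definition~\ref{defbytree}). Uniqueness is automatic, because each pruning step is forced by maximality of the core as a closed BY subtree. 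An entirely analogous procedure — removing the open edge(s), the $\iota$-orbit of $v_\infty$, and any vertices of degree $< 3$ — would give the core in the hyperelliptic graph setting. Genus invariance follows because each pruning move only deletes genus-0 vertices and does not change $H_1$ of the underlying space.

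The next step would be to show that the constructions $\TT$, $\GG$ and $\SS$ are compatible with taking cores: if $G$ is an open hyperelliptic graph and $T=\TT(G)$, then $\TT(\tilde G)\iso \tilde T$, and analogously for $\GG$ and $\SS$. Because the constructions in Table \ref{tabdicto} are local (vertex-by-vertex and edge-by-edge), and because the pruning conditions on the two sides translate into each other under $\TT$ and $\SS$, the pruning process on $G$ and $T$ proceeds in lockstep. This immediately yields the bijection $\text{(ii)}\leftrightarrow\text{(iii)}$, and combined with Theorem \ref{combmain1} it descends to a bijection $\text{(ii)}'\leftrightarrow\text{(iii)}'\leftrightarrow\text{(i)}'$, since equivalence on each side is defined so as to have isomorphic cores.

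What then remains is the bijection $\text{(i)}\leftrightarrow\text{(i)}'\leftrightarrow\text{(iii)}$. I would prove that every equivalence class of cluster pictures contains a unique (up to isomorphism) balanced representative by analysing each of the four moves of Definition \ref{Sequivdef}. Moves (iii)–(iv) convert between $|X|=2g+1$ and $|X|=2g+2$ by adjoining or deleting a single root; in the balanced representative one always takes $|X|$ even. Moves (i)–(ii) add or remove the cocluster $X\setminus\s$ when $X$ has exactly two proper children; in the balanced representative one always includes it. Under $\TT$, these moves correspond precisely to the prunings that produce the core of the associated open BY tree, so each equivalence class of cluster pictures maps to exactly one closed BY tree via $\TT$, and the balanced representative is the one that is already stable under all four moves. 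Genus preservation holds throughout, because all four moves either fix $|X|$ or shift it by one between $2g+1$ and $2g+2$, and taking the core preserves genus by Definitions \ref{coreHG} and \ref{coreBY}. Finally, the explicit dictionary of invariants in Table \ref{closed correspondence} (for $g\ge 2$) is read off from Table \ref{tabdicto}: vertices of $\tilde G$ correspond to non-genus-0-leaf vertices of $\tilde T$, which under $\SS$ correspond to principal clusters; twin leaves of $\tilde T$ correspond to the $\iota$-loops or $\iota$-anti-invariant edges of $\tilde G$ and to the cotwins/twins of the cluster picture; the colouring and parity conditions translate as given.

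The main obstacle I anticipate is the verification that any two cluster pictures in the same equivalence class (in the sense of having isomorphic open-BY-tree cores) are indeed connected by a finite sequence of the four moves — this is what makes Definition \ref{Sequivdef} the right equivalence relation and not a strictly coarser one. To handle it I would classify the possible ``tails'' near $\infty$ that an open BY tree can have above a given core: there are only a small number of cases (blue or yellow leaf, blue or yellow edge to $\infty$, presence or absence of an additional short segment), and each corresponds exactly to one of the moves (i)–(iv) applied to a balanced cluster picture. Once this bookkeeping is done, the theorem follows by assembling the three bijections above.
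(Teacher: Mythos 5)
Your overall architecture matches the paper's: reduce to the open correspondence of Theorem \ref{combmain1}, prove existence, uniqueness and genus-invariance of the core together with its compatibility with $\TT$, $\GG$, $\SS$ (the paper's Proposition \ref{Tclosure} and Lemma \ref{hyp tree equiv lemma}), and identify the move-generated equivalence on cluster pictures with the isomorphic-cores equivalence on BY trees by classifying the possible tails at $\infty$ (the paper's Proposition \ref{Sclosure} and Table \ref{opentable}). One small caution: your pruning recipe ``remove the open edge together with its nearest vertex $v_\infty$'' is wrong as literally stated --- in most of the sixteen cases of Table \ref{opentable} the vertex incident to the open edge survives into the core (for the open tree consisting of a blue open edge attached to a single genus-$1$ vertex, the core is that vertex), and deleting a vertex of degree $\ge 3$ outright would disconnect the tree. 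The correct procedure removes the open edge and then at most one further edge and one or two vertices; your subsequent ``delete only what violates the axioms'' phrasing suggests you intend this, but the recipe should be stated that way.

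The genuine gap is the step $(i)\leftrightarrow(i')$: you do not prove that every equivalence class of cluster pictures contains a balanced representative, nor that it contains only one. Your argument only constrains the local structure at the top cluster (parity of $|X|$, presence of the cocluster), and your characterisation ``the balanced representative is the one that is stable under all four moves'' is false: a balanced picture whose top cluster has two children of size $g+1$ (e.g.\ $X=\{1,\dots,6\}$ with children $\{1,2,3\}$ and $\{4,5,6\}$ in genus $2$) admits move (ii) of Definition \ref{Sequivdef}. The real content here is global rather than local: composing moves (i) and (ii) lets the top cluster migrate through the tree --- equivalently, it moves the point of the core at which the open yellow edge of the associated open BY tree is attached --- and the balanced condition (all proper clusters $\ne X$ have size $\le g+1$, with $0$ or $2$ of size exactly $g+1$) says precisely that this attachment point is the centroid of the core for the weight $w(v)=2g(v)+2-\deg_{T_b}(v)$. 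The existence and uniqueness of that centroid is a nontrivial combinatorial fact (the paper's Lemma \ref{centres of trees} and Definition \ref{BY tree centre}, used in Lemma \ref{existence of balanced clusters}); without it you obtain neither existence nor uniqueness of the balanced representative, so the bijection $(i)\leftrightarrow(i')$, and hence $(i)\leftrightarrow(iii)$, is not established. You need to supply this centroid argument or an equivalent one.
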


\begin{proof}
The correspondence $(ii)\leftrightarrow (iii)$ was shown previously in Proposition \ref{by tree hyp graph proposition}. Lemma \ref{hyp tree equiv lemma} combined with Proposition \ref{Tclosure} (both shown below) and the open correspondence of Theorem \ref{combmain1} gives $(ii')\leftrightarrow (iii')$. The correspondences $(ii')\leftrightarrow (ii)$ and $(iii')\leftrightarrow (iii)$ follow from the definition of equivalence: Proposition \ref{Tclosure} shows that the core exists and is unique, and Corollary \ref{core to open} 
and Remark \ref{corropengraph}
show that each closed hyperelliptic graph (resp. closed BY tree) arises as the core of some open hyperelliptic graph (resp. open BY tree). Proposition \ref{Sclosure} below shows that two cluster pictures are equivalent if and only if the associated BY trees are, which combined with the open correspondence of Theorem  \ref{combmain1} gives $(i')\leftrightarrow (ii')$. Finally, we show in Lemma \ref{existence of balanced clusters} that each equivalence class of cluster pictures contains a unique balanced one, giving $(i)\leftrightarrow (i')$.

The bijections between the invariants shown in Table \ref{closed correspondence} follow from the explicit description of the correspondences.
\end{proof}

The above result makes no mention of automorphism groups. Since equivalent objects need not have the same automorphism groups, the situation is more delicate (see e.g. Examples \ref{ex1:0g}, \ref{ex1:0t} and \ref{ex1:0s2}). However, the correspondence for balanced cluster pictures
is fairly clean:

\begin{theorem}
\label{closedcorraut}
Let $(X,\Sigma)$ be a balanced (possibly metric) cluster picture and $\tilde G$ be the core of the associated hyperelliptic graph $\GG(\Sigma)$.
Then the natural map $\Aut(\Sigma)\to \Aut(\tilde G)$, given by applying $\GG\circ \TT$ and restricting to the core, is surjective. The kernel is trivial if $X$ is \ub, and $C_2$  if $X$ is non-\ub\  (generated by the trivial permutation with $\epsilon(X)=-1$ and all other signs $+1$).
\end{theorem}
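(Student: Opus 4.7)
My plan is to factor the map through the correspondences of Theorem \ref{combmain1} and analyze the resulting restriction on open BY trees. Writing $T = \TT(\Sigma)$ and $\tilde T$ for the core of $T$, the map decomposes as
$$
\Aut(\Sigma) \stackrel{\TT}{\longrightarrow} \Aut(T) \stackrel{\mathrm{res}}{\longrightarrow} \Aut(\tilde T) \stackrel{\GG}{\longrightarrow} \Aut(\tilde G),
$$
in which the outer two arrows are isomorphisms by Propositions \ref{cluster picture by tree proposition} and \ref{by tree hyp graph proposition}. Thus it suffices to understand $\mathrm{res}:\Aut(T) \to \Aut(\tilde T)$.

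The first step is to describe the structure of $T$ near its open edge $a_X$ in the balanced case. Since $\Sigma$ is balanced, $|X|$ is even and $a_X$ is yellow, while $w_X$ is yellow iff $X$ is \"ubereven. The crucial structural observation is that when $X$ is non-\"ubereven, $w_X$ is a blue vertex whose only yellow incident edge is $a_X$ (all remaining yellow edges at $w_X$ would have to terminate at blue even children), so the interior of $a_X$ forms a yellow component of $T_y$ disjoint from $\tilde T_y$; when $X$ is \"ubereven, $a_X$ belongs to a yellow component of $T_y$ that also meets $\tilde T_y$ (either through the remaining yellow edges at $w_X$, or through the merged edge of $\tilde T$ that appears when $w_X$ has degree $2$ in $T\setminus\{a_X\}$ and is absorbed into an edge of the core).

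Next I would compute the kernel. A pair $(\sigma_0,\epsilon_\sigma)\in\Aut T$ restricts to the identity on $\tilde T$ iff $\sigma_0$ acts as the identity on $\tilde T$ as a metric graph and $\epsilon_\sigma$ is trivial on every yellow component of $\tilde T_y$. Since $a_X$ and $w_X$ are the only features of $T$ that might escape $\tilde T$, the first condition forces $\sigma_0$ to fix $a_X$ setwise and $w_X$ pointwise, hence $\sigma_0=\mathrm{id}$ on all of $T$. The second condition fully determines $\epsilon_\sigma$ on every yellow component of $T_y$ meeting $\tilde T_y$ but leaves free the sign on any yellow component of $T_y$ lying entirely outside $\tilde T_y$. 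By the structural observation, such a free component exists precisely when $X$ is non-\"ubereven, in which case it is the interior of $a_X$, and under $\TT$ the free sign corresponds to the sign $\epsilon(X)$ on the top cluster. This gives the claimed kernel.

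For surjectivity, given $\tau\in\Aut(\tilde T)$ I would lift it to $\sigma\in\Aut(T)$ by sending $a_X$ to itself, mapping $w_X$ to its image under $\tau$ (treating $w_X$ as a distinguished point of $\tilde T$, whether a vertex or an interior point of an absorbed edge), and extending $\epsilon_\sigma$ trivially on any new yellow component. The main obstacle I anticipate is verifying that $\tau$ actually fixes $w_X$: this is not automatic since $\tilde T$ does not remember the open edge. I expect to handle this by showing that the balance condition characterises $w_X$ intrinsically in $\tilde T$ via a centroid-type property (no subtree hanging off $w_X$ contains more than half of the $m_v$-marked leaves, cf.\ Construction \ref{TtoS}), so that $w_X$ is preserved by every automorphism of $\tilde T$; in the absorbed case, the same balance condition forces $w_X$ to sit symmetrically on the merged edge, so it is preserved even by a swap of the edge's endpoints, yielding a well-defined lift and completing the proof.
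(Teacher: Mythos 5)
Your route is the paper's own, unpacked: factor the map as $\Aut\Sigma\to\Aut T\to\Aut\tilde T\to\Aut\tilde G$ with the outer arrows isomorphisms, read off the kernel from the sign that is lost on a yellow component of $T_y$ disjoint from the core, and get surjectivity from the fact that the attachment point of the open edge is an automorphism-invariant ``centre'' of $\tilde T$. This is precisely the content of Corollary \ref{AutEquiv}, Lemma \ref{existence of balanced clusters}, Remark \ref{canonical BY tree rep} and Corollary \ref{res of auts}; your ``$m_v$-marked leaves'' weight is exactly the weight function of Definition \ref{BY tree centre}. The kernel computation is correct and complete. (One harmless slip: when $X$ is non-\"ubereven, $a_X$ need not be the only yellow edge at $w_X$ --- $X$ may have twins or other even children --- but since $w_X$ is blue it disconnects the interior of $a_X$ from the rest of $T_y$ regardless, which is all you actually use.)

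The step I cannot accept as written is the claim that in the absorbed case ``the balance condition forces $w_X$ to sit symmetrically on the merged edge.'' Balancedness constrains only the sizes of the two children of $X$, never the distances $\delta(\s_1,X)$ and $\delta(\s_2,X)$, so in the metric case $w_X$ can be any interior point of the central edge. In the non-metric case the problem evaporates for a different reason, which is how you should phrase the repair: automorphisms are homotopy classes of homeomorphisms, so any automorphism of $\tilde T$ preserving the central edge (possibly reversing it) is represented by a homeomorphism fixing the prescribed interior point, and the lift to $T$ exists. In the metric case the issue is genuine: take $g=2$ with $X=\s_1\sqcup\s_2$, $|\s_i|=3$, $\delta(\s_1,X)=1$, $\delta(\s_2,X)=2$. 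This is balanced, $\tilde T$ is two genus-$1$ blue vertices joined by a blue edge of length $3$ admitting the edge-reversing isometry, but no lift to $T$ exists (it would have to carry an edge of length $1$ to one of length $2$), and indeed $\Aut\Sigma=C_2$ maps trivially to $\Aut\tilde G\cong C_2$. So surjectivity, as literally stated for metric cluster pictures, fails here. To be fair, the paper's own proof elides exactly this point (``all automorphisms of $\tilde T$ fix the centre'' only yields that the central \emph{edge} is preserved setwise), so you have correctly located the weak joint of the argument --- but it must be resolved by the homotopy-class observation in the non-metric case and flagged as a genuine restriction in the metric one, not by a symmetry claim that is false.
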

\begin{proof}
This follows from Corollary \ref{AutEquiv} along with the comparison of equivalence between hyperelliptic graphs and BY trees (Lemma \ref{hyp tree equiv lemma} and Proposition \ref{by tree hyp graph proposition}).
\end{proof}

\stepcounter{equation}
\begin{table}[h] 
\caption{Dictionary for the one-to-one correspondence (closed case, genus $\ge 2$)} 
\label{closed correspondence} 
\begin{tabular}{|l|l|l|}
\hline
Hyperelliptic graph & BY tree  & Cluster picture \cr
\hline
\hline
&&\cr
$\iota$-invariant vertices       & blue vertices of genus $g$   & principal non-\"ubereven\cr
of genus $g$                    & that are not genus 0 leaves     & clusters of genus g   \cr
&&\cr
\hline
&&\cr
pairs of vertices                & yellow vertices of genus 0 & principal \"ubereven    \cr
$\{v,\iota(v)\}$                       &                                           & clusters          \cr
&&\cr
\hline
                                          &                                            & twins of distance $\delta$ to \cr 
                                          & genus 0 leaves with the     & a non-\"ubereven parent,   \cr
loops of length $2\delta$   & edge to a blue vertex of    & \qquad\qquad and \cr
                                          &  length $2\delta$                 & cotwins of distance $\delta$\cr 
                                          &                                             & to a non-\"ubereven  child\cr
\hline
                                           &                                           & twins of distance $\delta$ to \cr
$\iota$-anti-invariant edges   & genus 0 leaves with the      & an \"ubereven parent, \cr 
of length $2\delta$, that are        & edge to a yellow vertex      &  \qquad\qquad and \cr
 not loops            & of length $2\delta$               &  cotwins of distance $\delta$ \cr
                                         &                                              &   to an \"ubereven  child\cr
\hline
                                         &                                             & odd principal clusters    \cr
                                         &                                            & of distance $\delta$ to a  \cr
$\iota$-invariant edges of         &                                           & principal parent, \cr
length $\delta/2$        & blue edges of length $\delta$     & \qquad \qquad  and \cr
  & & $X$ if $X=\s_1\coprod \s_2$ with \cr
  & & $\s_1, \s_2$ odd principal of   \cr
  & & distance $\delta$ to each other    \cr
\hline
                                             &                                                      & even principal clusters   \cr
                                             &                                                         & of distance $\delta$ to a  \cr
pairs of edges $\{e,\iota(e)\}$    & yellow edges of length $2\delta$     &  principal parent \cr
of length $\delta$               &not incident to a genus 0                    & \qquad \qquad  and \cr
                                            & leaf                                                   & $X$ if $X=\s_1\coprod \s_2$ with \cr
  & & $\s_1, \s_2$ even principal of   \cr
  & & distance $\delta$ to each other    \cr
\hline
\end{tabular}
\end{table}

\subsection{Equivalence: BY trees, hyperelliptic graphs}


Recall that the core of an open hyperelliptic graph (resp. open BY tree) is its maximal closed hyperelliptic subgraph (resp. closed BY subtree).  Proposition \ref{Tclosure} below shows that the core exists and is unique.
Granted this, we single out vertices that come from the core:
%

\smallskip

\begin{definition}[Principal vertex]
~
\begin{itemize}
\item[(1)]
Let $G$ be an open hyperelliptic graph of genus $\ge 2$, with core $\tilde{G}$. 
A vertex $v$ of $G$ is \emph{principal} if it corresponds to a vertex in the core (i.e. lies in the core and does not become a point on an edge upon removing $G\setminus \tilde{G}$). 
\item[(2)] Let $T$ be an open BY tree of genus $\ge 2$, with core $\tilde{T}$. A vertex $v$ of $T$ is \emph{principal} if it corresponds to a vertex in the core which is not a genus 0 leaf of $\tilde{T}$. 
\end{itemize}
\end{definition}

Still assuming Proposition \ref{Tclosure}, we now show that cores and principal vertices are
preserved under the correspondence:

\begin{lemma} \label{hyp tree equiv lemma}
The correspondences $\underline{G}$ and $\TT$ between open hyperelliptic graphs and open BY trees preserve equivalence. Explicitly, if $T$ is an open BY tree with core $\tilde{T}$, then the core of $\underline{G}(T)$ is isomorphic to $\underline{G}(\tilde{T})$. Similarly, for an open hyperelliptic graph $G$ with core $\tilde{G}$, the core of $\TT(G)$ is isomorphic to $\TT(\tilde{G})$.

Moreover,
let $G$ be an open hyperelliptic graph of genus $\ge 2$, $T=\TT(G)$ the associated BY tree and $\pi:G\rightarrow T$ the quotient map. Then $\pi$ induces a bijection between $\iota$-orbits of principal vertices of $G$ and principal vertices of $T$ (here $\iota\in \Aut G$ is the hyperelliptic involution). 
\end{lemma}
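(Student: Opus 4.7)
The plan is to establish a stronger local statement: that $\TT$ and $\underline{G}$ induce inclusion-preserving bijections between closed subobjects of $G$ and $\TT(G)$ (respectively between closed BY subtrees of $T$ and closed hyperelliptic subgraphs of $\underline{G}(T)$). Given this, the core is characterised as the unique maximal element of the poset of closed subobjects, so it is automatically preserved by the correspondence.

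First I would show that for any closed hyperelliptic subgraph $H \subseteq G$, the image $\TT(H) = H/\langle \iota|_H\rangle$ naturally embeds inside $\TT(G) = G/\langle \iota\rangle$ as a closed BY subtree. This requires three checks: (i) $\TT(H)$ is a closed subspace of $\TT(G)$ topologically expressible as a union of vertices and edges of $\TT(G)$; (ii) the colour and genus markings on $\TT(H)$ agree with the restriction of those on $\TT(G)$; (iii) the defining axioms of a BY subtree from Definition \ref{coreBY} hold --- in particular, that a degree 2 genus 0 vertex whose incident edges share its colour gets absorbed into the edge it subdivides, which matches the absorption condition in the definition of a closed hyperelliptic subgraph. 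The corresponding statement for $\underline{G}$ follows by running the same argument in reverse, using that the preimage under the quotient map of a closed BY subtree is a closed $\iota$-stable subset of $G$.

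Combining this with Proposition \ref{by tree hyp graph proposition}, which gives $\underline{G}\circ \TT \cong \mathrm{id}$ and $\TT\circ \underline{G} \cong \mathrm{id}$ on isomorphism classes, yields the required poset isomorphism between closed hyperelliptic subgraphs of $G$ and closed BY subtrees of $\TT(G)$. Since the core is the maximum on each side (by the existence and uniqueness statement in Proposition \ref{Tclosure}), one obtains the desired isomorphisms $\widetilde{\TT(G)} \cong \TT(\tilde{G})$ and $\widetilde{\underline{G}(T)} \cong \underline{G}(\tilde{T})$.

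For the statement about principal vertices, I would unpack the description of $\pi : G \to T$ in Construction \ref{GtoT}. Each $\iota$-orbit of vertices of $G$ maps to a single vertex of $T$: an invariant vertex $v$ maps to a blue vertex, and a swapped pair $\{v, \iota(v)\}$ maps to a yellow vertex. Vertices of $T$ that arise as genus 0 leaves of the core $\tilde{T}$ correspond under $\underline{G}$ either to loops on invariant vertices or to $\iota$-anti-invariant edges in $\tilde{G}$; in either case they do \emph{not} contribute vertices to $\tilde{G}$. Conversely, every non-leaf (or positive-genus leaf) vertex of $\tilde{T}$ comes either from an invariant or from a swapped pair of vertices of $\tilde{G}$. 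Using the compatibility of cores established above, this translates into the claimed bijection between $\iota$-orbits of principal vertices of $G$ and principal vertices of~$T$.

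The main technical hurdle will be the absorption-of-degree-2-vertices condition in the definition of a closed subobject, because this is where the interaction between the 2-colouring and the local vertex structure is most delicate, and where the two definitions (closed BY subtree versus closed hyperelliptic subgraph) are phrased slightly differently. I expect that a short case analysis, keyed by the colour of the vertex and of its two incident edges, will suffice to verify that the absorption operations on both sides really do match up under $\TT$ and $\underline{G}$. The exceptional circle graphs of Definitions \ref{defhypgraph} and \ref{defbytree} will need separate verification, but this should be routine.
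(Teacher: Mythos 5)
Your proposal is correct and follows essentially the same route as the paper: the paper likewise observes that $\TT$ and $\underline{G}$ carry closed subobjects to closed subobjects and preserve inclusions (leaving the absorption-of-degree-2-vertices check implicit in "it follows from the explicit constructions"), so the core, being the unique maximal closed subobject by Proposition \ref{Tclosure}, is preserved; and it derives the principal-vertex bijection from the same observation that the genus $0$ leaves of the core arise exactly from loops and $\iota$-anti-invariant edges, hence contribute no vertices to $\tilde{G}$. Your plan is simply a more explicit write-up of the same argument.
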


\begin{proof}
Recall that the maps $\underline{G}$ and $\TT$ of Constructions \ref{GtoT} and \ref{TtoG} were defined for both open and closed objects. It follows from the explicit constructions that if $G$ is an open hyperelliptic graph and $G'$ a (closed) hyperelliptic subgraph, then $\TT(G')$ is  canonically a closed BY subtree of $\TT(G)$, and that inclusion of subtrees is preserved by $\TT$. Since the same is also true if we start with an open BY tree and consider the map $\underline{G}$ applied to subtrees of~$T$, we obtain the result. 

For the second part,
it only remains to recall from Construction \ref{GtoT} that vertices of $T$ either arise from $\iota$-orbits of vertices of $G$, or from $\iota$-anti-invariant edges, and that the latter are precisely the genus zero leaves of~$T$.
\end{proof}

\clearpage

\comment

\begin{table}[!htb]        
\begin{center}
\ \ \includegraphics{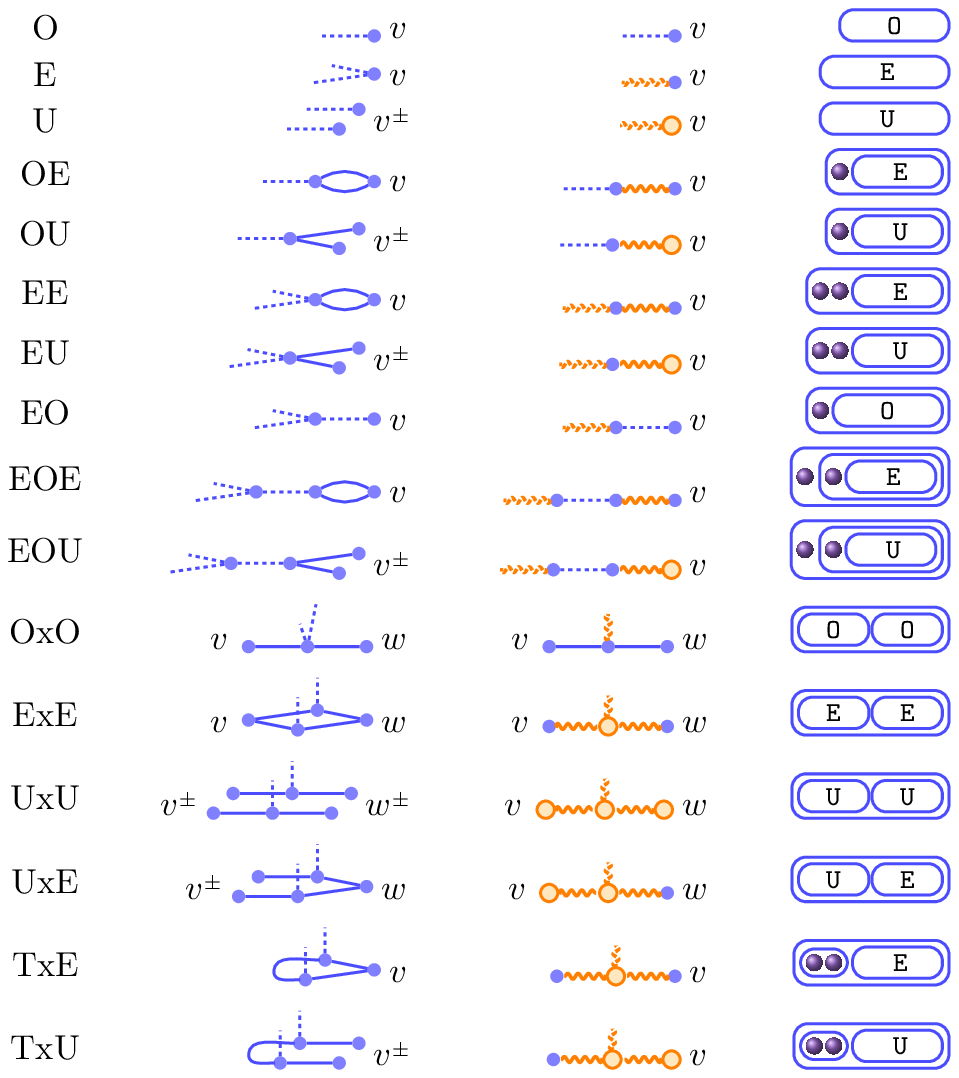}\\[5pt]
\end{center}
\begingroup\begin{flushleft}\footnotesize 
Notation (cf. Propositions \ref{Tclosure}, \ref{Sclosure}) 
Column 2/3: dashed edges, and the leftmost vertex in 
EO/EOE/EOU, are not in the core; vertices in the picture other than $v^\bullet,w^\bullet$ have 
genus 0 and no other outgoing edges. 
Column 2: $v^\bullet,w^\bullet$ have positive genus or $\ge 3$ outgoing non-dashed edges.
Column 3: $v,w$ have positive genus or an outgoing non-dashed yellow edge.
Column 4: O/E/U denotes an odd/even/\"ubereven maximal principal cluster. 
\\[8pt]\end{flushleft}
\endgroup
\caption{16 possible `neighbourhoods of $\infty$' for cluster pictures, open 
hyperelliptic graphs and open BY trees $(g\ge 2)$}
\label{opentable}
\label{nbhds of infty table}  
\end{table}

\endcomment

\def\v{\raise 1.3pt\hbox{$\>\scriptsize v\>\,$}}
\def\vpm{\raise 1.3pt\hbox{$\>\scriptsize v\rlap{$^\pm$}\>\>\>\,$}}
\def\w{\raise 1.3pt\hbox{$\>\scriptsize w\>\,$}}
\def\wpm{\raise 1.3pt\hbox{$\>\scriptsize w\rlap{$^\pm$}\>\>\>\,$}}

\stepcounter{equation}
\begin{table}[!htb]                                         
\caption{Neighbourhoods of $\infty$ (genus $\ge 2$)}
\begin{center}
\begin{tabular}{|c@{\quad}|r@{\quad}r@{\quad}r|}
\hline     
\scalebox{0.75}{Case} & \scalebox{0.75}{Connected component of $\infty$}   & \scalebox{0.75}{Connected component of $\infty$ } & \scalebox{0.75}{Configuration of maximal}\cr
     & \scalebox{0.75}{of $G\setminus\text{\{principal vertices\}}$} & \scalebox{0.75}{of $T\setminus\text{\{principal vertices\}}$} & \scalebox{0.75}{principal clusters of $\Sigma$}\\[3pt]
\hline     
\raise1pt\hbox{O} & \hgO \v & \byO \v & \clpO\vphantom{$\int^{\int^a}$} \cr
\raise1pt\hbox{E} & \hgE \v & \byE \v & \clpE \cr
\raise1pt\hbox{U} & \hgU \vpm & \byU \v & \clpU \cr
\raise3pt\hbox{OE} & \hgOE \v & \byOE \v & \clpOE \cr
\raise3pt\hbox{OU} & \hgOU \vpm & \byOU \v & \clpOU \cr
\raise3pt\hbox{EE} & \hgEE \v & \byEE \v & \clpEE \cr
\raise3pt\hbox{EU} & \hgEU \vpm & \byEU \v & \clpEU \cr
\raise3pt\hbox{EO} & \hgEO \v & \byEO \v & \clpEO \cr
\raise5pt\hbox{EOE} & \hgEOE \v & \byEOE \v & \clpEOE \cr
\raise5pt\hbox{EOU} & \hgEOU \vpm & \byEOU \v & \clpEOU \cr
\raise3pt\hbox{OxO} & \v \hgOxO \w & \v \byOxO \w & \clpOxO \cr
\raise3pt\hbox{ExE} & \v \hgExE \w & \v \byExE \w & \clpExE \cr
\raise3pt\hbox{UxU} & \vpm \hgUxU \wpm & \v \byUxU \w & \clpUxU \cr
\raise3pt\hbox{UxE} & \vpm \hgUxE \w & \v \byUxE \w & \clpUxE \cr
\raise3pt\hbox{TxE} & \hgTxE \v & \byTxE \v & \clpTxE \cr
\raise3pt\hbox{TxU} & \hgTxU \vpm & \byTxU \v & \clpTxU \cr
\hline     
\end{tabular}
\end{center}
\begingroup\begin{flushleft}\footnotesize 
In every row the three entries correspond to one another under $\GG, \TT, \SS$.\\
Column 2/3: labelled vertices are principal, others are not.\\
Column 2/3: dashed edges, and the leftmost vertex in 
EO/EOE/EOU, are not in the \rlap{core.}\\
Column 4: O/E/U denotes an odd/even/\"ubereven maximal principal cluster. 
\\[8pt]\end{flushleft}
\endgroup
\label{opentable}
\label{nbhds of infty table} 
\end{table}


%

\begin{proposition}
\label{Tclosure}
Let $T$ be an open BY tree and $G$ an open hyperelliptic graph.

\noindent
(1) The core $\tilde{T}$ of $T$ (resp. $\tilde{G}$ of $G$) exists and is unique. 

\noindent
(2) $g(\tilde T)=g(T)$ and $g(G)=g(\tilde G)$. 

\smallskip
\noindent
Now assume $g(T)\ge 2$ and $g(G)\ge 2$.
\smallskip

\noindent
(3) There are 16 possibilities for the connected component
of $\infty$ of $T\setminus\text{\{principal vertices\}}$, and 16 
corresponding ones for hyperelliptic graphs.
%
%
They are given in 
Table~\ref{opentable}. 

\noindent
(4) 
In the notation of Table~\ref{opentable},
$\tilde T$ is obtained from $T$ by 
starting at $\infty$ and removing 
\begin{itemize}
\item
(Cases O--EU) edge
\item
(Cases EO--EOU) edge, vertex, edge
\item
(Cases OxO--TxU) edge, vertex
\end{itemize}
and $\tilde G$ from $G$ by removing
\begin{itemize}
\item
(Cases O--U) edge(s) 
\item
(Case EO) edges, vertex, edge
\item
(Case EOE, EOU) edges, vertex, edge, vertex
\item
(Cases OE--EU, OxO--TxU) edge(s), vertex/vertices.
\end{itemize}
Here `edge/edges' means removing the unique edge / $\iota$-orbit of two edges from $\infty$ 
or the latest removed vertex; and `vertex/vertices' means taking the vertex / $\iota$-orbit 
of two vertices incident to the latest removed edge(s), and 
(a) when they have degree 1, removing them or (b) when they have degree~2, declaring
them to not be vertices anymore (but interior points on the resulting merged edges 
instead).
%
%
%
\end{proposition}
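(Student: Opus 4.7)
My plan is to prove (1) and (2) simultaneously by an explicit pruning procedure, then carry out a case analysis for (3) and (4), and finally reduce the hyperelliptic graph statements to the BY tree statements via the open correspondence.

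For the BY tree case, I would define an iterative procedure starting at $\infty$. At each step, remove the open edge $e_\infty$ and inspect the vertex $v$ at its other end. Either (a) every vertex of the remaining graph satisfies Definition \ref{defbytree}, in which case we stop (demoting $v$ to a point on a merged edge if it has genus $0$, degree $2$, and same-coloured incident edges), or (b) $v$ violates some condition of Definition \ref{defbytree}, in which case we prune $v$ together with the next edge(s), creating a new open edge, and iterate. Existence of $\tilde T$ follows from the termination of this procedure (which is forced by $g(T)\ge 2$, so a substantial core must remain). Uniqueness follows because each removal is \emph{forced}: any closed BY subtree $T'\subseteq T$ must omit whatever the procedure removes, since containing a violating vertex would contradict $T'$ being a BY tree. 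Part (2) is immediate: the pruned material is contractible and contains no positive-genus vertices, so neither $\operatorname{rk} H_1(T,T_b)$ nor $\sum g(v)$ changes.

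For parts (3) and (4), I would systematically enumerate the local configurations near $\infty$ in an open BY tree. The classification is organized by: the colour of the open edge (blue/yellow); whether $v$ is principal; and if not, the colour and degree data forcing further pruning. A genus $0$ vertex $v$ fails to be principal only in a limited number of ways---it can have wrong degree/colour properties from Definition \ref{defbytree}. The main obstacle is showing completeness: that exactly the $16$ cases in Table \ref{opentable} arise. I would verify this by a bookkeeping argument: prune $v$ only when necessary, and show that after at most two pruning steps the resulting vertex must either be principal or give rise to a branching configuration (the cases OxO, ExE, UxU, UxE, TxE, TxU where the first pruned vertex has two outgoing non-open edges, both going to principal vertices or configurations that immediately reach them). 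The genus $\ge 2$ hypothesis ensures the procedure cannot unravel the entire tree. The explicit description in (4) is then read off from each case.

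For the hyperelliptic graph case, rather than repeating the analysis, I would leverage Constructions \ref{GtoT} and \ref{TtoG} together with Proposition \ref{by tree hyp graph proposition}. The key observation (already noted in the proof of Lemma \ref{hyp tree equiv lemma}) is that $\GG$ and $\TT$ send closed BY subtrees to closed hyperelliptic subgraphs and vice versa, and preserve inclusion. Consequently, the core of $G$ corresponds to the core of $\TT(G)$: a maximal closed hyperelliptic subgraph of $G$ must equal $\GG(\tilde T)$ where $\tilde T$ is the core of $\TT(G)$. This yields existence and uniqueness of $\tilde G$, and the equality $g(G)=g(\tilde G)$ from (2) follows from the BY tree version since $\TT$ preserves genus (Proposition \ref{by tree hyp graph proposition}(1)). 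The $16$-case table for hyperelliptic graphs transfers via the dictionary in Construction \ref{TtoG} (or equivalently Table \ref{tabdicto}): a blue edge in $T$ corresponds to an $\iota$-invariant edge in $G$; a yellow edge not at a leaf corresponds to an $\iota$-swapped pair of edges; and yellow edges ending at blue genus $0$ leaves correspond to $\iota$-anti-invariant edges or loops. Applying this dictionary to the pruning description in the BY tree case yields the listed pruning for hyperelliptic graphs in (4).
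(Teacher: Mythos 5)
Your argument is essentially the paper's own proof: remove the open edge from the vertex $v_0$ nearest $\infty$, observe that only conditions (1) or (2) of Definition \ref{defbytree} can be newly violated (condition (3) only improves when an edge is deleted), branch into the resulting configurations to get the 16 cases, get uniqueness from the fact that each excision is forced on any closed BY subtree, and transport everything to hyperelliptic graphs via $\GG$ and $\TT$.

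One concrete caveat: parts (1) and (2) are asserted for \emph{all} genera, but your existence/termination argument explicitly invokes $g(T)\ge 2$. In genus $0$ the pruning procedure as you describe it would delete the whole tree --- after removing the open edge one is left with a single blue genus-$0$ vertex with no yellow edge, which violates Definition \ref{defbytree}(2) as literally stated, yet is precisely what the core is supposed to be --- and in genus $1$ the exceptional circle graphs of Definition \ref{defhypgraph} require special treatment on the hyperelliptic-graph side. These finitely many low-genus configurations have to be checked separately by inspection (as the paper does via Tables \ref{g012table} and \ref{tabg1a}); with that added, your proof is complete.
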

\begin{proof}
That the core of $T$ (resp. $G$) exists and is unique follows 
by inspection in genus 0 and 1 (see Tables \ref{g012table}, \ref{tabg1a})
and, otherwise, from the explicit construction of the core detailed below. 
For that, we just do the $T\to\tilde T$ case, the $G\to\tilde G$ case being its translation 
via the correspondence between open hyperelliptic graphs and open BY trees. 

Clearly, to get from $T$ to $\tilde T$, the unique open edge (say, from $v_0$)
needs to be removed. 
If $v_0$ becomes a valid BY tree vertex (see Definition \ref{defbytree}),
we are done. There are 7 such configurations depending on whether $v_0$ is a genus~0 
leaf or not, and on the edge colours (Cases O--EU).

Otherwise, after the open edge is removed, 
$v_0$ must violate \ref{defbytree} (1) or (2). 
If it violates (1), $v_0$ is yellow (of genus 0) with exactly two other edges.
There are 5 such configurations depending on the two adjacent vertices, and 
on the edge colours (Cases ExE--TxU). Declaring $v_0$ to not be a vertex gives a BY tree.

If $v_0$ violates (2), it must have become a blue genus 0 vertex with no yellow edges.
Then it has one or two blue edges (by \ref{defbytree} (3)), and the removed edge
was yellow (by (2)).
When there are two blue edges, this is case OxO. 
Declaring $v_0$ to not be a vertex gives a BY tree.
If there is one blue edge, we remove $v_0$ and this edge --- these are 
Cases EO, EOE, EOU depending on the vertex adjacent to $v_0$.

The statement about the genus is clear, as no positive genus vertices are removed and the 
(relative) homology is unchanged. (See also Proposition~\ref{le:H1iso}.)
%
\end{proof}

An immediate corollary is the following. 

\begin{corollary} \label{res of auts} 
Let $T$ be an open BY tree with core $\tilde{T}$. Then the map $\Aut T \rightarrow \Aut \tilde T$ given by restriction of automorphisms has kernel $C_2$ in Cases EE-OxO and is injective in all other cases. Its image consists of all those automorphisms of $\tilde T$ fixing the point closest to $\infty$.\\
\end{corollary}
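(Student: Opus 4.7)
The plan is to analyze both the kernel and the image of the restriction map $\Aut T\to \Aut\tilde T$ directly, using the explicit local description of $T\setminus \tilde T$ near $\infty$ supplied by Proposition \ref{Tclosure} and tabulated in Table \ref{opentable}.

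For the kernel, I would take $\sigma=(\sigma_0,\epsilon)\in\Aut T$ whose restriction to $\tilde T$ is trivial. Then $\sigma_0$ fixes every vertex and every edge of $\tilde T$ pointwise. Because the portion of $T$ removed to form $\tilde T$ is attached to $\tilde T$ at a unique point $p_\infty$ (by Proposition \ref{Tclosure}) and has a rigid shape (as depicted in Table \ref{opentable}), $\sigma_0$ must also act trivially on $T\setminus \tilde T$, so $\sigma_0=\id_T$. The only remaining freedom is in the sign datum $\epsilon$, which may differ from the identity sign exactly on yellow components of $T$ that do not meet $\tilde T$. Reading off from Table \ref{opentable}, a removed yellow edge at $\infty$ forms its own yellow component of $T$ (supplying an independent sign) precisely when both its endpoints are blue --- equivalently, when the yellow edge incident to $\infty$ is cut off from $\tilde T$ by a \emph{blue} interior point. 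This is exactly the situation in Cases EE, EU, EO, EOE, EOU and OxO, giving a kernel of order two. In the remaining cases, either there is no yellow edge in $T\setminus\tilde T$ (Cases O, OE, OU) or the removed yellow edge meets a yellow interior vertex and so shares its yellow component with an edge of $\tilde T$ (Cases U, ExE, UxU, UxE, TxE, TxU); in either situation the sign is already determined by $\epsilon|_{\tilde T}$ and the restriction map is injective.

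For the image, note that every $\sigma\in\Aut T$ fixes $\infty$ and therefore preserves the unique geodesic from $\infty$ to $\tilde T$, so its restriction fixes the point $p_\infty$ of $\tilde T$ closest to $\infty$. Conversely, given any automorphism $\tau$ of $\tilde T$ fixing $p_\infty$, I would extend $\tau$ to all of $T$ by letting it act as the identity on the removed neighborhood of $\infty$. Since the shape of this neighborhood (together with the colours and genera of its vertices and edges) depends only on the local colour data of $\tilde T$ at $p_\infty$, which $\tau$ preserves, this extension is a well-defined automorphism of $T$, and the two constructions are mutually inverse on the subset of $\Aut \tilde T$ fixing $p_\infty$.

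The main obstacle is the bookkeeping in the kernel analysis: one must keep careful track, for each of the sixteen local configurations, of which yellow edges near $\infty$ lie in the same connected component of $T_y$. Once one isolates the guiding principle --- that two yellow edges share a yellow component if and only if they meet at a yellow vertex --- the case analysis becomes mechanical, and matching the cases against the pictures in Table \ref{opentable} yields the stated kernel and image.
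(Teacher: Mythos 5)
Your case analysis has a gap: you enumerate only fifteen of the sixteen configurations in Table~\ref{opentable}. You place O, OE, OU in the bucket ``no yellow edge removed'' and U, ExE, UxU, UxE, TxE, TxU in the bucket ``removed yellow edge meets a yellow interior vertex'', and you assign EE--OxO a $C_2$ kernel --- but Case E appears nowhere. This omission is not harmless. In Case E the open yellow edge is glued directly to a \emph{blue} principal vertex $v$ of the core, so by your own (correct) guiding principle --- two yellow edges share a component of $T_y$ if and only if they meet at a yellow vertex --- that open edge, minus $v$, is an entire connected component of $T_y$ disjoint from $\tilde T$, and its sign is unconstrained by the restriction. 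So Case E produces a $C_2$ kernel, exactly like EE--OxO. The slip enters at the word ``equivalently'': your reformulation ``cut off from $\tilde T$ by a blue \emph{interior} point'' silently excludes the configuration where there is no interior point at all because the yellow edge is attached straight to a blue vertex of the core.

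Confronting Case E reveals that the statement as printed is itself off by one case, so no proof of it verbatim can succeed: Remark~\ref{canonical BY tree rep} and Theorem~\ref{closedcorraut} (equivalently Corollary~\ref{AutEquiv}) explicitly exhibit, for a yellow open edge glued to a blue centre --- which is Case E --- the nontrivial kernel element fixing all vertices and carrying sign $-1$ on the component of $T_y$ containing the open edge. The corollary should therefore read ``kernel $C_2$ in Case E and in Cases EE--OxO''. With that correction your argument goes through: the image analysis (every automorphism of $T$ fixes $\infty$, hence the point of $\tilde T$ nearest $\infty$; conversely extend by the identity on the rigid tail) is fine, and the kernel analysis is correct once Case E is restored to the $C_2$ list.
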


\begin{remark}
The proof of Proposition \ref{Tclosure} 
gives a straightforward way to compute the core in practice.
\end{remark}

We now 
consider the opposite direction. 
That is, given a closed BY tree~$\tilde{T}$, which open BY trees have $\tilde{T}$ as their core:

%
%



%

%

\begin{corollary} \label{core to open}
Let $\tilde{T}$ be a closed BY tree. Then an open BY tree $T$ has core $\tilde{T}$ if and only if it is obtained from $\tilde{T}$ in one of the following ways:
\begin{itemize}
\item declaring a point on an edge of $\tilde{T}$ to be a vertex of genus 0 (and the same colour  as the edge) and adding a yellow open edge at this vertex,
\item adding a yellow open edge to a vertex of $\tilde{T}$,

\item adding a blue open edge to a blue vertex $v$ of $\tilde{T}$ which has $2g(v)+2>\# \text{blue edges at }v$,

\item  adding `closed  blue edge $\rightarrow$ genus $0$ blue vertex $\rightarrow$ open yellow edge' to a blue vertex $v$ of $\tilde{T}$ which has $2g(v)+2>\# \text{blue edges at }v$.
\end{itemize}
\end{corollary}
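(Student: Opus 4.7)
\textbf{Proof proposal for Corollary \ref{core to open}.}

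The plan is to reduce the statement to a finite case check against Proposition \ref{Tclosure}, which has already catalogued the sixteen possible ``neighbourhoods of $\infty$'' for an open BY tree of genus $\geq 2$, and which tells us exactly what must be removed to pass from $T$ to its core $\tilde T$. I would first dispose of the low genus exceptions ($g(T)\leq 1$) by direct inspection using Tables \ref{g012table} and \ref{tabg1a}, so that Proposition \ref{Tclosure} may be invoked freely in the remainder of the argument.

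For the implication ``each of the four constructions produces an open BY tree with core $\tilde{T}$'', I would verify, one construction at a time, (a) that the resulting graph satisfies the three axioms of Definition \ref{opendefbytree}, and (b) that applying the core-extraction recipe of Proposition \ref{Tclosure}(4) returns $\tilde T$. Part (a) is immediate in each case: the ``room'' hypothesis $2g(v)+2>\#\text{blue edges at }v$ in constructions 3 and 4 is precisely what is needed to preserve axiom (3), and in construction 1 the new vertex inherits the colour of its edge, automatically giving it the right edge-colouring to satisfy axioms (1) and (2). For part (b), one simply checks which of the sixteen cases O--TxU the construction falls into; for instance construction 1 applied to a point on a blue edge (necessarily between two principal vertices, since in $\tilde T$ every blue edge runs between principal vertices by axiom (2)) produces case OxO, while applied to a yellow edge it produces one of ExE, UxU, UxE, TxE, TxU depending on the colours of the endpoints and whether they are principal or genus 0 leaves.

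For the converse direction, the key observation is that every open BY tree $T$ with core $\tilde T$ falls into exactly one of the sixteen cases of Proposition \ref{Tclosure}(3), and each of these sixteen cases arises from a specific construction:
\begin{itemize}
\item Cases O, OE, OU: construction 3, applied to a principal blue vertex (case O) or to a blue genus 0 leaf (cases OE, OU); the ``room'' hypothesis is automatic because genus 0 leaves have no blue edges.
\item Cases E, U, EE, EU: construction 2, applied to a principal vertex (E, U) or to a blue genus 0 leaf (EE, EU).
\item Cases EO, EOE, EOU: construction 4, with the open yellow edge, the intermediate genus 0 blue vertex, and the closed blue edge read off directly from the removal recipe ``edge, vertex, edge''.
\item Cases OxO, ExE, UxU, UxE, TxE, TxU: construction 1, where the ``edge, vertex'' removal in Proposition \ref{Tclosure}(4) identifies the non-principal vertex as a point on an edge of $\tilde T$ (of the same colour).
\end{itemize}
A simple tally ($3+4+3+6=16$) confirms that no case is missed and that the four constructions are disjoint in the open tree they produce, which gives the claimed characterisation.

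The main obstacle, although not a conceptual one, is bookkeeping: one must be careful to distinguish principal vertices from genus 0 leaves of $\tilde T$ (the latter being non-principal in $T$), and to observe that blue edges of $\tilde T$ can only connect principal vertices while a blue genus 0 leaf of $\tilde T$ must have its unique edge yellow. Handling these colouring constraints correctly is what ensures the sixteen cases of Proposition \ref{Tclosure} are precisely accounted for by the four constructions, with neither overlap nor omission.
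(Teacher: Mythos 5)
Your proposal is correct and follows essentially the same route as the paper: dispose of genus $\le 1$ by inspection, then use Proposition \ref{Tclosure} to reduce everything to a finite check of which configurations can be glued onto $\tilde T$ at which points so as to yield a valid open BY tree with core $\tilde T$. Your matching of the sixteen neighbourhoods of $\infty$ to the four constructions (including the observation that the ``room'' hypothesis is exactly axiom (3) and is automatic at genus 0 leaves) is accurate, and merely makes explicit the case check the paper leaves to the reader.
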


\begin{proof}
This is true by inspection in genus 0 and 1. Otherwise,
it follows from Proposition \ref{Tclosure} that the core of any open BY tree is obtained by removing either a single open edge, blue or yellow, or removing the configuration consisting of a closed blue edge followed by an open yellow edge. To prove the result, one just checks which conditions need to be satisfied at a point $x\in\tilde{T}$ in order for the graph given by glueing on one of these three configurations to be a valid open BY tree with core $\tilde{T}$.  
\end{proof}

\begin{remark}
\label{corropengraph}
Corollaries  \ref {res of auts} and \ref{core to open} have obvious analogues for hyperelliptic graphs via the correspondence. Since the statements are neater for BY trees and these are the ones we will use when comparing the notion of equivalence for  BY trees/hyperelliptic graphs to that for cluster pictures, we have omitted them.
\end{remark}

\subsubsection{Centres of BY trees}

Given a closed BY tree $\tilde{T}$, Corollary \ref{core to open} can be viewed as describing the equivalence class of open BY trees with core (isomorphic to) $\tilde{T}$. In this subsection we single out a canonical representative in each equivalence class of open BY trees. To do this, we first single out a canonical `centre' (either a vertex or edge) on a given closed BY tree. Glueing on an open yellow edge there gives the sought representative of the associated equivalence class of open BY trees. 

The following purely graph theoretic lemma shows the existence of a `centre' with respect to a weighting on the vertices of a tree. We omit the proof.

\begin{lemma} \label{centres of trees}
Let $T$ be a finite connected tree and $w:V(T)\rightarrow \mathbb{R}_{\geq 0}$ be a `weight' function on the vertices of $T$ such that each vertex of degree one or two has positive weight. For a subtree $T'\leq T$, set $w(T')=\sum_{v\in T'}w(v)$ and for each $v\in T$, define 
\[\phi(v)=\textup{max}\left\{w(T')\bigm|T'~~\text{is a connected component of }T\setminus \{v\}\right\}.\] 
Then either
\begin{itemize}
\item[(1)] $\textup{min}_{v\in T}\phi(v)<\frac{1}{2}w(T),$ in which case the minimum is attained at a unique vertex of $T$, and all other vertices have $\phi(v)>\frac{1}{2}\phi(T)$,
\end{itemize}
or
\begin{itemize}
\item[(2)] $\textup{min}_{v\in T}\phi(v)=\frac{1}{2}w(T),$ in which case the minimum is attained at precisely two vertices of $T$, and these vertices are adjacent. 
\end{itemize}
In case $(1)$ we call the minimising vertex the centre of $T$ with respect to the weighting $\phi$. In case $(2)$, we define the centre to be the edge joining the two minimising vertices.
\end{lemma}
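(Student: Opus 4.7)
The plan is to combine a \emph{walk downhill} argument with a careful analysis of the boundary case $\phi(v^*)=\tfrac{1}{2}w(T)$. The first observation I would record is that for any edge $e=uv$, one has $\phi(u)+\phi(v)\geq w(T)$: removing $u$ (respectively $v$) yields a component containing the whole subtree of $T$ on $v$'s (resp.~$u$'s) side of $e$, and these two subtrees partition $V(T)$. This will already show that once $\min_v\phi(v)=\tfrac{1}{2}w(T)$, the minimizer set lies in a single edge, and symmetrically that any strict minimizer below $\tfrac{1}{2}w(T)$ forces its neighbours to have $\phi>\tfrac{1}{2}w(T)$.

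Next, to establish $\min_v\phi(v)\leq \tfrac{1}{2}w(T)$, I would construct a strictly decreasing sequence of $\phi$-values. Given $v$ with $\phi(v)>\tfrac{1}{2}w(T)$, let $T_1$ be the heaviest component of $T\setminus\{v\}$ and let $u$ be the neighbour of $v$ inside $T_1$. The component of $T\setminus\{u\}$ containing $v$ then has weight $w(T)-w(T_1)<\tfrac{1}{2}w(T)<\phi(v)$, while the remaining components are contained in $T_1\setminus\{u\}$ and so have weight at most $w(T_1)-w(u)$. When $w(u)>0$ this already gives $\phi(u)<\phi(v)$. The genuinely delicate case, and the main obstacle, is $w(u)=0$: by hypothesis this forces $\deg_T(u)\geq 3$, so $T_1\setminus\{u\}$ has at least two components whose weights sum to $w(T_1)$. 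Here the positivity hypothesis on low-degree vertices enters crucially: the leaves of each such component have $T$-degree $1$ or $2$ (or the component is a single vertex adjacent to $u$, of $T$-degree $1$), hence carry positive weight, so every component has strictly positive weight and therefore strictly less than $w(T_1)$. Iterating the procedure from any starting vertex must terminate, at a vertex with $\phi\leq \tfrac{1}{2}w(T)$.

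Finally, the two cases of the conclusion follow by the same circle of ideas. In case (1), for any $x\neq v^*$ the component of $T\setminus\{x\}$ containing $v^*$ absorbs every component of $T\setminus\{v^*\}$ except the one meeting $x$, and therefore has weight at least $w(T)-\phi(v^*)>\tfrac{1}{2}w(T)$, giving $\phi(x)>\tfrac{1}{2}w(T)$ and uniqueness. In case (2), the walk-downhill argument applied to $v^*$ forces the heaviest component of $T\setminus\{v^*\}$ to have weight exactly $\tfrac{1}{2}w(T)$, and the neighbour $u$ of $v^*$ inside it is then also a minimizer with $\phi(u)=\tfrac{1}{2}w(T)$. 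To rule out any further minimizer $x\notin\{v^*,u\}$, I would analyse $x$ according to which side of the edge $v^*u$ it lies on and run the same estimate; here again the positivity hypothesis is what handles the subcases $w(v^*)=0$ or $w(u)=0$, ensuring that no single subtree on either side of $x$ can carry the full weight $\tfrac{1}{2}w(T)$, and hence forcing $\phi(x)>\tfrac{1}{2}w(T)$.
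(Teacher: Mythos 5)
Your proof is correct. The paper explicitly omits its proof of this lemma, so there is nothing to compare against; your argument is the standard weighted-centroid descent, and it is complete in all essentials — in particular you correctly isolate the only delicate point, namely that the positivity hypothesis on vertices of degree $\leq 2$ forces every branch hanging off a weight-zero vertex to have strictly positive weight, which is exactly what makes the descent strict and what rules out a third minimiser (or a second half-weight branch at $v^*$ or $u$) in the boundary case $\phi=\tfrac{1}{2}w(T)$.
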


\begin{definition} \label{BY tree centre}
Let $T$ be a closed BY tree. We define its \emph{centre} to be the vertex or edge afforded by Lemma \ref{centres of trees} applied to the weight function $w:V(T)\rightarrow \mathbb{Z}_{\geq 0}$ given by
\[w(v)=\begin{cases}0~~&~~v~\textup{yellow,}\\ 2(g)+2-\textup{deg}_{T_b}(v)~~&~~v~\textup{blue,}\end{cases}\] 
where here for a blue vertex $v$, $\textup{deg}_{T_b}(v)$ denotes the number of blue edges at $v$.
Note that as $w$ is invariant under all automorphisms of $T$, the centre of $T$ is also. 
\end{definition}

\begin{remark} \label{genus weighting computation}
Let $T$ be a closed BY tree. 
With $w$ as in Definition \ref{BY tree centre} above, the same argument as in the proof of Proposition \ref{TSgenus} gives 
$$
  w(T)=2g(T)+2.
$$
Similarly, for each $v\in T$ and each connected component $T'$ 
of $T\setminus\{v\}$ 
we have
\[w(T')=\begin{cases}2g(T')+2~~&~~\textup{ if the open edge of $T'$ is yellow,}\\2g(T')+1~~&~~\textup{ if the open edge of $T'$ is blue}.\end{cases}\]
\end{remark}

\begin{remark} \label{canonical BY tree rep}
Glueing an open yellow edge to the centre of a closed BY tree $\tilde{T}$ gives (up to isomorphism) a canonical representative in the equivalence class of open BY trees having $\tilde{T}$ as their core. Letting $T$ denote this representative, the natural map $\Aut T\rightarrow \Aut\tilde{T}$ given by restriction of automorphisms is surjective
(this follows from Corollary \ref{res of auts}  since all automorphisms of $\tilde{T}$ fix the centre).
The kernel of the restriction homomorphism is trivial if the centre of $\tilde{T}$ is yellow, whilst if the centre of $\tilde{T}$ is blue then the kernel is isomorphic to $C_2$, generated by the automorphism of $T$ which fixes all vertices, has sign $-1$ on (the component of $T_y$ containing) the yellow open edge, and trivial sign on all other components of $T_y$ (again see Corollary \ref{res of auts}).
\end{remark}

\begin{remark}
Since equivalence is preserved by the correspondence between open BY trees and open hyperelliptic graphs, the construction above gives a canonical representative in each equivalence class of open hyperelliptic graphs.
\end{remark}

\subsection{Equivalence: cluster pictures}


%
%
%
%


We now turn to cluster pictures. We begin by describing the clusters that will correspond to principal vertices and to genus 0 leaves on the associated open BY tree,


\subsubsection{Principal clusters, twins and cotwins}

\begin{definition}\label{def principal cotwin}
Let $(X,\Sigma)$ be a cluster picture of genus $g\ge 2$.
Recall that a cluster of size 2 is a \emph{twin}.

A cluster $\s$ is a \emph{cotwin} if it has a child of size $2g$ whose complement is not a twin.

A proper cluster $\s$ is \emph{principal} if it is neither a twin nor a cotwin, 
and 
if $|\s|=2g+2$ then $\s$ has at least 3 children.
\end{definition}

\begin{example}
\label{ex1:0s3}
Each of the 7 pictures from Example \ref{ex1:0s2}
has exactly one principal cluster 
(smallest one of size $\ge 4$), and either one twin or one cotwin. 
\end{example}

\begin{remark} 
When $g(\Sigma)\ge 2$,
either $\Sigma$ has a unique maximal principal cluster $\s$, of size $\ge 2g$, 
or $X=\s\coprod\s'$ is a union of two principal clusters. 
Marking a maximal principal cluster by 
{\rm O/E/U} according to whether it is odd/even non-\"ubereven/\"ubereven
gives 16 possible configurations, as listed in Table~\ref{opentable} (right column). 
Note that $\Sigma$ has a cotwin if and only if it is in  cases OE-EU or EOE-EOU.
\end{remark}

\begin{lemma} \label{principal cluster lemma}
Let $\Sigma$ be a cluster picture of genus $\ge 2$, and $T=\TT(\Sigma)$ the associated open BY tree, with core $\tilde{T}$. Then a cluster $\s\in \Sigma$ is principal if and only if the associated vertex  $v_\mathfrak{s}$ of $T$ (see Construction \ref{StoT}) is a principal vertex. Moreover, $\s$ corresponds to a genus $0$ leaf in $\tilde{T}$ if and only if it is either a twin or a cotwin.
\end{lemma}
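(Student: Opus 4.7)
The plan is to combine the bijection $\s \mapsto v_\s$ between proper clusters and vertices of $T$ from Construction \ref{StoT} with the explicit description of the core afforded by Proposition \ref{Tclosure}. Under that construction twins are precisely the clusters whose associated vertex is a leaf of $T$, and Corollary \ref{genus of subtree}(3) gives $g(v_\s)=g(\s)$, so twin-leaves are genus-$0$. The task is therefore to identify which vertices of $T$ become non-principal upon taking the core: either they are excised entirely, or they survive but are turned into genus-$0$ leaves because an adjacent edge or vertex is removed.

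First I would argue that everything of interest lives in the ``neighbourhood of $\infty$'' tabulated by Proposition \ref{Tclosure} and Table \ref{opentable}. Indeed, if $\s$ is a cotwin then by definition it has a child of size $2g$, forcing $|\s|\in\{2g+1,2g+2\}$ and hence $|X\setminus\s|\le 1$; so cotwins can only occur as $X$ itself or as the unique cluster of size $|X|-1$, both of which sit in one of the 16 configurations. Similarly, any cluster $\s$ with $|\s|=2g+2$ equals $X$, again near $\infty$. Thus any proper cluster $\s$ lying strictly below the 16-case ``neighbourhood'' is neither a twin nor a cotwin and does not fall under the auxiliary condition in the definition of principality, so it is automatically principal; and on the tree side, Proposition \ref{Tclosure}(4) shows that the core-taking procedure does not touch $v_\s$, so $v_\s$ survives as a non-leaf vertex of $\tilde T$ and is principal. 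This disposes of all $\s$ outside the 16-case neighbourhood.

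For the remaining vertices, a case-by-case check against Table \ref{opentable} finishes the proof. In each of the 16 rows, column three labels the vertices of $T$ near $\infty$ as principal (those marked $v$ or $v^\pm$) or not, and column four labels the corresponding clusters by type; the match between the two labellings is what must be verified. Using Corollary \ref{genus of subtree} to translate between cluster data (size, parity, \"ubereven-ness, whether a complement is itself a cluster) and tree data (vertex colour, edge colour, and the shape of the $\infty$-neighbourhood of $T$), one reads off in each row that the unlabelled vertices correspond either to twins (as in cases TxE, TxU) or to cotwins (as in cases EO, EOE, EOU, and the ``E''/``U'' outer decoration of cases OE, OU, EE, EU) or to a top cluster $X$ which is split into two principal halves (as in cases OxO, ExE, UxU, UxE), in which last situation $v_X$ is excised by the core and $X$ is neither twin, cotwin, nor principal, so both directions of the lemma hold vacuously.

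The main obstacle will be the bookkeeping of the sixteen cases: each requires a short but careful translation, and the cases splitting $X$ into two halves need to be handled separately because there neither $\s$ nor $v_\s$ sits on the ``twin-or-cotwin $\leftrightarrow$ genus-$0$ leaf'' side of the dichotomy. Conceptually nothing deep is happening—once Proposition \ref{Tclosure} tells us which pieces of $T$ disappear when forming $\tilde T$, Construction \ref{StoT} tells us which clusters those pieces correspond to, and the definitions of twin, cotwin, and principal cluster have been engineered to match—but the verification is inescapably case-by-case.
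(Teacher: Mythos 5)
Your proposal is correct and follows essentially the same route as the paper: the published proof is a two-line appeal to Construction \ref{StoT} together with Table \ref{opentable}, observing that twins give genus $0$ leaves of $T$ (which persist in the core) while cotwins give vertices that become genus $0$ leaves only upon passing to the core; your write-up simply fleshes out the reduction to the sixteen configurations and the case check. One slip to repair: your claim that every proper cluster lying strictly below the neighbourhood of $\infty$ is ``neither a twin nor a cotwin'' is false, since twins can occur at arbitrary depth -- but this is harmless, because you have already disposed of twins via the leaf correspondence in your first paragraph, and the correct statement (a non-twin cluster strictly below the neighbourhood is principal, and its vertex is untouched by the core-taking and cannot become a genus $0$ leaf) follows from exactly the argument you give.
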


\begin{proof}
This follows from Construction \ref{StoT} (describing the association 
$\s\mapsto v_\mathfrak{s}$) and Table \ref{opentable},
which shows what is removed to obtain the core.
Note that twins correspond to 
genus 0 leaves of $T$, each of which remains a genus 0 leaf in the core, whilst cotwins correspond 
to vertices of $T$ which are not genus 0 leaves but become so when passing to the core.
\end{proof}

\subsubsection{Comparison with equivalence for open BY trees} 

We now show that the maps $\TT$ and $\underline{\Sigma}$ between cluster pictures and open BY trees preserve equivalence. Since (up to isomorphism) these maps are inverse to each other, it suffices to show the result for $\TT$.

\begin{proposition}
\label{Sclosure}
Two cluster pictures $(X,\Sigma)$, $(X',\Sigma')$ are equivalent if and only if 
the corresponding BY trees $\TT(\Sigma)$, $\TT(\Sigma')$ are. The same holds in the metric case.
\end{proposition}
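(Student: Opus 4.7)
The strategy is to use Table \ref{opentable} as the organising device: it lists all 16 possible `neighbourhoods of $\infty$' in parallel columns for open BY trees and cluster pictures. Two open BY trees have the same core precisely when they agree on everything except (possibly) their neighbourhood of $\infty$. The proof therefore reduces to matching this structural flexibility of BY trees against the four generating moves (i)--(iv) of Definition \ref{Sequivdef}.

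For the forward implication (equivalent cluster pictures $\Rightarrow$ equivalent BY trees) I would verify, move by move, that each of (i)--(iv) leaves the core of the associated open BY tree unchanged. Move (iii) (and its inverse (iv)) adds a singleton root to $X$, which by Construction \ref{StoT} only toggles the colour of the open edge: blue if $X$ is odd, yellow if $X$ is even. So (iii) transitions between matched pairs in Table \ref{opentable} (O$\leftrightarrow$E, OE$\leftrightarrow$EE, OU$\leftrightarrow$EU, EO$\leftrightarrow$EOE or EOU depending on the type of cotwin partner, etc.), and the core is unchanged by Proposition \ref{Tclosure}. Move (i) (and its inverse (ii)) adds a cocluster $X\setminus\s$ under the hypothesis that $X$ has exactly two proper children. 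On the BY tree side this corresponds to creating a new vertex right below $\infty$, which is exactly the transition between a `split top' configuration (OxO, ExE, UxU, UxE, TxE, TxU) and its `merged' counterpart (EO, EE, EU, OE, etc.). In each of these pairings the resulting core, as read off from Table \ref{opentable} by stripping away the neighbourhood of $\infty$, is identical.

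For the reverse implication I would use Corollary \ref{core to open}: given a closed BY tree $\tilde T$, every open BY tree with core $\tilde T$ is obtained by one of four explicit local operations at a point or an edge of $\tilde T$, and so realises one of the 16 rows of Table \ref{opentable}. It therefore suffices to show that whenever two of these rows share the same underlying core, the corresponding cluster pictures are related by (i)--(iv). The pairs connected by a single move (iii) or (i) have already been identified in the previous paragraph; chaining these gives a path between any two rows that produce a common core. The main obstacle here is the bookkeeping: one must carefully check, for each of the `split top' cases, that the hypothesis `$X$ has exactly two children' required by move (ii) is actually met, and that the colour/parity data lines up correctly, especially for the asymmetric cases EO/EOE/EOU where the reduction near $\infty$ removes two edges and one vertex.

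Finally, the metric case adds no new ideas. The length data attached to the moves in Definition \ref{de:distance} has been set up so that distances inside the core (in particular, between vertices corresponding to principal clusters) are preserved by each move. Consequently, the non-metric argument lifts verbatim to the metric setting, giving the same bijection on equivalence classes.
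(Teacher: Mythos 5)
Your overall strategy --- verify move by move that (i)--(iv) preserve the core, organise the verification around Table \ref{opentable}, and use Corollary \ref{core to open} plus connectivity for the converse --- is exactly the route the paper takes. The problem is that your case analysis, which is the entire substance of the argument, is not correct as written. The central error is your description of move (iii): it does \emph{not} merely toggle the colour of the open edge. Under (iii) the old top cluster $X$ (of odd size $2g+1$) remains an element of $\Sigma'=\Sigma\cup\{X'\}$, so the BY tree acquires a \emph{new} vertex $v_{X'}$, a new blue edge $v_Xv_{X'}$ (blue because $X$ is odd), and a yellow open edge at $v_{X'}$. Consequently the transitions induced by (iii) are O$\,\to\,$EO, OE$\,\to\,$EOE and OU$\,\to\,$EOU (the only three cases with $|X|$ odd are O, OE, OU), not O$\,\to\,$E, OE$\,\to\,$EE, OU$\,\to\,$EU as you claim. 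The core is still unchanged, but checking this requires the ``remove edge, vertex, edge'' reduction of Proposition \ref{Tclosure} for cases EO--EOU, not the single-edge removal your description implicitly verifies; and getting from O to E genuinely takes two moves (first (iii), then (ii) removing the size-$(2g+1)$ child), so the one-step chains you assert do not exist.

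A secondary error: the hypothesis ``$X$ has exactly two children'' belongs to move (ii), not move (i); indeed (i) does nothing when $X$ has only two children, since $X\setminus\s$ is then already the other child. The correct chains (as in the paper) are O$\,\leftrightarrow\,$EO$\,\leftrightarrow\,$E, then OE$\,\leftrightarrow\,$EOE$\,\leftrightarrow\,$EE$\,\leftrightarrow\,$TxE$\,\leftrightarrow\,$E and OU$\,\leftrightarrow\,$EOU$\,\leftrightarrow\,$EU$\,\leftrightarrow\,$TxU$\,\leftrightarrow\,$U, together with E/U$\,\leftrightarrow\,$OxO/ExE/UxE/UxU via (i)/(ii). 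With the pairings corrected your argument (including the metric remark, which is fine) goes through and coincides with the paper's proof, but as it stands the verification is checking transformations that the moves do not actually perform.
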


\begin{proof}
Again, this is true by inspection in genus 0 and 1 (see Tables \ref{g012table}, \ref{tabg1a}). 
Now suppose $(X,\Sigma)$ is a cluster picture of genus $\ge 2$, and $T=\TT(\Sigma)$.

In the notation of Table \ref{opentable},
cluster pictures without clusters of size $2g$ or $2g+1$ fall into cases 
E, U, OxO, ExE, UxU and UxE.
Possible moves (see Definitions \ref{Sequivdef}, \ref{de:distance}) between 
such cluster pictures are

\def\arrno#1{$\!\!\overarrow{\hbox{\tiny (#1)\ }}\!\!$}
\def\v{\raise -7pt\llap{$\>\scriptsize v$}}
\def\onetwoarrow{\mathrel{\mathop{\scalebox{2}[1]{$\rightleftarrows$}}\limits^{\mathrm{\tiny (i)}}_{\mathrm{\tiny (ii)}}}}
\def\smonetwoarrow{\mathrel{\mathop{\scalebox{1.5}[1]{$\rightleftarrows$}}\limits^{\mathrm{\tiny (i)}}_{\mathrm{\tiny (ii)}}}}
\def\smtwoonearrow{\mathrel{\mathop{\scalebox{1.5}[1]{$\rightleftarrows$}}\limits^{\mathrm{\tiny (ii)}}_{\mathrm{\tiny (i)}}}}
\def\smthreefourarrow{\mathrel{\mathop{\scalebox{1.5}[1]{$\rightleftarrows$}}\limits^{\mathrm{\tiny (iii)}}_{\mathrm{\tiny (iv)}}}}

\begin{center}
E/U $\>\>\onetwoarrow\>\>$ OxO/ExE/UxE/UxU.
\end{center}

\noindent
The corresponding BY tree in cases E and U has an open yellow edge attached to a principal
vertex $v$. A principal child $\s$ of $X$ corresponds to an adjacent principal vertex $v_\s$
and the moves (i) above are obtained by adding a `cocluster' to $\s$.
The effect on $T$ is to move the yellow edge to a point on the edge between $v$ and $v_\s$
(without changing the metric on the core in the metric case).
The moves (ii) above are the inverse of this.

The only moves to and from cluster pictures that have a cluster of size $2g$ are

\begin{center}
  \parbox[c]{3.6em}{\clpOE}   $\smthreefourarrow$
  \parbox[c]{5.0em}{\clpEOE}  $\smtwoonearrow$
  \parbox[c]{4.5em}{\clpEE}   $\smonetwoarrow$
  \parbox[c]{4.5em}{\clpTxE}  $\smtwoonearrow$
  \parbox[c]{4.5em}{\clpE}    
\end{center}

\vspace{-13pt}

\begin{center}
  \parbox[c]{3.6em}{\byOE\v}   \qquad
  \parbox[c]{5.0em}{\scalebox{0.8}{\byEOE\v}}  \qquad
  \parbox[c]{4.5em}{\byEE\v}   \qquad
  \parbox[c]{4.5em}{\byTxE\v}  \qquad
  \parbox[c]{4.5em}{\byE\v}    
\end{center}

\noindent
and

\begin{center}
  \parbox[c]{3.6em}{\clpOU}   $\smthreefourarrow$
  \parbox[c]{5.0em}{\clpEOU}  $\smtwoonearrow$
  \parbox[c]{4.5em}{\clpEU}   $\smonetwoarrow$
  \parbox[c]{4.5em}{\clpTxU}  $\smtwoonearrow$
  \parbox[c]{4.5em}{\clpU}    
\end{center}

\vspace{-13pt}

\begin{center}
  \parbox[c]{3.6em}{\byOU\v}   \qquad 
  \parbox[c]{5.0em}{\scalebox{0.85}{\byEOU\v}}  \qquad 
  \parbox[c]{4.5em}{\byEU\v}   \qquad 
  \parbox[c]{4.5em}{\byTxU\v}  \qquad 
  \parbox[c]{4.5em}{\byU\v}    
\end{center}

\noindent
depending on whether the cluster of size $2g$ is non-\"ubereven or \"ubereven. 
By construction of $T$, 
moving along the chains 
transforms the `tail at~$\infty$' without altering the core, as shown above.
%

Finally, cluster pictures that have a cluster of size $2g+1$ but not of size $2g$
are cases O and EO. The only moves between them are 

\begin{center}
  \parbox[c]{3.6em}{\clpO}   $\smthreefourarrow$
  \parbox[c]{5.0em}{$\>\>$\clpEO}  $\smtwoonearrow$
  \parbox[c]{4.5em}{\clpE}   
\end{center}

\vspace{-13pt}

\begin{center}
  \parbox[c]{3.6em}{\byO\v}   \qquad 
  \parbox[c]{5.0em}{$\>\>$\byEO\v}  \qquad 
  \parbox[c]{4.5em}{\byE\v}   
\end{center}

\noindent
As before, these transform the tail at $\infty$ without altering the core, as shown.

%



This covers all possible moves between cluster pictures. Thus, 
equivalent cluster pictures yield BY trees with isomorphic cores (in other 
words, equivalent). 

Conversely, if $T$ are $T'$ are open BY trees with the same core, the moves described above,
the fact that BY trees are connected and Corollary \ref{core to open}
show that the associated cluster pictures are equivalent.
\end{proof}

\begin{remark}
\label{sequni}
Incidentally, the proof of the proposition shows that every equivalence of cluster pictures
can be broken up into moves (i)-(iv) \emph{uniquely} (without going back). 
More precisely, fix an equivalence class of cluster pictures. Consider 
the graph whose vertices are cluster pictures in this class (up to isomorphism)
and edges are given by moves (i)-(iv). Then this graph is a tree.
See Table \ref{tabcorr} for an example; here directions of arrows indicate moves (ii) and (iv).
\end{remark}

\stepcounter{equation}
\begin{table}[h]          
\begin{center}
\includegraphics[scale=0.7]{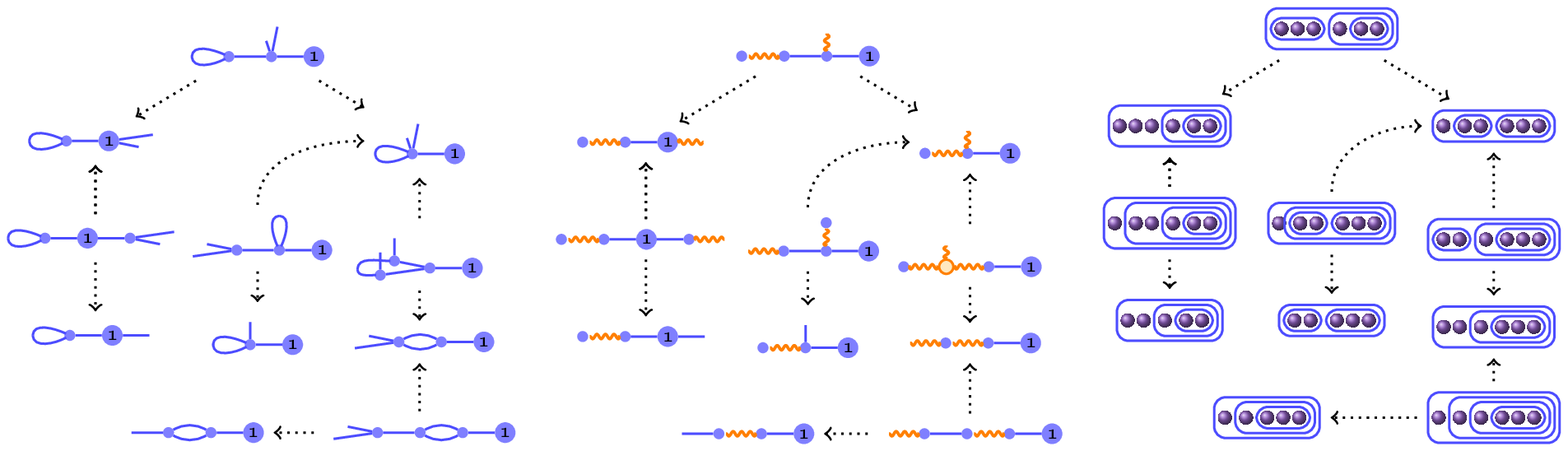}\\[12pt]
\end{center}
\caption{Example: an equivalence class of hyperelliptic graphs, open BY trees, 
and cluster pictures (Type $\hetype{1xI_n}$)}
\label{tabcorr}
\end{table}



\begin{proposition}
\label{adjacencyprop}
Say that two proper clusters $\s_1,\s_2$ in a cluster picture $(X,\Sigma)$ 
are adjacent if $\s_1<\s_2$, $\s_2<\s_1$, or $X=\s_1\coprod\s_2$ with $\s_1,\s_2<X$ and $X$ is even.
If $(X,\Sigma)$ and $(X',\Sigma')$ are equivalent cluster pictures of genus $\ge 2$,
then there is an adjacency preserving bijection
$$
  \text{\{principal $\s\in\Sigma$\}} \leftrightarrow \text{\{principal $\s'\in\Sigma'$\}},
$$
$$
  \text{\{twins and cotwins $\s\in\Sigma$\}} \leftrightarrow \text{\{twins and cotwins $\s'\in\Sigma'$\}}.
$$
In the metric case, we can also insist that the bijection preserves distances between clusters.
\end{proposition}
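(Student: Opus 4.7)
The plan is to translate the statement through the correspondence $\TT$ between cluster pictures and open BY trees. If $T = \TT(\Sigma)$ and $T' = \TT(\Sigma')$, then by Proposition~\ref{Sclosure} their cores satisfy $\tilde T \cong \tilde{T}'$ (and isomorphically as metric BY trees in the metric case). Lemma~\ref{principal cluster lemma} identifies principal clusters of $\Sigma$ with the vertices of $\tilde T$ that are not genus~$0$ leaves, and identifies twins and cotwins of $\Sigma$ with the genus~$0$ leaves of $\tilde T$. Thus the core isomorphism immediately induces the two claimed bijections.

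The substantive content is the verification that adjacency in $\Sigma$ corresponds to edge-adjacency in $\tilde T$. I would proceed case by case using Definition~\ref{def principal cotwin}:
\begin{itemize}
\item For $\s_1 < \s_2$ (and symmetrically $\s_2 < \s_1$), the vertices $v_{\s_1}, v_{\s_2}$ are joined by an edge in $T$ by Construction~\ref{StoT}, and inspection of Table~\ref{opentable} shows that edges between principal/twin/cotwin vertices are never deleted in passing to $\tilde T$.
\item For $X = \s_1 \coprod \s_2$ with $X$ even: here $X$ has only two proper children, so $X$ is not principal, and $v_X$ has degree $2$ in $T$ outside of the open edge. The configuration at~$\infty$ must be one of OxO, ExE, UxU, UxE, TxE, TxU in Table~\ref{opentable}; in each of these, $v_X$ is absorbed into a single edge of $\tilde T$ joining $v_{\s_1}$ and $v_{\s_2}$, establishing adjacency in $\tilde T$.
\end{itemize}
Conversely, any edge of $\tilde T$ between two vertices coming from clusters arose either as an edge of $T$ (yielding one of the first two cases) or by absorbing a non-principal degree-$2$ vertex at~$\infty$ (yielding the third case). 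The hard part will be the last step: to confirm that the third adjacency case is exhaustively covered, I would use the size constraint $|\s_1|+|\s_2| = 2g+2$ to check that both $\s_i$ must be principal or twins, and in particular that neither can be a cotwin (since a cotwin $\s_2$ having a size-$2g$ child would force $|\s_1|=2$, making $\s_1$ a twin and the complement of that child in $X$ a twin, contradicting the definition of cotwin).

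For the metric statement, the isomorphism $\tilde T \cong \tilde{T}'$ preserves edge lengths by hypothesis. By Construction~\ref{StoT}, blue edges have length $\delta(\s,\s')$ and yellow edges have length $2\delta(\s,\s')$, so distances between adjacent proper clusters are recovered from the edge lengths in the core. In the third adjacency case, the merged edge length in $\tilde T$ is the sum of the two constituent yellow lengths $2\delta(\s_1,X) + 2\delta(\s_2,X) = 2\delta(\s_1,\s_2)$ (for case ExE; analogous checks for the blue analogue OxO and the mixed cases UxU, UxE, TxE, TxU), matching the distance formula of Definition~\ref{de:distance}. Thus distances are preserved throughout, completing the proof modulo the routine case-by-case inspection of Table~\ref{opentable}.
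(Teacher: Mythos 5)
Your proof follows essentially the same route as the paper's: pass to the cores via Proposition~\ref{Sclosure}, identify principal clusters and twins/cotwins with core vertices via Lemma~\ref{principal cluster lemma}, and verify adjacency (and, in the metric case, distances) by inspecting Table~\ref{opentable} --- the paper simply leaves the case-by-case inspection of the table implicit. One small slip: in ruling out cotwins when $X=\s_1\coprod\s_2$, a size-$2g$ child of $\s_2$ forces $|\s_2|\ge 2g+1$ and hence $|\s_1|\le 1$, contradicting properness of $\s_1$ (not $|\s_1|=2$ as you wrote); the conclusion is unaffected.
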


\begin{proof}
Let $T=\TT(\Sigma)$ and let $\tilde{T}$ be its core. 
By Lemma \ref{principal cluster lemma}, under the map $\s\mapsto v_\mathfrak{s}$, 
principal clusters of $\Sigma$ correspond to vertices of $\tilde{T}$ which are not genus $0$ 
leaves, whilst twins and cotwins of $\Sigma$ correspond to genus 0 leaves of $\tilde{T}$.  
First note 
that vertices $v_\mathfrak{s}$ and $v_\mathfrak{s'}$ are adjacent in the open BY tree $T$ 
if and only if $\s<\s'$ or $\s'<\s$. 
That two vertices in the core $\tilde{T}$ are adjacent if and only if the
corresponding clusters are now follows by consulting Table~\ref{opentable}.
In particular, any isomorphism
from $\tilde{T}$ to the core of $\TT(\Sigma')$ (one such 
necessarily exists by Proposition \ref{Sclosure}) induces a bijection as in the statement.  
\end{proof}

%

\subsubsection{Centres and balanced cluster pictures}

Recall that a cluster picture $(X,\Sigma)$ is balanced if $|X|=2g+2$ is even, 
there are either 0 or 2 clusters of size $g+1$, and $X$ is the only cluster of size $>g+1$.
(For instance, in Table \ref{tabg1a}, the second row in each of the three groups
is balanced.)


\begin{lemma} \label{existence of balanced clusters}
Every equivalence class of cluster pictures has (up to isomorphism) a unique balanced one. Under $\TT$, it corresponds to the canonical representative of the associated equivalence class of open BY trees as defined in Remark \ref{canonical BY tree rep}.
\end{lemma}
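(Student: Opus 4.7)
The plan is to leverage the one-to-one correspondence between cluster pictures and open BY trees (Proposition \ref{cluster picture by tree proposition}) together with the compatibility of equivalence (Proposition \ref{Sclosure}) to transport the statement to the side of open BY trees, where the canonical representative constructed in Remark \ref{canonical BY tree rep} provides the sought canonical element. Concretely, I will show that for an open BY tree $T$ with core $\tilde{T}$, the associated cluster picture $(X,\Sigma)=\SS(T)$ is balanced if and only if $T$ is (up to isomorphism) the canonical representative of its equivalence class, i.e.\ is obtained by glueing an open yellow edge at the centre of $\tilde T$ in the sense of Definition \ref{BY tree centre}.

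The main technical step will be a dictionary between the three balance conditions and the defining properties of the centre. First, by Proposition \ref{TSgenus}, $|X|=2g+2$ is equivalent to the open edge of $T$ being yellow, matching the defining property of the canonical representative (Remark \ref{canonical BY tree rep}). Assuming this, let $v_\infty$ denote the vertex adjacent to $\infty$, so that $X=\s_{v_\infty}$. The proper children of $X$ in $\Sigma$ biject with the connected components of $T\setminus\{v_\infty\}$ not containing $\infty$, and by Corollary \ref{genus of subtree} the size of the child cluster corresponding to such a component $T'$ equals the weight $w(T')$ from Definition \ref{BY tree centre}, where $T'$ is viewed as an open BY tree with open edge coming from $v_\infty$. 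Combined with $w(\tilde T)=2g+2$ from Remark \ref{genus weighting computation}, the balance conditions translate precisely into $\phi(v_\infty)\le g+1=\tfrac{1}{2}w(\tilde T)$, with exactly $0$ or $2$ hanging subtrees of weight $g+1$. By Lemma \ref{centres of trees}, this is exactly the characterisation of $v_\infty$ being the centre of $\tilde T$ (a centre vertex producing the $0$ case, and a centre edge producing the $2$ case after the subdivision prescribed in Remark \ref{canonical BY tree rep}).

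A small side issue will be to confirm that $v_\infty$ must lie in $\tilde T$ at all; by Corollary \ref{core to open}, the alternative (case (d) of that corollary) glues an extra blue edge and genus $0$ blue vertex between $\tilde T$ and the open yellow edge, which forces $X$ to have a proper child of size $2g+1>g+1$ (for $g\ge 2$) and so violates balance. The low genus cases can be verified directly from Tables \ref{g012table} and \ref{tabg1a}. With the characterisation above in hand, existence and uniqueness (up to isomorphism) of the balanced cluster picture in each equivalence class follow immediately from the existence and uniqueness of the centre asserted in Lemma \ref{centres of trees}, and the second sentence of the lemma is then a tautology from the construction in Remark \ref{canonical BY tree rep}.

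The hard part will be the careful bookkeeping needed to translate between \emph{children of $X$} in the cluster picture and \emph{components of $T\setminus\{v_\infty\}$} in the BY tree, in particular when the centre of $\tilde T$ is an edge (so that one must subdivide and check that subdivision preserves the relevant weights, as noted after Definition \ref{BY tree centre}) and when $v_\infty$ coincides with or differs from a vertex of $\tilde T$. Tracking edge colours through the cases (a)--(d) of Corollary \ref{core to open} and verifying that only (a) and (b) can give balanced pictures is routine but needs some care; no new ideas beyond the weight computation of Remark \ref{genus weighting computation} will be required.
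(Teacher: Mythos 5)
Your proposal is correct and follows essentially the same route as the paper: reduce via Proposition \ref{Sclosure} and Corollary \ref{core to open} to deciding which of the four gluing operations on the core $\tilde T$ produce a balanced picture, rule out the blue open edge (odd $|X|$) and the blue-edge-plus-vertex configuration, and then use Corollary \ref{genus of subtree} and Remark \ref{genus weighting computation} to identify child sizes of $X$ with the weights of Definition \ref{BY tree centre}, so that Lemma \ref{centres of trees} characterises balance as glueing at the centre. The only cosmetic difference is that you exclude the blue-edge-plus-vertex case by exhibiting a child of size $2g+1>g+1$ where the paper observes that it creates a cotwin; both are immediate.
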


\begin{proof}
Let $\Sigma$ be a cluster picture of genus $g$, $T=\TT(\Sigma)$ the associated BY tree, and $\tilde{T}$ its core. Then by Proposition \ref{Sclosure}, the cluster pictures equivalent to $\Sigma$ are precisely those associated to the open BY trees obtained from $\tilde{T}$ by one of the operations of Corollary \ref{core to open}. Note that glueing an open blue edge to a vertex of $\tilde{T}$ results in a cluster picture of odd size (see Table \ref{opentable}) and such cluster pictures are not balanced. Similarly, glueing a closed blue edge (whose endpoint is blue of genus 0) followed by an open yellow edge onto a vertex of $\tilde{T}$ results in a cluster picture having a cotwin which again is not balanced. 

Next, fix a vertex $v$ of $\tilde{T}$ and consider the cluster picture $(X',\Sigma')$ associated to the open BY tree obtained by glueing a yellow open edge to $v$. We have $|X'|=2g+2$. Moreover, the children of $X'$ are all of the form $\mathfrak{s}_{v'}$ for $v'$ adjacent to $v$. For each such vertex, let $T_{v'}$ denote the connected component of $\tilde{T}\setminus{v}$ containing $v'$. By Remark \ref{genus of subtree}, it follows that the size of $\mathfrak{s}_{v'}$ is equal to $w(T_{v'})$ where $w$ is the weight function of Definition \ref{BY tree centre}. It now follows from Lemma \ref{centres of trees} that $(X',\Sigma')$ is balanced if and only if $v$ is the centre of $\tilde{T}$ (see also Remark \ref{genus weighting computation}).

Similarly, one sees that the cluster picture associated to the open BY tree obtained by glueing an open yellow edge to an existing edge of $\tilde{T}$ is balanced if and only if this edge is the centre of $\tilde{T}$. 
%
%
\end{proof}

\begin{corollary}\label{AutEquiv}
Let $(X,\Sigma)$ be a balanced cluster picture, let $T=\TT(\Sigma)$ be the associated open BY tree and let $\tilde{T}$ denote the core of $T$. Then the natural map $\Aut\Sigma\rightarrow \Aut\tilde{T}$, sending $\sigma \in \Aut\Sigma$ to the restriction of $\TT(\sigma)$ to $\tilde{T}$, is surjective. Its kernel is trivial if $X$ is \ub, and $C_2$  if $X$ is non-\ub\  (generated by the trivial permutation with $\epsilon(X)=-1$ and all other signs $+1$).
\end{corollary}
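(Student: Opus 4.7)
The plan is to chain together three results already established in this section: the isomorphism $\TT:\Aut\Sigma\stackrel{\sim}{\to}\Aut T$ coming from Proposition \ref{cluster picture by tree proposition}(2), the explicit description of the restriction map $\Aut T\to\Aut\tilde T$ in Corollary \ref{res of auts}/Remark \ref{canonical BY tree rep}, and Lemma \ref{existence of balanced clusters} identifying $T$ as the canonical representative of its equivalence class. The composite $\Aut\Sigma\stackrel{\TT}{\to}\Aut T\to\Aut\tilde T$ is, by construction, the map in the statement, so it suffices to understand the second arrow.

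First I would invoke Lemma \ref{existence of balanced clusters} to conclude that, because $\Sigma$ is balanced, the open yellow edge of $T=\TT(\Sigma)$ is glued to $\tilde T$ at its centre (as a vertex or as an interior point of an edge). By Definition \ref{BY tree centre} the centre is preserved by every element of $\Aut\tilde T$, so Corollary \ref{res of auts} gives surjectivity of $\Aut T\to\Aut\tilde T$. For the kernel, Remark \ref{canonical BY tree rep} says it is trivial when the centre of $\tilde T$ is yellow and cyclic of order $2$ when the centre is blue, with the non-trivial element being the automorphism $\tau_T$ of $T$ which fixes every vertex, has sign $-1$ on the connected component of $T_y$ containing the open edge, and sign $+1$ on all other components of $T_y$.

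Next I need to translate the colour of the centre of $\tilde T$ into a property of $X$, and identify $\TT^{-1}(\tau_T)$ in $\Aut\Sigma$. Both are immediate from Construction \ref{StoT}/\ref{TtoS} and Corollary \ref{genus of subtree}(2): the vertex $v_X$ attached to the open edge is yellow exactly when $X$ is \"ubereven, so the centre of $\tilde T$ is yellow iff $X$ is \"ubereven and blue iff $X$ is non-\"ubereven. When $X$ is non-\"ubereven, $v_X$ is blue and the component of $T_y$ containing the open edge consists precisely of that one open edge, whose colour in the correspondence records the sign $\epsilon(X)$; all other yellow components of $T$ lie strictly below $v_X$ and correspond under $\SS$ to signs $\epsilon(\s)$ on even clusters $\s\ne X$. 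Hence $\TT^{-1}(\tau_T)$ is exactly the element $(\id,\epsilon)\in\Aut\Sigma$ with $\epsilon(X)=-1$ and $\epsilon(\s)=+1$ for all other even $\s$, as claimed.

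The main subtlety is bookkeeping in the edge case where the centre of $\tilde T$ is an edge rather than a vertex; one has to check that Lemma \ref{existence of balanced clusters} still attaches the open edge at the correct colour so that Remark \ref{canonical BY tree rep} applies verbatim, and that the sign-assignment on the newly created degree-$2$ vertex (which becomes $v_X$) is the one predicted by Construction \ref{StoT}. Once this case is verified, the three ingredients fit together with no further computation, and the statement follows.
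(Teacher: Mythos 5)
Your proposal is correct and follows essentially the same route as the paper, which likewise deduces the corollary from the isomorphism $\Aut\Sigma\iso\Aut T$, Lemma \ref{existence of balanced clusters}, and the surjectivity/kernel statement of Remark \ref{canonical BY tree rep} (itself resting on Corollary \ref{res of auts}). The extra details you supply — translating the colour of the centre into the \"ubereven-ness of $X$ via Corollary \ref{genus of subtree}(2) and identifying the kernel generator as $(\id,\epsilon)$ with $\epsilon(X)=-1$ — are exactly the unwinding the paper leaves implicit.
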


\begin{proof}
By Theorem \ref{combmain1}, the map $\Aut \Sigma \rightarrow \Aut T$ sending $\sigma$ to $\TT(\sigma)$ is an isomorphism. The result now follows from Lemma \ref{existence of balanced clusters} and the corresponding statement for BY trees discussed in Remark \ref{canonical BY tree rep}.
\end{proof}

\def\GraphScale{0.4}
\def\SnakeWiggle{3pt}
\def\clustersep{1.3pt}
\rootsize{0.5}

\section{The homology lattice $\Lambda$}\label{s:homology}



In this section we study the lattices $\Lambda$ attached to hyperelliptic graphs, BY trees and cluster pictures, along with their natural automorphism actions, and show that the correspondences identify them. 

\subsubsection*{Action of automorphisms}

Recall how to identify automorphism groups across the correspondence.
Suppose $G$ is a hyperelliptic graph, $T=\TT(G)$ the associated BY tree,
and $\Sigma=\SS(T)$ the associated cluster picture.
A choice of a section
$s:G/\left\langle \iota\right \rangle\rightarrow G$ as in Construction \ref{GtoT} 
gives an isomorphism $\Aut G\to\Aut T$, which makes $\Lambda_T$ an $\Aut G$-module.
There is also a canonical isomorphism $\Aut T\to\Aut \Sigma$, independent of any choices,
which makes $\Lambda_\Sigma$ into an $\Aut G$-module as well.
Finally, 
automorphisms of an open BY tree $T$ act on the core $\tilde T$, and 
similarly for hyperelliptic graphs.

\begin{theorem}[Lattice correspondence] \label{lattice thm 1}
If $\Sigma$ is a cluster picture, then there are canonical $\Aut\Sigma$-equivariant isomorphisms
$$
  \Lambda_{\widetilde{\GG(\TT(\Sigma))}}\iso \Lambda_{\GG(\TT(\Sigma))}\iso \Lambda_\Sigma\iso \Lambda_{\TT(\Sigma)}\iso 
  \Lambda_{\widetilde{\TT(\Sigma)}}.
$$
If $T$ is a BY tree, then there are canonical $\Aut T$-equivariant isomorphisms
$$
  \Lambda_{\GG(T)}\iso \Lambda_T\iso \Lambda_{\SS(T)}.
$$
If $G$ is an open hyperelliptic graph, choose a section $s: G/\langle \iota\rangle\to G$. 
Then there are canonical $\Aut G$-equivariant isomorphisms
$$
  \Lambda_G\iso \Lambda_{\TT(G)}\iso \Lambda_{\SS(G)}.
$$
For another section $s'$,
the two isomorphisms $\Lambda_G\iso \Lambda_{\TT(G)}$ differ by $\psi_{s,s'}$
of Proposition \ref{dependence of s}. The isomorphism $\Lambda_{\TT(G)}\iso \Lambda_{\SS(G)}$ 
does not depend on the choice of $s$.

In the metric case, all isomorphisms preserve the pairings. 
%
%
\end{theorem}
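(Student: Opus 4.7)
\smallskip

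\textbf{Proof Plan.} The strategy is to construct explicit maps at the level of cellular chains, verify they descend to homology, and then check equivariance and (in the metric case) compatibility with pairings. I would break the theorem into three building blocks: (a) the open-to-core isomorphisms $\Lambda_G \iso \Lambda_{\tilde G}$ and $\Lambda_T \iso \Lambda_{\tilde T}$; (b) the ``cover'' isomorphism $\Lambda_G \iso \Lambda_T$; and (c) the combinatorial isomorphism $\Lambda_T \iso \Lambda_\Sigma$. Composing these, together with Theorem \ref{combmain1}, then yields all statements.

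For (a), I would directly inspect Table \ref{opentable}: in each of the 16 cases the pieces of $G$ (resp.\ $T$) removed when passing to the core form a contractible subcomplex attached to $\tilde G$ (resp.\ to $\tilde T \cup \tilde T_b$) along a single point. The inclusion-induced maps $H_1(\tilde G)\!\to\!H_1(G)$ and $H_1(\tilde T, \tilde T_b)\!\to\!H_1(T, T_b)$ are therefore isomorphisms by a short direct argument (or, if one prefers, by the long exact sequences of the pairs), and it is immediate that automorphisms act compatibly. For (b), fix a section $s: T \to G$ as in Remark \ref{sections section} and define a chain-level map
\[
  \phi_s: C_1(T)\lra C_1(G),\qquad c\longmapsto s(c) - (\iota\circ s)(c),
\]
(with the sign action of Definition \ref{lambdaBY} on yellow edges built in). A chain $c\in C_1(T)$ with $\partial c\in C_0(T_b)$ maps to a cycle in $G$, since points of $T_b$ have $\iota$-invariant preimages and the two pieces cancel; this yields $\phi_s: \Lambda_T \to \Lambda_G$. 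Conversely, the construction $G \to G/\langle\iota\rangle = T$ together with the antisymmetrisation $\tfrac12(\id - \iota_*)$ produces a well-defined map $\Lambda_G \to \Lambda_T$ (the factor $\tfrac12$ absorbed in the length rescaling from Construction \ref{GtoT}: a blue edge of $T$ has length $2d$ while the corresponding $\iota$-invariant edge of $G$ has length $d$). A routine chain-level calculation confirms these are mutually inverse, so $\phi_s$ is an isomorphism.

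For (c), I would match generators directly using Construction \ref{StoT} and Remark \ref{reduced homology remark}: components of $T_b$ are in bijection with a canonical set of clusters, and the standard basis of $\tilde H_0(T_b)$ corresponds to the generators $\mathfrak{s}\in\cE_\Sigma$ of $\Lambda_\Sigma$ (the \"ubereven condition on $X$ producing exactly the ``sum $=0$'' relation of Definition \ref{lambdaCP}). To check equivariance, I would use that by Remark \ref{hyp inv central} $\iota$ is central in $\Aut G$, so any $\sigma\in\Aut G$ descends to $\TT(\sigma;s)\in\Aut T$; the signs $\epsilon_{\TT(\sigma;s)}$ introduced in Construction \ref{GtoT} are precisely what is needed for $\phi_s$ to be equivariant. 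Dependence on the section follows from the cocycle rule together with Proposition \ref{dependence of s}: replacing $s$ by $s'$ composes $\phi_s$ with $\psi_{s,s'}$, as claimed. The isomorphism $\Lambda_T \iso \Lambda_\Sigma$ is canonical because Construction \ref{StoT} identifies $\Aut T$ with $\Aut\Sigma$ without any choice.

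For the metric statement, I would unwind the length-pairing definitions: a yellow edge $\bar e$ in $T$ of length $2d$ contributes $\langle \bar e,\bar e\rangle = 2d$ to the relative pairing on $\Lambda_T$, while its image $e^+ - e^-$ in $\Lambda_G$ has length pairing $d + d = 2d$; blue edges lie in $T_b$ and contribute $0$ on both sides. Comparing with Definition \ref{lambdaCP}, the cluster-side formula $\langle \mathfrak{s}_1,\mathfrak{s}_2\rangle = 2\delta(\mathfrak{s}_1\wedge\mathfrak{s}_2,\hat{\mathfrak{s}}_1)$ is exactly the length of the common yellow path in the corresponding tree picture, with the factor $2$ matching the $\TT\leftrightarrow\SS$ rescaling in Constructions \ref{StoT} and \ref{TtoS}. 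The main obstacle will be bookkeeping: the signs $\epsilon$ on yellow components of $T$ interact non-trivially with orientations of edges of $G$, and one has to verify that the cocycle composition law for $\Aut T$ (Definition \ref{autb}) matches the composition of automorphisms of $G$ acting on $C_1(G)$ via signed permutations. The cleanest way to handle this is to subdivide edges and choose orientations that are stable under the relevant automorphism (using \S\ref{subdivision}), reducing the verification to a finite check on each orbit of edges.
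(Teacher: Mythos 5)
Your architecture coincides with the paper's: part (a) is Lemma \ref{le:H1iso} (proved there by a deformation retract onto the core, as read off from Table \ref{opentable}), part (b) is Proposition \ref{hyp graph homology} with exactly your chain map $c\mapsto s(c)-\iota(s(c))$, and part (c) is Propositions \ref{BY tree homology proposition} and \ref{lattices BY cluster}, which match the components of $T_b$ (equivalently the set $\cE_\Sigma$) with the generators of $\Lambda_\Sigma$, the \"ubereven-$X$ case producing the sum-zero relation; the section-dependence statement is Remark \ref{dependence of lattice iso on section}, and your metric and equivariance checks agree with the paper's.

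The one step I would not accept as written is your verification that $\phi_s$ is an isomorphism in (b). The composite you propose, $\pi_*\circ\tfrac12(\id-\iota_*)$, is identically zero on cycles: an $\iota$-anti-invariant chain has opposite coefficients on $e^+$ and $e^-$ and zero coefficients on $\iota$-invariant edges (with orientations chosen so that $\pi$ is orientation-preserving), so its pushforward to $C_1(T)$ vanishes. The correct inverse instead reads off the coefficients of a cycle on $s(T_y)$, and to know this is well defined and surjective you need the observation --- which is precisely where the hypothesis that $G/\langle\iota\rangle$ is a tree enters --- that every $1$-cycle $z$ of $G$ satisfies $\iota_*z=-z$ on the nose: $z+\iota_*z$ is an invariant cycle, hence descends to a cycle in the tree $T$, hence is zero. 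Your ``routine chain-level calculation'' hides exactly this point. The paper sidesteps it by placing $\phi_s$ in the short exact sequence of complexes $0\to C_\bullet(T)/C_\bullet(T_b)\to C_\bullet(G)\to C_\bullet(T)\to 0$ and using $H_1(T)=H_2(T)=0$; I would either adopt that route or state the anti-invariance of cycles explicitly before claiming your two maps are mutually inverse.
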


\begin{proof}
This follows upon combining Lemmas \ref{le:H1iso} with Propositions \ref{hyp graph homology} and \ref{lattices BY cluster} and Remark \ref{dependence of lattice iso on section}.
\end{proof}


%

\begin{corollary} \label{lattice equiv  cor}
Let $(X,\Sigma)$ and $(X',\Sigma')$ be equivalent cluster pictures. Then there is an isomorphism $\Lambda_\Sigma\iso \Lambda_{\Sigma'}$ which, in the metric case, preserves the respective pairings. 
\end{corollary}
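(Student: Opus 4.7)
The plan is to reduce the statement to equivalences at the level of BY trees, where equivalence is defined by isomorphism of cores. The key tool is the chain of canonical, pairing-preserving isomorphisms provided by Theorem \ref{lattice thm 1}:
\[
\Lambda_\Sigma \;\iso\; \Lambda_{\TT(\Sigma)} \;\iso\; \Lambda_{\widetilde{\TT(\Sigma)}},
\]
and similarly for $\Sigma'$. Thus it suffices to produce an isomorphism $\Lambda_{\widetilde{\TT(\Sigma)}} \iso \Lambda_{\widetilde{\TT(\Sigma')}}$ respecting pairings in the metric case, and then to splice it into the chain
\[
\Lambda_\Sigma \;\iso\; \Lambda_{\widetilde{\TT(\Sigma)}} \;\iso\; \Lambda_{\widetilde{\TT(\Sigma')}} \;\iso\; \Lambda_{\Sigma'}.
\]

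By Proposition \ref{Sclosure}, the equivalence $(X,\Sigma) \sim (X',\Sigma')$ (both non-metric and metric) is equivalent to an equivalence $\TT(\Sigma) \sim \TT(\Sigma')$ of the associated open BY trees. By definition of equivalence for open BY trees (Definitions \ref{equiv def BY} and \ref{defmetricbytree}), this means that there is an isomorphism of BY trees $\widetilde{\TT(\Sigma)} \iso \widetilde{\TT(\Sigma')}$, which in the metric case preserves edge lengths.

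Any isomorphism of (closed) BY trees induces an isomorphism on their $H_1(T,T_b)$ lattices, simply by functoriality of relative homology, and this isomorphism is pairing-preserving in the metric case because the pairing is defined in terms of edge lengths (see \S\ref{sss:lengthpairing}, \S\ref{se:RH}). Applied to $\widetilde{\TT(\Sigma)} \iso \widetilde{\TT(\Sigma')}$, this yields the desired middle isomorphism
\[
\Lambda_{\widetilde{\TT(\Sigma)}} \;\iso\; \Lambda_{\widetilde{\TT(\Sigma')}},
\]
compatible with pairings in the metric case. Composing with the isomorphisms of Theorem \ref{lattice thm 1} at both ends completes the proof. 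The only subtle point is that the isomorphism so produced is not canonical: it depends on the choice of isomorphism of cores, which need not be unique. But no such canonicity is claimed in the statement, so this presents no obstacle.
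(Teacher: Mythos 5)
Your proposal is correct and follows essentially the same route as the paper: both reduce via Proposition \ref{Sclosure} to the fact that equivalent cluster pictures have isomorphic cores $\widetilde{\TT(\Sigma)}\iso\widetilde{\TT(\Sigma')}$, note that such an isomorphism induces a (length-compatible) isomorphism of the lattices $H_1(\tilde T,\tilde T_b)$, and then splice in the canonical isomorphisms of Theorem \ref{lattice thm 1}. Your explicit remark that the resulting isomorphism is non-canonical is a fair observation and consistent with the statement.
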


\begin{proof}
In both the metric and non-metric cases, the core $\tilde{T}$ of $\underline{T}(\Sigma)$ is an invariant of its equivalence class. Hence so is the associated lattice $\Lambda_{\tilde{T}}$. The result now follows from Theorem \ref{lattice thm 1}.
\end{proof}

\begin{remark}
A proof of Corollary \ref{lattice equiv  cor} without passing through the correspondence can be given by using Proposition \ref{adjacencyprop}. 
\end{remark}

\subsection{Reduction to the closed case}

We begin by showing that the homology groups of open hyperelliptic graphs (resp. open BY trees) are isomorphic to those of their core. As a consequence we will only consider the closed case after this subsection.

\begin{proposition}\label{le:H1iso}
Let $G$ be an open hyperelliptic graph with core $\tilde{G}$. Then there is a canonical isomorphism \[H_1(G)\iso H_1(\tilde{G}),\]
equivariant for the action of $\Aut G$. Similarly, if $T$ is an open BY tree with  core $\tilde{T}$ then there is a canonical isomorphism
\[ H_1(T,T_b)\iso H_1(\tilde{T},\tilde{T}_b),\]
equivariant for the action of $\Aut T$.
(In the above, automorphisms of $G$ (resp. $T$) act on $H_1(\tilde{G})$ (resp. $H_1(\tilde{T},\tilde{T}_b)$) via their restriction to the core.)
\end{proposition}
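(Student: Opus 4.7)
The plan is to show that in both settings the inclusion of the core into the open object is a strong deformation retract (with, in the BY case, the retraction respecting the blue subspace), and that the induced isomorphism on (relative) homology is canonical and $\Aut$-equivariant.

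First I would use Proposition \ref{Tclosure} to describe the complement $G\setminus \tilde G$ (respectively $T\setminus \tilde T$) explicitly. In each of the 16 cases of Table \ref{opentable}, the complement is a contractible ``tail'' (or an $\iota$-orbit of two symmetric tails) attached to $\tilde G$ at a single vertex (respectively an $\iota$-orbit of two vertices): either a single open edge (cases O--U), an edge--vertex--open-edge configuration (cases EO--EOU), or an open edge together with a degree-two vertex that becomes an interior point of the core (cases OxO--TxU). In every case the obvious straight-line deformation collapsing the complement onto its attaching point(s) defines a strong deformation retract $r\colon G\to\tilde G$, and consequently the inclusion $\tilde G\hookrightarrow G$ induces an isomorphism $H_1(\tilde G)\xrightarrow{\iso} H_1(G)$. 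The canonical isomorphism in the proposition is taken to be the inverse of this inclusion-induced map. Since every $\sigma\in\Aut G$ restricts to an automorphism of $\tilde G$ (as the core is the unique maximal closed hyperelliptic subgraph), the inclusion is $\Aut G$-equivariant, and the equivariance of the isomorphism on $H_1$ follows immediately.

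For the BY tree case I would repeat the same argument, now checking in each of the 16 cases that the retraction $r\colon T\to\tilde T$ sends $T_b$ into $\tilde T_b$. A case-by-case inspection of Table \ref{opentable} shows this: the blue part of $T\setminus \tilde T$ is either empty, a single blue vertex/leaf, or a blue edge together with its endpoint (as in cases EO--EOU), all of which collapse onto a blue vertex of $\tilde T$. Hence $r$ is a homotopy equivalence of pairs $(T,T_b)\to(\tilde T,\tilde T_b)$ and the induced map
$$H_1(\tilde T,\tilde T_b)\longrightarrow H_1(T,T_b)$$
is an isomorphism; again we take its inverse as the canonical map in the proposition.

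The main subtlety --- and the point I expect will require the most care --- is the $\Aut T$-equivariance, because the action of $(\sigma_0,\epsilon_\sigma)\in\Aut T$ on $H_1(T,T_b)$ is twisted by the signs $\epsilon_\sigma$ attached to the connected components of $T_y$ (cf.\ Definition \ref{lambdaBY}). I would need to verify that the natural map $\Aut T\to\Aut\tilde T$ given by restriction is compatible with this sign datum, i.e.\ that the connected components of $T_y$ are in a canonical bijection with those of $\tilde T_y$ on which the sign is preserved. In cases O--U and OxO--TxU this bijection is clear since removing a single open edge, or an open edge and an interior vertex, does not split any yellow component of $T$. In cases EO--EOU the removed yellow open edge lies in a yellow component that either disappears entirely in $\tilde T$ (and whose sign contributes only to the action on elements of $H_1(T,T_b)$ that are killed by the retraction) or is represented by a yellow edge already present in $\tilde T$. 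Checking these possibilities against Table \ref{opentable} will complete the verification of $\Aut T$-equivariance.
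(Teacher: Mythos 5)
Your proof takes essentially the same route as the paper: a deformation retract of the open object onto its core, read off from the explicit description of the neighbourhood of $\infty$, with the relative case handled by observing that the retraction carries $T_b$ into $\tilde T_b$ (the paper phrases this step via the relative homology sequence and the five-lemma rather than as a homotopy equivalence of pairs, which amounts to the same thing). Two small remarks: the sixteen-case analysis of Table \ref{opentable} and Proposition \ref{Tclosure}(3)--(4) is only stated for genus $\ge 2$, so genus $0$ and $1$ (including the exceptional circle graphs) must be checked separately by inspection, as the paper does; and your worry about the sign twist in the $\Aut T$-action is dissolved in the paper's setup, since Definition \ref{lambdaBY} equips $H_1(T,T_b)$ for an \emph{open} tree with its $\Aut T$-action precisely via restriction to the core, so the equivariance is immediate once the retraction fixes $\tilde T$ pointwise.
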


\begin{proof}
It is easy to check the claim in genus 0 and 1 (see Tables \ref{g012table}, \ref{tabg1a}),
so assume $g\ge 2$.
From Table \ref{nbhds of infty table} we see that any open hyperelliptic graph admits a deformation retract onto its core. This induces the sought isomorphism on homology groups. 

In the case of BY trees, Table \ref{nbhds of infty table} shows that $T$ admits a deformation retract onto its core $\tilde{T}$ which induces a deformation retract from $T_b$ to $\tilde{T}_b$. This induces maps $H_i(T,T_b)\rightarrow H_i(\tilde{T},\tilde{T}_b)$ for each $i$. It also induces maps $H_i(T)\rightarrow H_i(\tilde{T})$ and $H_i(T_b)\rightarrow H_i(\tilde{T}_b)$ for each $i$ which, being induced by deformation retracts, are isomorphsims. That the maps on relative homology groups are also isomorphisms now follows from the relative homology exact sequence and the $5$-lemma.

The claim about the action of automorphisms is immediate since the deformation retracts 
act as identity on the core by definition.
\end{proof}

\begin{remark}
It follows from Lemma \ref{le:H1iso} that the action of an automorphism of an open hyperelliptic graph $G$ (resp. open BY tree $T$) on $H_1(G)$ (resp. $H_1(T,T_b)$)  depends only on its restriction to the core.
\end{remark}

\subsection{Hyperelliptic Graphs $\leftrightarrow$ BY trees}


\begin{proposition} \label{hyp graph homology} 
Let $G$ be a hyperelliptic graph, $T=\underline{T}(G)$ the associated  BY tree and $s:T\rightarrow G$ a section to the quotient map $\pi:G\rightarrow T$. Then there is a canonical isomorphism
\[H_1(G)\iso H_1(T,T_b),\] equivariant for the action of $\Aut G$ and, in the metric case, preserving the respective pairings. (In the above, $\Aut G$ acts on $H_1(T,T_b)$ via the isomorphism  $\Aut G\rightarrow \Aut T$ determined by $s$ (see Construction \ref{GtoT})).
\end{proposition}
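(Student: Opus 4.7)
The plan is to construct the sought isomorphism at the chain level, using $s$ to lift relative cycles from $T$ to $G$ and then symmetrising under $\iota$. I would first subdivide $G$ at the unique $\iota$-fixed interior point of each $\iota$-anti-invariant edge or loop (licit by \S\ref{subdivision}, and preserved by every $\sigma\in\Aut G$ since $\sigma$ commutes with $\iota$); after this subdivision, every edge of $G$ is either $\iota$-invariant (fixed pointwise) and maps bijectively onto a blue edge of $T$, or lies in a swapped pair $\{e^+,e^-\}=\{e^+,\iota(e^+)\}$ of distinct edges and maps $2{:}1$ onto a yellow edge $\bar e$ of $T$. Pick an orientation on $T$ and use $s$ to orient $G$ compatibly: orient $s(\bar e)$ so that $\pi\colon s(\bar e)\to\bar e$ is orientation-preserving, and extend by $\iota_*$. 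With these conventions, for a yellow edge $\bar e$ of $T$ one checks $\iota_*(e^+)=e^-$ and $\pi_*(e^+)=\pi_*(e^-)=\bar e$ at the signed chain level.

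Next I would define the map
\[\psi\colon C_1(T)/C_1(T_b)\longrightarrow C_1(G),\qquad \psi(\bar e)=s(\bar e)-\iota_*s(\bar e)\text{ for $\bar e$ yellow}\]
(blue edges lie in $C_1(T_b)$ and map to $0$), and verify that it restricts to an isomorphism $H_1(T,T_b)\iso Z_1(G)=H_1(G)$. The boundary $\partial\psi(\bar c)=(1-\iota_*)s_*\partial\bar c$ vanishes whenever $\partial\bar c\in C_0(T_b)$, since the preimage of every blue vertex is a single $\iota$-fixed point of $G$. Injectivity is immediate: $\psi(\bar c)=0$ forces $s_*(\bar c)=\iota_*s_*(\bar c)$, but these chains are supported on the disjoint sheets $s(T_y)$ and $\iota(s(T_y))$ of $G$, so both must vanish. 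For surjectivity I would take $c\in Z_1(G)$, apply $\pi_*$, and use $Z_1(T)=0$ (since $T$ is a tree): coefficient-matching forces the coefficient of every $\iota$-invariant edge to be zero and yields $n_{e^+}=-n_{e^-}$ on each swapped pair, so $c=\psi\bigl(\sum n_{s(\bar e)}\bar e\bigr)$; the same matching, now applied to $\partial c=0$ vertex by vertex, shows the preimage is a relative cycle.

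Finally I would check equivariance and, in the metric case, preservation of the pairing. For $\sigma\in\Aut G$ with $\bar\sigma=\TT(\sigma;s)=(\bar\sigma_0,\epsilon)\in\Aut T$, the identity $\psi\circ\bar\sigma=\sigma_*\circ\psi$ reduces on each yellow edge $\bar e$ to the chain equality $\sigma_*s(\bar e)=\epsilon(\bar e)\cdot s(\bar\sigma_0\bar e)$, which is precisely the defining property of $\epsilon$ in Construction \ref{GtoT} combined with the fact that $\sigma$ commutes with $\iota$. In the metric case, the length convention of Construction \ref{GtoT} gives $\langle\psi(\bar e),\psi(\bar e)\rangle_G=l(e^+)+l(e^-)=l(\bar e)=\langle\bar e,\bar e\rangle_{T,T_b}$ both for swapped pairs of length $d$ (where $l(\bar e)=2d$) and for anti-invariant edges of length $d$ (which subdivide into halves of length $d/2$, and give $l(\bar e)=d$); cross-terms vanish because distinct yellow edges of $T$ lift to disjoint $\iota$-orbits in $G$. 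Independence from the choice of $s$, up to the automorphism $\psi_{s,s'}$ of Definition \ref{defi of psi}, is a direct verification mirroring Proposition \ref{dependence of s}.

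The main obstacle I anticipate is the sign bookkeeping in the equivariance step, particularly for $\iota$-anti-invariant loops, where after subdivision $\iota_*$ swaps the two half-edges and its signed action has to be reconciled with the orientation convention so that $\epsilon(\bar e)$ from Construction \ref{GtoT} lines up correctly with the signed chain action of $\sigma$ on $s(\bar e)$. Once this is handled carefully in the setup, the rest of the argument is formal.
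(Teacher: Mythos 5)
Your proposal is correct and is essentially the paper's own argument: the same subdivision of $G$ at the $\iota$-fixed midpoints of anti-invariant edges, and the same symmetrisation map $\bar e\mapsto s(\bar e)-\iota_*s(\bar e)$, with the same chain-level checks of equivariance and the length pairing. The only difference is presentational: the paper packages injectivity and surjectivity into the short exact sequence of complexes $0\to C_\bullet(T)/C_\bullet(T_b)\to C_\bullet(G)\to C_\bullet(T)\to 0$ and invokes contractibility of $T$ in the long exact sequence, whereas you unwind the same exactness statements by hand (disjointness of the two sheets for injectivity, $Z_1(T)=0$ and coefficient matching for surjectivity).
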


\begin{proof}
We take the usual $\Delta$-complex structure on $T$, so that the $0$-simplices are the vertices and the $1$-simplices are the edges. For the $\Delta$-complex structure on $G$, we take the usual one, and then subdivide each $\iota$-anti-invariant edge at the preimage of the associated vertex of $T$ (which in each case is a genus 0 leaf). 
Define a map of complexes $C_{\bullet}(T)\rightarrow C_{\bullet}(G)$ given by $x\mapsto s(x)-\iota(s(x))$. Since the section $s$ is continuous, this map is compatible with the boundary operators on each side (strictly speaking, we need to choose an orientation on the edges of $T$ and $G$ respectively to define the boundary operators; we do this in such a way that 
both $\iota$ and $\pi$ are orientation-preserving). The kernel of this map of complexes is $C_\bullet(T_b)$ and, along with the quotient map $\pi:C_\bullet(G)\rightarrow C_\bullet (T)$ we obtain a short exact sequence of complexes
\[0\rightarrow C_\bullet(T)/C_\bullet(T_b)\longrightarrow C_\bullet(G)\longrightarrow C_\bullet(T)\rightarrow 0.\]
Since $H_2(T)=0=H_1(T)$ ($T$ is contractible)
and $H_\bullet(T,T_b)$ is the homology of the leftmost complex,
this sequence gives an isomorphism
\[H_1(T,T_b)\stackrel{\sim}{\longrightarrow}H_1(G).\]
In the metric case, the pairings on $H_1(G)$ and $H_1(T,T_b)$ are induced by ones on $C_1(G)$ and $C_1(T)$ respectively. The scaling factors in Construction \ref{GtoT} 
are defined in such a way that the map in degree 1 in the short exact sequence above preserves these. Similarly, the compatibility with automorphism actions 
can be checked on the level of the map $C_1(T)\to C_1(G)$ (again, see Construction \ref{GtoT}).
\end{proof}

\begin{remark} \label{dependence of lattice iso on section}
For each section $s: T\to G$, write $f_s:H_1(G)\rightarrow H_1(T,T_b)$ for the (inverse of the) isomorphism constructed in Proposition \ref{hyp graph homology}. 
Then given two sections $s$ and $s'$ one has
\[f_{s'}=\psi_{s,s'}\circ f_s\]
where $\psi_{s,s'}\in \Aut T$ is as in Definition \ref{defi of psi}.
\end{remark}

\begin{remark}
Another approach to proving the existence of the isomorphism of Proposition \ref{hyp graph homology} is as follows. Writing $G_b$ for the subgraph of $G$ fixed by the hyperelliptic involution, one has $H_1(G)\iso \tilde{H}_0(G_b)$. This follows by enlarging the closed sets $T_b\subset T, G_b\subset G$ to their small open neighbourhoods $\tilde T_b, \tilde G_b$ and applying the Mayer--Vietoris sequence to the open sets $U=\tilde G_b\cup s(T_y)$ and $V=\tilde G_b\cup \iota(s(T_y))$ which cover $G$ (note that $U\cap V$ is homotopic to $G_b$, whilst $U$ and $V$ individually are homotopic to the tree $T$). Since $G_b$ and $T_b$ are homeomorphic, we have $\tilde{H}_0(G_b)\iso \tilde{H}_0(T_b)$. The latter group is isomorphic to $H_1(T,T_b)$ via the relative homology sequence.
\end{remark}

\subsection{BY trees}

In this subsection we give an explicit description of the first relative homology group of a (closed, possibly metric) BY tree with respect to its blue part. This will be necessary for establising the second isomorphism of Theorem \ref{lattice thm 1} but may also be of independent interest.  It will be convenient to work with rooted BY trees, i.e. BY trees with a distinguished point (which may be a vertex but could also be a point on an edge). Our description of the relative homology group will naturally be compatible with automorphisms of the BY tree which fix the root (but not general automorphisms).

\subsubsection{Rooted BY trees}

\begin{definition}
By a \emph{rooted BY tree} we mean a pair $(T,R)$ where $T$ is a (closed, possibly metric) BY tree and $R$, the `root', is a point on $T$ (i.e. a vertex or a point on an edge). By an automorphism of a rooted BY tree we mean an automorphism of the underlying BY tree (complete with signs on yellow components) that preserves $R$. We write $\Aut_RT$ for the group of automorphisms of a rooted BY tree $(T,R)$. Given a vertex $v\neq R$ of $T$, we refer to the unique edge of $v$ in the direction of $R$ as the \textit{parent edge} of $v$.
\end{definition}

\begin{remark}  
Every BY tree has a centre (in the sense of Definition \ref{BY tree centre}),
which is fixed by all automorphisms.  Thus any BY tree can be made into a rooted BY tree in such a way that there is no difference between `rooted' and `non-rooted' automorphisms. 
\end{remark}


\begin{definition}
Let $T$ be an open BY tree and $\tilde{T}$ its core. Then we give $\tilde{T}$ the structure of a rooted BY tree by defining the root $R$ to be the point on $\tilde{T}$ which is closest to $\infty$ in $T$. 
\end{definition}

\begin{remark}
Let $\tilde{T}$ be the core of an open BY tree $T$ of genus $\ge 2$.  
Then whether or not the root $R$ is a vertex depends on the type of the neighbourhood of infinity in $T$ (cf. Table \ref{opentable}).
Specifically, it is a vertex of $\tilde{T}$ in cases 
O--EOU, and lies on an edge otherwise. 
In cases O--E and OE--EOU, $R$ is a blue vertex and in case $U$, $R$ is a yellow vertex. In case OxO, $R$ is a point on a blue edge and in cases ExE--TxU, $R$ is a point on a yellow edge. Note that the automorphism group of an open BY tree $\tilde{T}$ maps surjectively to the automorphism group of its core, viewed as a rooted BY tree (the map being restriction of automorphisms). This is an isomorphism apart from cases EE-OxO, where the kernel is isomorphic to $C_2$ and acts trivially on $H_1(T,T_b)$ (see Corollary \ref{res of auts}). 
\end{remark}

\subsubsection{Homology of a rooted BY tree}

Let $(T,R)$ be a 
rooted (closed) BY tree with blue part $T_b$ and yellow part $T_y$. For the purposes of computation, we make $T$ into a $\Delta$-complex in the usual way, save that, in the case that the root $R$ lies on an edge, we subdivide this edge at the root $R$ so that $R$ becomes a $0$-simplex. Moreover, we orient all edges so that they point towards $R$. Note that every automorphism of $T$ is orientation preserving since it fixes the root.

For the rest of the section we adopt the following convention.

\begin{convention} \label{root of subtree}
If $T'$ is a closed subtree of a rooted BY tree $(T,R)$ then we take as the root of $T'$ the point on $T'$ closest to $R$ in $T$ (which is either $R$ itself or a vertex of both $T$ and $T'$ (or both)). 
\end{convention}




\begin{definition} \label{BY tree lattice}
Let $(T,R)$ be a rooted BY tree. For a connected component $Y$ of $T_y$, let $\overline{Y}$ denote its closure in $T$, viewed as a rooted tree with root $R_Y$ as in Convention \ref{root of subtree}. Write $\cL_Y$ for the set of (non-root) leaves of $\overline{Y}$ and define $\cL_T=\cup_{Y}\cL_Y$ (note that this union is disjoint);
equivalently, 
$$
  \cL_T=\{\text{blue vertices}~v \neq R~\text{whose parent edge is yellow}\}.
$$
For $v\in \cL_T$, take $Y$ for which $v\in \cL_Y$ and define $\hat v=R_Y$.
We then define the free $\Z$-module $\Pi_T$ by
$$
  \Pi_T = 
  \begin{cases}
  \>\>\>
  \vphantom{\int^{\int^X}_{\int_X}}
  \Z[\cL_T] & \text{if $R$ is blue,}\cr
  \>\Bigl\{\sum_v \lambda_v v\in \Z[\cL_T]\bigm|\sum_{\hat v=R}\lambda_v =0\Bigr\} &
    \text{if $R$ is yellow.}\\[8pt]
  \end{cases}
$$
%
In the metric case, we define a pairing on $\Z[\cL_T]$ and $\Pi_T$ by setting
\[\left \langle v_1,v_2\right \rangle =\begin{cases} \delta(v_1\wedge v_2,\hat v_1)~~&~~\text{if }\hat v_1=\hat v_2,\\0~~&~~\text{otherwise,} \end{cases}\]
where $v_1\wedge v_2$ denotes the point (vertex or $R$ (or both)) at which the unique paths in $T$ from $v_1$ to $R$ and $v_2$ to $R$ meet.
Automorphisms $(\sigma,\epsilon_\sigma)\in\text{Aut}_RT$ act on $\Z[\cL_T]$ and $\Pi_T$ 
by setting, for a leaf $v_1\in \overline{Y}$,
\[(\sigma,\epsilon_\sigma)\cdot v_1=\epsilon_\sigma(Y)\sigma(v_1),\]
and extending linearly.
\end{definition}


\begin{remark}
For $v\in \cL_T$, $\hat v$ is blue unless $R$ is yellow and $\hat v=R$ 
(as all yellow vertices of $T$ have only yellow edges).
\end{remark}

\begin{remark} \label{extension of pairing}
Note that the action of $\text{Aut}_RT$ on $\Z[\cL_T]$
is particularly simple, being given by signed permutations.   
\end{remark}

\begin{proposition} \label{BY tree homology proposition}
Let $(T,R)$ be a rooted BY tree.
 Then there is a canonical isomorphism 
\[ \Pi_T\iso H_1(T,T_b),\]
equivariant for the action of $\Aut_RT$ and, in the metric case, 
respecting the pairings.
\end{proposition}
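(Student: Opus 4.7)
The plan is to construct an explicit map $\phi \colon \Pi_T \to H_1(T, T_b)$ and then verify that it gives the desired canonical isomorphism. First I would orient every edge of $T$ towards $R$, and for each $v \in \cL_T$ with $v \in \cL_Y$ let $\gamma_v$ denote the unique path in the closure $\overline{Y}$ from $\hat v$ to $v$, viewed as a $1$-chain in $C_1(T)$; it consists entirely of yellow edges. I would then set $\phi(v) = [\gamma_v]$ and extend linearly. The first thing to check is well-definedness: when $\hat v$ is blue, $\partial \gamma_v = v - \hat v$ lies in $C_0(T_b)$, so $\gamma_v$ represents a class in $H_1(T, T_b)$. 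When $R$ is yellow, for those $v$ with $\hat v = R$ the boundary contribution of $R$ does not lie in $C_0(T_b)$, but the defining constraint $\sum_{\hat v = R} \lambda_v = 0$ is exactly what cancels it in any element of $\Pi_T$.

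Next I would establish injectivity. The parent yellow edge $e_v$ of each $v \in \cL_T$ appears with coefficient $\pm 1$ in $\gamma_v$ and in no other $\gamma_{v'}$, so a relation $\sum \lambda_v \phi(v) = 0$ in $C_1(T)/C_1(T_b)$ forces $\lambda_v = 0$ for each $v$; since $T$ is one-dimensional, $H_1(T, T_b) \hookrightarrow C_1(T)/C_1(T_b)$, so this suffices. To deduce surjectivity I would compare ranks. By the relative homology exact sequence, $\rk H_1(T, T_b) = b - 1$, where $b$ is the number of connected components of $T_b$. On the other hand, the bipartite auxiliary graph whose nodes are the $b$ blue components and the $y$ yellow components, with an edge for each adjacency, is a tree, and its $b + y - 1$ edges are in bijection with the disjoint union $\bigsqcup_Y \{\text{leaves of }\overline Y\}$. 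Each $R_Y$ is a blue leaf of $\overline{Y}$ except in the yellow-root case, where $R_{Y^*}$ is an interior yellow vertex. Subtracting roots gives $|\cL_T| = b - 1$ (resp.\ $b$), and imposing the constraint in the yellow case yields $\rk \Pi_T = b - 1$ in both settings.

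Equivariance follows since any $\sigma \in \Aut_R T$ fixes $R$ and thus preserves edge orientations, while the action of $(\sigma, \epsilon_\sigma)$ on a yellow edge $e$ in component $Y$ is $\epsilon_\sigma(Y) \sigma(e)$; hence $\sigma \cdot \gamma_v = \epsilon_\sigma(Y) \gamma_{\sigma(v)}$, matching the action on $\Pi_T$. In the metric case, the relative length pairing on $C_1(T)$ is supported on yellow edges, so $\langle \gamma_{v_1}, \gamma_{v_2} \rangle$ equals the total yellow length shared by the two paths. This is zero when $\hat v_1 \ne \hat v_2$ (the paths lie in disjoint yellow closures and share no yellow edges), and equals $\delta(v_1 \wedge v_2, \hat v_1)$ otherwise, matching Definition \ref{BY tree lattice}.

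The main obstacle will be the careful treatment of the yellow-root case, which is precisely why $\Pi_T$ is defined as a kernel rather than a free module there, and which complicates both the well-definedness check and the rank count. All remaining steps are essentially direct computations, facilitated by the fact that $T$ is $1$-dimensional so $H_1(T, T_b)$ embeds into $C_1(T)/C_1(T_b)$.
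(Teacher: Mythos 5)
Your construction is the same as the paper's: send $v\in\cL_T$ to the class of the yellow path between $v$ and $\hat v$, check that the boundary lands in $C_0(T_b)$ (with the constraint $\sum_{\hat v=R}\lambda_v=0$ absorbing the yellow-root case), and use the fact that the parent edge of $v$ occurs in $\gamma_v$ with coefficient $\pm 1$ and in no other $\gamma_{v'}$. The well-definedness, equivariance and pairing verifications are all correct, and your rank count for $\Pi_T$ (via the bipartite components-tree) agrees with the paper's count (via the bijection between $\cL_T$ and the components of $T_b$ not containing $R$).

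The one genuine gap is the sentence ``To deduce surjectivity I would compare ranks.'' An injective map of free $\Z$-modules of equal finite rank need not be surjective --- the cokernel can be a nonzero finite group (e.g.\ $2\Z\subset\Z$) --- so injectivity plus $\rk\Pi_T=\rk H_1(T,T_b)$ does not by itself give an isomorphism of lattices, only an isomorphism after tensoring with $\Q$. The fix is exactly the observation you already made for injectivity: since each parent edge $e_v$ appears with coefficient $\pm1$ in $\gamma_v$ and in no other $\gamma_{v'}$, the set $\{\gamma_v\}_{v\in\cL_T}$ extends to a $\Z$-basis of $C_1(T_y)$ (complete it by the yellow edges that are not parent edges of elements of $\cL_T$), so the image of $\Z[\cL_T]$, and hence of the direct summand $\Pi_T$, is a \emph{direct summand} of $C_1(T_y)=C_1(T)/C_1(T_b)$. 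A direct summand of $C_1(T_y)$ that is contained in $H_1(T,T_b)$ and has the same rank must equal $H_1(T,T_b)$. This is precisely how the paper closes the argument; without the saturation step the proof is incomplete.
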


\begin{proof}
First note that the map sending a connected component 
of $T_b$ to its root gives a bijection between 
the set of connected components of $T_b$ not containing $R$ and the set $\cL_T$. 
It now follows that
$$
  \textup{rk}~\Pi_T=\textup{rk}~H_1(T,T_b),
$$
as Remark \ref{reduced homology remark} shows that the rank of $H_1(T,T_b)$ is one less that the number of connected components of $T_b$.

Now consider the homomorphism $p:\Z[\cL_T]\rightarrow C_1(T)/C_1(T_b)$ where for $v\in \cL$, we define $p(v)$ as the shortest path in $T$ from $v$ to $\hat v$. Since this path is yellow by construction, $p$ is injective. Note also that $C_1(T)/C_1(T_b)$ is a free $\Z$-module since $C_1(T)$ is the direct sum of $C_1(T_b)$ and $C_1(T_y)$. We claim that the image of $p$ is a direct summand of $C_1(T)/C_1(T_b)=C_1(T_y)$. Indeed, for each $v\in \cL_T$, its parent edge appears in $p(v)$ with  multiplicity one, and does not appear in  $p(v')$ for any $v\neq v'\in  \cL_T$. Thus the set $\{p(v)\>|\>v\in \cL_T\}$ may be completed to a basis for $C_1(T_y)$ by adding in all yellow edges of $T$ except the parent edges of vertices in $\cL_T$. 

Now denote by $\tilde{p}$ the restriction of $p$ to $\Pi_T$, which is injective since $p$ is. We claim that its image is contained in $H_1(T,T_b)$. Indeed, writing $d:C_1(T)\rightarrow C_0(T)$ for the boundary map as usual, for $v\in \cL_T$  we have $d(p(v))=\hat v- v$. Provided that $\hat v$ is blue (i.e. unless $R$ is yellow and $\hat v=R$), we see that $d(p(v))$ lies in $C_0(T_b)$ in which case $p(v)$ is an element of $H_1(T,T_b)$. In particular, the claim holds for $R$ blue. On the other hand, if $R$ is yellow and $v,v'\in \cL_T$ with $\hat v =\hat v'=R$, then $d(p(v-v'))=v'-v \in C_0(T_b)$. By the way we have defined $\Pi_T$ when $R$ is yellow, this proves the claim in this instance also.

Since $\Pi_T$ is a direct summand of $\Z[\cL_T]$ (it is the  kernel of the homomorphism into $\Z$ sending $v\in \cL_T$ with $\hat v$ yellow to $1$, and all other elements of $\cL_T$ to $0$, which shows that $\Z[\cL_T]/\Pi_T$ is torsion free), $p$ is injective, and $p(\Z[\cL_T])$ is a direct summand on $C_1(T_y)$, it follows that $\tilde{p}(\Pi_T)$ is a direct summand of $C_1(T_y)$ also. We are now in the following situation: we have inclusions of free, finite rank $\Z$-modules $\tilde{p}(\Pi_T)\subseteq H_1(T,T_b)\subseteq C_1(T_y)$, with $\tilde{p}(\Pi_T)$ a direct summand of $C_1(T_y)$ and $\textup{rk}~\tilde{p}(\Pi_T)=\textup{rk}~H_1(T,T_b)$.
It now follows formally that $\tilde{p}(\Pi_T)=H_1(T,T_b)$ whence $\tilde{p}$ is an isomorphism.
%
%
%
\end{proof}





\newpage

\subsection{Cluster Pictures $\leftrightarrow$ BY trees}






\begin{proposition} \label{lattices BY cluster}
Let $(X,\Sigma)$ be a cluster picture, $T=\TT(\Sigma)$ the associated open $BY$ tree and $\tilde{T}$ its core. Then there is a canonical isomorphism
\[\Lambda_\Sigma \iso \Pi_{\tilde{T}},\]
equivariant for the action of $\Aut\Sigma=\Aut T$ and, in the metric case, preserving the respective pairings. 
(Here, $\Pi_T$ is as defined in Definition \ref{BY tree lattice}.)
\end{proposition}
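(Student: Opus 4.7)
My plan is to construct the isomorphism $\Lambda_\Sigma\iso\Pi_{\tilde T}$ directly from the correspondence between proper clusters and vertices of $T=\TT(\Sigma)$ established in Construction \ref{StoT}, then check that it is compatible with the subspace conditions, automorphism actions, and pairings.

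The first step is to establish a natural bijection between the indexing sets of the two lattices, namely $\cE_\Sigma$ (even non-\"ubereven clusters $\s\ne X$) and $\cL_{\tilde T}$ (blue vertices $v\ne R$ of $\tilde T$ with yellow parent edge). Sending $\s\mapsto v_\s$ under the correspondence of Construction~\ref{StoT} gives a blue vertex of $T$ (since $\s$ is non-\"ubereven) whose parent edge is yellow (since $\s$ is even and $\s\ne X$), so the candidate map is well-defined in $T$. The main task is to check that $v_\s$ is actually a vertex of $\tilde T$ different from $R$, and conversely that every $v\in\cL_{\tilde T}$ arises this way. By Proposition~\ref{Tclosure}, the only vertices of $T$ removed in passing to $\tilde T$ are the vertex immediately adjacent to $\infty$ (in cases EO--EOU and OxO--TxU); moreover, the root $R$, when it is a vertex, is a principal vertex (corresponding to a maximal principal cluster). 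A case-by-case inspection of the 16 near-infinity configurations in Table~\ref{opentable} (combined with Lemma~\ref{principal cluster lemma}) shows that both the removed vertex and $R$ (when it is a vertex) correspond to clusters that are either equal to $X$ or are \"ubereven, hence in either case not in $\cE_\Sigma$. This yields the desired bijection $\s\leftrightarrow v_\s$, and extends to an isomorphism $\Z[\cE_\Sigma]\iso\Z[\cL_{\tilde T}]$.

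Next I need to check that the submodule conditions defining $\Lambda_\Sigma\subseteq\Z[\cE_\Sigma]$ and $\Pi_{\tilde T}\subseteq\Z[\cL_{\tilde T}]$ match. The key observation is that for $\s\in\cE_\Sigma$, the yellow component $Y$ of $T_y$ containing the parent edge of $v_\s$ corresponds to the chain of \"ubereven ancestors above $\s$; its root $R_Y$ (in the sense of Convention~\ref{root of subtree}) is the vertex $v_{\hat\s}$ when $\hat\s\ne X$ or when $X$ is non-\"ubereven, and equals $R$ precisely when $\hat\s=X$ and $X$ is \"ubereven. In particular, $R$ is yellow iff $X$ is \"ubereven, and in that case $\hat v_\s=R$ iff $\hat\s=X$. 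Therefore the kernel-of-sum conditions in Definitions~\ref{lambdaCP} and \ref{BY tree lattice} correspond under the bijection.

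For automorphism compatibility, let $(\sigma_0,\epsilon_\sigma)\in\Aut\Sigma$; by Construction~\ref{StoT}, the corresponding sign of $\TT(\sigma)$ on the yellow component $Y$ above $v_\s$ equals $\epsilon_\sigma(\s)$, so the action $\s\mapsto\epsilon_\sigma(\s)\sigma_0(\s)$ on $\Lambda_\Sigma$ matches the signed permutation action on $\Pi_{\tilde T}$. Finally, in the metric case, the pairing compatibility is immediate from the scaling in Construction~\ref{StoT}: yellow edges of $\tilde T$ have length twice the corresponding cluster distance, so the path from $v_{\s_1}\wedge v_{\s_2}$ to $\hat v_{\s_1}$ (which lies entirely within the yellow component $\bar Y$) has length exactly $2\delta_\Sigma(\s_1\wedge\s_2,\hat\s_1)$, matching the factor of $2$ in Definition~\ref{lambdaCP}.

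The main obstacle is the case-by-case verification in the first step: showing that the bijection $\s\leftrightarrow v_\s$ really holds in all 16 configurations of Table~\ref{opentable}. This is not deep but is the point where the somewhat ad~hoc looking definition of $\cE_\Sigma$ (excluding $X$) and the removal recipe for the core have to be matched up; once one has verified the three or four structurally distinct types (vertex-root blue, vertex-root yellow, edge-root blue, edge-root yellow), the rest are minor variations.
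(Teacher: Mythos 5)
Your proof is correct and follows essentially the same route as the paper's (identify $\cE_\Sigma$ with $\cL_{\tilde T}$ via $\s\mapsto v_\s$ using the near-infinity case analysis of Table \ref{opentable}, match $\hat\s$ with the root $R_Y$ of the relevant yellow component, then check the signs and the doubled metric), only spelled out in more detail. One small slip in your case inventory: in cases EO, EOE and EOU the root $R$ corresponds to the odd cluster of size $2g+1$, which is neither $X$ nor \"ubereven --- but since odd clusters are excluded from $\cE_\Sigma$ by definition, your conclusion is unaffected.
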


\begin{proof}
We first claim that, under the correspondence between cluster pictures and open BY trees, the set  $\cE_\Sigma$ of Definition \ref{the set cal E} (consisting of even, non- \"ubereven clusters $\s\neq X$) is identified with the set $\cL_{\tilde{T}}$ (see Definition \ref{BY tree lattice}). Indeed, under the correspondence, even non-\"ubereven clusters correspond to blue vertices of $T$ whose parent edge is yellow. Now if $\s$ is an even non-\"ubereven cluster with associated vertex $v_\s$, one easily checks from Table \ref{nbhds of infty table} that the parent edge is in the core if and only if $\s\neq X$. Thus, as desired, $\cE_\Sigma$ corresponds to the set of blue vertices in the core which are not the root, and whose parent edge is yellow. 

It now follows that the map $\s\leftrightarrow v_\s$ induces an isomorphism between $\Lambda_\Sigma$ and  $\Pi_{\tilde{T}}$. 
Moreover, given $\s\in \cE_\Sigma$, corresponding to a leaf $v_\s$ of a yellow component $Y$ of $\tilde{T}$, one sees that $\hat\s$ (cf. Definition \ref{the set cal E}) corresponds to the root $R_Y$ of $\bar{Y}$. Indeed, this is immediate if $\s$ is contained in some non-\"ubereven cluster, and the case where no such cluster exists follows upon consulting Table \ref{nbhds of infty table}. The claimed result now follows immediately from the definitions of $\Lambda_\Sigma$ and $\Pi_{\tilde{T}}$, complete with pairing and action of automorphisms (using the identification of automorphism groups as given in Proposition \ref{cluster picture by tree proposition}).
\end{proof}

\subsection{An example}

Consider Examples \ref{Grain}, \ref{Miller}, \ref{Windmill}, \ref{WM} and \ref{GM} with $T$ and $G$ open. We construct bases for $\Lambda_{\Sigma}, \Lambda_T, \Lambda_G$ that illustrate why the lattices are isomorphic.

\hskip 1cm
\GRAINthree

\noindent\hskip 1cm\raise 1em\hbox{\MILLER}
\WINDMILL

\noindent
Recall that by definition
$$
\Lambda_{\Sigma} = \Z\t_1 + \Z\t_2 + \Z\t_3 + \Z\t_4 + \Z\s_2,
$$ the basis vectors corresponding to the even, non-\ub\ clusters in $\Sigma$. 

We produce a basis for $\Lambda_T=H_1(T,T_b)$ by looking at one yellow component at a time.
For a given yellow component $Y$ (with closure $\bar{Y}$), let $z$ be the leaf of $\bar{Y}$ closest to $\infty$ and as basis vectors take the shortest paths from the other leaves to $z$. In our example, we have 4 yellow components and the basis vectors are $[u_1, x], [u_2, x], [u_3,w_2], [u_4, w_2], [w_2,x]$, which is precisely the basis in Example \ref{Miller}. In other words, the basis we've chosen is indexed by blue vertices with a yellow edge towards $\infty$, which exactly correspond to even, non \ub\ clusters of $\Sigma$ (see Example \ref{GM}). 

To see why the two pairings coincide, consider for example the two paths $[u_3,w_2], [u_4, w_2]$. Their intersection is of length 2 which agrees with the distance $\delta(w_3, w_2)$. Since $w_2$ corresponds to $\s_2=\hat{\t}_3 = \hat{\t}_4$ and $w_3$ corresponds to $\s_3 = \t_3 \wedge \t_4$, 
$$
2=\langle[u_3,w_2], [u_4, w_2]\rangle =  \delta(w_3, w_2) = \delta(\t_3 \wedge \t_4, \hat{\t}_3) = \langle\t_3, \t_4\rangle=2.
$$

Now consider the action of the automorphisms $\sigma_{\Sigma}=(\alpha_{\Sigma}, \epsilon_{\Sigma})$ and $\sigma_{T}=(\alpha_{T}, \epsilon_{T})$ on the lattices. Recall that $\sigma_{\Sigma} (\s) = \epsilon_{\Sigma}(\s)\alpha_{\Sigma}(\s)$ by definition, e.g. $\sigma_{\Sigma} (\t_2) = -\t_1$ and that $\sigma_T([u,v]) = \epsilon_T(z)[\alpha_T(u),\alpha_T(v)]$, where $[u,v]$ is a path within one yellow component and $z$ any point on that component, e.g. $\sigma_T([u_2,x]) = -[u_1,x]$.

A path in our basis of the form $[u,*]$ is sent to $[\alpha_T(u), *]$ which is another basis vector, e.g. $\alpha_{T}([u_2, x]) = [u_1,x]$. By construction, if $u$ corresponds to the cluster $\s$ then $\alpha_T(u)$ corresponds to $\alpha_{\Sigma}(\s)$ and $\epsilon_T(u) = \epsilon_{\Sigma}(\s)$, e.g. $\epsilon_T([u_2,x]) = -1 = \epsilon_{\Sigma}(\t_2)$. It follows that the action of $\sigma_T$ on $\Lambda_T$ is the same as the action of $\sigma_{\Sigma}$ on $\Lambda_{\Sigma}$. 

Now consider the hyperelliptic graph $G$ with the decomposition of its $\iota$-permuted part $G_y = G_y^+ \coprod G_y^-$, 
where $G_y^+$ consists of  $e_2^+, e_3^+, v_3^+, e_{\infty}^+$ and the top halves of $\l_1$, $\l_2$, $\l_3$ and $\l_4$ (call these $\l_1^+, \l_2^+, \l_3^+, \l_4^+$).
To construct a basis for $\Lambda_G$ in a systematic way 
\begin{itemize}
\item construct $G'$ from $G_y$ by removing its edges towards $\infty$ and taking the closure. In our example $G' = G \setminus\{e_1, v_1, e_{\infty}^+, e_{\infty}^-\}$,
\item the $\iota$-invariant points remaining are $v_x, v_2$ and the mid-points of $\l_1, \l_2, \l_3, \l_4$,
\item for $\iota$-invariant points with an edge in $G'$ towards $\infty$, create a loop by following $G_y^+$ towards $\infty$ to the next $\iota$-invariant point and back via $G_y^-$; for our example we obtain the loops $\l_1$ (oriented clockwise) and $\l_2$ (anti-clockwise), and the loops $e_2^+-e_2^-$, $\l_3^++e_3^+-e_3^--\l_3^-$, $\l_4^+ + e_3^+ - e_3^--\l_4^-$, where we have oriented each edge and half-edge towards $\infty$. It is exactly the basis given in Example \ref{Windmill}.
\end{itemize}
Under the $2:1$ map $G \to T$, these loops correspond to yellow paths from one blue vertex to another blue vertex which is closer to $\infty$. By construction, this gives the basis of $\Lambda_T$.  

Since both pairings measure the length of the intersection of loops/paths, we get the same pairing on both spaces, e.g. 
$$
2=\langle[u_3,w_2], [u_4, w_2]\rangle =  \delta(w_3, w_2) =2\delta(e_3^+)= \delta(e_3^+)+\delta(e_3^-) = 
$$
$$
 = \langle\l_3^++e_3^+-e_3^--\l_3^-, \l_4^+ + e_3^+ - e_3^--\l_4^-\rangle=2.
$$

As in Example \ref{WM}, the action of $\sigma_G$ on $G/\langle \iota \rangle = T$ is that of $\a_T$, in particular $\a_T([u_2,x]) =[u_1,x]$ corresponds to $\sigma_G(\l_2) = \pm\l_1$. Moreover $\epsilon_T([u_2,x])=-1$ corresponds to $\sigma_G(\l_2^+) = \l_1^-$ so that $\sigma_T([u_2,x]) = -[u_1,x]$ corresponds precisely to $\sigma_G(\l_2)= -\l_1$.

\def\GraphScale{0.4}
\def\SnakeWiggle{3pt}
\def\clustersep{1.3pt}
\rootsize{0.5}

\section{Tamagawa groups of hyperelliptic graphs}\label{s:components}

In this section we study the Tamagawa group $\Phi(G)$ (see Definition \ref{componentgroup}) of 
a hyperelliptic graph $G$ whose edge lengths are integers, and the corresponding group for 
BY trees and cluster pictures. In Proposition \ref{component group vs jacobian} 
we identify it with the graph-theoretic Jacobian of $G$ along with automorphism action.
We then give an explicit description of the 2-torsion in this group (Corollary \ref{2torscor}).

\subsection{Integral hyperelliptic graphs and Tamagawa groups}

\begin{definition}
A (closed) metric hyperelliptic graph $G$  is \emph{integral} 
if all edge lengths are integers, unless $G$ is the genus 1 circle graph from \eqref{circlegraph}.
In that exceptional case, we say that $G$ is integral if the sum of lengths of its two edges 
is an integer.

A closed metric BY tree $T$ is \emph{integral} if $\GG(T)$ is. 

A metric cluster picture $(X,\Sigma)$ is \textit{integral} if the core of $\GG(\TT(\Sigma))$ is.
\end{definition}

\begin{lemma} \label{integral BY trees lemma}
A (closed) metric BY tree is integral if and only if all edges have integral length and all edges not incident to a genus 0 leaf have even length. 

A metric cluster picture $(X,\Sigma)$ of genus $\ge 2$ is integral if and only if 
\begin{itemize}
\item $\delta(\s,\s')\in 2\Z$ for $\s'<\s$ with $\s'$, $\s$ principal, $\s'$ odd,
\item $\delta(\s,\s')\in 2\Z$ for $\s,\s'$ odd principal, $X=\s\bigsqcup \s'$,
\item $\delta(\s,\s')\in \Z$ for $\s'<\s$ with $\s'$, $\s$ principal, $\s'$ even,
\item $\delta(\s,\s')\in \Z$ for $\s,\s'$ even principal, $X=\s\bigsqcup \s'$,
\item $\delta(\s,\t)\in \frac{1}{2}\Z$ for a twin $\t<\s$,
\item $\delta(\s,\cc)\in \frac{1}{2}\Z$ for $\cc$ a cotwin, $\s<\cc$ of size $2g$.
\end{itemize}
\end{lemma}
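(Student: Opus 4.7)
The plan is to prove the BY tree statement directly from Construction~\ref{TtoG}, and then deduce the cluster-picture statement by composing with Construction~\ref{StoT} and analysing the passage to the core described in Table~\ref{opentable}.

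For the BY tree statement, I would simply unwind the definition: $T$ is integral iff $\GG(T)$ is. From Construction~\ref{TtoG}, every edge of $T$ of length $d$ produces edges of $\GG(T)$ in one of two ways. If the $T$-edge is not incident to a genus~$0$ leaf, it contributes either an $\iota$-invariant edge (blue $T$-edge) or a pair of $\iota$-swapped edges (yellow $T$-edge), each of length $d/2$. If the $T$-edge is incident to a genus~$0$ leaf, it contributes an $\iota$-anti-invariant loop (blue neighbour) or an $\iota$-anti-invariant edge between $v^+$ and $v^-$ (yellow neighbour), of length $d$. Hence $\GG(T)$ has integer edge lengths precisely when every non-leaf-incident $T$-edge has $d/2\in\Z$ and every leaf-incident $T$-edge has $d\in\Z$, which is the stated condition. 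The two exceptional low-genus circle graphs are handled by direct inspection.

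For the cluster-picture statement, I would use that by definition and Lemma~\ref{hyp tree equiv lemma}, $(X,\Sigma)$ is integral iff $\GG(\tilde T)$ is, where $\tilde T$ is the core of $\TT(\Sigma)$. By the previous paragraph this is equivalent to: every edge of $\tilde T$ has integer length, and those not incident to a genus~$0$ leaf of $\tilde T$ have even length. Using Lemma~\ref{principal cluster lemma}, non-leaf vertices of $\tilde T$ correspond to principal clusters of $\Sigma$ and genus~$0$ leaves of $\tilde T$ correspond to twins and cotwins. Combined with the length rules of Construction~\ref{StoT}---an edge between $v_{\s'}$ and $v_\s$ with $\s'<\s$ has length $\delta(\s,\s')$ when blue (i.e.\ $\s'$ odd) and $2\delta(\s,\s')$ when yellow (i.e.\ $\s'$ even), while an edge from a twin leaf $u_\t$ to its parent $v_\s$ is yellow of length $2\delta(\t,\s)$---this translates directly into bullets~$(1)$, $(3)$, $(5)$ of the Lemma. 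The remaining bullets are read off from Table~\ref{opentable}: in cases OxO, ExE, UxU and UxE the vertex $v_X$ is absorbed into the core and the two $T$-edges $v_X v_\s,\,v_X v_{\s'}$ merge into a single edge of length $\delta(X,\s)+\delta(X,\s')=\delta(\s,\s')$ (blue, OxO) or $2\delta(\s,\s')$ (yellow, others), giving bullets~$(2)$ and~$(4)$; in the six cotwin cases OE, OU, EO, EU, EOE, EOU, the core contains an edge from the $2g$-sized principal child $v_\s$ to the cotwin leaf $v_\cc$, yellow of length $2\delta(\s,\cc)$, yielding bullet~$(6)$.

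The main obstacle is the bookkeeping required to run through all sixteen configurations of Table~\ref{opentable} and confirm that the edges of $\tilde T$ arising in each case are captured by the six bullets of the Lemma, and conversely that no additional constraints are imposed. The most delicate configurations are TxE and TxU, where $X$ is neither principal nor a cotwin yet the core still contains a merged yellow edge from a principal vertex $v_\s$ to a twin leaf $u_\t$ of length $2\delta(\s,\t)=2\delta(X,\s)+2\delta(X,\t)$; here one has to verify carefully that the resulting integrality constraint is implied by the twin bullet, appealing if necessary to the moves of Definition~\ref{Sequivdef} to pass to an equivalent cluster picture in which the constraint is transparent. Once this case analysis is complete, both directions of the equivalence follow. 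In the metric case the edge lengths match by construction, and in the non-metric case the statement is vacuous.
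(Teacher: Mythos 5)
Your overall strategy is the same as the paper's: the paper disposes of this lemma in one line by citing the closed-case dictionary (Table \ref{closed correspondence}), and your write-up is an honest, more explicit unwinding of exactly that correspondence. The BY-tree half is correct, as is the translation giving bullets (1), (3), (5), (6) and the merged-edge analysis in cases OxO, ExE, UxU, UxE giving bullets (2) and (4).

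The one step that does not go through as claimed is precisely the one you flag: TxE and TxU. There $X=\t\sqcup\s$ with $\t$ a twin and $\s$ principal of size $2g$, the (\"ubereven) vertex $v_X$ is absorbed into an edge when passing to the core, and the surviving yellow edge from the leaf $u_\t$ to $v_\s$ has length $2\delta(X,\t)+2\delta(X,\s)$; integrality of the core therefore demands $\delta(\t,\s)=\delta(X,\t)+\delta(X,\s)\in\frac{1}{2}\Z$. This is \emph{not} implied by the twin bullet: bullet (5) applied to $\t<X$ controls only $\delta(X,\t)$, and no bullet controls $\delta(X,\s)$, because $X$ is neither principal nor a cotwin, so bullets (1)--(4) and (6) are silent about it. (Take $\delta(X,\t)=\frac{1}{2}$, $\delta(X,\s)=\frac{1}{4}$: all six bullets can hold while the core edge has length $\frac{3}{2}$; conversely $\delta(X,\t)=\delta(X,\s)=\frac{1}{4}$ gives an integral core while bullet (5) fails, so neither implication survives.) Appealing to the moves of Definition \ref{Sequivdef} does not rescue this, since the lemma's hypotheses are stated for the given $(X,\Sigma)$ and the individual distances $\delta(X,\t)$, $\delta(X,\s)$ are not equivalence invariants --- only their sum is. What your computation actually shows is that in these two configurations the correct condition is $\delta(\t,\s)\in\frac{1}{2}\Z$ for the pair adjacent in the sense of Proposition \ref{adjacencyprop} (i.e.\ the distance appearing in Table \ref{closed correspondence}); the statement of the lemma is itself loose here, and the paper's one-line proof does not engage with this either. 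If you state and prove the TxE/TxU contribution in that corrected form, the rest of your argument closes the proof.
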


\begin{proof}
This follows immediately from the (closed) case of the correspondence between cluster pictures, BY trees and hyperelliptic graphs as detailed in Section \ref{s:121c} (see in particular Table \ref{closed correspondence}).
\end{proof}

\begin{remark}
Given an integral hyperelliptic graph $G$, the paring on $H_1(G,\Z)$ takes integer values. In particular, $H_1(G,\Z)$ embeds in its abstract dual $H_1(G,\Z)^\vee$ via $x\mapsto \left \langle x,-\right \rangle$. By using the correspondences of previous sections, it follows that if $X$ is either an integral BY tree or an integral cluster picture, then $\Lambda_X$ embeds into its abstract dual $\Lambda_X^\vee$ similarly.
\end{remark}

\begin{definition}
\label{componentgroup}
Let $X$ be a hyperelliptic graph/BY tree/cluster picture and suppose that $X$ is integral. Then we define the \emph{Tamagawa group} $\Phi(X)$ as
\[\Phi(X)=\Lambda_X^\vee/\Lambda_X.\]
In each case, the action of $\Aut X$ on $\Lambda_X$ induces an action on $\Phi(X)$.
\end{definition}


\begin{theorem}[Tamagawa group correspondence] \label{component theorem}
Let $(X,\Sigma)$ be an integral cluster picture, $T$ (resp. $G$) the associated open BY tree (resp. open hyperelliptic graph) and $\tilde{T}$ (resp. $\tilde{G}$) its core. Then we have  isomorphisms 
\[\Phi(\Sigma)\iso \Phi(\tilde{T})\iso \Phi(\tilde{G}),\]
 equivariant for the action of $\Aut\Sigma$.
\end{theorem}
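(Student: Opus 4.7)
The plan is to derive the claim as an immediate consequence of the Lattice Correspondence (Theorem \ref{lattice thm 1}), by observing that the quotient $\Lambda^\vee/\Lambda$ is functorial in pairing-preserving isomorphisms of integral lattices equipped with a group action.

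First I would invoke Theorem \ref{lattice thm 1} to obtain canonical $\Aut\Sigma$-equivariant isomorphisms
\[\Lambda_\Sigma \iso \Lambda_{\tilde{T}} \iso \Lambda_{\tilde{G}},\]
where on the right the implicit section on $\GG(\TT(\Sigma))$ is the canonical one of Remark \ref{choice of section in G construction}, and the identification of automorphism groups is provided by Theorem \ref{combmain1} together with the restriction-to-core maps (cf.\ Corollary \ref{res of auts} and Remark \ref{corropengraph}). By the metric part of the same theorem, these lattice isomorphisms preserve the pairings. Since $\Sigma$ is integral by hypothesis, the pairings on all three lattices take values in $\Z$, so each lattice embeds canonically into its $\Z$-linear dual via $x\mapsto \langle x,-\rangle$.

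Next I would record the elementary fact that given two integral lattices $(L,\langle-,-\rangle_L)$ and $(L',\langle-,-\rangle_{L'})$, any pairing-preserving $\Z$-linear isomorphism $f:L\to L'$ induces a unique isomorphism $f^\vee: L^\vee \iso L'^\vee$ by $\phi\mapsto \phi\circ f^{-1}$, and this $f^\vee$ restricts to $f$ on the embedded copies of $L$ and $L'$. Passing to quotients then produces an isomorphism $L^\vee/L \iso L'^\vee/L'$, and by functoriality any group acting compatibly with $f$ on both sides acts compatibly on the induced quotient isomorphism.

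Applying this observation to the two pairing-preserving isomorphisms displayed above immediately yields the desired isomorphisms $\Phi(\Sigma)\iso\Phi(\tilde{T})\iso\Phi(\tilde{G})$, and $\Aut\Sigma$-equivariance is inherited from that of Theorem \ref{lattice thm 1}. There is no substantive obstacle: the only bookkeeping is that in the identifications $\Aut\Sigma\iso\Aut\tilde{T}$ and $\Aut\Sigma\iso\Aut\tilde{G}$ obtained by composing with the restriction maps, the kernel (which is non-trivial in some cases by Corollary \ref{res of auts} and Theorem \ref{closedcorraut}) automatically acts trivially on $\Phi$ because it already acts trivially on $\Lambda$, so the $\Aut\Sigma$-action descends unambiguously.
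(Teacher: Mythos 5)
Your proposal is correct and takes exactly the route the paper does: the paper's entire proof of this theorem is the single sentence "This follows immediately from Theorem \ref{lattice thm 1}," and your argument simply spells out the standard functoriality of $\Lambda\mapsto\Lambda^\vee/\Lambda$ under pairing-preserving equivariant isomorphisms that this one-liner relies on. The extra bookkeeping you include about sections and kernels acting trivially is sound but not needed beyond what Theorem \ref{lattice thm 1} already packages.
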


\begin{proof}
This follows immediately from Theorem \ref{lattice thm 1}.
\end{proof}

\subsection{Jacobians of graphs}
\label{ss:jacgraphs}

In this section, we show that the Tamagawa group of an integral hyperelliptic graph $G$ 
of coincides with the Jacobian of a (combinatorial) graph $G_{\Z}$ canonically associated to $G$.
Throughout this subsection, $G$ has genus $\ge 2$.

\begin{notation} \label{integer points}
For an integral hyperelliptic graph $G$, we denote by $G_\Z$ the graph having the same underlying topological space as $G$, but whose set of vertices consists of those points on $G$ which are an integer distance from the vertices of $G$. Equivalently, $G_\Z$ is the graph obtained by subdividing each edge $e$ of $G$, say of length $l$, by adding $l-1$ vertices at intervals of unit distance along the edge, so as to obtain a new graph all of whose edge lengths are $1$.  
\end{notation}

\begin{remark}
We  have $\Aut G_\Z=\Aut G$ and the discussion in Section \ref{subdivision} shows that $H_1(G_\Z)$ is canonically isomorphic to $H_1(G)$, with the isomorphism preserving the respective pairings and automorphism actions. 
\end{remark}

In what follows we shall think of $G_\Z$ as being a finite combinatorial graph with unweighted edges (though possibly with loops and multiple edges) and disregard the genus marking. We now recall the definition of the Jacobian of such a graph.

\begin{definition}
Let $\cG$ be a finite combinatorial graph, possibly with loops and multiple edges (we reserve the letter `$G$' for hyperelliptic graphs). Write $\Div(\cG)$ for the free $\Z$-module on the vertices $V(\cG)$ of $\cG$ and $\Div^0(\cG)$ for the subgroup of $\Div(\cG)$ consisting of elements whose coefficients sum to zero. Contained in $\Div^0(\cG)$ is a certain full rank submodule $\Prin(\cG)$ consisting of `principal divisors' which may be defined as follows:

 For $v,v'\in V(\cG)$, set
\[v\cdot v'=\begin{cases}\deg(v)-2~\#~\text{loops at }v~~&~~v=v',\\- \#~ \text{edges between }v~\text{and}~v'~~&~~v\neq v',\end{cases}\]
and define a map $\alpha:\Div(\cG)\rightarrow \Div(\cG)$ by, for $v\in V(\cG)$, setting
\[\alpha(v)=\sum_{v'\in V(\cG)}(v\cdot v')v',\] 
and extending linearly. We then have $\Prin(\cG)=\im(\alpha)$. 

The \textit{Jacobian} of $\cG$ is then defined as
\[\Jac(\cG)=\Div^0(\cG)/\Prin(\cG).\]
It is a finite abelian group and the action of $\Aut\cG$ on $\Div(\cG)$ induces an action on $\Jac(\cG)$.
\end{definition}

\begin{remark} \label{loop edges}
The notion of the Jacobian of a graph appears in multiple places in the literature and is referred to by several different names, the most notable other ones being the sandpile group and the Picard group (see \cite[Section 1.1]{W} and the references therein for an overview of its occurence). Various equivalent definitions of the Jacobian also appear in the literature. The definition above is a slight variant of the one given in \cite{BN2}. There the Jacobian is only defined for graphs without loops (but possibly with multiple edges). Our definition of  $v\cdot v'$ above ensures that our definition of $\Jac(\cG)$ (along with automorphism action) agrees with that of the Jacobian of the graph obtained by removing all loop-edges from $\cG$. 

We also remark that in \cite[Section 3.1]{BR} a generalisation of the Jacobian is defined for arbitrary metric graphs. In the case that $G$ is an integral hyperelliptic graph the group $\Jac_\Z(G)$ in the notation of  loc. cit. agrees with $\textup{Jac}(G_\Z)$ as defined above. However, since the definition of the Jacobian of a metric graph is less elementary than that of a finite combinatorial graph, we have elected to work with $G_\Z$ rather than introduce $\Jac_\Z(G)$.
\end{remark}

\begin{proposition} \label{component group vs jacobian}
Let $G$ be an integral hyperelliptic graph of genus $\ge 2$. 
Then there is a canonical isomorphism 
\[\Jac(G_\Z)\iso \Phi(G),\]
equivariant for the action of $\Aut G_\Z=\Aut G$.
\end{proposition}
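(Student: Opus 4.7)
The plan is to reduce the statement to a classical identification between the Jacobian of a finite combinatorial graph and the discriminant group of the standard pairing on its first homology. Concretely, by Notation \ref{integer points} and the discussion in \S\ref{subdivision}, there is a canonical isomorphism
\[
  H_1(G_\Z, \Z) \;\iso\; H_1(G,\Z) \;=\; \Lambda_G,
\]
which preserves both the length pairings (all edges of $G_\Z$ have length $1$, and the pairing on $\Lambda_G$ takes integer values since $G$ is integral) and the action of $\Aut G_\Z = \Aut G$. It therefore induces an $\Aut G$-equivariant isomorphism
\[
  H_1(G_\Z,\Z)^\vee / H_1(G_\Z,\Z) \;\iso\; \Lambda_G^\vee / \Lambda_G \;=\; \Phi(G).
\]
Thus it suffices to construct, for an arbitrary finite connected graph $\cG$ (possibly with loops and multiple edges), a canonical $\Aut\cG$-equivariant isomorphism
\[
  \Jac(\cG) \;\iso\; H_1(\cG,\Z)^\vee / H_1(\cG,\Z),
\]
where $H_1(\cG,\Z)^\vee$ is computed with respect to the standard length-$1$ pairing on $C_1(\cG)$.

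To establish this, I would fix an orientation on the edges of $\cG$, giving the boundary map $d: C_1(\cG) \to C_0(\cG)$ and, via the standard inner product on $C_1(\cG)$ and $C_0(\cG)$, the transposed map $d^* : C_0(\cG) \to C_1(\cG)$. A direct computation shows that the composition $d\circ d^* : C_0(\cG) \to C_0(\cG)$ agrees with the map $\alpha$ of the definition of $\Jac(\cG)$ (the identification $\deg(v) - 2\cdot\#\{\text{loops at }v\} = \deg_{\text{non-loop}}(v)$ takes care of loops). Hence $\Prin(\cG) = d(d^*(C_0(\cG)))$ and $\Div^0(\cG) = d(C_1(\cG))$, so taking the preimage under $d$ yields
\[
  \Jac(\cG) \;=\; C_1(\cG)\bigm/\bigl(H_1(\cG,\Z) + d^*(C_0(\cG))\bigr).
\]

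On the other hand, the standard identification of cellular homology with cellular cohomology via the inner product, together with $C_2(\cG)=0$, gives a canonical isomorphism
\[
  H_1(\cG,\Z)^\vee \;=\; \Hom_{\Z}(H_1(\cG,\Z),\Z) \;\iso\; H^1(\cG,\Z) \;=\; C_1(\cG)/d^*(C_0(\cG)),
\]
under which the embedding $H_1(\cG,\Z)\hookrightarrow H_1(\cG,\Z)^\vee$ induced by the length pairing becomes the composition $H_1(\cG,\Z)\hookrightarrow C_1(\cG) \twoheadrightarrow C_1(\cG)/d^*(C_0(\cG))$. Taking cokernels gives the same quotient $C_1(\cG)/(H_1(\cG,\Z)+d^*(C_0(\cG)))$, yielding the desired canonical isomorphism $\Jac(\cG)\iso H_1(\cG,\Z)^\vee/H_1(\cG,\Z)$.

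Finally, equivariance is automatic: every map used ($d$, $d^*$, and the inner product) is canonically defined up to the choice of orientation, and this choice only affects intermediate objects by signs which cancel in the resulting quotient. The main mild obstacle is bookkeeping with signs and loops in the definition of $\alpha$, but the formula above shows that the ``non-loop'' corrections in Definition of $v\cdot v'$ are exactly what is needed to make $d\circ d^*$ land on the nose. No deeper difficulties arise; once the combinatorial identity $\Prin(\cG) = d(d^*(C_0(\cG)))$ is in place, both the existence of the canonical isomorphism and its compatibility with $\Aut\cG$ follow formally.
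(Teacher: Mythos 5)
Your proof is correct, but it takes a genuinely different route from the paper's. The paper first strips out the loop-edges of $G_\Z$ (observing that they are orthogonal length-$1$ summands of $H_1$ and hence contribute nothing to $\Phi$), then cites Baker--Norine and Bacher--de la Harpe--Nagnibeda for the isomorphism itself, constructing it via the explicit path-based map $v'-v\mapsto \langle p_{v,v'},-\rangle$ and only verifying $\Aut G$-equivariance by hand on that map. You instead give a self-contained linear-algebraic argument: the factorization $\alpha = d\circ d^*$ of the graph Laplacian identifies $\Jac(\cG)$ with $C_1(\cG)/(H_1(\cG)+d^*(C_0(\cG)))$, while the universal-coefficient identification $H_1(\cG)^\vee\iso C_1(\cG)/d^*(C_0(\cG))$ (using $C_2=0$ and $H_0$ free, so that $d^*(C_0)$ is automatically saturated) identifies $H_1^\vee/H_1$ with the same quotient. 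This buys you independence from the cited references, and it absorbs the loop-correction in the definition of $v\cdot v$ directly into the computation of $d\circ d^*$ rather than requiring a separate reduction to the loopless case. The two constructions in fact yield the same isomorphism: a path $p_{v,v'}$ is precisely a $1$-chain $c$ with $dc=v'-v$, so the paper's $f(v'-v)=\langle p_{v,v'},-\rangle$ is the image of $[c]$ under your chain of identifications. Your equivariance argument is stated a little loosely ("signs cancel in the quotient"); the cleaner formulation is that, for a fixed orientation, $d$, $d^*$ and the inner product are all equivariant for the signed-permutation action of $\Aut\cG$ on $C_1$ and the permutation action on $C_0$, so every map in your chain is equivariant and the induced isomorphism is independent of the orientation. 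This is a minor presentational point, not a gap.
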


\begin{remark}
Several versions of this proposition, in various levels of generality (in particular, it is not specific to hyperelliptic graphs), appear in the literature though to the best of our knowledge the action of automorphism groups is not considered. We begin by reducing to the situation covered by \cite[Theorem B.4]{BN2} and deduce the compatibility of automorphisms from the explicit map defined there.
\end{remark}

\begin{proof}[Proof of Proposition \ref{component group vs jacobian}.]
When defining $\Phi(G)$ as $H_1(G)^\vee/H_1(G)$ we are at liberty to choose the $\Delta$-complex structure on $G$ and we do so by taking the $0$-simplices to consist of the vertices of $G$ and the $1$-simplices as the edges of $G_\Z$ (along with their endpoints). Note that if $e$ is a loop-edge of $G_\Z$ then it generates an orthogonal direct summand of $H_1(G)$ and, having length $1$, we have $\left \langle e,e\right \rangle=1$. In particular we see that $e$ does not contribute to the quotient $H_1(G)^\vee/H_1(G)$. Combining this observation with Remark \ref{loop edges}, it suffices to prove the result under the assumption that $G_\Z$ contains no loops.

We are now in the situation covered by \cite[Theorem B.4]{BN2} and our choice of $\Delta$-complex structure on $G$ ensures that $H_1(G)$, as computed with this choice, coincides with their $\Lambda^1(G_\Z)$. Following loc. cit., we now fix a base vertex $v\in V(G_\Z)$ and define a map $$
  f_v:\Div(G_\Z)\rightarrow H_1(G)^\vee/H_1(G)=\Phi(G)
$$ 
as follows. Given $v'\in V(G_\Z)$, pick a path $p_{v,v'}$ in $G_\Z$ from $v$ to $v'$ and set $f_v(v')=\left \langle p_{v,v'},-\right \rangle$. Since $p_{v,v'}$ has integral length, its pairing will all elements of $H_1(G)$ is integral and so it defines a valid element of $H_1(G)^\vee$. Moreover, given two different choices of path from $v$ to $v'$, their difference is an element of $H_1(G)$ so $f_v$ is independent of the choice of path $p_{v,v'}$. Restricting $f_v$ to $\Div^0(G_\Z)$ we obtain a map $f:\Div^0(G_\Z)\rightarrow \Phi(G)$ which does not depend on the choice of base vertex $v$. Then as asserted in loc. cit. (see \cite[Proposition 7.2]{BLHN} for the proof), the map $f$ induces the sought isomorphism $\Jac(G_\Z)\iso \Phi(G)$. 

With the explicit map in hand, it is easy to check compatibility with automorphisms. Let $\sigma\in \Aut G$ and view it as an automorphism of $G_\Z$. Let $v,v'\in V(G_\Z)$. Then $f$ sends $v-v'\in \Div^0(G_\Z)$ to $\left \langle p_{v,v'},-\right \rangle$ where $p_{v,v'}$ is any path from $v$ to $v'$. Now $\sigma(p_{v,v'})$ is a path from $\sigma(v)$ to $\sigma(v')$ and $\sigma(v)-\sigma(v')\in \Div^0(G_\Z)$ is mapped by $f$ to $\left \langle \sigma(p_{v,v'}),-\right \rangle$, which is the same as  we obtain by acting by $\sigma$ on $f(v-v')$. The result now follows since $\Div^0(G_\Z)$ is generated by the elements $v-v'$ as $v$ and $v'$ range over the vertices of $G_\Z$. 
\end{proof}

\subsection{2-torsion in the Tamagawa group} 
As an application of the correspondence between hyperelliptic graphs and BY trees, and the description of the group $H_1(T,T_b)$ for a BY tree $T$ afforded by Proposition \ref{BY tree homology proposition}, we end this section by computing the 2-torsion in the Tamagawa group of a hyperelliptic graph. 

The result for BY trees is the following. 

\begin{theorem} \label{2-tors in BY tree} 
Let $T$ be an integral BY tree of genus $\ge 2$. 
Write $S$ for the set of connected components of $T_b$, excluding the genus 0 leaves of $T$ whose unique (necessarily yellow) edge has odd length; $\Aut T$ acts naturally on $S$. Then, as an $\Aut T$-module,
$$
  \Phi(T)[2]\iso \begin{cases}0~~&~~S=\emptyset~~\text{and}~~\rk H_1(T,T_b)~~\text{even,}\\\Z/2\Z~~&~~S=\emptyset~~\text{and}~~\rk H_1(T,T_b)~~\text{odd,} \\ \ker\left((\Z/2\Z)[S]\stackrel{\textup{sum}}{\longrightarrow}\Z/2\Z\right)~~&~~\text{else,}\end{cases}
$$
where
%
`\textup{sum}' denotes the sum of the coefficients map.
\end{theorem}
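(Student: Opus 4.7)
The plan is to reduce the question to an $\F_2$-linear algebra computation, describe the relevant pairing modulo 2 combinatorially in terms of the components of $T_b$, and then read off the answer.

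First I would apply the snake lemma to the short exact sequence $0\to\Lambda_T\to\Lambda_T^\vee\to\Phi(T)\to 0$ with the multiplication-by-2 map. Since $\Lambda_T$ and $\Lambda_T^\vee$ are torsion-free, this yields a canonical, $\Aut T$-equivariant identification
\[
  \Phi(T)[2]\iso \ker\!\Bigl(\Lambda_T/2\Lambda_T\xrightarrow{\;M\;}\Lambda_T^\vee/2\Lambda_T^\vee\Bigr),
\]
where $M$ is induced by the length pairing on $\Lambda_T=H_1(T,T_b)$. The sign components of automorphisms (Definition \ref{autb}) act trivially modulo 2, so only the underlying permutation action of $\Aut T$ on $T$ will play a role in what follows.

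Next, I would replace $\Lambda_T$ with the more geometric description $\tilde H_0(T_b)=\Z[\mathcal{C}]_0$ from Remark \ref{reduced homology remark}, where $\mathcal{C}$ denotes the set of connected components of $T_b$ and the subscript $0$ means sum-zero. A class $C-C'$ is represented by the yellow part of the unique shortest path in $T$ from $C'$ to $C$, so that the length pairing takes the form
\[
  \langle x,y\rangle \;=\; \sum_{e\in E(T_y)}l(e)\,\phi_e(x)\,\phi_e(y),
\]
where $\phi_e\in\Lambda_T^\vee$ reads off the (signed) coefficient of $e$. The crucial input from the integrality hypothesis (Lemma \ref{integral BY trees lemma}) is that the only yellow edges of odd length are the edges $e_v$ attached to the elements $v$ of the $\Aut T$-set $B$ described in the theorem, and for each such $v$ the functional $\phi_{e_v}$ simply reads off the coefficient of the singleton component $\{v\}\in\mathcal{C}$. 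Writing $\mathcal{C}_0=\{\{v\}:v\in B\}\subseteq\mathcal{C}$, so that $S=\mathcal{C}\setminus\mathcal{C}_0$ as in the theorem, this shows that the pairing modulo 2 factors as the pullback of the standard dot product on $\F_2^B$ via the natural $\Aut T$-equivariant map
\[
  \Psi\;\colon\; \F_2[\mathcal{C}]_0\longrightarrow\F_2[\mathcal{C}_0]=\F_2^B,\qquad \sum n_C C\mapsto (n_{\{v\}})_{v\in B}.
\]
Hence $\Phi(T)[2]=\Psi^{-1}\bigl((\textup{Im}\,\Psi)^{\perp}\bigr)$.

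The final step is a brief case analysis, which I expect to be the main obstacle due to the subtlety at $S=\emptyset$. When $S\ne\emptyset$, any element of $\F_2^B$ lifts to a sum-zero element of $\F_2[\mathcal{C}]$ by adjusting the coefficient at a chosen element of $S$, so $\Psi$ is surjective and $(\textup{Im}\,\Psi)^\perp=0$; this produces an $\Aut T$-equivariant isomorphism
\[
  \Phi(T)[2]\iso \ker\Psi \;=\; \F_2[S]_0 \;=\; \ker\bigl(\F_2[S]\xrightarrow{\textup{sum}}\F_2\bigr).
\]
When $S=\emptyset$ we have $\mathcal{C}=\mathcal{C}_0$ and $\Psi$ is the inclusion of the sum-zero hyperplane $\F_2[\mathcal{C}_0]_0$ into $\F_2[\mathcal{C}_0]$; the radical of the restricted standard dot product equals $\F_2\cdot(1,\ldots,1)$ when $|\mathcal{C}_0|$ is even and $0$ otherwise. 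Combined with $\rk H_1(T,T_b)=|\mathcal{C}_0|-1$ in this case, this gives the claimed dichotomy $\Phi(T)[2]\cong\Z/2\Z$ or $0$ according to whether $\rk H_1(T,T_b)$ is odd or even. Equivariance throughout is automatic, since $\mathcal{C}$, $\mathcal{C}_0$, $B$ and $S$ are natural $\Aut T$-sets and every map appearing is $\Aut T$-equivariant.
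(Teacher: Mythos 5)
Your argument is correct. It shares the paper's skeleton: the snake lemma identifies $\Phi(T)[2]$ with the radical of the mod-$2$ reduction of the length pairing (this radical equals $\ker(\Lambda_T/2\Lambda_T\to\Lambda_T^\vee/2\Lambda_T^\vee)$ precisely because $\Lambda_T$ is free), and the decisive input in both proofs is that, by Lemma \ref{integral BY trees lemma}, the only odd-length edges are the yellow edges at genus $0$ leaves. Where you diverge is in the bookkeeping. The paper computes with the rooted lattice $\Pi_T\iso\Z[\cL_T]$ of Definition \ref{BY tree lattice}; this forces a choice of root, makes the mod-$2$ pairing diagonal in the basis $\cL_T$ when a blue root is available, and requires a separate dual-basis argument via the sequence $0\to\Pi_T\to\Z[\cL_T]\to\Z\to0$ in the case $S=\emptyset$, where the root must be taken yellow. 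You instead work root-free with $\Lambda_T/2\Lambda_T\iso\tilde H_0(T_b)\otimes\F_2=\F_2[\mathcal{C}]_0$ and express the mod-$2$ pairing once and for all as the pullback of the standard dot product on $\F_2^B$ along the coefficient map $\Psi$, so that both cases of the theorem come out of the single computation of $\Psi^{-1}\bigl((\mathrm{Im}\,\Psi)^{\perp}\bigr)$: surjectivity of $\Psi$ when $S\neq\emptyset$, and the radical of the dot product restricted to the sum-zero hyperplane when $S=\emptyset$. This buys a uniform treatment of the $S=\emptyset$ dichotomy and makes equivariance immediate, since no basis or root is chosen and all signs die mod $2$. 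The only steps worth writing out carefully in a final version are the verification that the yellow part of a path between two components of $T_b$ is a relative cycle representing $C-C'$, and that for a genus $0$ leaf $v$ the functional $\phi_{e_v}$ computes (up to sign) the coefficient of the singleton component $\{v\}$; both are routine.
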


\begin{proof} 
For the time being we will ignore the action of $\Aut T$, adding it back in at the end. 
Let $R\in T$ be a vertex and make $T$ into a rooted BY tree by taking $R$ to be the root. Now let $\Pi_T$ be as in Definition \ref{BY tree lattice}, so that by Proposition \ref{BY tree homology proposition}
we have an isomorphism of $\Z$-lattices $\Pi_T\iso H_1(T,T_b)$ and, in particular, we have \[\Phi(T)\iso \Pi_T^\vee/\Pi_T.\]
Noting that $\Pi_T$ is torsion free as a $\Z$-module and applying the snake  lemma to the commutative diagram with exact rows
\[
\xymatrix{0\ar[r] & \Pi_T\ar[r]\ar[d]^{2} & \Pi_T^\vee\ar[r]\ar[d]^{2} & \Phi(T)\ar[r]\ar[d]^{2} & 0\\
0\ar[r] & \Pi_T\ar[r] & \Pi_T^\vee\ar[r] & \Phi(T)\ar[r] & 0,
}
\] 
it follows that we have
\[\Phi(T)[2]\iso \ker\left(\Pi_T/2\Pi_T\longrightarrow \Pi_T^\vee/2\Pi_T^\vee\right).\]

Suppose first that $S\neq\emptyset$, so that $T$ has either a blue vertex which is not a genus 0 leaf, or that $T$ has a genus 0 leaf whose unique edge has even length, and take $R$ to be one such. Then since $R$ is blue, we have $\Pi_T=\Z[\cL_T]$ where the set $\cL_T$ of Definition \ref{BY tree lattice} (which depends on $R$) consists of the blue vertices different from $R$ whose parent edge is yellow. The pairing on $\Pi_T$ is given by 
\[\left \langle v_1,v_2\right \rangle=\begin{cases}0~~&~~\widehat{v_1}\neq \widehat{v_2},\\\delta(v_1\wedge v_2,\widehat{v_1})~~&~~\text{else,} \end{cases} \]
for $v_1,v_2\in \cL_T$ (see Definition \ref{BY tree lattice} for the definitions of $\hat{v}$ and $v_1\wedge v_2$).

We claim that for any $v_1,v_2\in \cL_T$ we have
\[\left \langle v_1,v_2\right \rangle \equiv \begin{cases}0\text{ (mod 2)}~~&~~v_1\neq v_2,\\ l_p(v)\text{ (mod 2)}~~&~~v_1=v=v_2,\end{cases}\]
where for $v\in T$ (not equal to $R$) $l_p(v)$ is the length of its parent edge. To prove the claim, first take $v_1\neq v_2\in \cL_T$ and assume that $\widehat{v_1}=\widehat{v_2}$ (otherwise $v_1$ pairs trivially with $v_2$ by definition and we are done). Then $v_1\wedge v_2$ cannot be a leaf and so each (necessarily yellow) edge on the shortest path from $v_1\wedge v_2$ to $\widehat{v_1}$ has even length. It now follows from Lemma \ref{integral BY trees lemma} that $\delta(v_1\wedge v_2,\widehat{v_1})$ is an even integer as desired. The case where $v_1=v_2$ is similar: every edge in the path from $v_1$ to $\widehat{v_1}$ is a yellow edge not incident to a genus 0 leaf, save possibly for the parent edge of $v_1$. 

Now for $v\in \cL_T$, let $\phi_v$ denote the homomorphism in $\Pi_T^\vee$ dual to $v$ (i.e. sending $v$ to $1$ and all other  elements of $\cL_T$ to $0$). Then by the claim, we see that the map $\Pi_T/2\Pi_T\rightarrow \Pi_T^\vee/2\Pi_T^\vee$ is given by $v\mapsto l_p(v) \phi_v$. Since the set $\{\phi_v|v\in \cL_T\}$ is a basis for $\Pi_T^\vee$ (the dual basis to the standard basis for $\Pi_T=\Z[\cL_T]$), the kernel of this map is the $\F_2$-vector space having as basis the elements $v\in \cL_T$ for which $l_p(v)$ is even. Now by Lemma \ref{integral BY trees lemma}, $v\in\cL_T$ can only have $l_p(v)$ odd if it is a genus 0 leaf. In particular, writing $O$ for the set of genus 0 leaves in $T$ whose unique edge has odd length, an $\F_2$-basis for $\ker(\Pi_T/2\Pi_T\rightarrow \Pi_T^\vee/2\Pi_T^\vee)$ is given by the set $\cL_T\setminus O$. The map sending $v\in \cL_T$ to its connected component in $T_b$ is a bijection onto the set of connected components of $T_b$ not containing $R$. It follows that the isomorphism of the statement in the case $S\neq\emptyset$ holds abstractly. 

To additionally obtain the isomorphism as $\Aut T$-modules, recall from the proof of Proposition  \ref{BY tree homology proposition} that the canonical isomorphism of $\Z$-lattices $\Pi_T\iso H_1(T,T_b)$ is given by sending $v\in \cL_T$ to the unique shortest path $p(v)$ between $v$ and $\hat{v}$. It follows from the argument above that, as $\Aut T$-modules, $\Phi(T)[2]$ is isomorphic to the subgroup of $H_1(T,T_b)/2H_1(T,T_b)$ generated by the set $\{p(v)\>|\>v\in \cL_T\setminus O\}$. One checks that the isomorphism $H_1(T,T_b)\rightarrow \tilde{H}_0(T_b)$ coming from the relative homology sequence (Remark \ref{reduced homology remark}) identifies this subgroup with $\ker\left((\Z/2\Z)[S]\stackrel{\textup{sum}}{\longrightarrow}\Z/2\Z\right)$. Since the map  $H_1(T,T_b)\rightarrow \tilde{H}_0(T_b)$ is $\Aut T$-equivariant upon passing to quotients by multiplication by $2$ on each side (since then we no longer need to consider orientation or signs) we are done. 

Suppose now that $S=\emptyset$, so that all blue vertices of $T$ are genus 0 leaves and each of their edges has odd length. Then $T$ necessarily has a yellow vertex and now we take the root $R$ to be one such. Note that now $\cL_T$ is precisely the set of genus 0 leaves of $T$. Now $T$ necessarily has precisely one yellow component, whence $\Pi_T$ sits in a short exact sequence
\[0\rightarrow \Pi_T \longrightarrow \Z[\cL_T]\stackrel{\text{sum}}{\longrightarrow} \Z\rightarrow 0,\]
the map `sum' sending $\sum_{v\in \cL_T}\lambda_v v$ to the sum of the $\lambda_v$. Since $\Pi_T$ is a free $\Z$-module, the sequence remains exact after applying the functor $\textup{Hom}(-,\Z)$ (which we denote $(-)^\vee$ for simplicity) and we obtain a commutative diagram with exact rows \[
\xymatrix{0\ar[r] & \Pi_T \ar[r]\ar[d]  & \Z[\cL_T]  \ar[r]\ar[d] & \Z\ar[r] & 0\\
0 & \Pi_T^\vee \ar[l] & (\Z[\cL_T])^\vee  \ar[l] & \Z ^\vee\ar[l] & 0\ar[l]
}
 \] 
 where here the two  vertical maps are induced by the pairing (see Definition \ref{BY tree lattice}). Since each object in the diagram is torsion free, tensoring by $\Z/2\Z$ we obtain a commutative diagram with exact rows
\[
\xymatrix{0\ar[r] & \Pi_T/2\Pi_T\ar[r]\ar[d]  & \Z[\cL_T]/2\Z[\cL_T] \ar[r]\ar[d] & \Z/2\Z\ar[r] & 0\\
0 & \Pi_T^\vee/2\Pi_T^\vee\ar[l] & (\Z[\cL_T])^\vee/2(\Z[\cL_T])^\vee \ar[l] & \Z^\vee/2\Z^\vee\ar[l] & 0.\ar[l]
}
 \]
The same argument as in the case $S\neq\emptyset$ shows that the rightmost of the two vertical maps sends $v\in \cL_T$ to its dual vector $\phi_v$ (each $l_p(v)$ being odd) and as such is injective.  Moreover, the map $\Z^\vee/2\Z^\vee\rightarrow (\Z[\cL_T])^\vee/2(\Z[\cL_T])^\vee$ sends the unique non-trivial element of $\Z^\vee/2\Z^\vee$ to $\sum_{v\in \cL_T}\phi_v$. Combining exactness in the middle of the bottom row with the injectivity of the rightmost vertical map shows that $\sum_{v\in \cL_T}v$ is the unique non-trivial element of the kernel of the map $\Z[\cL_T]/2\Z[\cL_T]\rightarrow \Pi_T^\vee/2\Pi_T^\vee$ given by composing the rightmost vertical map with the restriction map $(\Z[\cL_T])^\vee/2(\Z[\cL_T])^\vee\rightarrow \Pi_T^\vee/2\Pi_T^\vee$. Further, the top sequence shows that $\sum_{v\in \cL_T}v$ lies in $\Pi_T/2\Pi_T$ if and only if $|\cL_T|=\rk H_1(T,T_b)+1$ is even. Thus 
 \[\ker\left(\Pi_T/2\Pi_T\rightarrow \Pi_T^\vee/2\Pi_T^\vee\right)\iso \begin{cases} 0~~&~~\rk H_1(T,T_b)~~\text{ even,}\\\Z/2\Z~~&~~\rk H_1(T,T_b)\text{ odd,}\end{cases}\] 
 which  completes the proof of the theorem (note that we do not need to consider the action of $\Aut T$ in this case since the only possible action of any group on $\Z/2\Z$ is trivial).
\end{proof}

\begin{corollary}
\label{2torscor}
Let $G$ be a hyperelliptic graph of genus $\ge 2$. Write $G_b$ for the subgraph of $G$ fixed by the hyperelliptic involution and write $G(\Z)$ for the set of points on $G$ which are an integer distance from a vertex. 
Let $\cW$ denote the set of connected components of $G_b$ which contain a point of $G(\Z)$.
Then we have isomorphisms of $\Aut G$-modules
\[\Phi(G)[2]\iso \begin{cases}0~~&~~\cW=\emptyset~~\text{and}~~\rk H_1(G)~~\text{even,}\\\Z/2\Z~~&~~\cW=\emptyset~~\text{and}~~\rk H_1(G)~~\text{odd,} \\ \ker\left((\Z/2\Z)[\cW]\stackrel{\textup{sum}}{\longrightarrow}\Z/2\Z\right)~~&~~\text{else,}\end{cases}\]
where `\textup{sum}' denotes the sum of the coefficients map.
\end{corollary}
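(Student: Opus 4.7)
The plan is to reduce the statement to Theorem \ref{2-tors in BY tree} via the correspondence between hyperelliptic graphs and BY trees. Set $T = \TT(G)$ with blue part $T_b$ and write $\pi \colon G \to T$ for the quotient map. Theorem \ref{component theorem} gives a canonical $\Aut G$-equivariant isomorphism $\Phi(G)[2] \iso \Phi(T)[2]$, and the rank equality $\rk H_1(G) = \rk H_1(T, T_b)$ holds by Proposition \ref{hyp graph homology}. So it suffices to produce an $\Aut G$-equivariant bijection $\cW \leftrightarrow S$, where $S$ is the set appearing in Theorem \ref{2-tors in BY tree}.

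The first step is to note that $\pi$ restricts to a homeomorphism $G_b \iso T_b$: both are the locus fixed by $\iota$, and $\iota$ acts trivially on $G_b$ so $\pi$ is injective on the fixed locus. Consequently $\pi$ induces a canonical $\Aut G$-equivariant bijection between the connected components of $G_b$ and those of $T_b$ (using that, for the action on components of $T_b$, only the permutation part of the automorphism of $T$ matters and this is independent of the choice of section). I then need to match the two ``exclusion'' conditions. Looking at Construction \ref{GtoT}, the components of $T_b$ are of two types: (a) components containing a blue vertex of $T$ that is \emph{not} an extra genus $0$ leaf, which under $\pi^{-1}$ contain an $\iota$-invariant vertex of $G$, hence a point of $G(\Z)$, so they lie in $\cW$ and are automatically in $S$; (b) singleton components consisting of an extra blue genus $0$ leaf, which in $G$ correspond exactly to the midpoints of $\iota$-anti-invariant edges or loops.

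For the type (b) components I have to compare the two parities. An extra genus $0$ leaf attached by a yellow edge of length $d$ arises from an $\iota$-anti-invariant edge (or loop) of $G$ of length $d$. Its unique fixed point under $\iota$ is the midpoint, which lies at distance $d/2$ from a vertex of $G$; it is therefore in $G(\Z)$ precisely when $d$ is even. On the other side, by the rule in Theorem \ref{2-tors in BY tree}, the leaf belongs to $S$ precisely when $d$ is even. So $\cW \leftrightarrow S$ under $\pi$, and this bijection is $\Aut G$-equivariant since the bijection of components is. Applying Theorem \ref{2-tors in BY tree} to $T$ and transporting along $\Phi(G)[2] \iso \Phi(T)[2]$ then yields the three cases of the corollary.

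The main technical point to verify carefully is the parity translation in type (b): one must check that the length $d$ in $G$ really equals the length of the corresponding yellow edge in $T$ (as in Construction \ref{GtoT}) and that ``$d$ even'' simultaneously characterises membership of the midpoint in $G(\Z)$ and exclusion of the leaf from the excluded set of Theorem \ref{2-tors in BY tree}. Once this is checked (together with the slightly subtle fact that the $\Aut G$-action on components of $G_b$ agrees with the permutation action of $\Aut T$ on components of $T_b$, for which the sign ambiguity in $\Aut G \iso \Aut T$ is irrelevant), the argument reduces to citing the BY-tree result.
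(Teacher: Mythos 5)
Your argument is correct and is essentially the paper's own proof: both reduce to Theorem \ref{2-tors in BY tree} via the homeomorphism $G_b \iso T_b$ induced by the quotient map, identify $\cW$ with the set $S$ by matching the parity condition on $\iota$-anti-invariant edge lengths against membership of their midpoints in $G(\Z)$, and note that the sign ambiguity in $\Aut G \iso \Aut T$ is irrelevant for the action on components of $T_b$. The only cosmetic difference is that you route the isomorphism $\Phi(G)[2]\iso\Phi(T)[2]$ through Theorem \ref{component theorem} rather than directly through Proposition \ref{hyp graph homology}, which changes nothing of substance.
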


\begin{proof}
Let $T=\TT(G)$ be the BY tree associated to $G$. Then the quotient map gives a homeomorphism from $G_b$ to $T_b$. Let $Z$ be a connected component of $T_b$. Then as yellow vertices of $T$ have only yellow edges, $Z$ necessarily contains a vertex of $T$. In fact, either $Z$ contains a vertex which is not a genus 0 leaf, or $Z=\{v\}$ for a single genus 0 leaf $v$. In the first case, the preimage under $\pi$ of this vertex is a vertex of $\pi^{-1}(Z)$. On the other hand, if $Z=\{v\}$ for a genus 0 leaf $v$, then $\pi^{-1}(v)$ is not a vertex of $G$ but the midpoint of an $\iota$-anti-invariant edge. 
In particular, $\pi^{-1}(v)\in G(\Z)$ if and only if the parent edge of $v$ has even length. 
Thus, the set $\cW$ corresponds under $\pi$ to the set of connected components of $T_b$ excluding the genus 0 leaves whose parent edge has odd length. Since the number of components of $T_b$ is equal to $\rk H_1(T,T_b)-1$ (see Remark \ref{reduced homology remark}), the result now follows from Theorem \ref{2-tors in BY tree}, along with usual identification of $\Aut G$ with $\Aut T$ (choices of section here are irrelevant since all signs act trivially on the objects involved).
\end{proof}

\def\GraphScale{0.4}
\def\SnakeWiggle{3pt}
\def\clustersep{2pt}
\rootsize{0.5}

\newpage

\section{Classification of semistable types and naming convention}\label{s:naming}

\subsection{Types of BY trees (and hyperelliptic graphs/cluster pictures)}

We propose a naming scheme for cluster pictures, (open) BY trees and (open) 
hyperelliptic graphs. We define it for BY trees and transport to the other two
categories via the one-to-one correspondence.

\begin{notation}
\label{namingnot}
Let $T$ be an \emph{open BY tree}. For the edges, we use
\smallskip

\noindent
\begin{tabular}{llllll}
\quad 
& $\cdot$           & blue edge\cr
& $:$               & yellow edge \cr
& $\cdot_{\scriptscriptstyle d/2}$, $:_{\scriptscriptstyle d}$  & edge of length $d$\cr
\noalign{\noindent and for the vertices\vphantom{$\int^X_X$}}
%
%
%
& U                    & yellow vertex\cr
& I or 0               & blue vertex of genus 0\cr
& 1,2,3,...            & blue vertex of genus 1,2,3,...\cr
\end{tabular}

\noindent
To define the notation (`Type') of $T$ itself, 
%
%
%
%
let $e$ be its open edge, say incident to a vertex $v$.
As a topological space, $T$
decomposes as a disjoint union
$$
  T=\{v\}\cup\{e\}\cup t_1\cup\ldots\cup t_k\cup T_1\cup...\cup T_n.
$$
where the $t_i$ are open trees `\BYtwin' (a blue vertex of genus 0 with one open 
yellow edge, say of length $n_i$), 
and the $T_i$ are the other connected components of $T \setminus \{e,v\}$; 
they are open BY trees. 
Then we define, inductively,
\begin{center}
  $\Type(T) = [e][v]_{n_1,...,n_k}\Type(T_1)\cdots\Type(T_n)$,
\end{center}
where $[e]$ is the notation for the edge $e$, as above, $[v]$ is the notation for the 
vertex $v$, as above.
To avoid ambiguity, when $n>0$, unless $T$ is the full tree that we are interested in,
we bracket everything after $[e]$ and write
\begin{center}
  $\Type(T) = [e]\bigl([v]_{n_1,...,n_k}\Type(T_1)\cdots\Type(T_n)\bigr)$.
\end{center}
In the non-metric case, the subscripts $n_i$ are placeholders instead of lengths whose 
purpose is only to record the number of genus 0 leaves%
\footnote{this agrees with Kodaira and Namikawa-Ueno types 
for semistable curves of genus 1 and 2}.
See Example \ref{exnot1} below.
\end{notation}

\begin{notation}
\label{typecl}
For a \emph{closed BY tree} $\tilde T$, recall from Remark \ref{canonical BY tree rep} 
that there is a canonical open BY tree $T$ with core $\tilde T$ obtained 
by glueing a yellow open edge $e_0$ to the centre of $\tilde T$.
%
%
%
We let $\Type(\tilde T)$ to be the name of $T$ with [$e_0$] omitted.
To emphasize the configurations for which the centre is an edge, 
we also use an alternative notation
$$
\begin{array}{llllll}
  0{\tt .}_{\scriptscriptstyle m}\Type(T_1){\tt .}_{\scriptscriptstyle n}\Type(T_2) & \longmapsto & \Type(T_1){\times}_{\scriptscriptstyle m+n}\Type(T_2)\cr 
  \text{U}{\tt :}_{\scriptscriptstyle m}\Type(T_1){\tt :}_{\scriptscriptstyle n}\Type(T_2) & \longmapsto & \Type(T_1)\,{\circ}_{\scriptscriptstyle m+n}\Type(T_2).\cr
\end{array}
$$
The symbol $\times$ or $\circ$ can only appear once in the name, so the names of $T_1$ and 
$T_2$ do not have to be bracketed.
\end{notation}

\begin{example}
\label{exnot1}
Here are a few examples:

\bigskip\noindent
\begin{center}
\begin{tabular}{|c@{\quad}c|@{\quad}c@{\quad}c@{\quad}|c|}
\hline
\vphantom{$\int^X$}
$T$ (open) & $\Type(T)$ & $\tilde T$ (closed) & $\Type(\tilde T)$ & genus \\
\hline
\vphantom{$\int^X$}
\namexA & 1 \\
\namexD & 2 \\
\namexB & 2 \\
\namexC & 2 \\[2pt]
\hline
\end{tabular}
\end{center}
\end{example}

\begin{example}
In genus 2 the 7 balanced configurations (Table \ref{tabg2a1}) are
\begin{center}
\begin{tabular}{cccccccc}
\TwBn
\end{tabular}
\end{center}
\end{example}

\begin{example}
\label{exnotbig}
The BY tree in Example \ref{Miller} has Type \hetype{:0_{5,5}._1 2:_2(0:_2 U_{6,6})} and its core 
Type \hetype{0_{5,5}._1 2:_2(0:_2 U_{6,6})}.
\end{example}

\begin{remark}
\label{remnames}
All the main invariants of a BY tree $T$ can be seen from the type name:
vertices which are not genus 0 leaves are the capitals I, U, 0, 1, 2, ... in the name. 
The genus 0 leaves are their subscripts; say there are $k$ of them.
Edges not incident to genus 0 leaves are the symbols
$\cdot$, :, $\times$ and $\circ$.
The genus of $T$ is the sum of 
all (non-subscript) numbers in the type plus
\begin{center}
$\rk \Lambda_T$ = \#colons + $k$ - \#`U's - \#`$\circ$'s
\end{center}
and similarly for open BY trees, by ignoring the first symbol if it is `:'.
\end{remark}

\def\exonetyd{U$_f$:(U$_d$:0$_a$:0$_b$):(0$_e$.1$_c$)}

\def\exonepid{\clusterpicture[0.65]
  \Root(1.30,2)(r1);
  \Root(1.80,2)(r2);
  \ClusterL(c1){(r1)(r2)}{$a$};
  \Root(2.50,2)(r3);
  \Root(3.00,2)(r4);
  \ClusterL(c2){(r3)(r4)}{$b$};
  \ClusterL(c3){(c1)(c1n)(c2)(c2n)}{$d$};
  \Root(3.80,2)(r5);
  \Root(4.40,2)(r6);
  \Root(4.90,2)(r7);
  \Root(5.40,2)(r8);
  \ClusterL(c4){(r6)(r7)(r8)}{$c$};
  \ClusterL(c5){(r5)(c4)(c4n)}{$e$};
  \ClusterL(c6){(c3)(c3n)(c5)(c5n)}{$f$};
\endclusterpicture}

\subsection{Automorphisms}

\begin{notation}[Automorphism]
\label{notaut}
Let $T$ be a (possibly open) BY tree, 
and $E=\{e_1,...,e_n\}\subset E(T)$ an ordered subset of its edges,
preserved by $\Aut T$ (or some subgroup that we care about).
By default, we take 
$$
  E=\{\text{all closed edges}\},
$$ 
ordered as follows: 
if $e=\{v_1,v_2\}$, $e'=\{v'_1,v'_2\}$, we check which 
$v\in\{v_1,v_2,v'_1,v'_2\}$ comes last in the name $\Type(T)$ 
(as a capital letter or its subscript); if it is $v'_i$ then $e'$ comes after $e$, and 
vice versa.
%

We then write $\sigma$ as a permutation on the indices of the blue edges and $\pm$indices of
the yellow edges, where the sign of $\pm \sigma(e)$ is determined by $\epsilon(e)$.

%
%
\end{notation}

\begin{example}
\label{exInxIn}
\def\clustersep{1.6pt}
Take a BY tree of genus 2 that corresponds to two nodal genus 0 curves meeting at a point,
with two nodes of the same depth:

\begin{center}
\begin{tabular}{c@{\qquad}ccc@{\qquad}c}
Type & $T$ & $\SS(T)$ & $\GG(T)$ & $\Aut(T)$ \\[2pt]
\pb{\hetype{I_nxI_n}} & \pb{\InxImT} & \pb{\InxImS} & \pb{\InxImG} & $D_4$ \\[2pt]
\end{tabular}
\end{center}

\noindent
Its automorphism group is $D_4$ (order 8). To write its elements we order the edges 
as above:
%
%
\begin{center}
\InxImT\\
$1\quad 2 \quad 3$
\end{center}

%
\noindent
As signed permutations, the elements of $\Aut T$ are 

$$
  \id,\> ({-1}\>{1}),\> ({-3}\>{3}),\> ({-1}\>{1})({-3}\>{3}),
$$
$$
     (13)({-1}\>{-3}),\> ({1}\>{-3})({-1}\>{3}), \> ({1}\>{-3}\>{-1}\>{3}),\>
        ({1}\>{3}\>{-1}\>{-3})
$$
On the corresponding hyperelliptic graph 
$$
  \InxImG 
$$
the element $({-1}\>{1})$ reflects the left loop in the $x$-axis, $({-3}\>{3})$ reflects the right loop, 
and $({1}\>{3}\>{-1}\>{-3})$ sends the left loop to the right one keeping the orientation 
and the right one to the right one reversing the orientation.
(In~this example, it is also reasonable to take $E=\{\text{yellow edges}\}$ instead of all
edges.)
\end{example}

\subsection{BY trees with an automorphism}

For semistable hyperelliptic curves over local fields, it is important to keep
track of the action of Frobenius. Therefore we need a naming convention for 
BY trees with a distinguished automorphism.

\begin{notation}[Type with an automorphism]
We incorporate the action of an automorphism on the edges and the signs into the type 
name, as follows. 

Suppose $T$ is an open BY tree, and $\phi\in\Aut T$.
As in Notation \ref{namingnot}, write
$$
  T=\{v\}\cup\{e\}\cup t_1\cup\ldots\cup t_k\cup T_1\cup...\cup T_n.
$$
We extend the notation 
\begin{center}
  $\Type(T) = [e][v]_{n_1,...,n_k}\Type(T_1)\cdots\Type(T_n)$.\\
\end{center}
to a notation for $\Type(T,\phi)$ as follows:

The automorphism $\phi$ permutes the $t_i$ and the $T_i$, and we rearrange them, 
if necessary, by $\phi$-orbits. In other words, each $\phi$-orbit, say of length $m$, 
is a block $t_i,...,t_{i+m-1}$ or $T_i,...,T_{i+m-1}$.
\begin{enumerate}
\item 
For each $\phi$-orbit $t_i,...,t_{i+m-1}$ replace commas in $n_i,...,n_{i+m-1}$ by 
$\sim$, a symbol for `are in the same $\phi$-orbit'. 
\item
Similarly for each $\phi$-orbit $\Type(T_i)\cdots\Type(T_{i+m-1})$, let $\phi^k$ be the smallest
power of $\phi$ that stabilises $T_i$. Instead of $\Type(T_j)$ write $\Type(T_j,\phi^k)$,
defined inductively, with the first edge symbol `.' or `:' replaced by $\sim$ 
for $j>i$. 
\item
For a closed BY tree (see Notation \ref{typecl}), we similarly replace 
$\hetype{x}, \hetype{o}$ by $\FrobX, \FrobO$ when the endpoints 
of the central edge are swapped by $\phi$.
\end{enumerate}

We decorate the type with signs as follows. For a vertex $v$, let $\phi^{k_v}$ be the smallest power of $\phi$ that stabilizes $v$ and $\epsilon_{v}$ be the sign of $\phi^{k_v}$ on its parent edge should it be yellow.  For each vertex $v$ that is first in its $\phi$-orbit (ordered by appearance in the type):
\begin{itemize}
\item if $v$ is yellow such that its parent edge does not lead to a yellow vertex, decorate the symbol for $v$ in the type name with the superscript $\epsilon_v$,
\item if $v$ is blue, let $w_1,..,w_s$ be the blue vertices joined to $v$  by a yellow edge leading away from $\infty$ (ordered by appearance in the type).  For each $w_i$ that is first in its $\phi^{k_v}$-orbit, decorate the symbol for $v$ in the type name with the superscript $\epsilon_{w_i}$. By convention, these superscripts appear in the same order as the $w_i$'s and are separated by commas.
\end{itemize}
Finally if the open edge is yellow and  incident to a blue vertex, decorate the initial colon with the sign of $\phi$ on the open edge.

In the case of a closed BY tree $\tilde{T}$ with an automorphism $\phi$, define the type $(\tilde{T},\phi)$ to be the type $(T,\phi')$ with the first dot or colon (and their sign) deleted, where $T$ is obtained from $\tilde{T}$ by glueing a yellow open edge to  its center, and $\phi'$ extends $\phi$. 
We use an analogous convention as in Notation~\ref{namingnot} for the cases where the center is an edge. In these cases, we decorate $\circ$ with the sign of the initial $U$ and we write $\tilde{\circ}, \tilde{\times}$ if $T_1$ and $T_2$ are swapped by~$\phi$.

One can check that, in the open or closed case, $(T,\phi)$ and $(T',\phi')$ 
get the same notation if and only if they are isomorphic as pairs, that is
there is an isomorphism $\psi: T\to T'$ such that $\psi\circ\phi=\phi'\circ\psi$.
\end{notation}

\begin{example}[Elliptic curves]
\label{exellc}
Let $T$ be one of the BY trees
$$
\pbox[c]{\textwidth}{\begin{tikzpicture}[scale=\GraphScale]
  \GraphVertices
  \Vertex[x=1.50,y=0.000,L=1]{1};
  \InfVertices
  \Vertex[x=0.000,y=0.000,L=\InfLabel]{2}
  \BlueEdges
  \Edge(1)(2)
\end{tikzpicture}}
\qquad\text{or}\qquad
\pbox[c]{\textwidth}{\begin{tikzpicture}[scale=\GraphScale]
  \BlueVertices
  \Vertex[x=1.50,y=0.000,L=\relax]{1};
  \Vertex[x=3.00,y=0.000,L=\relax]{2};
  \InfVertices
  \Vertex[x=0.000,y=0.000,L=\InfLabel]{3}
  \BlueEdges
  \Edge(1)(3)
  \YellowEdges
  \Edge(1)(2)
\end{tikzpicture}}
$$
The associated cluster pictures are all possible ones of size 3 
(see Table \ref{tabg1a})
$$
\def\GraphScale{0.4}
\def\SnakeWiggle{3pt}
\def\clustersep{1.3pt}
\rootsize{0.5}
\raise-1.6pt\hbox{
\clusterpicture[\clpicscale]
  \Root(2.20,2)(r1);
  \Root(2.42,2)(r2);
  \Root(2.64,2)(r3);
  \Cluster(c1){(r1)(r2)(r3)};
\endclusterpicture}
\qquad\qquad\qquad
\raise-3.2pt\hbox{\clusterpicture[\clpicscale]
  \Root(2.20,2)(r1);
  \Root(2.49,2)(r2);
  \Root(2.71,2)(r3);
  \Cluster(c1){(r2)(r3)};
  \Cluster(c2){(r1)(c1)};
\endclusterpicture}
$$
and they correspond to elliptic curves with good and multiplicative reduction.
If $\phi\in\Aut T$, then $\Type(T,\phi)$ is  
$\hetype{.1}$ in the first case, and $\hetype{.I_n^+}, \hetype{.I_n^-}$ 
in the second case, depending on the $\phi$-action on 
the yellow edge. 
When $\phi$ is Frobenius, $\hetype{.I_n^+}$ is 
`split multiplicative' and $\hetype{.I_n^-}$ `non-split multiplicative' reduction.
If one is only interested in elliptic curves and not general curves of genus 1,
one could omit the first dot and write the types as 
\hetype{1}, \hetype{I_n^+} and \hetype{I_n^-}.
\end{example}

\begin{example}[$\hetype{I_nxI_n}$]
In Example \ref{exInxIn}, for the 5 conjugacy classes of automorphisms 
$\phi\in D_4=\Aut T$ the label $\Type(T,\phi)$ is
$$
  \hetype{I_n^+xI_n^+}, \qquad 
  \hetype{I_n^+xI_n^-}, \qquad 
  \hetype{I_n^-xI_n^-}, \qquad 
  \hetype{I_n^+\FrobX I_n}, \qquad 
  \hetype{I_n^-\FrobX I_n}.
$$
See Table \ref{tabg2bible} for all possible types with an automorphism in genus 2.
\end{example}

\begin{example}
The BY tree with automorphism from Example \ref{Miller} has type  
\hetype{:^+0^{-,+}_{5~5}._1 2:_2(0:_2 U^-_{6,6})}
\end{example}
\section{Tables}\label{s:tables}

Table \ref{tabg3} illustrates the `closed' one-to-one correspondence 
in genus 3. 
In genus 0,1,2 and 3 there are, respectively, 1, 2, 7 and 32 `semistable types', that is
equivalence classes of hyperelliptic graphs/BY trees/cluster pictures 
(cf. Theorem \ref{combmain2}). In genus 0,1 and 2 they are listed in Table \ref{g012table} 
(p. \pageref{g012table}). 
The easiest way to generate them in any genus $g$ is to produce all balanced cluster 
pictures in $X=\{1,...,n\}$ with $n\in\{2g+1,2g+2\}$, up to $S_n$-conjugacy. 

\smallskip

Table \ref{tabg2a1} illustrates the `open' one-to-one correspondence 
(Theorem \ref{combmain1}) in genus 2. In genus 0 and 1, see Table \ref{tabg1a} 
(p. \pageref{tabg1a}). To obtain these, we can list all cluster pictures, 
balanced or not.

\smallskip

Table \ref{tabg2bible}
lists all genus 2 types with an automorphism $\phi$. (In the context of curves over local fields of odd residue characteristic, these 
correspond to all possible Frobenius actions on the dual graph of the special fiber of the minimal regular model of semistable genus 2 curves. 
For elliptic curves, the corresponding types are 1, I$_n^+$ and I$_n^-$ --- good, split multiplicative
and non-split multiplicative reduction; see Example \ref{exellc}.) 
Note that 
by Theorems \ref{combmain2} and \ref{closedcorraut}, there is a bijection between
\begin{itemize}
\item Isomorphism classes of pairs $(\Sigma,\phi)$, where $\Sigma$ is a balanced cluster picture and $\phi\in\Aut\Sigma$ has sign $+1$ on $X$ if $X$ is non-\ub. Here two pairs  $(\Sigma,\phi)$  and  $(\Sigma',\phi')$ are isomorphic if there is an isomorphism $\psi:\Sigma\to\Sigma'$ such that $\psi\phi\psi^{-1}=\phi'$.
\item Isomorphism classes of pairs $(G,\phi)$ of hyperelliptic graphs with an automorphism, where two pairs  $(G,\phi)$  and  $(G',\phi')$ are isomorphic if there is an isomorphism $\psi:G\to G'$ such that $\psi\phi\psi^{-1}=\phi'$.
\end{itemize}
Explicitly, the bijection is given by mapping $\Sigma$ 
to the core $G$ of $\GG(\Sigma)$ and $\phi$ to the restriction of $\GG(\phi)$ to $G$.
This makes it easy to list the types on the level of cluster pictures.
The lattice $\Lambda$, the $\phi$-action on
it and the Tamagawa group can also be computed from it as well
(Definition \ref{lambdaCP}).

\stepcounter{equation}
\begin{table}[t]                             
\hbox{\kern-1.2cm\hbox{\includegraphics[scale=0.9]{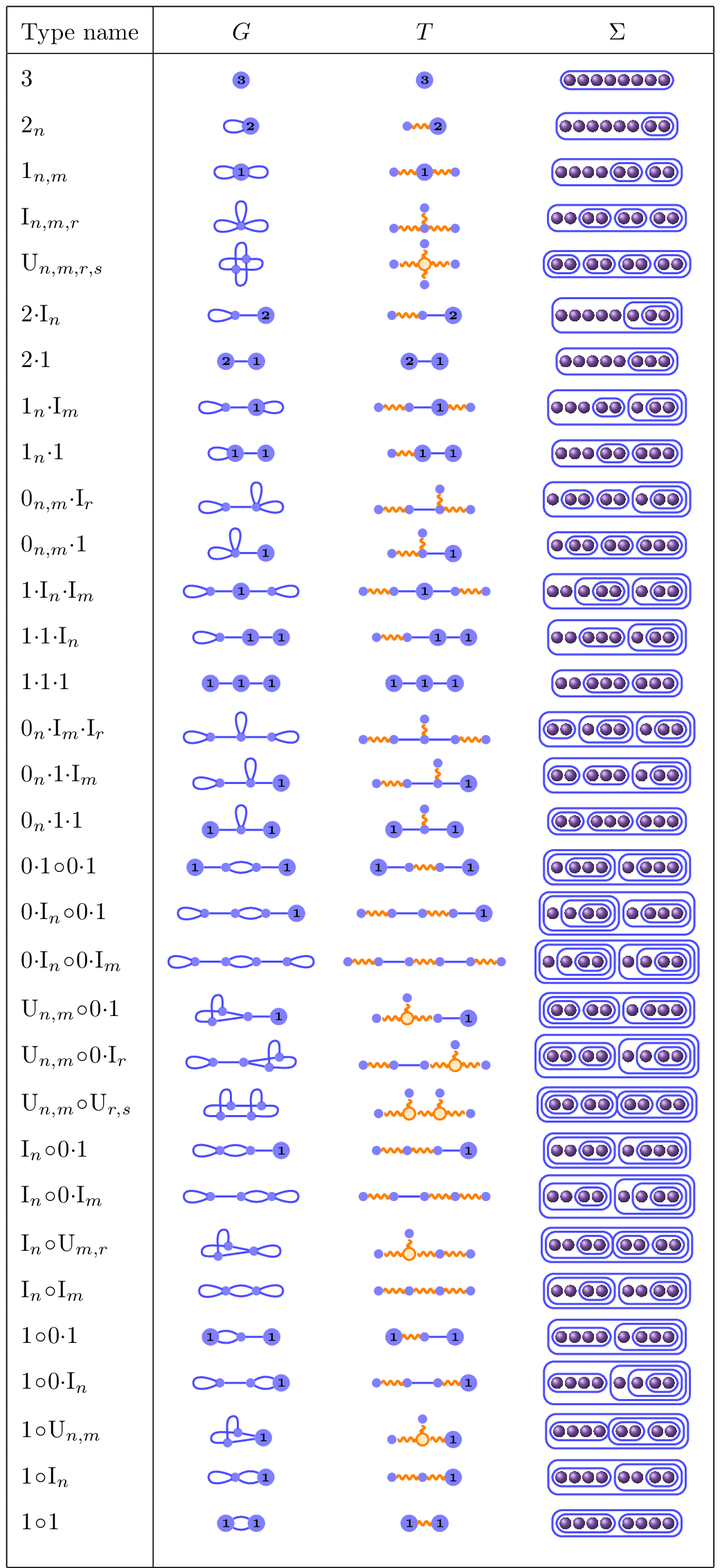}}}
\medskip
\caption{Balanced cluster pictures, hyperelliptic graphs and BY trees in genus 3}
\label{tabg3}
\end{table}

\stepcounter{equation}
\begin{table}[b]                                
\hbox{\kern-0.5cm\hbox{\includegraphics[scale=1.1]{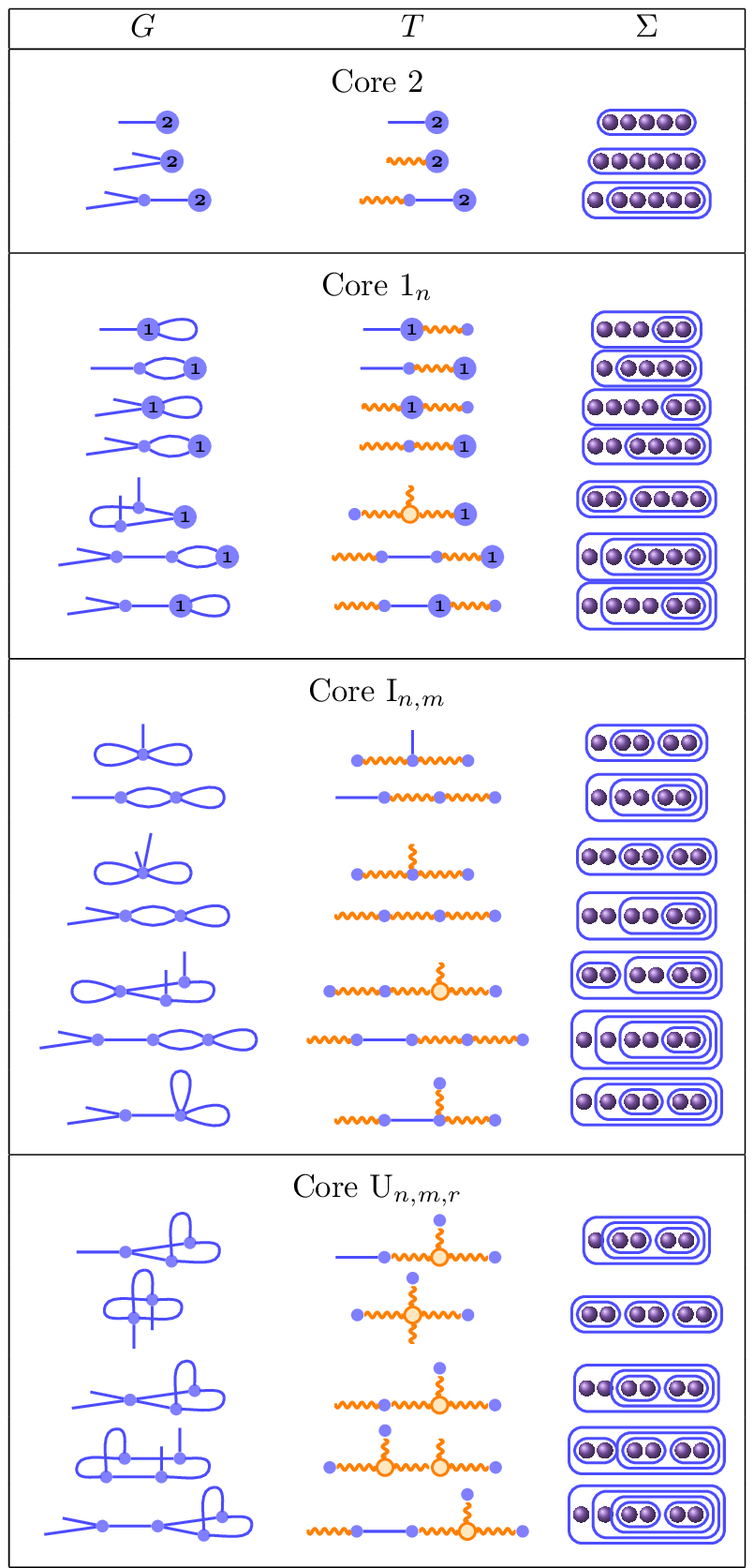}}}
\bigskip
\caption{Cluster pictures, open hyperelliptic graphs and open BY trees up to 
isomorphism in genus 2}
\label{tabg2a1}
\end{table}


\begin{table}[t]                             
\hbox{\kern-0.7cm\hbox{\includegraphics[scale=1.1]{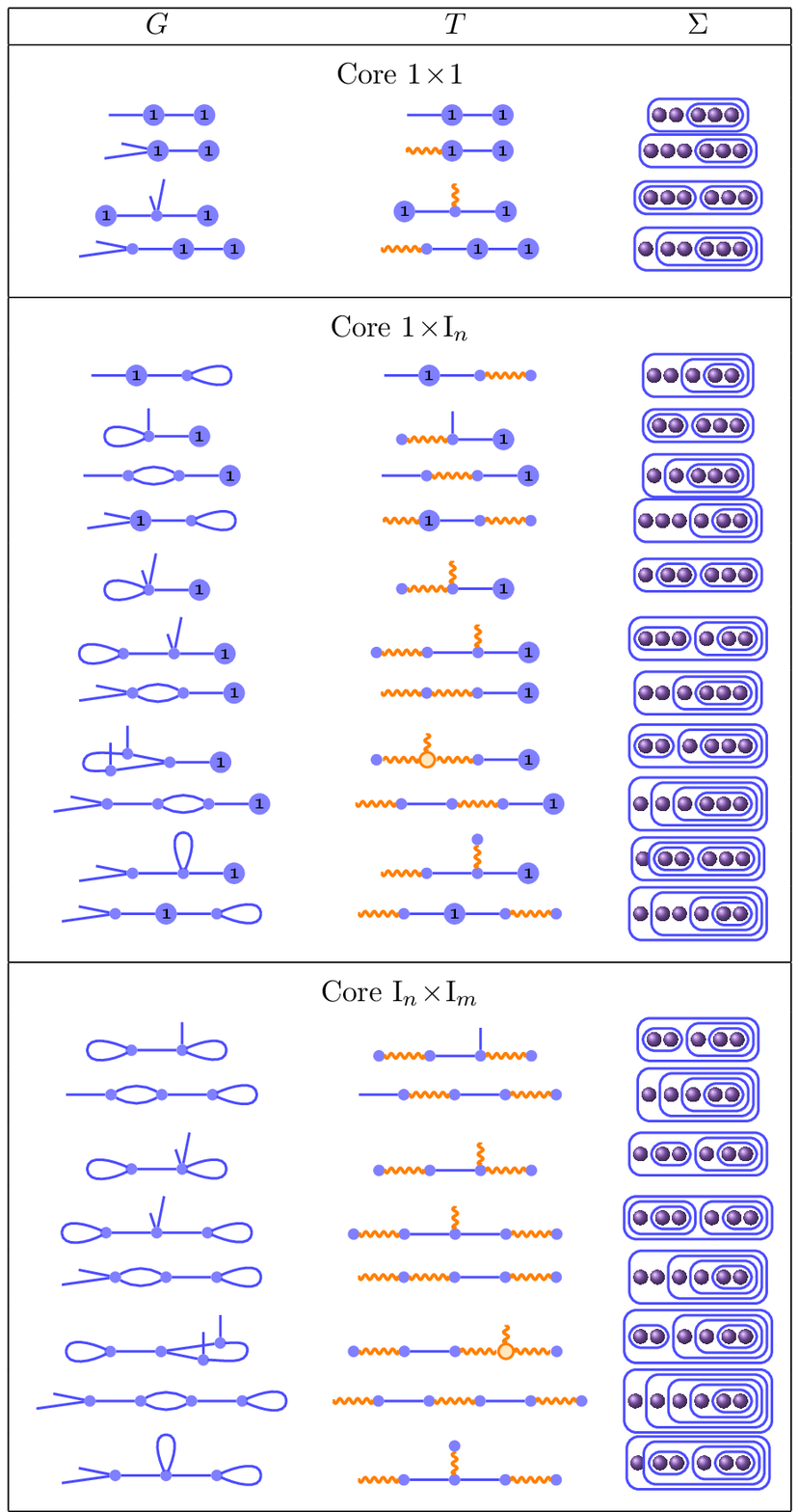}}}
\bigskip
\caption{(continued)}
\label{tabg2a2}
\end{table}

\stepcounter{equation}
\begin{table}[t]                             
\hbox{\kern-1.7cm\hbox{\includegraphics[scale=1]{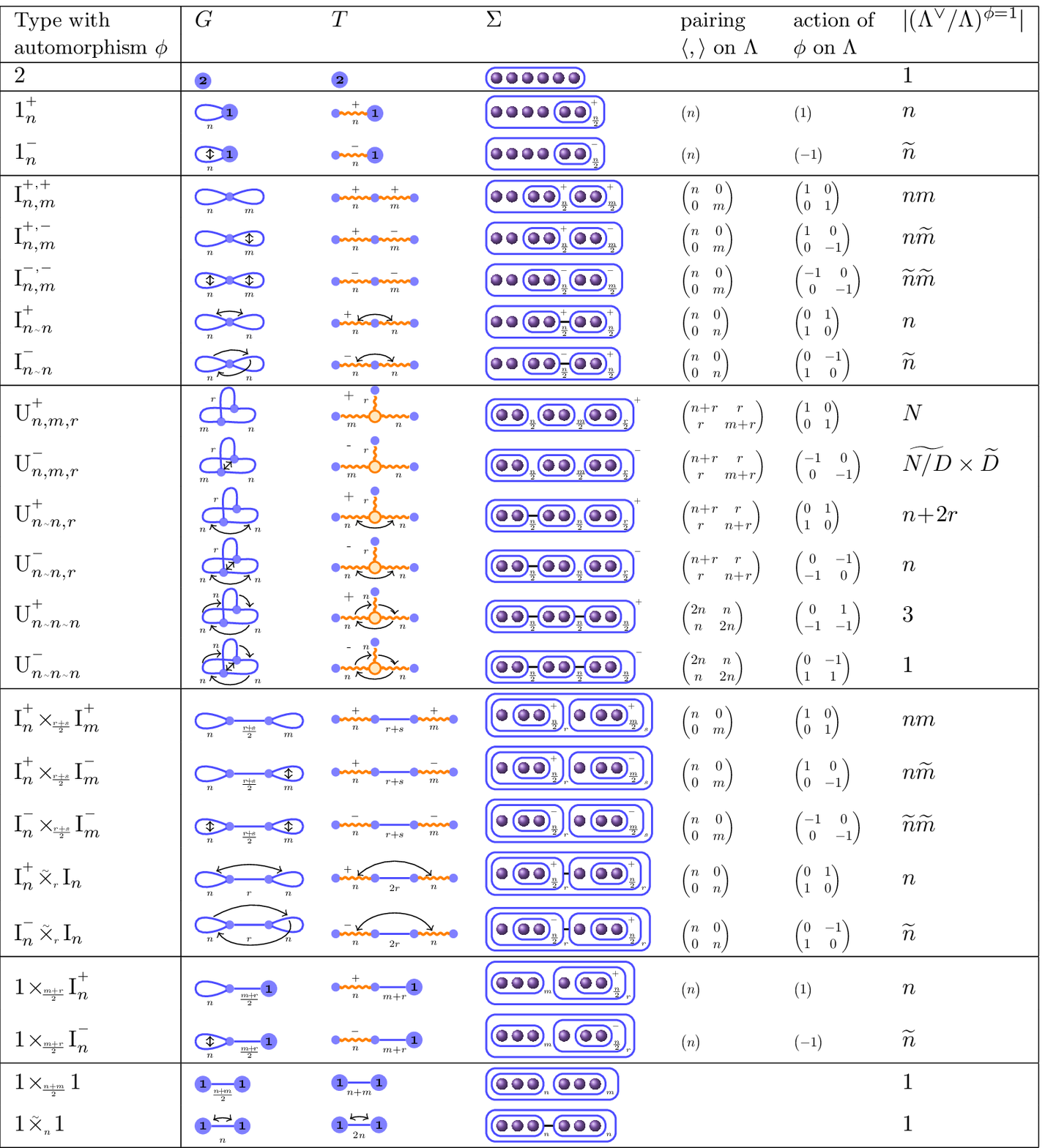}}}
\hbox{\footnotesize \kern-1.7cm\hbox to 10em{Notation in the last column: 
$\tilde n=2$ if $2|n$ and $\tilde n=1$ if $2\nmid n$; $D=\gcd(m,n,r)$; $N=nm+nr+mr$.}}
\hbox{\footnotesize \kern-1.7cm\hbox to 10em{Black arrows in $G$ and $T$ and black lines in $\Sigma$ indicate the automorphism; $+/-$ in $T$ and $\Sigma$ indicate the value of $\epsilon_{\phi}$.}}
\hbox{\footnotesize \kern-1.7cm\hbox to 10em{Numbers indicate lengths of edges in $G$ and $T$, and distances to the parent clusters in $\Sigma$.}}
\bigskip
\caption{Types with an automorphism in genus 2}
\label{tabg2bible}
\end{table}

\clearpage


\end{document}